\newcommand{\bs}{\backslash}
\newcommand{\C}{\mathbb{C}}
\newcommand{\fg}{\mathfrak{g}}
\newcommand{\ftg}{\widetilde{\mathfrak{g}}}
\newcommand{\ft}{\mathfrak{t}}
\newcommand{\ff}{\mathfrak{f}}
\newcommand{\Cx}{{\mathbb{C}^\times}}
\newcommand{\Cxm}{ {\mathbb{C}_{-\mu-\frac{1}{2} \mu_0}^\times} }
\newcommand{\Cxma}{ {\mathbb{C}_{-\mu + \frac{1}{2}\alpha}^\times} }
\newcommand{\Cxz}{ {\mathbb{C}_{-\frac{1}{2} \mu_0}^\times} }
\newcommand{\Cxzf}{ {\mathbb{C}_{[-\frac{1}{2} \mu_0]}^\times} }
\newcommand{\Z}{\mathbb{Z}}
\newcommand{\N}{\mathbb{N}}
\newcommand{\tG}{{\widetilde{G}}}
\newcommand{\A}{\mathcal{A}}
\newcommand{\Az}{ {\mathcal{A}_{-\zeta - [\frac{1}{2} \mu_0], \hbar}} }
\newcommand{\Am}{ {\mathcal{A}_{[-\mu - \frac{1}{2} \mu_0] , \hbar}} }
\newcommand{\hh}{\hbar}
\newcommand{\GK}{{G_{\mathcal{K}}}}
\newcommand{\GO}{{G_{\mathcal{O}}}}
\newcommand{\NK}{{N_{\mathcal{K}}}}
\newcommand{\NO}{{N_{\mathcal{O}}}}
\newcommand{\ttGK}{{\widetilde{G}_{\mathcal{K}}}}
\newcommand{\ttGO}{{\widetilde{G}_{\mathcal{O}}}}
\newcommand{\ttGOK}{{\widetilde{G}^{\mathcal O}_{\mathcal K} \rtimes \Cx}}
\newcommand{\ttGOKnoloop}{{\widetilde{G}^{\mathcal O}_{\mathcal K}}}
\newcommand{\FO}{F_\cO}
\newcommand{\cK}{{\mathcal K}}
\newcommand{\cO}{{\mathcal O}}
\newcommand{\sT}{\mathcal{T}}
\newcommand{\sR}{\mathcal{R}}
\newcommand{\sZ}{\mathcal{Z}}
\DeclareMathOperator{\Hom}{Hom}
\DeclareMathOperator{\val}{val}
\newcommand{\PP}{\mathbb{P}}
\newcommand{\OO}{\mathcal{O}}
\newcommand{\sF}{\mathcal{F}}
\newcommand{\fh}{\mathfrak{h}}
\newcommand{\Sym}{\operatorname{Sym}}
\newcommand{\QM}{\operatorname{QM}}
\newcommand{\Spec}{\operatorname{Spec}}
\newcommand{\Proj}{\operatorname{Proj}}
\newcommand{\Rees}{\operatorname{Rees}}
\newcommand{\id}{\operatorname{id}}
\newcommand{\Gr}{\mathrm{Gr}}
\newcommand{\Sp}{\mathrm{Sp}}
\newcommand{\Vco}{\overset{\circ}{V_c}}
\newcommand{\Lco}{\overset{\circ}{L_c}}
\newcommand{\Xco}{\overset{\circ}{X_c}}
\newcommand{\La}{\mathcal{L}}
\newcommand{\Za}{\mathcal{Z}}
\newcommand{\cF}{\mathcal{F}}
\newcommand{\cW}{\mathcal{W}}
\newcommand{\groupoid}{\mathcal{P}}
\newcommand{\inclu}{\alpha}
\newcommand{\ontu}{b}
\newcommand{\ontuu}{\tilde{b}}
\newcommand{\eqstab}{{L}_c}
\newcommand{\arxiv}[1]{\href{http://arxiv.org/abs/#1}{\tt arXiv:\nolinkurl{#1}}}
\newcommand{\ssl}{\mathfrak{sl}}
\newcommand{\excise}[1]{}
\newcommand{\sslash}{\mathbin{/\mkern-6mu/}}
\newcommand{\sssslash}{\mathbin{/\mkern-6mu/\mkern-6mu/\mkern-6mu/}}
\newtheorem{Theorem}{Theorem}[section]
\newtheorem{Proposition}[Theorem]{Proposition}
\newtheorem{Lemma}[Theorem]{Lemma}
\newtheorem{Corollary}[Theorem]{Corollary}
\newtheorem{Conjecture}[Theorem]{Conjecture}
\theoremstyle{definition}
\newtheorem{Definition}[Theorem]{Definition}
\newtheorem{Example}[Theorem]{Example}
\newtheorem{Remark}[Theorem]{Remark}
\newcommand{\basissigma}{| \sigma \rangle}
\newcommand{\basissigmas}{| [\sigma] \rangle}
\newcommand{\basissigmamax}{| \sigma^{\max} \rangle}
\newcommand{\basislambdasigma}{| \lambda + \sigma \rangle}
\newcommand{\basislambdasigmas}{| [\lambda + \sigma] \rangle}
\title{BFN Springer Theory}
\author{Justin Hilburn}
\email{jhilburn@perimeterinstitute.ca}
\author{Joel Kamnitzer} 
\email{jkamnitz@math.toronto.edu}
\author{Alex Weekes} 
\email{weekes@math.usask.ca}
\begin{document}
\maketitle

\begin{abstract}
Given a representation N of a reductive group G,
Braverman-Finkelberg-Nakajima have defined a remarkable Poisson
variety called the Coulomb branch. Their construction of this space
was motivated by considerations from 3d gauge theories and
symplectic duality. The coordinate ring of this Coulomb branch is
defined as a convolution algebra, using a vector bundle over the
affine Grassmannian of G.

This vector bundle over the affine Grassmannian maps to the space of
loops in the representation N.  We study the fibres of this maps,
which live in the affine Grassmannian. We use these BFN Springer
fibres to construct modules for (quantized) Coulomb branch algebras.
These modules naturally correspond to boundary conditions for the
corresponding gauge theory.

We use our construction to partially prove a
conjecture of Baumann-Kamnitzer-Knutson and give evidence for
conjectures of Hikita, Nakajima, and Kamnitzer-McBreen-Proudfoot.  We
also prove a relation between BFN Springer fibres and quasimap spaces.
\end{abstract}

\section{Introduction}

\subsection{The Coulomb branch and BFN Springer fibres}
Let $G$ be a reductive group and let $M$ denote a quaternionic representation of $G$. Associated to the pair $(G, M)$ physicists have defined a $3$d $\mathcal{N} = 4$ gauge theory. One interesting invariant of the theory is its moduli space of vacua, which is a union of hyperkahler manifolds. For some time, it was an open question how to compute the ring of algebraic functions on a certain irreducible component of this moduli space, known as the Coulomb branch \cite{BDG}. Braverman-Finkelberg-Nakajima \cite{BFN1} solved this problem completely under the assumption that $M $ admits a $G$-invariant Lagrangian splitting $ M = N \oplus N^*$. 

Their construction of this ring begins by defining a certain convolution variety, analogous to the Steinberg variety. Let $ D $ denote the formal disk and $ D^\times $ the punctured disk.  Define,
 \begin{align*}
\sT_{G,N} = &\{(P, \sigma, s) : \text{ $ P $ is a principal $G$-bundle on the $ D $,} \\
&\text{$\sigma$ is a trivialization of $ P $ on $ D^\times $, $s \in \Gamma(D, N_P) $}\}
\end{align*}
This space is an (infinite-rank) vector bundle over the affine Grassmannian and also comes with a map $ \sT_{G,N} \rightarrow \NK := \Gamma(D^\times, N) $.  We can form the fibre product of two copies of this space over $ \NK $, 
$$
\sZ_{G,N} = \sT_{G,N} \times_{\NK} \sT_{G,N} 
$$

Then \cite{BFN1} define $ \A_0 := H_\bullet([\sZ_{G,N}/\GK]) $ to be the homology of the stack quotient of $ \sZ_{G,N}$ by the group $ \GK $.  The homology $ \A_0 $ carries a convolution algebra structure.  Due to infinite-dimensionality issues, the definition of this homology from \cite{BFN1} is a bit subtle, see section \ref{se:defA}.  

Braverman-Finkelberg-Nakajima prove that $ \A_0 $ is a commutative algebra and define the Coulomb branch by $ \mathcal M_C(G,N) := \Spec \A_0 $.  They also define a non-commutative version $ H_\bullet^{\Cx}([\sZ_{G,N}/\GK])$, where $ \Cx $ acts by rotation on the formal disk $ D $. 

There is a natural analogy between $ \sT_{G,N} \rightarrow \NK $ and the Springer resolution $ T^* \mathcal B \rightarrow \mathcal N $ (where $ \mathcal B $ is the flag variety and $ \mathcal N $ is the nilpotent cone).  In classical Springer theory, one considers fibres of the Springer resolution and uses them to construct representations of the homology of the Steinberg variety $ T^* \mathcal B \times_{\mathcal N} T^* \mathcal B $.

For this reason, it is natural to consider fibres of the map $ \sT_{G,N} \rightarrow \NK $, which we call BFN Springer fibres.  For any $ c \in \NK $, this fibre is given by
$$
\Sp_c := \{ (P, \sigma) : \sigma^{-1}(c) \in \Gamma(D, N_P) \} = \{ [g] \in \GK/\GO : g^{-1} c \in \NO \}
$$
For the second description, we undo the usual modular description of the affine Grassmannian $ \Gr_G = \GK/\GO $.

The purpose of this paper is to study these BFN Springer fibres and use them to construct modules for these Coulomb branch algebras.  Our construction can be viewed as a generalization of the construction of Cherednik algebra modules using the homology of affine Springer fibres (see Varagnolo-Vasserot \cite{VV} and Oblomkov-Yun \cite{OY}).

\subsection{Physics Interpretation}
In \cite{BDGH}, Bullimore-Dimofte-Gaiotto-Hilburn studied boundary conditions for these $3$d $\mathcal{N}=4$ gauge theories that preserve $2$d $\mathcal{N}=(2,2)$ supersymmetry.  They showed that from each boundary condition one can construct a module over the Coulomb branch algebra. 

One particular nice class of boundary conditions are parameterized by triples $\mathcal{B} = (N, c, H)$ where $c \in N$, and $H \subseteq \text{Stab}_G(c)$. In the physics literature the choice $H=1$ is referred to as Dirichlet boundary condition for the gauge field. The choice $H=G$ is called the Neumann boundary condition for the gauge field. 

In \cite{DGGH} and \cite{HY}, the first author and his coauthors showed that the compactification of the $A$-twisted $3$d theory on a circle gives rise to the $2$d TQFT associated to the category of $D$-modules on $ N_\cK/\GK $. (See \cite{BF, CG2, CCG} for other perspectives on these ideas.) In particular, each boundary condition can be wrapped on the circle to give an object of this category. The boundary condition $\mathcal{B}$ gives rise to the delta functions supported on $c/H_{\cK} \hookrightarrow N_{\cK}/G_\cK$. Furthermore, it was also shown that each vortex line gives rise to a $D$-module on the loop space. In particular, the trivial vortex line $\mathbbm{1}$ is associated with the structure sheaf of $N_{\cO}/G_{\cO}$. From this perspective, the work of Braverman-Finkelberg-Nakajima shows that $\text{End}(\mathbbm{1}) = \A_0$, as is expected on physical grounds.

In \cite{DGGH}, it is argued that the space of local operators living at the intersection of a half space with boundary condition $\mathcal{B}$ and the trivial line operator $\mathbbm{1}$ oriented transverse to the boundary is given by
\[
\Hom(\mathbbm{1}, \mathcal{B}) \cong H^{H_{\cK}}_{\bullet}(\Sp_c).
\]
When one can make sense of the $H_{\cK}$-equivariant homology, we expect that this is an $\A_0$-module. One can elaborate on this construction by replacing $\mathbbm{1}$ with a flavour vortex line labeled by a cocharacter $\mu: \Cx \to F$, or equivalently by replacing $c$ with $z^\mu c$, to construct a module which we will call $\mathcal{B}[\mu]$. Physically, taking equivariant homology for the loop rotation corresponds to turning on an omega background.

For physical reasons it is expected that these modules coincide with the ones constructed by \cite{BDGH} and we show this in the abelian case. Furthermore, the local operators in $A$-twisted $2$d $\mathcal{N}=(2,2)$ theory are closely related to quantum cohomology so on physical grounds we expect that these modules also coincide with the ones constructed by Teleman in \cite{T}. We will return to this in future work.

\subsection{Geometric properties}
The affine Grassmannian $ \GK/\GO $ has connected components $ \Gr(\sigma) $ labelled by $ \sigma \in \pi_1(G)$ (for us $ G $ is often a product of general linear groups and thus $ \pi_1(G) $ is infinite).  Moreover each connected component $ \Gr(\sigma) $ is usually infinite-dimensional (unless $ G $ is a torus), so potentially $ \Sp_c $ is quite a complicated space.  For example, if $ N $ is the adjoint representation, then $ \Sp_c $ is the usual affine Springer fibre, see for example \cite{Zhiwei}.  However, we consider situations in which $\Sp_c $ behaves more like the following example.

\begin{Example}
	Suppose that $ G = GL_k $ and $ N = \Hom(\C^n, \C^k) $ with $ k \le n $.  Let $ c \in \NK $ be a surjective map.  
	
	If $ c \in N(\C)$, then it is easy to see that $ \Sp_c $ is actually independent of $ c $ and coincides with the positive part of affine Grassmannian of $ GL_k $.  This is the locus of all $ \cO $-lattices in $\cK^k $ which contain $ \cO^k $.  Thus each connected component of $ \Sp_c $ is a finite-dimensional projective variety.
\end{Example}

Our first result (Theorem \ref{th:finitedim}) shows that the components $ \Sp_c(\sigma) = \Sp_c \cap \Gr(\sigma)$ are well-behaved under the assumption that $ c \in \NK $ is GIT-stable.

\begin{Theorem}
	Let $ c \in \NK $.  Assume that $ c$ is $ \chi$-stable for some $ \chi: \GK \rightarrow \cK^\times $, with trivial stabilizer.  Then for each $ \sigma \in \pi_1(G)$, $ \Sp_c(\sigma) $ is a finite-dimensional projective variety.
\end{Theorem}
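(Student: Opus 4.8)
The plan is to show two things: that $\Sp_c(\sigma)$ is closed in the connected component $\Gr(\sigma)$, and that it is \emph{bounded}, meaning contained in some Schubert variety $\overline{\Gr^\lambda}$ (write $\Gr^\lambda=\GO z^\lambda\GO/\GO$ and $\overline{\Gr^\lambda}$ for its closure, a finite-dimensional projective variety, so that $\Gr(\sigma)=\bigcup_\lambda\overline{\Gr^\lambda}$). Granting these, $\Sp_c(\sigma)$ is a closed subvariety of the finite-dimensional projective variety $\overline{\Gr^\lambda}$, hence is itself finite-dimensional and projective. Closedness is formal and uses nothing about stability: $\Sp_c$ is the image in $\Gr_G$ of the fibre over $c$ of the map $\sT_{G,N}\to\NK$; concretely, on the preimage of $\overline{\Gr^\lambda}$ in $\GK$ the map $g\mapsto g^{-1}c$ has bounded pole order, so ``$g^{-1}c\in\NO$'' is the vanishing locus of a morphism to a finite-dimensional vector space, a right-$\GO$-invariant closed locus which descends to the closed subvariety $\Sp_c\cap\overline{\Gr^\lambda}$. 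Only boundedness will use the hypothesis on $c$.

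To bound $\Sp_c(\sigma)$, fix a maximal torus $T\subseteq G$. Since $\chi$ is pulled back from a character $\bar\chi$ of $G$, set $\eta:=\bar\chi|_T\in X^*(T)$; then $[g]\mapsto\val\chi(g)$ is a well-defined, locally constant, $\Z$-valued function on $\Gr_G$ (because $\chi(\GO)\subseteq\cO^\times$), constant equal to some $d_\sigma$ on $\Gr(\sigma)$, and equal to $\langle\eta,\mu\rangle$ on $\Gr^\mu$ for $\mu$ dominant. Thus, setting $\mathcal D:=\{\mu\in X_*(T)\ \text{dominant}:[\mu]=\sigma,\ \Gr^\mu\cap\Sp_c\neq\emptyset\}$, we have $\Sp_c(\sigma)=\bigsqcup_{\mu\in\mathcal D}(\Gr^\mu\cap\Sp_c)$ with $\langle\eta,\mu\rangle=d_\sigma$ for all $\mu\in\mathcal D$, and boundedness of $\Sp_c(\sigma)$ is equivalent to finiteness of $\mathcal D$. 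Unwinding the definitions, $\mu\in\mathcal D$ iff there is a $c'$ in the $\GO$-orbit of $c$ with $\val(c'_j)\geq\langle j,\mu\rangle$ for every $T$-weight $j$ of $N$, where $c'_j$ denotes the $N_j\otimes\cK$-component of $c'$: indeed $[az^\mu]\in\Sp_c$ iff $a^{-1}c\in z^\mu\NO$. Replacing $c$ by a $\GK$-translate, which changes $\Sp_c$ only by an automorphism of $\Gr_G$, one may assume $c\in\NO$, so all these valuations are $\geq 0$.

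Suppose for contradiction that $\mathcal D$ is infinite. Fixing a norm on $X_*(T)_{\mathbb{R}}$ and passing to a subsequence, choose $\mu_i\in\mathcal D$ with $|\mu_i|\to\infty$ and $\mu_i/|\mu_i|\to\lambda_\infty$, a nonzero dominant element of $X_*(T)_{\mathbb{R}}$; since $\langle\eta,\mu_i\rangle=d_\sigma$ is bounded, $\langle\eta,\lambda_\infty\rangle=0$. Choosing $c'_i=a_i^{-1}c$ with $\val((c'_i)_j)\geq\langle j,\mu_i\rangle$ for all $j$, for each weight $j$ with $\langle j,\lambda_\infty\rangle>0$ we get $\langle j,\mu_i\rangle\to\infty$, hence $(c'_i)_j\equiv 0\pmod{z^K}$ once $i$ is large; so for every $K$ the reduction $c\bmod z^K$ lies in $G(\cO/z^K)\cdot\bigl(N^{\leq 0}\otimes(\cO/z^K)\bigr)$, where $N^{\leq 0}:=\bigoplus_{\langle j,\lambda_\infty\rangle\leq 0}N_j$; that is, $c$ lies in the closure of $\GK\cdot N^{\leq 0}(\cK)$. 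Now pick an integral dominant $\lambda_0\neq 0$ with $\langle\eta,\lambda_0\rangle=0$ and $N^{\leq 0}\subseteq N^{\leq 0}_0:=\bigoplus_{\langle j,\lambda_0\rangle\leq 0}N_j$ — possible because $\lambda_\infty$ already satisfies these rational polyhedral conditions — so that $c$ lies in the closure of $\GK\cdot N^{\leq 0}_0(\cK)$. But $\lim_{s\to 0}\lambda_0(s)^{-1}\cdot v$ exists for every $v\in N^{\leq 0}_0(\cK)$, and the cocharacter $s\mapsto\lambda_0(s)^{-1}$ has Hilbert--Mumford weight $0$ (namely $-\langle\eta,\lambda_0\rangle$); hence no point of $N^{\leq 0}_0(\cK)$, and therefore no point of the closed, $\GK$-stable set $\overline{\GK\cdot N^{\leq 0}_0(\cK)}$, can be $\chi$-stable. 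Since $c$ is $\chi$-stable with trivial stabilizer, the numerical criterion yields a contradiction, so $\mathcal D$ is finite and $\Sp_c(\sigma)$ is bounded, completing the proof.

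The serious content is entirely in the last paragraph: converting the asymptotic valuation estimates into the statement that $c$ lies in the closure of $\GK\cdot N^{\leq 0}_0(\cK)$, and combining this with the numerical criterion for the reductive group $G_\cK$ over the (non-algebraically-closed) field $\cK$. One must handle the possible irrationality of the limiting direction $\lambda_\infty$ (done above by passing to the rational $\lambda_0$), make the closures precise inside the ind-scheme $\NK$, and verify that the non-$\chi$-stable locus is closed and $\GK$-stable so that the orbit of $c$ cannot meet it. I expect the most delicate point to be this book-keeping with closures in $\NK$ — equivalently, making a sufficiently canonical choice of the elements $a_i$, or invoking an optimal destabilizing cocharacter in the sense of Kempf.
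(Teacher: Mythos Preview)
Your argument is correct and takes a genuinely different route from the paper's. The paper avoids Hilbert--Mumford and limits of cocharacters entirely: it first proves that $\chi$-stability makes $g\mapsto(gc,\chi(g)^{-1})$ a proper map $G\to N\times\C$ (Lemma~\ref{le:StableProper}), then reduces to the commutator subgroup $G'$ via the identification $\Gr_G(\sigma)\cong\Gr_{G'}(0)$ (Lemma~\ref{le:GtoG'}), so that properness forces the $G'_\cK$-orbit of the translated point $z^{-\tau}c$ to be closed in $\NK$; finally, for a semisimple group with closed orbit (Theorem~\ref{th:inGrlambda}) the resulting surjection $\cK[N]\twoheadrightarrow\cK[G'_\cK\cdot z^{-\tau}c]$ bounds below the valuation of every matrix coefficient on $\Sp_c$, giving containment in a single $\overline{\Gr^\lambda}$. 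Your approach trades the semisimple reduction and the matrix-coefficient estimate for a compactness argument plus the numerical criterion, which is more conceptual but imports more GIT machinery.

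On the closure step you flagged as delicate: the clean way to make it rigorous is to work first over $\C$. Let $Z=\overline{G\cdot N^{\le 0}_0}\subset N$ be the Zariski closure as $\C$-varieties. Your mod-$z^K$ argument shows $c\bmod z^K\in G(\cO/z^K)\cdot N^{\le 0}_0(\cO/z^K)\subset Z(\cO/z^K)$ for every $K$, whence $c\in\varprojlim_K Z(\cO/z^K)=Z(\cO)\subset Z(\cK)$. Over $\C$, the easy direction of Hilbert--Mumford (with the nontrivial one-parameter subgroup $\lambda_0^{-1}$, whose pairing with $\chi$ is $0$) shows $G\cdot N^{\le 0}_0$, and hence its closure $Z$, lies in the closed non-$\chi$-stable locus $N\setminus N^s$; since formation of the stable locus commutes with the field extension $\cK/\C$, this gives $c\in Z(\cK)\subset N(\cK)\setminus N^s(\cK)$, the desired contradiction. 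This bypasses any question about Zariski closures of $\cK$-point sets inside $N_\cK$. (In fact one can check, using that $G(\cK)\times N^{\le 0}_0(\cK)$ is Zariski-dense in $(G\times N^{\le 0}_0)_\cK$, that $\overline{\GK\cdot N^{\le 0}_0(\cK)}=Z_\cK$, so your statement is literally true --- but the detour through $Z$ over $\C$ is what actually proves it.)
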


\subsection{Modules}

The second theorem of this paper is the construction of modules using BFN Springer fibres and more generally certain equivariant sheaves. Since the category of $ \GK$-equivariant $D$-modules on $\NK $ is difficult to work with technically, we will work with $\groupoid$-equivariant sheaves on $ \NO $, where
$$
\groupoid = \{(g, v) : g \in \GK, v \in \NO, gv \in \NO \} 
$$
is the groupoid obtained by restricting the $\GK $-action to $ \NO $.  In section \ref{se:DefAction}, (Theorem \ref{th:action}) we prove the following result.

\begin{Theorem} \label{th:introModule}
	For any $\groupoid$-equivariant sheaf $ \mathcal F$, there is a $ \A_0 $-module structure on $ H^{-\bullet}_{\GO}(\NO, \mathcal F)$.
\end{Theorem}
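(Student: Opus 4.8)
The plan is to construct the $\A_0$-action via a convolution formalism that parallels the original Braverman--Finkelberg--Nakajima definition of $\A_0$ itself. Recall that $\A_0 = H_\bullet([\sZ_{G,N}/\GK])$ with $\sZ_{G,N} = \sT_{G,N} \times_{\NK} \sT_{G,N}$, and that $ H^{-\bullet}_{\GO}(\NO, \cF) $ should be thought of as the "homology of $\sT_{G,N}$ with coefficients in $\cF$", since $\sT_{G,N} \cong \GK \times_{\GO} \NO$ as a space over $\NK$ (via the modular description the first factor records the lattice/trivialization and the second the section). The key geometric observation is that there is a correspondence
\[
[\sZ_{G,N}/\GK] \;\xleftarrow{\;p\;}\; [\sZ_{G,N} \times_{\sT_{G,N}} \sT_{G,N}/\GK] \;\xrightarrow{\;q\;}\; [\sT_{G,N}/\GK],
\]
or more usefully, working downstairs on $\NO$, that the groupoid $\groupoid$ acts on $\NO$ and one has the two projections together with a composition map organizing the $\groupoid$-action. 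Pulling $\cF$ back along one leg, tensoring with the fundamental class of $\sZ$ (i.e.\ the class defining an element of $\A_0$), and pushing forward along the other leg should produce the module map $\A_0 \otimes H^{-\bullet}_{\GO}(\NO,\cF) \to H^{-\bullet}_{\GO}(\NO,\cF)$.

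More concretely, I would proceed in the following steps. First, set up the relevant spaces: form $\sT_{G,N} \times_{\NK} \sT_{G,N}$ but now remember that the \emph{second} copy of $\sT_{G,N}$ carries the sheaf $\cF$ via the identification $[\sT_{G,N}/\GK] \simeq [\NO/\groupoid]$, so that $H^{-\bullet}_{\GO}(\NO,\cF)$ is a Borel--Moore-type homology of this stack twisted by $\cF$. Second, identify the convolution diagram: the triple product $[\sT_{G,N} \times_{\NK} \sT_{G,N} \times_{\NK} \sT_{G,N}/\GK]$ carries maps to $\sZ_{G,N}$-stack (first two factors), to the module stack (last two factors), and to the module stack (first and last factors). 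Third, define the action by $a \cdot m = (p_{13})_*\bigl( p_{12}^* a \cdot p_{23}^* m \bigr)$, where the product on the right uses the intersection-theoretic (refined Gysin) pairing coming from the fact that these are all closed substacks inside an ambient smooth-ish ind-stack, exactly as in \cite{BFN1}. Fourth, check the module axioms: associativity $a\cdot(b\cdot m) = (ab)\cdot m$ follows from a standard diagram chase on the quadruple product $[\sT^{\times_{\NK} 4}/\GK]$ together with the base-change/projection-formula identities for the refined Gysin maps, and unitality follows because the unit of $\A_0$ is the fundamental class of the diagonal $[\sT_{G,N}/\GK] \hookrightarrow [\sZ_{G,N}/\GK]$.

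The main technical obstacle, as in all of BFN-type constructions, is \textbf{making sense of the (equivariant Borel--Moore) homology and the pushforward maps in this infinite-dimensional setting}. The stacks involved are ind-pro-stacks, the relevant homology must be defined via a limit over finite-dimensional approximations (truncating the affine Grassmannian by Iwahori-orbit closures and the vector bundles by their filtration by congruence subgroups), and one must check that the convolution maps are well-defined and independent of the approximation — in particular that the map $q$ (pushforward along the forgotten $\sT_{G,N}$ factor) is "nice enough" (ind-proper on the relevant loci, or at least that proper pushforward is defined after twisting by $\cF$ supported appropriately). The equivariance with respect to the loop-rotation $\Cx$, needed for the quantized statement later, adds bookkeeping but no new ideas. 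I expect the cleanest route is to reduce everything to the already-established formalism of \cite{BFN1}: realize $H^{-\bullet}_{\GO}(\NO,\cF)$ as $H_\bullet$ of an appropriate $\GK$-equivariant complex on $\sT_{G,N}$, and then invoke their framework verbatim with $\cF$-coefficients in the second slot, so that the only genuinely new verification is that $\cF$ being $\groupoid$-equivariant is exactly the compatibility needed for the convolution product to descend.
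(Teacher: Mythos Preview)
Your proposal is essentially correct and follows the same convolution strategy as the paper. A few remarks on where the paper is more precise than your sketch.

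First, the paper works entirely in the ``downstairs'' model $\sR_{G,N}, \NO, \groupoid$ rather than $\sZ_{G,N}, \sT_{G,N}, \NK$; this is not a genuine difference since $[\sR/\GO] \cong [\sZ/\GK]$, but it is how the infinite-dimensional issues are actually handled (following \cite{BFN1}), and your remark about ``reducing everything to the already-established formalism of \cite{BFN1}'' is exactly right.

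Second, and more substantively: you say the $\groupoid$-equivariance is ``exactly the compatibility needed for the convolution product to descend,'' but you do not say \emph{where} it enters. In the paper's construction the action map is a four-step composite: a pull-back-with-supports morphism $\omega_\sR \boxtimes \sF \to p_* d_1^! \sF$, then the equivariance isomorphism $\beta: d_1^!\sF \xrightarrow{\sim} d_2^!\sF$, then the identification $d_2^!\sF = q^! m^!\sF$ coming from a $\GO$-bundle, and finally proper pushforward along $m$. The isomorphism $\beta$ is not a descent condition after the fact; it is an explicit link in the chain. Correspondingly, the associativity proof uses the cocycle (hexagon) axiom for $\beta$ on $\groupoid^{(2)}$ as one of the faces of a large commutative diagram, in direct analogy with \cite[Theorem 3.10]{BFN1}. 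Your ``standard diagram chase on the quadruple product'' is right in outline, but the new ingredient beyond \cite{BFN1} is precisely verifying that the hexagon for $\beta$ supplies the missing commutative square.
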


If $ \GK $ acts with finite stabilizer $\Lco $ on $ c \in \NK $, then there is a $\groupoid$-equivariant sheaf $ \mathcal F $ such that $ H^{-\bullet}_{\GO}(\NO, \mathcal F) = H_\bullet^{\Lco}(\Sp_c)$ and thus we get an $\A_0$-module structure on $ H_\bullet^{\Lco}(\Sp_c)$.

The proof of Theorem \ref{th:introModule} involves adapting the proof from \cite{BFN1} of the associativity of the algebra $ \A_0$ and inserting our sheaf $ \mathcal F $ into the process.

\subsection{Flavour symmetry and weight modules}
In the theory of Coulomb branches, it is frequently important to consider the presence of another symmetry group $ F$, called the \textbf{flavour group}.  Thus assume that we are given an extension
$$
1 \to G \to \tG \to F \to 1
$$
where $ F $ is a torus, and an action of $ \tG $ on $ N $ extending the action of $ G $.

Braverman-Finkelberg-Nakajima \cite{BFN1} used the loop rotation action and the flavour group to define the non-commutative Coulomb branch algebra $ \A = H_\bullet^{\ttGO \rtimes \Cx}(\sR) $ containing $ H^\bullet_{F \times \Cx}(pt) $ in its centre.  We modify Theorem \ref{th:introModule} appropriately to get an action of this larger algebra.  In particular, we get an action of $ \A $ on $  H_\bullet^{\eqstab}(\Sp_c)$, where $ \eqstab $ is the stabilizer of $ c$ in $ \ttGOK $.

The algebra $ \A $ contains a large commutative subalgebra $ H^\bullet_{\tG \times \Cx}(pt) $, called the \textbf{Gelfand-Tsetlin subalgebra}, and a smaller commutative subspace $ H^2_G(pt) $, called the \textbf{Cartan subalgebra}.  In this paper, we will study weight modules for $ \A$, that is modules on which the Cartan subalgebra acts semisimply after specializing the centre (see Definition \ref{def:Weight}).  We prove the following result (Corollary \ref{co:weight}).

\begin{Theorem} \label{th:introweight}
	If $ c \in \NK $ has trivial stabilizer in $ G $, then $ H_\bullet^{\eqstab}(\Sp_c) $ is a weight module with weight spaces $ H_\bullet^{\eqstab}(\Sp_c(\sigma)) $.
\end{Theorem}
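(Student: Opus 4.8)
The plan is to reduce to two separate statements and establish each. Write $M = H^{\eqstab}_\bullet(\Sp_c)$ for the $\A$-module of Theorem~\ref{th:action}. Since the connected components of $\Gr_G$ are indexed by $\pi_1(G)$, with $\Sp_c(\sigma) = \Sp_c \cap \Gr(\sigma)$ the union of components of $\Sp_c$ lying over $\Gr(\sigma)$, there is a decomposition of graded $H^\bullet_{\eqstab}(pt)$-modules
\[
M \;=\; \bigoplus_{\sigma \in \pi_1(G)} H^{\eqstab}_\bullet(\Sp_c(\sigma)).
\]
It then suffices to prove: (1) the Cartan subspace $H^2_G(pt) \subseteq \A$ preserves each summand; and (2) after specialising the centre $\HTF$ to a point, $H^2_G(pt)$ acts on $H^{\eqstab}_\bullet(\Sp_c(\sigma))$ by a single scalar $\chi_\sigma$, which then automatically depends $\C$-affinely on $\sigma$. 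Granting (1) and (2), $H^2_G(pt)$ acts semisimply with the $H^{\eqstab}_\bullet(\Sp_c(\sigma))$ contained in its eigenspaces, which is precisely what Definition~\ref{def:Weight} requires.

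For (1) I would use the $\pi_1(G)$-grading. Following \cite{BFN1}, $\A = \bigoplus_{\gamma \in \pi_1(G)} \A_\gamma$ is graded by the component of $\Gr$ over which a class is supported, and reading this grading off the convolution correspondence used in the proof of Theorem~\ref{th:action} gives $\A_\gamma \cdot H^{\eqstab}_\bullet(\Sp_c(\sigma)) \subseteq H^{\eqstab}_\bullet(\Sp_c(\gamma + \sigma))$. The Gelfand--Tsetlin subalgebra $\HtG = H^\bullet_{\ttGOK}(pt)$ is the image of $H^{\ttGOK}_\bullet(\NO)$, i.e.\ of homology supported over the base point of $\Gr$ (which lies in $\Gr(0)$), so $\HtG \subseteq \A_0$; in particular the Cartan subspace $H^2_G(pt) \subseteq \HtG$ preserves each graded piece. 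Note that this step uses nothing about the stabiliser of $c$.

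Claim (2) is the heart. First I would extract from the construction in Section~\ref{se:DefAction} that the Gelfand--Tsetlin subalgebra acts on the $\sigma$-piece through a ring homomorphism $\rho_\sigma \colon \HtG \to H^\bullet_{\eqstab}(pt)$ followed by cap product, where on degree-two classes $\rho_\sigma$ is the restriction along $\eqstab \hookrightarrow \ttGOK$ corrected by the shift $\phi \mapsto \hh\langle \phi, \sigma\rangle$; here I use the pairing $H^2_G(pt) \times H_2^G(pt) \to \C$ together with $H_2^G(pt) \cong \pi_1(G)\otimes\C$, so that $\langle \phi, \sigma\rangle$ makes sense. Intuitively the shift arises because translating $\Sp_c(\sigma)$ by the cocharacter $\sigma$ into the identity component conjugates the ambient group action, and conjugating $t^\sigma$ by loop rotation rescales it, producing the factor of $\hh$. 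Granting this, it remains to see that the restriction of $H^2_G(pt)$ to $\eqstab$ lies in the span of the image of $\HTF$ (whose degree-two part is $\ff^* \oplus \C\hh$). Decompose $c = \sum_\omega c_\omega$ into its components in the $\tG$-weight spaces of $N$. Since $\eqstab$ stabilises $c$, comparing lowest-order terms in $z$ in the relation $\xi \cdot c + s\, z c' = 0$ for $(\xi,s) \in \operatorname{Lie}(\eqstab)$ shows that each weight $\omega$ with $c_\omega \neq 0$ satisfies $\omega = -\val(c_\omega)\,\hh$ in $H^2_{\eqstab}(pt)$ (after restricting to a maximal-torus-type subgroup through which $\eqstab$ factors, which is legitimate since the stabiliser of $c$ in $\GK$ is finite). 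Hence the $\ft^*$-component of every such weight restricts into the image of $\HTF$; and because $c$ has trivial stabiliser in $G$, these $\ft^*$-components span $\ft^*$, so the restriction of all of $H^2_G(pt)$ to $\eqstab$ is contained in the image of $\HTF$. Therefore, after specialising $\HTF$ to scalars and adding the shift $\hh\langle -, \sigma\rangle$, each $\phi \in H^2_G(pt)$ acts on $H^{\eqstab}_\bullet(\Sp_c(\sigma))$ by the scalar $\chi_\sigma(\phi) = \hh\langle \phi, \sigma\rangle + \ell(\phi)$, with $\ell$ depending only on the specialised flavour and loop parameters. This gives (2), and hence the theorem.

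The \textbf{main obstacle} is the first step of the previous paragraph: teasing out of the convolution construction of the $\A$-action the precise way the Gelfand--Tsetlin subalgebra acts on $M$ — namely, by cap product with $\rho_\sigma$, with $\rho_\sigma$ on degree two being exactly ``restriction plus the cocharacter shift'' and carrying no residual contribution from the geometry of $\Sp_c(\sigma)$. Concretely this comes down to identifying the first Chern classes of the tautological $\tG$-bundle over the components $\Gr(\sigma)$ and tracking them through the BFN convolution diagram; everything after that is the elementary weight bookkeeping above, and finite-dimensionality of $\Sp_c(\sigma)$ (Theorem~\ref{th:finitedim}) is not needed for the argument. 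A minor point is that the lowest-order weight relation should be read as an identity of classes only after tensoring with $\C$ (equivalently, rationally), but that is all that is used.
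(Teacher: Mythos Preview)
Your overall strategy matches the paper's: establish weak $F^!$-equivariance (your step (1), the paper's Proposition~\ref{pr:weakEquiv}) by decomposing the action diagram along $\pi_1(G)$-components, then establish strong $(\tG^!,\phi)$-equivariance via a shift formula (your step (2), the paper's Proposition~\ref{pr:strongEquiv}), and finally deduce the weight-module condition from the trivial-stabilizer hypothesis. The paper resolves your ``main obstacle'' with a concrete equivariant line-bundle computation (Lemma~\ref{le:isoLine}): on the component $\ttGOKnoloop(\sigma)\rtimes\Cx$ there is an explicit function $f(g_1,g_2)=\theta(g_1)_p\, g_2^{-p}$ (with $p=\langle\sigma,\theta\rangle$) exhibiting an isomorphism $\OO[\bar\theta,0]\cong\OO[0,\bar\theta - p\hbar]$ of left/right equivariant line bundles, which gives exactly the ``restriction plus cocharacter shift'' you anticipate for the Gelfand--Tsetlin action.

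Where you diverge is in the final step. Your weight-decomposition argument for why the restriction of $H^2_G(pt)$ to $L_c$ lands in the image of $H^2_{F\times\Cx}(pt)$ is more delicate than needed and not quite correct as stated: the lowest-order-term comparison only yields a relation $\langle\omega,\xi\rangle = -s(\val(c_\omega)+\tfrac12)$ after projecting $\xi$ to a torus, and passing from individual $\tG$-weights $\omega$ (which are generally not $W$-invariant) back to $H^2_G(pt)=(\ft^*)^W$ requires additional justification. The paper bypasses all of this with a one-line group-theoretic observation (Corollary~\ref{co:weight}): the kernel of the natural map $L_c \to \FO\rtimes\Cx$ is precisely the stabilizer of $c$ in $\GK$, so under your hypothesis this map is injective; dualizing Lie algebras, $\phi|_{\ff^*\oplus\C\hbar}\colon\ff^*\oplus\C\hbar\to H^2_{L_c}(pt)$ is surjective, which is exactly the surjectivity required by Definition~\ref{def:Weight}. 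Note also that both the paper's argument and your sketch implicitly use $L_c\subset\ttGO\rtimes\Cx$ so that $\phi$ is defined as in (\ref{eq: weight map assumpt2}); this is an extra hypothesis not visible in the introduction's statement.
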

In this generality, we cannot describe precisely how the Gelfand-Tsetlin algebra acts on these modules.  However, in Lemma \ref{le:highestweight} and Theorem \ref{thm: GT bases}, under more restrictive assumptions on $ c$, we find eigenvectors for the action of the Gelfand-Tsetlin algebra and descibe the eigenvalues.

Another important class of modules is category $ \OO $.  To specify this category, we need to choose a character $ \chi : G \rightarrow \Cx $.  This collapses the $ \pi_1(G) $-grading on $ \A $ to a $\Z $-grading and thus we may consider category $ \OO$ modules, which are those weight modules with bounded above weight spaces. The following result (Theorem \ref{th:catO}) is a fairly easy application of the definition of stability.

\begin{Theorem}
	Assume that $ c \in \NK $ is $ \chi$-stable.  Then $ H_\bullet^{L_c}(\Sp_c) $ lies in category $ \OO $.
\end{Theorem}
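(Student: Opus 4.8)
The plan is to show that $\chi$-stability of $c$ forces the set of connected components $\sigma \in \pi_1(G)$ on which $\Sp_c(\sigma)$ is nonempty to be bounded above with respect to the $\Z$-grading induced by $\chi$. Combined with Theorem~\ref{th:introweight}, which already identifies $H_\bullet^{L_c}(\Sp_c)$ as a weight module with finite-dimensional weight spaces $H_\bullet^{L_c}(\Sp_c(\sigma))$ (finite-dimensionality coming from Theorem~\ref{th:finitedim}), this is exactly the statement that the module lies in category $\OO$.

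First I would recall the definition of $\chi$-stability: $c \in \NK$ is $\chi$-stable if, roughly, for every one-parameter degeneration the limit $\lim_{t\to 0}$ along $\chi$-decreasing directions fails to stay inside $\NO$ — more precisely, that $g^{-1}c \in \NO$ together with a lower bound on $\langle \chi, \cdot\rangle$ cannot both hold once one moves too far in the $\chi$-negative direction. The key point is the valuation/order-of-vanishing estimate: if $[g] \in \Sp_c(\sigma)$, so that $g^{-1}c \in \NO$, then pairing with the character $\chi$ and using that $c$ is fixed gives an inequality of the form $\langle \chi, \sigma \rangle \ge -\val_\chi(c)$ or similar, where the right-hand side depends only on $c$, not on $\sigma$. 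Concretely, I would decompose $N$ into $T$-weight spaces, write $c = \sum c_i$ accordingly, let $d_i = \val(c_i)$, and observe that $g^{-1}c \in \NO$ constrains each component of $\sigma$ against the $d_i$; taking the $\chi$-pairing then yields a bound $\langle \chi, \sigma\rangle \le C(c)$ for all $\sigma$ with $\Sp_c(\sigma) \neq \emptyset$. This is precisely where stability (as opposed to mere semistability) is used: it guarantees that the relevant weights actually control all directions in $\pi_1(G)$, so that the bound is genuine rather than vacuous in some directions.

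Next I would assemble the pieces. The character $\chi$ collapses the $\pi_1(G)$-grading on $\A$ to a $\Z$-grading (as noted in the excerpt preceding Theorem~\ref{th:catO}), and the weight spaces $H_\bullet^{L_c}(\Sp_c(\sigma))$ are regrouped accordingly into $\Z$-graded pieces. The bound from the previous paragraph says this $\Z$-grading is bounded above. Since $L_c$ is finite (trivial stabilizer in $G$ implies $L_c$ is finite, as it sits inside $\ttGOK$ over the loop-rotation $\Cx$) and each $\Sp_c(\sigma)$ is a finite-dimensional projective variety by Theorem~\ref{th:finitedim}, each graded piece is finite-dimensional. A weight module with finite-dimensional, bounded-above weight spaces is by Definition~\ref{def:Weight} (and the definition of category $\OO$ recalled just before the statement) an object of category $\OO$. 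This completes the argument.

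The main obstacle I anticipate is making the stability estimate fully rigorous in the infinite-dimensional setting: one must be careful that $\chi$-stability as defined for the loop group action (via a character $\chi : \GK \to \cK^\times$, hence really an element measuring orders of vanishing) interacts correctly with the $T$-weight decomposition of $N$ and genuinely bounds $\langle \chi, \sigma\rangle$ uniformly over all nonempty components. In particular one needs that the stable locus is nonempty-friendly enough that the Hilbert–Mumford-type numerical criterion applies componentwise, and that no "escape to infinity" in a $\chi$-neutral direction can occur — this is exactly the content of $\chi$-stability with trivial stabilizer, but spelling out the inequality cleanly is the crux. Everything else (finite-dimensionality of weight spaces, the weight-module structure) is imported directly from the earlier theorems.
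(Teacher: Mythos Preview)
Your outline has a genuine gap in the core step. The $T$-weight decomposition of $c$ does not control $g^{-1}c$ for general $g \in \GK$: once $g$ lies outside the torus, the action of $g^{-1}$ mixes weight spaces, so the condition $g^{-1}c \in \NO$ yields no inequality relating $\val(c_i)$ to ``components of $\sigma$''. (Indeed $\sigma \in \pi_1(G) = X_*(T)/Q_*$ only pairs well with characters of $G$, not with arbitrary $T$-weights, so the phrase ``each component of $\sigma$'' has no invariant meaning.) The obstacle you flag at the end is not a technicality to be cleaned up but the whole difficulty.

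The paper's proof replaces the weight decomposition by the right $G$-equivariant object: a semi-invariant $f \in \cK[N]^{G, r\chi}$ with $f(c) \ne 0$, whose existence is exactly the definition of $\chi$-semistability. For any $g \in \GK$ one has $f(g^{-1}c) = \chi(g)^{-r} f(c)$, so $\val f(g^{-1}c) = -r\langle \chi, \sigma\rangle + \val f(c)$. Writing $f = \sum f_i \otimes p_i$ with $f_i \in \C[N]$, $p_i \in \cK$, the condition $g^{-1}c \in \NO$ forces $\val f(g^{-1}c) \ge \min_i \val p_i$, and the bound on $\langle \chi, \sigma\rangle$ follows immediately. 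Note also that you have the roles of stability and semistability reversed: semistability alone (the existence of such an $f$) gives the boundedness for category $\OO$; full stability is what was used earlier, in Theorem~\ref{th:finitedim}, to get finite-dimensionality of each $\Sp_c(\sigma)$. Finally, $L_c$ is not finite in general (it typically contains the loop-rotation $\Cx$ and flavour directions); what trivial stabilizer in $G$ buys you is that $L_c \hookrightarrow \FO \rtimes \Cx$.
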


\subsection{Fixed points and changing Lagrangians}

Consider the Hamiltonian reduction $ M = T^* N \sssslash G $; this is called the \textbf{Higgs branch} of the corresponding gauge theory.  An important conjecture of Hikita and Nakajima (see \cite{Hikita} and \cite[\S 8]{moncrystals}) relates the equivariant cohomology of this Higgs branch to the $B$-algebra of $ \A $.  Under the assumption of isolated fixed points, this implies that there is bijection between $ F $--fixed points $ (T^* N \sssslash G)^F $ in the Higgs branch and Verma modules for $ \A $.  More precisely, given a fixed point $ [c]$, we get a splitting $ F \rightarrow \tG $ whose image is the stabilizer of $ c $ in $ \tG $.  The fixed point $ [c] $  should correspond to a highest weight module for $ \A $, such that the action of the Cartan subalgebra of $ \A $ on the highest weight space is determined by the above splitting. 

Consider now a fixed point $ c \in (N \sslash G)^F $.  In this case, our general construction gives a module $ M_c := H_\bullet^{F \times \Cx}(\Sp_c) $.  In section \ref{se:Hikita} we verify that these modules lie in category $ \OO $ and have the desired highest weights.  Thus, we give a conceptual explanation for the Hikita-Nakajima conjecture.  (The reader should note that our method only covers those fixed points which lie in these $N \sslash G $, see section \ref{sec: hwtftsy}.)

Braverman-Finkelberg-Nakajima showed that the algebra $ \A $ depends only on $ M $ and not on $ N $ \cite[\S 6(viii)]{BFN1}.  If we pick a different $ G $-invariant Lagrangian subspace $ N' \subset T^*N $, then we get an isomorphism $ \A(G,N) \cong \A(G,N') $.  Thus our construction can be viewed as giving modules $ M_c $ for all $ c \in N_\cK$.  However, if $c \in N_\cK \cap N'_\cK$ happens to lie in two different Lagrangians, then the modules we obtain can be different, as can be seen in the abelian case.

\subsection{Toric case}
Suppose that $ G = T $ is a torus. In this case, the Springer fibres and the associated modules can be very easily understood.  Using our method, we construct all the simple and Verma modules in category $\mathcal O$.  Furthermore one can encode the structure of these modules using hyperplane arrangements as in \cite{BLPW1, BDGH}.  

\subsection{Quiver case}
Another important case is when the pair $ G, N $ comes from a quiver.  Fix a finite quiver $Q $ with vertex set $I$.  Fix $I$-graded vector spaces $ V, W $.  Let $ G = \prod GL(V_i)$ and let $ N $ be the usual vector space of framed representations of the quiver $ Q $ on the vector spaces $V, W $.  Thus a point $ c \in N $ corresponds to a representation $ V $ of the quiver $ Q $ along with a framing $ W \rightarrow V $. For the character of $ G $ given by the product of determinants, the point $ c $ is stable when the image of $ W $ generates $ V $ as a $ \C Q $-module.  In section \ref{se:SpringerQuiver}, we give the following description of the Springer fibre for stable points.

\begin{Theorem} \label{le:introQuiver}
	Assume that $ c \in N$ is stable.  Then
	$$
	\Sp_c = \{ V_\cO \subseteq L \subset V_\cK : L \text{ is an $ \C Q \otimes \cO $ submodule of }  V_\cK  \}
	$$
\end{Theorem}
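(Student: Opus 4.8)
The plan is to translate both sides into statements about $\cO$-lattices in $V_\cK$ and feed in the stability hypothesis only at the very end. Recall the standard dictionary for $G = \prod_{i\in I} GL(V_i)$: the affine Grassmannian $\Gr_G = \GK/\GO$ is identified with the set of $I$-graded $\cO$-lattices $L = \bigoplus_{i\in I} L_i \subseteq V_\cK = \bigoplus_{i\in I} V_{i,\cK}$ (each $L_i \subseteq V_{i,\cK}$ a lattice), via $[g]\mapsto g V_\cO$, where $V_\cO = \bigoplus_i V_{i,\cO}$; throughout, $L$ ranges over such lattices. A point $c\in N$ amounts to arrow maps $c_a\colon V_i\to V_j$ for each $a\colon i\to j$ in $Q$, together with framing maps $f_i\colon W_i\to V_i$. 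Extending each $c_a$ to a $\cK$-linear map $V_{i,\cK}\to V_{j,\cK}$ makes $V_\cK$ a module over $\C Q\otimes\cK$, with the vertex idempotents acting as the projections onto the summands $V_{i,\cK}$. Hence the phrase ``$L$ is a $\C Q\otimes\cO$-submodule of $V_\cK$'' means precisely: $L = \bigoplus_i L_i$ with each $L_i$ an $\cO$-submodule of $V_{i,\cK}$ and $c_a(L_i)\subseteq L_j$ for every arrow $a\colon i\to j$ (the graded decomposition being automatic from closure under the idempotents).

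First I would unwind the defining condition of $\Sp_c$. The $\GK$-action on $\NK$ reads $g\cdot c = \bigl((g_j c_a g_i^{-1})_a,\ (g_i f_i)_i\bigr)$, so writing $L = g V_\cO$ one has $g_j^{-1} c_a g_i(V_{i,\cO})\subseteq V_{j,\cO} \Longleftrightarrow c_a(L_i)\subseteq L_j$ and $g_i^{-1} f_i(W_i)\subseteq V_{i,\cO} \Longleftrightarrow f_i(W_i)\subseteq L_i$; moreover $f_i(W_i)\subseteq L_i$ is equivalent to $f_i(W_{i,\cO})\subseteq L_i$ since $L_i$ is an $\cO$-module, where $W_{i,\cO} = W_i\otimes\cO$. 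Therefore $[g]\in\Sp_c$ if and only if $L$ is a $\C Q\otimes\cO$-submodule of $V_\cK$ containing $f(W_\cO) := \bigoplus_i f_i(W_{i,\cO})$. Note this description of $\Sp_c$ is valid for arbitrary $c$.

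Now I would bring in stability. By definition $c$ is stable precisely when $f(W)$ generates $V$ as a $\C Q$-module, i.e.\ $\C Q\cdot f(W) = V$. If $L\subseteq V_\cK$ is any $\C Q\otimes\cO$-submodule with $f(W_\cO)\subseteq L$, then $L$ contains the $\C Q\otimes\cO$-submodule generated by $f(W_\cO)$, which is $(\C Q\cdot f(W))\otimes\cO = V\otimes\cO = V_\cO$; conversely $f(W_\cO)\subseteq V_\cO$ always. So, under the stability hypothesis, the condition ``$L$ contains $f(W_\cO)$'' in the description of $\Sp_c$ from the previous paragraph may be replaced by ``$L$ contains $V_\cO$'', which is exactly the asserted equality $\Sp_c = \{\,V_\cO\subseteq L\subset V_\cK : L\text{ is a }\C Q\otimes\cO\text{-submodule of }V_\cK\,\}$.

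I do not expect a serious obstacle: the argument is bookkeeping together with a one-line use of stability, and the step most prone to error is simply pinning down conventions before computing — the orientation of the framing maps, the precise formula for the $\GK$-action on $\NK$, and the lattice/affine-Grassmannian dictionary — so I would fix all of these at the outset. (One minor caveat worth recording: the right-hand side is to be read with $L$ an honest $\cO$-lattice, i.e.\ a point of $\Gr_G$; without that restriction, e.g.\ $L = V_\cK$ is a $\C Q\otimes\cO$-submodule containing $V_\cO$ but does not lie in $\Gr_G$.)
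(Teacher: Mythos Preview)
Your proof is correct and follows the same overall strategy as the paper: translate membership in $\Sp_c$ into lattice-theoretic conditions on $L = gV_\cO$, and then use stability to upgrade ``$L$ contains $f(W_\cO)$'' to ``$L$ contains $V_\cO$''. The only difference is in the execution of that last step. You observe directly that the $\C Q\otimes\cO$-submodule of $V_\cK$ generated by $f(W_\cO)$ is $(\C Q\cdot f(W))\otimes\cO = V_\cO$, which is cleaner. The paper instead passes to the quotient $V' := (L\cap V_\cO)/(L\cap zV_\cO) \subseteq V$, checks that $V'$ is a $\C Q$-submodule containing $f(W)$, invokes stability (in the form of Lemma~\ref{le:stabquiver}) to get $V'=V$, and then implicitly uses Nakayama to conclude $V_\cO\subseteq L$. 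Your route avoids the Nakayama step entirely; the paper's route has the minor advantage of literally producing a subspace of $V$ to which the stability criterion of Lemma~\ref{le:stabquiver} applies verbatim.
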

In particular, this shows that $ \Sp_c $ is independent of the choice of framing.

In this quiver case, the Coulomb branch is a generalized affine Grassmannian slice.  Thus, Theorem \ref{th:introModule} provides a quasi-coherent sheaf on the generalized affine Grassmannian slice whose space of sections is given by the homology of this space of $ \C Q \otimes \cO$-submodules.  

\begin{Remark}
Motivated by the theory of MV polytopes and Duistermaat-Heckman measures,
Baumann, Knutson, and the second author \cite{BKK} formulated a general conjecture relating preprojective algebra modules and MV cycles.  In particular, given any preprojective algebra module, we conjectured the existence of a quasi-coherent sheaf whose support was a union of MV cycles.  As explained in section \ref{se:GeomSlices}, Theorems \ref{th:introModule} and  \ref{le:introQuiver} establish our conjecture for the case where the preprojective algebra module comes from a quiver representation. This was one of the main motivations for the current paper.
\end{Remark}

\subsection{Quasimap spaces}
The space $ \Sp_c $ can be viewed as the space of maps from $ D $ to the stack $ [N/G] $, whose restriction to $ D^\times $ is given by $ c $.  For this reason it is interesting to relate $\Sp_c $ to spaces of based maps from $ \PP^1$ to $[N/G]$; these are usually called ``based quasimaps'' in the literature.

To explain the relation, let us begin with the special case where $ G = \prod_{i=1}^{n-1} GL_i $ and $ N = \oplus_{i=1}^{n-1} \Hom(\C^i, \C^{i+1}) $ (coming from the quiver in Figure \ref{fig:1}).  In this case, the Coulomb branch $ \mathcal M_C(G,N)$ coincides with the nilpotent cone of $ GL_n $.  Moreover, the quantized algebra $ \A $ coincides with the asymptotic enveloping algebra $ U_\hbar \mathfrak{gl}_n $ (see Theorem \ref{th:sln}).  We choose $ c \in N $ to correspond to the standard embeddings $ \C^i \hookrightarrow \C^{i+1} $.
\begin{figure}
\begin{tikzpicture}[scale=0.6]
\draw  (0,0) circle [radius=1];
\draw (4,0) circle[radius=1];
\draw (12,0) circle [radius=1];
\draw [->] (1.2,0) -- (2.8,0);
\draw [->] (5.2,0) -- (6.8,0);
\draw [->] (9.2,0) -- (10.8,0);
\draw [->] (13.2,0) -- (14.8,0);
\node at (0,0) {$1$};
\node at (4,0) {$2$};
\node at (12,0) {$n-1$};
\node at (8,0) {$\cdots$};
\draw (15,-1) rectangle (17,1);
\node at (16,0) {$n$};
\end{tikzpicture}
\caption{A linearly oriented, type $A_{n-1}$ quiver with framing at the last node} \label{fig:1}
\end{figure}
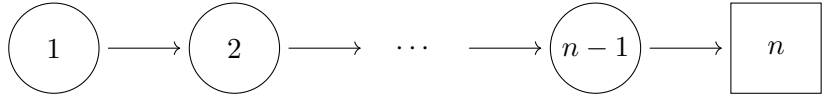
The space $ \La_n := QM_c(\PP^1,[N/G])$ of based maps from $ \PP^1, \infty $ to the stack $ [N/G], c $ is called Laumon's quasiflag space, because it parametrizes flags of locally free sheaves on $ \PP^1 $, which are standard at $ \infty $.  A celebrated result of Kuznetsov \cite{Kuz} shows that Laumon's space $ \La_n $ provides a small resolution of the Drinfeld's Zastava space  $ \Za_n$, which is the space of based maps from $ \PP^1 $ to the stack $ [SL_n \sslash U_n / T_n] $, where $U_n $ the maximal unipotent subgroup of $ GL_n $ and $ T_n $ its torus. 

In this case, the central fibre of the map $ \La_n \rightarrow \Za_n $ is isomorphic to the BFN Springer fibre $ \Sp_c$, and as each component of the Zastava space is contractible, this provides an isomorphism $ H_\bullet(\La_n) \cong H_\bullet(\Sp_c) $.

Now, let $ G, N $ be an arbitrary pair as before and let $ c \in N$. In order to generalize the computation of the equivariant $I$-function of $G/P$ in \cite{B} to a computation of equivariant $I$-functions of Lagrangians in other symplectic resolutions, \cite{BDGH} studied the space of based quasimaps $QM_c(\PP^1, [N/G])$.  In particular, they gave an action of the quantized Coulomb branch algebra on the cohomology of this space.

 In section \ref{se:various}, we generalize the Laumon/Drinfeld setup by defining Zastava-type quasimaps $ QM_c(\PP^1, [N\sslash G'/H]) $, where $  G' $ is the commutator subgroup of $ G $ and $ H = G/G' $ (a torus).  These quasimaps have a degree $ \sigma \in \pi_1(G) $ and we have decompositions $ QM(\dots) = \cup_\sigma QM^\sigma(\dots) $. Under a mild assumption on $c$, we can prove the following theorem.

\begin{Theorem} \label{th:introQM}
Let $ G, N$ be as above and assume $ c \in N $ has trivial stabilizer in $ G$.
	\begin{enumerate}
		\item There is a natural map 
		$$  QM_c(\PP^1, [N/G]) \to QM_c(\PP^1, [N\sslash G'/H]) $$
		\item For each $ \sigma$, $QM_c^\sigma(\PP^1, [N\sslash G'/H]) $ is an affine scheme contractible to the constant map $ c $.
		\item Assume that the scheme theoretic fibre of $ N \rightarrow N \sslash G' $ over the point $ [c] $ is isomorphic to $ G'$.  Then the central fibre of  
$  QM_c^\sigma(\PP^1, [N/G]) \to QM_c^\sigma(\PP^1, [N\sslash G'/H]) $ is isomorphic to $ \Sp_c(\sigma)$.
\end{enumerate}
\end{Theorem}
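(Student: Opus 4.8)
The plan is to give moduli-theoretic descriptions of all the spaces involved and then read off the three statements. First I fix conventions: a based quasimap of degree $\sigma \in \pi_1(G)$ from $\PP^1$ (based at $\infty$) to $[N/G]$ is a triple $(P,s,\tau)$ consisting of a degree-$\sigma$ $G$-bundle $P$ on $\PP^1$, a section $s$ of the associated bundle $N_P$, and a trivialization $\tau$ of $P$ over the punctured formal disk $D^\times_\infty$ at $\infty$ under which $s|_{D^\times_\infty}$ becomes $c \in \NK$; since an automorphism of $P$ that is trivial near $\infty$ is trivial, $QM^\sigma_c(\PP^1,[N/G])$ is a scheme, and likewise for $[N\sslash G'/H]$ (with $c$ replaced by its image $[c]$, and degree tracked through $\pi_1(G)\to\pi_1(H)$). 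Part (1) is then formal: the quotient map $N \to N\sslash G'$ is equivariant for $G \to H = G/G'$, so it descends to a morphism of stacks $[N/G] \to [(N\sslash G')/H]$, and pushing a quasimap forward sends $(P,s,\tau) \mapsto (P/G',\bar s,\bar\tau)$, where $\bar s$ is the image of $s$ under $N_P \to (N\sslash G')_{P/G'}$ (using that the $G$-action on $N\sslash G'$ factors through $H$); this manifestly respects the base point and the degree.

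For part (2) I would choose an $H$-equivariant closed embedding $N\sslash G' \hookrightarrow V$ into a linear representation $V$ of the torus $H$, reducing the affineness and finite-type questions to the target $[V/H]$. Trivializing the degree-$\sigma$ bundle away from $0$ and using a coordinate $t$ at $0$, a based quasimap to $[V/H]$ becomes a $V$-valued polynomial section on $\PP^1\setminus\{0\}$ whose $\sigma$-twist extends over $0$ and whose value at $\infty$ is $[c]$; the degree bounds cut this down to a finite-dimensional affine space, inside which imposing the equations of $N\sslash G' \subseteq V$ coefficientwise exhibits $QM^\sigma_c(\PP^1,[N\sslash G'/H])$ as a closed — hence affine, finite type — subscheme. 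The loop-rotation $\Cx$ rescaling $\PP^1$ and fixing $0,\infty$ scales the $t^i$-coefficient with weight $-i$; the only weight-zero coefficient is the constant one, which is pinned to $[c]$ by the base condition, so the $z \to \infty$ flow contracts the ambient affine space — and hence its $\Cx$-stable closed subscheme $QM^\sigma_c(\PP^1,[N\sslash G'/H])$ — onto the unique fixed point $\eta_\sigma$, which away from $0$ is the constant map $[c]$ with all of its degree-$\sigma$ defect concentrated at $0$; this $\eta_\sigma$ is what is meant by ``the constant map $c$''.

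For part (3) I would compute the fibre over $\eta_\sigma$. On $U := \PP^1\setminus\{0\}$ the bundle $Q_\sigma$ is trivial; using that $c$ has trivial stabilizer in $G$ together with the hypothesis $\pi^{-1}([c]) \cong G'$ (which makes $\pi^{-1}([c])$ the single closed $G'$-orbit $G'c$), one checks that $[c]$ has trivial stabilizer in $H$, so there is a \emph{unique} trivialization of $Q_\sigma|_U$ making $\bar s_\sigma|_U$ the constant $[c]$. A point of the fibre over $\eta_\sigma$ is a based quasimap $(P,s,\tau)$ to $[N/G]$ with $(P/G',\bar s)$ equal to $\eta_\sigma$. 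Over $U$, specifying $P$ together with the trivialization of $P/G'$ is the same datum (via $P' \mapsto P'\times_{G'}G$) as a $G'$-bundle $P'$ with $P = P'\times_{G'}G$; the condition $\bar s|_U = [c]$ forces $s$ to land in $P'\times_{G'}\pi^{-1}([c])$, and here the hypothesis $\pi^{-1}([c]) \cong G'$ (scheme-theoretically, as a $G'$-variety) identifies this with $P'$ itself. Hence $s|_U$ is a section of the $G'$-bundle $P'$, i.e.\ a trivialization of $P'$ — so $(P,s,\tau)|_U$ is rigid, and in the induced trivialization of $P|_U$ the section $s$ becomes literally the constant $c$. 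Restricting to $D^\times_0$, the only surviving data is a $G$-bundle $P|_{D_0}$ on the disk at $0$, trivialized over $D^\times_0$ and of class $\sigma \in \pi_1(G) = \pi_0(\Gr_G)$, such that $c$ extends to a section of $N_{P|_{D_0}}$ over $D_0$ — which is exactly the defining data of $\Sp_c(\sigma)$. Conversely, gluing such disk data to the rigid model over $U$ along $D^\times_0$ produces a point of the fibre over $\eta_\sigma$, and the two constructions are mutually inverse.

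The genuinely formal parts are (1) and the affineness half of (2); the substance is part (3), and within it the single point that over $U$ the section $s$ rigidifies the $G$-bundle $P$. This is exactly where the scheme-theoretic hypothesis $\pi^{-1}([c]) \cong G'$ is indispensable: without it $\pi^{-1}([c])$ might be non-reduced or strictly larger than the orbit $G'c$, and the argument would only produce a closed embedding of the central fibre into $\Sp_c(\sigma)$ rather than an isomorphism. The remaining work is bookkeeping of compatibilities — that all the identifications are $\Cx$-equivariant, and that the degree $\sigma$ is tracked correctly through $\pi_1(G) \to \pi_1(H)$ and the various trivializations.
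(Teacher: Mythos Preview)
Your argument is correct and tracks the paper's closely. One slip: your opening definition of a based quasimap requires $s|_{D^\times_\infty} = c$ in $\NK$, which is too strong (it would force the quasimap space in (2) to collapse to a point); the paper, and your own later phrasing ``whose value at $\infty$ is $[c]$'', only pin the value at the single point $\infty$. For (2) the paper takes a more intrinsic route than your embedding into a linear $H$-representation: it writes down the coordinate ring $A^\sigma_c$ of $QM^\sigma_c(\PP^1,[Y/T])$ directly as a quotient of $\Sym\big(\bigoplus_n A_n \otimes \C[x,y]_n^*\big)$ for $A=\C[Y]$ (Theorem~\ref{th:QMtorus}), which makes the contracting $\Cx$-grading transparent; your embedding argument is equivalent and arguably more down-to-earth. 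For (3) your rigidification of $(P,s)|_U$ via the section of the $G'$-torsor $\pi^{-1}([c])$ is exactly the paper's argument (Proposition~\ref{pr:CompareFcQM}), phrased a bit more categorically where the paper corrects a chosen trivialization by an explicit $g:U\to G'$; the paper packages this via an intermediary moduli space $\widehat\Sp_c$ and invokes Beauville--Laszlo for the disk-to-global gluing you leave implicit.
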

This theorem is illustrated by the diagram
\begin{equation*}
\begin{tikzcd}
\Sp_c(\sigma) \ar[r, hook] \ar[d] & QM_c^\sigma(\PP^1, [N/G]) \ar[d] \\
\{c\} \ar[r, hook] & QM_c^\sigma(\PP^1, [N\sslash G'/H])
\end{tikzcd}
\end{equation*}
The main step in the proof is a careful study (section \ref{se:QMtorus}) of the space of based quasimaps into a torus quotient.

\begin{Remark}
As a corollary of this theorem, we see that $ H_\bullet(\Sp_c) \cong H_\bullet(  QM_c(\PP^1, [N/G]) $.  Thus by \textit{transport de structure}, $ H_\bullet(QM_c(\PP^1, [N/G]) $ acquires an $ \A$-module structure. It is an open question whether this coincides with the other $\A$-actions on (co)homology of quasimap spaces constructed by \cite{FFFR, BFFR, NakHandsaw, BDGHK}.

For example, $ H_\bullet( \La_n) $ acquires a $ U_\hbar \mathfrak{gl}_n$-module structure by this theorem.  On the other hand, in \cite{FFFR}, Feigin-Finkelberg-Frenkel-Rybnikov constructed a module structure in (localized) equivariant cohomology.  Using results on compatibility with localization from Garner-Kivinen \cite{GK}, it should be possible to relate these two actions.  

More generally, by changing the dimension vectors in Figure \ref{fig:1}, we obtain actions of finite W-algebras on the homology of parabolic Laumon spaces, see Proposition \ref{prop: W algebras}.  These should be related to similar actions defined by Braverman-Feigin-Finkelberg-Rybnikov \cite{BFFR} and Nakajima \cite{NakHandsaw}.
\end{Remark}

\begin{Remark}
The generating function (with respect to the degree $ \sigma $) for the Euler characteristics of the quasimap space $ QM_c(\PP^1, [N/G]) $ is closely related to the $I$-function which appears in the study of the quantum cohomology of the Higgs branch $ T^*N \sssslash_\chi G$ (for the Calabi-Yau specialization).  Thus Theorems \ref{th:introweight} and \ref{th:introQM} relate the characters of modules for $ \A $ to this quantum cohomology.  This provides a conceptual explanation for a conjecture of the second author with McBreen and Proudfoot \cite{KMP}.  

\end{Remark}

\subsection{Other relations to the literature}
This paper represents one idea of how to construct Coulomb branch algebra modules using the geometry of the BFN space.  Another approach has been pursued by Webster \cite{Web}, in which he established a Koszul duality between Higgs and Coulomb branches.  The exact relationship between our approach and Webster's remains an interesting subject for future research.  Yet a third approach (perhaps closer in spirit to Webster's than to ours) has been pursued by Nakajima (unpublished).

One advantage of our approach is that it produces geometric descriptions of modules.  This is very well illustrated by the recent work of Garner and Kivinen \cite{GK} who use our results to give an action of the spherical rational Cherenik algebra on the homology of Hilbert schemes on plane curve singularities.

\subsection{Acknowledgements}
	
	We would like to thank Roman Bezrukavnikov, Alexander Braverman, Sabin Cautis, Tudor Dimofte, Michael Finkelberg, Dennis Gaitsgory, Davide Gaiotto, Niklas Garner, Oscar Kivinen, Allen Knutson, Michael McBreen, Dinakar Muthiah, Hiraku Nakajima, Andrei Negut, Nicholas Proudfoot, Sam Raskin, Ben Webster, and Zhiwei Yun for helpful conversations.

\section{The setup}
Let $ G $ be a reductive group.  Let $ T $ denote a maximal torus of $ G $.

We write  $\mathcal K = \C((z)), \mathcal O = \C[[z]] $ and we consider the groups $ \GK = G((z)), \GO = G[[z]] $.   We will study the affine Grassmannian $ \Gr_G = \GK/\GO$.

Assume now that we have an extension
$$
1 \rightarrow G \rightarrow \tG \rightarrow F \rightarrow 1
$$
where $ F $ is a torus.  Following the physics literature, we call $ F $ the \textbf{flavour torus}.

We define the group $\ttGK^\cO$ to be the preimage of $ F_\cO $ under the map $ \ttGK \rightarrow F_\cK$. We have that $ \Gr = \ttGK^\cO / \ttGO $.  

We have an action of $ \Cx $ on $ \ttGK $ and $ \ttGK^\cO $ by loop rotation and we can form the semidirect products $ \ttGK \rtimes \Cx $ and $\ttGOK $ (and similarly for $ \ttGO $).  We have that $ \Gr = \ttGOK / \ttGO \rtimes \Cx $.

\subsection{GIT and the Higgs branch}

Let $ N $ be a representation of $ \tG $. We assume that there exists a central $ \mu_0 : \Cx \rightarrow \tG $ giving the scaling action on $ N $; i.e. for all $ s \in \Cx $ and $ v \in N$, we have $ s^{\mu_0} v = s v $ and $ s^{\mu_0} $ lies in the centre of $ \tG $.  The existence of this $ \Cx $ will be useful to us later.  In most situations,  there is a natural $ \mu_0 $.  In any case, $ \widetilde G $ can always be enlarged to ensure the existence of such a $ \mu_0 $.

We will be interested in quotients of $ N $ and its cotangent bundle by the action of $ G$.

Fix a homomorphism $\chi : G \rightarrow \Cx $. For each $n \in \N $, we define the $n\chi$-semi-invariant functions on $ N $ by
$$
\C[N]^{G, n \chi} = \{ f \in \C[N] : f(g v) = \chi(g)^n f(v) \text{ for all } g \in G, v\in N\}
$$
These form the components of a graded ring.  Thus we can form the projective GIT quotient $$ N \sslash_\chi G = \operatorname{Proj} \bigoplus_{n\geq 0} \C[N]^{G, n \chi}.$$

Now consider the cotangent bundle $T^* N = N \oplus N^*$.  We have a moment map $ \Phi : T^*N \rightarrow \fg $, defined by $ \Phi(\alpha, v)(X) = \alpha(Xv)$. We can form the symplectic reduction $$ T^* N \sssslash_\chi G = \Phi^{-1}(0) \sslash_\chi G. $$ called the \textbf{Higgs branch} corresponding to the pair $G, N$. 

These quotients carry an action of the flavour torus $ F $ and we have an inclusion $ N \sslash_\chi G \rightarrow T^* N \sssslash_\chi G $.

\begin{Definition}
Let $ c \in N$.  We say that $ c $ is $ \chi$-\textbf{semistable}, if there exist $ n \in \N, f \in \C[N]^{G, n\chi} $ such that $ f(c) \ne 0 $.  We say that $ c $ is $\chi$-\textbf{polystable}, if there exist $ n \in \N, f \in \C[N]^{G, n\chi} $ such that $ f(c) \ne 0 $ and the orbit $ G c $ is closed in $ \{v \in N : f(v) \ne 0 \} $.  Finally, we say that $ c $ is $ \chi$-\textbf{stable}, if it is polystable and if $ c $ has finite stabilizer in $ G $.
\end{Definition}

Consider the action of $G$ on $N\times \C$ defined by $g\cdot (v, a) = (gv, \chi^{-1}(g) a)$. Then an element $c\in N$ is $\chi$--semistable if and only if $N \times \{0\}$ does not meet the closure of the orbit $G\cdot (c, 1)$.

The following result (which we were not able to find in the literature) will be very useful to us later.

\begin{Lemma} \label{le:StableProper}
If $ c \in N$ is $ \chi$-stable, then the map $ G \rightarrow N \times \C $ given by $ g \mapsto (gc, \chi(g)^{-1})$ is proper.
\end{Lemma}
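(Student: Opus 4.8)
The plan is to use the valuative criterion for properness applied to the map $\phi \colon G \to N \times \C$, $g \mapsto (gc, \chi(g)^{-1})$. Since $G$ is a scheme of finite type and $N \times \C$ is separated, it suffices to check that for any DVR $R$ with fraction field $\mathcal K' = \operatorname{Frac}(R)$ (one can even take $R = \C[[z]]$, $\mathcal K' = \C((z))$), every commutative square consisting of a $\mathcal K'$-point $g_0 \in G(\mathcal K')$ together with an extension of $\phi(g_0)$ to an $R$-point of $N \times \C$ can be filled by a (necessarily unique, since $\phi$ is a locally closed immersion — $c$ has finite, hence trivial after the earlier reductions, stabilizer, so $\phi$ is at least quasi-finite and radicial) $R$-point of $G$. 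Concretely: given $g_0 \in G(\mathcal K')$ such that $g_0 c \in N(R)$ and $\chi(g_0)^{-1} \in R$, I must show $g_0 \in G(R)$.

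The key step is to exploit the stability hypothesis through the auxiliary $G$-action on $N \times \C$, $g \cdot (v,a) = (gv, \chi^{-1}(g) a)$, and the reformulation recorded just before the lemma: $c$ is $\chi$-semistable iff $N \times \{0\}$ does not meet $\overline{G \cdot (c,1)}$. First I would observe that $g_0 \cdot (c, 1) = (g_0 c, \chi(g_0)^{-1})$ is by assumption an $R$-point of $N \times \C$, hence extends to a point over the closed point $z = 0$ of $\operatorname{Spec} R$; call this limit $(v_0, a_0) \in (N \times \C)(\C)$. This limit lies in $\overline{G \cdot (c,1)}$. If $a_0 = 0$ then $(v_0, 0) \in \overline{G\cdot(c,1)} \cap (N \times \{0\})$, contradicting $\chi$-semistability (which follows from $\chi$-stability). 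Hence $a_0 \neq 0$, i.e. $\chi(g_0)^{-1}$ is a unit in $R$ after all, so $\chi(g_0)^{\pm 1} \in R^\times$; and $(v_0, a_0) \in G \cdot (c, 1)$ because polystability says the orbit $G\cdot(c,1)$ is closed in the locus $\{a \neq 0\}$ (equivalently $Gc$ is closed in $\{f \neq 0\}$). So there is $h \in G(\C)$ with $h \cdot (c,1) = (v_0, a_0)$, i.e. the reduction of $g_0$ agrees with $h$ "up to the stabilizer" of $(c,1)$. Since the stabilizer of $(c,1)$ is contained in $\operatorname{Stab}_G(c)$, which is finite, and since we may assume it is trivial (the lemma is invoked under the trivial-stabilizer hypothesis elsewhere; in general finiteness plus the following argument still works), we conclude $g_0$ extends to an $R$-point of $G$.

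To finish, I would make the last extension statement precise: the morphism $G \to G \cdot (c,1)$, $g \mapsto g\cdot(c,1)$, is finite onto a locally closed subscheme $Z \subseteq N \times \C$ (it is the orbit map, injective on $\C$-points up to a finite group, and a closed immersion of $G/\!\operatorname{Stab}$ when the stabilizer is trivial), and we have just shown that the composite $\operatorname{Spec} R \xrightarrow{\phi(g_0)} N \times \C$ factors through $Z$ with its closed point landing in $Z$. Since $Z \to N \times \C$ is a locally closed immersion and $G \to Z$ is finite (hence proper), the $R$-point lifts uniquely to $G$.

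The main obstacle I anticipate is the careful bookkeeping around the stabilizer: when $\operatorname{Stab}_G(c)$ is merely finite rather than trivial, the orbit map $G \to Z$ is finite étale of degree $|\operatorname{Stab}|$ rather than an isomorphism, and one must argue that the specific lift coming from $g_0$ — rather than some Galois conjugate — is the one that extends. This is where one genuinely uses that $g_0$ is an honest $\mathcal K'$-point of $G$ (not just of $Z$): the reduction $\bar g_0$ exists in the finite set $\phi^{-1}(v_0,a_0)(\C)$, and one checks that $\bar g_0^{-1} g_0 \in G(\mathcal K')$ reduces to the identity and has image $(c,1)$ over all of $\operatorname{Spec} R$, so by finiteness of $\operatorname{Stab}_G(c)$ it lies in $G(R)$, giving $g_0 \in G(R)$. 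A clean alternative that sidesteps this entirely is to first reduce to the trivial-stabilizer case by replacing $G$ with $G/\operatorname{Stab}_G(c)$ and $\chi$ with a suitable power, which is harmless for properness; I would likely present the argument in that reduced form and remark on the general case.
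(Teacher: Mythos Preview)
Your approach via the valuative criterion is correct in outline and genuinely different from the paper's proof. The paper argues directly: from $\chi$-stability take $n, f \in \C[N]^{G,n\chi}$ with $f(c)\neq 0$ and $Gc$ closed in $U=\{f\neq 0\}$; define the closed subvariety $Y=\{(v,a): f(v)a^n = f(c)\}\subset N\times\C$ and observe that $\phi$ factors through $Y$; then $G\to Gc$ is finite, $Gc\hookrightarrow U$ is closed, $Y\to U$ is separated, and $Y\hookrightarrow N\times\C$ is closed, so standard composition/cancellation properties of proper morphisms give $G\to N\times\C$ proper. No valuative criterion, no case analysis on the stabilizer.

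Your argument reaches the same conclusion but does more work. The step ``polystability says $G\cdot(c,1)$ is closed in $\{a\neq 0\}$ (equivalently $Gc$ is closed in $\{f\neq 0\}$)'' is not a triviality and is the one place you should tighten: to justify it cleanly you essentially need the paper's $Y$, since $Y\subset N\times\Cx$, the projection $Y\to U$ is finite, and $G\cdot(c,1)$ is a union of components of the preimage of $Gc$. Once you introduce $Y$, the valuative criterion becomes unnecessary overhead. Your worry about finite versus trivial stabilizer is also a red herring here: the paper's argument handles both uniformly through the single observation that $G\to Gc$ is finite (hence proper), and your proposed reduction to $G/\operatorname{Stab}_G(c)$ is not needed.
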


\begin{proof}
Since $ c $ is $ \chi $-stable, there exist $ n \in \N, f \in \C[N]^{G,n\chi} $ such that $ f(c) \ne 0 $ and the orbit $ G c $ is closed in $ U := \{v \in N : f(v) \ne 0 \} $.  

Consider the closed subvariety of $N \times \C $ defined by
$$ 
Y = \{(v, a) : f(v) a^n = f(c) \}
$$  
Note that the above morphism $ G \rightarrow N \times \C $ factors through $ Y$.

We have a commutative diagram
\begin{equation*}
\begin{tikzcd}
\ & & Y \arrow{d}\\
G \arrow{r} \arrow{urr} & Gc \arrow{r} & U
\end{tikzcd}
\end{equation*}
The map $ G \rightarrow Gc $ is finite, hence proper.   Since $Gc$ is closed in $ U $, the map $ G \rightarrow U $ is proper.  Since $ Y \rightarrow U $ is separated, we deduce that $ G \rightarrow Y $ is proper.

As $ Y $ is a closed subvariety of $ N \times \C $, we deduce that the given morphism $ G \rightarrow N \times \C $ is proper. 
\end{proof}

\subsection{The BFN space}
Following Braverman-Finkelberg-Nakajima \cite{BFN1}, we will now define certain spaces which are used to construct the Coloumb branch.

Let $ \NO = N \otimes \C[[z]] $ and $ \NK = N \otimes \C((z))$.  These are the sections (over the disc $D $ and punctured disc $ D^\times$, respectively) of the trivial vector bundle with fibre $ N$.

  We can consider the vector bundle
$$
\sT_{G,N} := \GK \times_{\GO} \NO  \rightarrow \Gr
$$
We may identify $ \sT_{G,N} $ as a moduli space as follows.
\begin{align*}
\sT_{G,N} = &\{(P, \sigma, s) : \text{ $ P $ is a principal $G$-bundle on the disc $ D $,} \\
&\text{$\sigma$ is a trivialization of $ P $ on $ D^\times $, $s \in \Gamma(D, N_P) $}\}
\end{align*}
Here $ N_P = P \times_G N $ denotes the associated $ N $ bundle coming from the principal $ G $-bundle.

The map
$$
p: \sT_{G,N} \rightarrow \NK \quad [g,v] \mapsto gv
$$
is an analog of the Springer resolution.  Its fibres were called generalized affine Springer fibres by Goresky-Kottwitz-MacPherson \cite{GKM}. We will refer to these fibres as \textbf{BFN Springer fibres}.

Considering both of the above maps, we see that
$$ \sT_{G,N} = \{([g], w) \in \Gr \times \NK : w \in g \NO \}$$

Now, form the space
$$
\sZ_{G,N} := \sT_{G,N} \times_{\NK} \sT_{G,N} = \{([g_1], [g_2], w) \in \Gr \times \Gr \times \NK: w \in g_i \NO \}
$$
This is an analog of the Steinberg variety.  It admits a moduli description as follows.

$$
\sZ_{G,N} = \{ (P_1, \sigma_1, s_1), (P_2, \sigma_2, s_2) \in \sT_{G,N}^2 : \sigma_1(s_1) = \sigma_2(s_2) \}
$$
In other words we consider two principal $G$-bundles on $ D$, each trivialized away from $ 0 $ and equipped with a section of the associated $N $ bundle, and we demand that the two sections are equal when regarded (using the trivializations) as rational sections of the trivial $N $ bundle.

We have a diagonal action of $ \GK $ on $ \sZ_{G,N} $.  We would like to consider the $ \GK $-equivariant homology of $ \sZ_{G,N} $, but this is difficult because $ \GK $ is badly infinite-dimensional.  To avoid this problem, we follow \cite{BFN1}.

Let
$$
\sR_{G,N} = p^{-1}(\NO) = \{([g], w) \in \Gr \times \NO :  w \in g \NO \}
$$
The modular description of $ \sR_{G,N} $ is as follows.
$$
\sR_{G,N} = \{ (P, \sigma, s) \in \sT_{G,N} : \sigma(s) \in \NO \}
$$
In other words, we consider a principal $ G $ bundle on $ D$, a trivialization $ \sigma $ away from $ 0 $ and a section $ s $ of the associated $N $ bundle, such that $ \sigma(s) $ extends to a section over $ D $ over the  trivial $ N $ bundle.

Then we have $ \sZ_{G,N} = \GK \times_{\GO} \sR_{G,N} $ and thus $[ \sR_{G,N}/\GO ] = [ \sZ_{G,N} / \GK]$.  This stack is the moduli stack of pairs $ (P, s) $ where $ P $ is a principal $G$-bundles on the raviolo curve $ D \cup_{D^\times} D $ and $ s $ is a section of the associated $N$-bundle.

We write simply $ \sT, \sR, \sZ $, when $ G $ and $ N $ are fixed.

Following \cite[page 6]{BFN1}, we define an action of the group $ \ttGK \rtimes \Cx $ on $ \sT_{G,N}, \sZ_{G,N}, \NK$, where $ \Cx $ acts by a combination of loop rotation and scaling of $ N $ with weight $ 1/2 $.  (As in \cite{BFN1}, this means that we are really using the double cover of this $ \Cx $.) Similarly, the group $ \ttGO  \rtimes \Cx $ acts on $ \sR_{G,N}$.

\subsection{The (quantized) Coulomb branch} \label{se:defA}

Following Braverman-Finkelberg-Nakajima \cite{BFN1}, we can form the convolution algebra
$$ \A :=   H_\bullet^{\ttGO \rtimes \Cx}(\sR_{G,N}) $$
The algebra $ \A $ is called the \textbf{flavoured quantized Coulomb branch algebra}.

In the above definition of $ \A$, Braverman, Finkelberg, and Nakajima use a renormalized dualizing sheaf which is denoted $ \omega_{\sR_{G,N}}[-2 \dim \NO] $ and then $ \A $ is defined to be $ \A = H^{-\bullet}_{\ttGO \rtimes \Cx}(\sR_{G,N},  \omega_{\sR_{G,N}}[-2 \dim \NO]) $.  The actual definition of this homology involves finite-dimensional approximations to $ \sR_{G,N} $ and to the group $ \ttGO $; we refer to section 2(ii) of \cite{BFN1} for the precise definition.  Intuitively, we can think that classes in $ \A $ are represented by cycles in $ \sR_{G,N} $ which have finite-dimensional image under the map $ \sR_{G,N} \rightarrow \Gr $ and are finite codimensional along the fibres.

The (nonquantized) \textbf{Coulomb branch algebra} $ \A_0 $ is defined as $ \A_0 = H_\bullet^{\GO}(\sR_{G,N}) $.  It is a commutative algebra and $ \operatorname{Spec} \A_0 $ is called the \textbf{Coulomb branch}.

\subsection{Components of the affine Grassmannian} \label{se:ses}
The group $ \GK $ and the affine Grassmannian $ \Gr $ are disconnected with connected components labelled by $ \pi_1(G) $.  Note that $ \pi_1(G) = X_*(T)/ Q_*(G) $, where $X_*(T) $ denotes the coweight lattice of $ T $ and $ Q_*(G) $ denotes the coroot lattice.  Let $ \pi = \pi_1(G) $.


We get decompositions $ \GK = \sqcup_{\sigma \in \pi}  \GK(\sigma) $ and $ \Gr = \sqcup_{\sigma \in \pi} \Gr(\sigma)$.

For any dominant coweight $ \lambda $, we may also consider $ \Gr^\lambda = \GO z^\lambda $.  These $ \GO $ orbits partition $ \Gr $ and in particular we have  that $ \Gr(\sigma) = \cup \Gr^\lambda $ where the union is taken over those dominant $ \lambda \in X_*(T) $ such that $ [\lambda] = \sigma$.

\subsection{The Springer fibre and the orbital variety}
Let $ c \in \NK $.  Let $ \Lco \subset \GK $ be the stabilizer of $ c $.

\begin{Definition} \label{def:Springer}
We would like to consider the BFN Springer fibre over $ c$.  This will come in two flavours.

First, we can take the actual \textbf{Springer fibre} at c,
$$
\Sp_c := p^{-1}(c) = \{ [g] \in \Gr :  c \in g\NO \}.
$$
From the action of $ \GK$ on $ \Gr $, we see that $\Lco $ acts on $ \Sp_c $.   We define $ \Sp_c(\sigma) = \Sp_c \cap \Gr(\sigma) $ and we see that $ \Sp_c $ is the disjoint union of these $ \Sp_c(\sigma) $.  Note however that the action of $ \Lco $ may not preserve these pieces.

Second, we can consider an \textbf{orbital variety} analog. Namely, let
$$
\Vco = \GK c \cap \NO
$$
The variety $ \Vco $ has an action of $ \GO$.

Finally, we consider a third space which maps to both of these
	$$
\Xco = \{g \in \GK : gc \in \NO \}
$$
\end{Definition}

We will give $\Vco $ a scheme structure as a closed subscheme of the orbit space $ \GK c $.  It will not typically be a locally closed subscheme of $ \NO $, though there will be a morphism $ \Vco \rightarrow \NO $.

\begin{Lemma} \label{le:isoStacks}
We have an isomorphisms of stacks
 $  [\Lco \bs \Sp_c] \cong [ \GO \backslash \Xco / \Lco ] \cong [\GO \bs \Vco]. $
\end{Lemma}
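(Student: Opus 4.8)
The plan is to exhibit all three stacks as presentations of one intermediate object built from $ \Xco $. Note that $ \Xco $ carries commuting actions of $ \GO $ on the left and of $ \Lco $ on the right, both by multiplication inside $ \GK $: the left action is well defined because $ \NO $ is $ \GO $-stable, and the right action because $ \Lco $ fixes $ c $. I would write down two natural maps out of $ \Xco $. The first is $ a\colon \Xco \to \Sp_c $, $ g \mapsto [g^{-1}] $; since $ gc \in \NO $ says exactly that $ (g^{-1})^{-1} c \in \NO $, this lands in $ \Sp_c $, it is constant on left $ \GO $-orbits, and it intertwines the right $ \Lco $-action on $ \Xco $ with the left $ \Lco $-action on $ \Sp_c $ via $ h \mapsto h^{-1} $. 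The second is the orbit map $ b\colon \Xco \to \Vco $, $ g \mapsto gc $, which is constant on right $ \Lco $-orbits and equivariant for the $ \GO $-action on $ \Vco \subseteq \NO $. Both descend to morphisms of quotient stacks $ [\GO \bs \Xco / \Lco] \to [\Lco \bs \Sp_c] $ and $ [\GO \bs \Xco / \Lco] \to [\GO \bs \Vco] $, and the content of the lemma is that each of these is an isomorphism.

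For the first map, I would note that $ a $ is, up to the inversion automorphism $ g \mapsto g^{-1} $ of $ \GK $, the restriction to $ \Xco $ of the projection $ \pi\colon \GK \to \Gr = \GK/\GO $: indeed $ (\Xco)^{-1} = \{ h \in \GK : h^{-1} c \in \NO \} $ is precisely $ \pi^{-1}(\Sp_c) $. Since $ \pi $ is a $ \GO $-torsor, its restriction $ \pi^{-1}(\Sp_c) \to \Sp_c $ is a $ \GO $-torsor; transporting through inversion turns this into the statement that $ a $ realizes $ \Xco $ as a left $ \GO $-torsor over $ \Sp_c $, compatibly with the residual $ \Lco $-actions. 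Hence $ [\GO \bs \Xco] \cong \Sp_c $ as $ \Lco $-spaces and therefore $ [\GO \bs \Xco / \Lco] \cong [\Lco \bs \Sp_c] $.

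For the second map, recall that $ \Vco $ was defined as the closed subscheme of the orbit space $ \GK c \cong \GK/\Lco $ sitting over $ \NO \subseteq \NK $. Since $ \Xco $ is by definition the preimage of $ \NO $ under $ g \mapsto gc\colon \GK \to \NK $, and this morphism factors through $ \GK/\Lco $, the square expressing $ \Xco $ as the fibre product $ \GK \times_{\GK/\Lco} \Vco $ is Cartesian; thus $ b $ is the pullback of the $ \Lco $-torsor $ \GK \to \GK/\Lco $ and exhibits $ \Xco $ as an $ \Lco $-torsor over $ \Vco $. This is compatible with the $ \GO $-actions, which exist simultaneously on $ \GK $, on $ \GK/\Lco $ and on $ \NO $. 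Hence $ [\GO \bs \Xco / \Lco] \cong [\GO \bs \Vco] $, and combining the two gives the chain of isomorphisms.

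The bookkeeping with quotient stacks is formal once the maps are set up; the step I expect to require care is the matching of scheme structures. Concretely, one must check that $ \Xco $, defined as the preimage of $ \NO $ under $ g \mapsto gc $, coincides scheme-theoretically with the preimage of $ \Sp_c $ under $ \GK \xrightarrow{g \mapsto g^{-1}} \GK \xrightarrow{\pi} \Gr $, where $ \Sp_c = p^{-1}(c) $ carries its possibly non-reduced fibre structure; and that $ \Vco $, which was constructed inside the orbit space $ \GK c $ rather than inside $ \NO $, is indeed recovered as the honest quotient $ \Xco / \Lco $. Both reduce to unwinding the fibre-product and modular descriptions of $ \sT $ and $ \sR $ and of the spaces in Definition \ref{def:Springer}, together with the standard facts that $ \GK \to \Gr $ and $ \GK \to \GK/\Lco $ are fppf torsors (note that $ \Lco $ is a closed subgroup of $ \GK $, being the stabiliser of a point). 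Since everything in sight is an ind-scheme, one should in principle carry this out on the finite-dimensional approximations used to define $ \A $, though the torsor structures pass to these approximations without trouble; one also simply needs to keep the left/right conventions and the inversion straight so that the $ \Lco $- and $ \GO $-actions on the three sides genuinely correspond.
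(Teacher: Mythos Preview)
Your proposal is correct and follows essentially the same approach as the paper: the paper's proof simply names the two maps $q_1\colon \Xco \to \Vco$, $g \mapsto gc$ and $q_2\colon \Xco \to \Sp_c$, $g \mapsto [g^{-1}]$, asserts they are $\Lco$- and $\GO$-principal bundles respectively, and concludes. Your write-up supplies the justifications the paper omits (pulling back the torsors $\GK \to \Gr$ and $\GK \to \GK/\Lco$) and flags the scheme-structure and ind-scheme bookkeeping, but the underlying argument is the same.
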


\begin{proof}
	Note that we have maps $ q_1 : \Xco \rightarrow \Vco $ (given by $ g \mapsto g c$) and $ q_2 : \Xco \rightarrow \Sp_c $ (given by $ g \mapsto [g^{-1}] $) and these maps are $ \Lco $ and $ \GO $ principal bundles, respectively.  So the result follows.
\end{proof}

Now, let us assume that $ \Lco $ is finite-type and each component $ \Sp_c(\sigma) $ is of finite type (in Theorem \ref{th:finitedim} we will show that this holds if $ c $ is $\chi$-stable for some character $ \chi$).  In this case, the dualizing sheaf $ \omega_{\Sp_c} $ exists and we can use the above isomorphism of stacks to transfer it to a $ \GO$-equivariant sheaf on $ \Vco $, which we can denote $ \omega_{\Vco}[-2\dim \GO + 2 \dim \Lco] $.  By definition we have that
$$
H^\bullet_{\GO}(\Vco, \omega_{\Vco}[-2\dim \GO + 2 \dim \Lco]) \cong H^\bullet_{\Lco}(\Sp_c, \omega_{\Sp_c})
$$

\section{Finite-dimensionality}
The goal of this section is to provide a criterion on $ c \in \NK $ which will ensure that each component $ \Sp_c(\sigma) $ is finite-dimensional. 

\subsection{Semisimple groups}
We begin with the case that $ G $ is semisimple.  In this case, $\Gr_G $ has only finitely connected components, though for the moment we will just focus on the identity component $ \Gr(0) $ and the corresponding component $ \Sp_c(0)$.

\begin{Theorem} \label{th:inGrlambda}  Assume that $ G $ is semisimple.
	Let $ c \in \NK $.  Assume that the $ \GK $ orbit of $ c $ is closed in the Zariski topology on $ \NK $ (viewed as a variety over $ \cK $).  Assume also that $ \GK $ acts freely on $ c$.  Then there exists a dominant coweight $ \lambda $ such that $ \Sp_c(0) \subset \overline{\Gr^\lambda_G}$.  In particular $ \Sp_c(0) $ is a finite-dimensional projective variety.
\end{Theorem}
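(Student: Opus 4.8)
\emph{Proof idea.} For any dominant coweight $\lambda$, the closure $\overline{\Gr^\lambda_G}$ is a finite-dimensional projective variety, and it lies inside $\Gr(0)$ as soon as $\lambda \in Q_*(G)$. Moreover $\Sp_c$ meets each $\overline{\Gr^\nu_G}$ in a Zariski-closed subvariety: on the finite-dimensional $\overline{\Gr^\nu_G}$ the lattices $g\NO$ all contain a common $z^{K}\NO$, so the condition $c \in g\NO$ becomes a closed condition. Hence the theorem reduces to producing a dominant $\lambda \in Q_*(G)$ with $\Sp_c(0) \subseteq \overline{\Gr^\lambda_G}$, and the ``in particular'' is then automatic.

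The plan is to show that $\Xco = \{ g \in \GK : gc \in \NO \}$ is \textbf{bounded}, i.e.\ contained in $\overline{\GO z^\lambda \GO}$ for some dominant $\lambda$. Granting this, $\Xco^{-1}$ is bounded too, and since $c \in g\NO \iff g^{-1}\in \Xco$, the set $\Sp_c = \{[g] : g^{-1}\in\Xco\}$ is the image of $\Xco^{-1}$ under the quotient $\GK \to \Gr$; thus $\Sp_c$ is contained in a single $\overline{\Gr^{\lambda'}_G}$, and intersecting with $\Gr(0)$ gives the desired $\lambda \in Q_*(G)$.

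To prove boundedness, I would use the two hypotheses as follows. Since $\GK$ acts on $c$ with trivial (scheme-theoretic) stabilizer and the orbit $\GK\cdot c$ is Zariski-closed in $\NK$, viewed as varieties over $\cK$, and we are in characteristic zero, the orbit map $\phi\colon G_\cK \to N_\cK$, $g\mapsto gc$, is an \emph{isomorphism onto the closed subvariety} $G_\cK\cdot c$: it is flat (generic flatness together with homogeneity of the orbit under $G_\cK$), and a monomorphism because each fibre is a single reduced point (triviality of the stabilizer, preserved under base change), hence a flat monomorphism of finite presentation, i.e.\ an open immersion; being surjective onto the orbit it is an isomorphism onto it, and the orbit is closed in $N_\cK$ by hypothesis. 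Now fix a faithful representation $G \hookrightarrow GL(W)$; a subset of $\GK$ is bounded if and only if, after writing elements as matrices in a basis of $W$, all entries $g_{ij}$ and all $(\det g)^{-1}$ have uniformly bounded-below $z$-adic valuation. Via $\phi$ we identify $\Xco$ with $G_\cK\cdot c \cap \NO$. The functions $g_{ij}$ and $(\det g)^{-1}$ restrict to regular functions on the affine $\cK$-variety $G_\cK\cdot c$, hence are polynomials with $\cK$-coefficients in the linear coordinates on $N_\cK$; evaluating such a polynomial at a point of $\NO$ (whose coordinates lie in $\cO$) lands in $z^{-K}\cO$ where $-K$ bounds the valuations of its finitely many coefficients. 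Taking $K$ to work for all of these finitely many polynomials simultaneously shows $\Xco$ is bounded, completing the proof modulo the reductions above.

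The step I expect to require the most care is the claim that $\phi$ is an isomorphism onto the closed orbit over the non-algebraically-closed field $\cK$: one must confirm that orbit formation, the openness of an orbit in its closure, and generic flatness behave here exactly as over an algebraically closed field (all standard in characteristic zero, e.g.\ by base change to $\overline{\cK}$ and faithfully flat descent), and fix the convention that ``$\GK$ acts freely on $c$'' means the scheme-theoretic stabilizer $\Lco$ is trivial (in characteristic zero this is equivalent to having no nontrivial stabilizing points). The remaining bookkeeping — translating a valuation bound on matrix entries and determinant into containment of $\Sp_c(0)$ in a single $\overline{\Gr^\lambda_G}$ via the Cartan decomposition — is routine.
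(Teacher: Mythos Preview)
Your proof is correct and follows essentially the same route as the paper: both use the closed-orbit and free-action hypotheses to identify $G_\cK$ with its orbit in $N_\cK$, then observe that regular functions on $G$ extend to $\cK$-polynomial functions on $N$ whose valuations on points of $\NO$ are bounded below by the minimum valuation of their finitely many coefficients. The paper packages this as a separate lemma (for any $a \in \C[G]$ there is a uniform lower bound on $\val a(g)$ over $\Sp_c$) and then invokes the characterization of $\overline{\Gr^\lambda}$ via valuations of matrix coefficients in $V(\mu)^* \otimes V(\mu)$, whereas you use matrix entries in a faithful representation and a detour through $\Xco$ followed by inversion --- these are cosmetic differences, not substantive ones.
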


\begin{Lemma} \label{le:valge}
	Let $  G, c$ be as in statement of the above theorem.  For any $a \in \C[G]$, there exists $ n \in \Z $ such that if $ [g] \in \Sp_c $, then $\val a(g) \ge n$.
\end{Lemma}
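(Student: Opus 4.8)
The plan is to exploit the GIT-stability hypotheses (closed free orbit) via Lemma \ref{le:StableProper}, or rather its spirit: the stabilizer conditions force the orbit map $\GK \to \NK$, $g \mapsto gc$, to be ``proper enough'' that valuations of matrix coefficients on $\GK$ are controlled by the single constraint $gc \in \NO$. Concretely, first I would fix a faithful representation and realize $G \hookrightarrow GL_m$, so that $\C[G]$ is generated by the matrix entries $x_{ij}$ and $\det^{-1}$; since $\val$ is a valuation (multiplicative, and $\val(\det g)$ is bounded once the $x_{ij}(g)$ are), it suffices to prove the bound for a finite generating set of $\C[G]$, and in fact it is enough to bound $\val a(g)$ from below for $a$ ranging over a set of generators of $\C[G]$ as an algebra. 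So the statement reduces to: there is $n$ with $\val x_{ij}(g) \ge n$ for all $[g] \in \Sp_c$ and all $i,j$.

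Next I would use the closed-orbit hypothesis together with freeness. Because $G$ acts freely with closed orbit on $c$ (over $\cK$), by a relative version of Lemma \ref{le:StableProper} — applied with the affine GIT picture over the base $\Spec \cK$ — the orbit map $a \colon G_\cK \to \NK$ is a closed immersion, or at least proper; hence $\C[\GK c]$ surjects onto (a localization of) $\C[\NK]$-functions, and each coordinate function $x_{ij}$ on $\GK$, pulled back, is a regular function on the closed subvariety $\GK c \subseteq \NK$. Restricting further to $\GO c = \GK c \cap \NO$ (this is where $[g]\in\Sp_c$ lands, i.e. $g^{-1}c \in \NO$, so after inverting we are looking at the orbit through $c$ inside $\NO$), the pullback of $x_{ij}$ becomes a function that is bounded below in valuation simply because it is a polynomial in the coordinates of $\NO$, all of which lie in $\cO$ and hence have nonnegative valuation. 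Tracking the (fixed, finite) ``denominators'' introduced in expressing $x_{ij}$ in terms of the $\NO$-coordinates — these come from the fixed element $c$ and from clearing denominators in the closed-immersion equations — yields a uniform lower bound $n$ depending only on $G$, $c$, and $a$, not on $g$.

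The main obstacle I anticipate is making the ``relative Lemma \ref{le:StableProper}'' step precise over the non-algebraically-closed, non-Noetherian-in-the-naive-sense base $\cK = \C((z))$: one must check that closedness of the $\GK$-orbit in $\NK$ in the Zariski topology (as stated in Theorem \ref{th:inGrlambda}) genuinely gives that $\C[\NK] \to \C[\GK c]$ is surjective onto a finitely generated algebra over which the $x_{ij}$ are integral, and that freeness promotes this to the orbit map being a closed immersion rather than merely finite. An alternative route that sidesteps some of this: argue by contradiction — if no such $n$ existed, one could find a sequence $[g_k] \in \Sp_c$ with $\val a(g_k) \to -\infty$; rescale by $z^{N_k}$ with $N_k \to \infty$ and take a limit in the appropriate ind-scheme to produce a point of the affine Grassmannian lying in the boundary, which would exhibit a degeneration of the orbit $\GK c$ contradicting its closedness (or a nontrivial limiting stabilizer contradicting freeness). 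Either way, the crux is converting the two geometric hypotheses of Theorem \ref{th:inGrlambda} into a bound on a single valuation, and I would present the closed-immersion argument as the main line with the limiting argument as a remark.
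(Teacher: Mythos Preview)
Your core idea---use closedness of the orbit to get a surjection $\cK[\NK] \twoheadrightarrow \cK[\GK c]$, use freeness to identify $\cK[\GK c] \cong \cK[\GK]$, lift $a$ to some $a' = \sum a_i \otimes p_i \in \C[N] \otimes_\C \cK$, and then bound $\val a(g) = \val a'(g^{-1}c) \ge \min_i \val p_i$ because $g^{-1}c \in \NO$---is exactly the paper's proof. The paper carries this out directly in a few lines; your reduction to matrix entries, the invocation of properness or Lemma~\ref{le:StableProper}, and the worry about the base $\cK$ (it is a field, so this is ordinary affine algebraic geometry over $\cK$ and the surjection is just the restriction map to a closed subvariety) are all unnecessary detours.
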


\begin{proof}
	Consider the map of affine $ \cK$-varieties $ \GK \rightarrow \NK $ given by $ g \mapsto g c $, leading to the $ \cK$-algebra map $ \cK[\NK] \rightarrow \cK[\GK c] $.  Since the orbit is closed, this map is surjective. 
	
	Thus, we can find $ a' \in \cK[\NK] $ such that $ a'(g^{-1}c) = a(g) $ for all $ g \in \GK $.  Since $ \cK[\NK] = \C[N] \otimes_\C \cK $, we can write $ a' = \sum_{i=1}^k a_i \otimes p_i $, where $ p_i \in \cK $.  Let $ n = \min_i \val p_i $.
	
	Suppose that $ [g] \in \Sp_c $.  We have that $ a'(g^{-1}c) = \sum a_i(g^{-1} c) p_i $.  Since $ [g] \in \Sp_c $, we have that $ g^{-1}c \in \NO $ and so $ \val a_i(g^{-1} c) \ge 0 $.  Thus,
	$$
	\val a(g) = \val a'(g^{-1}c) = \sum \val a_i(g^{-1} c) p_i \ge \min_i p_i \ge n
	$$
\end{proof}

\begin{proof}[Proof of Theorem \ref{th:inGrlambda}]
	Let $ \lambda $ be a dominant coweight and let $ [g] \in \Gr(0)$.  We know that $ [g] \in \overline{\Gr^\lambda}$ if and only if $ \val a(g) \ge \langle w_0 \lambda, \mu \rangle $ for all dominant weights $ \mu $ of $ G$ and for all $ a \in V(\mu)^* \otimes V(\mu) $ (see for example \cite{KMW}).  In fact, it suffices to check this for a finite set $ S $ of $ \mu $ which spans the weight lattice.  Fix such a set $ S $. For each $ \mu \in S $, pick a basis for $ V(\mu)^* \otimes V(\mu) $  and apply Lemma \ref{le:valge} to the elements in this basis.  Thus we conclude that there exists $ n_\mu $ such that
	$$
	[g] \in \Sp_c \Rightarrow \val(a(g)) \ge n_\mu \text{ for all $  a \in V(\mu)^* \otimes V(\mu) $}
	$$
	Choose $ \lambda $ such that $  \langle w_0 \lambda, \mu \rangle  \le n_\mu $ for all $ \mu \in S $.  Thus, we see that if $ [g] \in \Sp_c(0) $, then $ [g] \in \overline{\Gr^\lambda}$ as desired.
\end{proof}

\subsection{Reduction to semisimple case}

We start with the following simple observation.  Let $ G $ be a reductive group and let $ G' $ be its commutator subgroup (which is semisimple).  The identity component of the affine Grassmannian of $ G' $ is isomorphic to every connected component of the affine Grassmannian of $  G$.  We will need to consider Springer fibres for both $ G$ and $ G'$, so we write $ \Sp_{c,G}$ and $ \Sp_{c,G'} $ to distinguish them.  The proof of the following lemma is immediate.

\begin{Lemma} \label{le:GtoG'}
	Let $ c \in \NK $.  Fix a coweight $ \tau $ and let $ \sigma = [\tau] \in \pi$.  
	\begin{enumerate}
		\item
		The map $ g \mapsto z^\tau g $ gives an isomorphism
		$$\Gr_{G'}(0) \cong \Gr_G(\sigma) $$
		\item
		This restricts to an isomorphism
		$$
		\Sp_{z^{-\tau} c,G'}(0) \cong \Sp_{c,G}(\sigma)
		$$
	\end{enumerate}
\end{Lemma}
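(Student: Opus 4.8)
The plan is to check that left translation by $z^\tau$ already realizes both isomorphisms, reducing everything to the fact recalled just above the lemma, namely that the inclusion $G' \hookrightarrow G$ induces an isomorphism $\Gr_{G'}(0) \xrightarrow{\sim} \Gr_G(0)$. For part (1): left multiplication by $z^\tau$ is an automorphism of the ind-scheme $\GK$, hence of $\Gr_G = \GK/\GO$. Since $\pi_0(\GK) = \pi$ is a group and $z^\tau$ represents the class $\sigma = [\tau]$, this automorphism sends $\Gr_G(\nu)$ isomorphically onto $\Gr_G(\nu+\sigma)$ for every $\nu$; in particular it restricts to an isomorphism $\Gr_G(0) \xrightarrow{\sim} \Gr_G(\sigma)$. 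Precomposing with the recalled isomorphism $\Gr_{G'}(0) \xrightarrow{\sim} \Gr_G(0)$ — which comes from the closed embedding $\Gr_{G'}\hookrightarrow\Gr_G$ induced by $G'\hookrightarrow G$ (valid because $G'((z))\cap \GO = G'[[z]]$) together with the surjectivity $(G'((z)))_0\cdot \GO = (\GK)_0$ — we get an isomorphism $\Gr_{G'}(0) \xrightarrow{\sim} \Gr_G(\sigma)$. Unwinding, it sends the class of $g\in G'((z))$ to the class of $z^\tau g$ in $\Gr_G$: this is visibly well defined, since $g\sim gk$ with $k\in G'[[z]]\subseteq\GO$ gives $z^\tau g\sim z^\tau gk$, so it is exactly the map $g\mapsto z^\tau g$ of the statement.

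For part (2), I would simply restrict the isomorphism of part (1). For $g\in G'((z))$ one has $[z^\tau g]\in\Sp_{c,G}$ if and only if $(z^\tau g)^{-1}c = g^{-1}(z^{-\tau}c)\in\NO$, which is precisely the condition that $[g]\in\Sp_{z^{-\tau}c,\,G'}$. Hence the isomorphism of part (1) identifies $\Sp_{z^{-\tau}c,\,G'}(0)$ with $\Sp_{c,G}(\sigma)$; since on both sides these subschemes are cut out of the ambient component by the same incidence condition, and this condition is transported along an isomorphism of ind-schemes, the identification is an isomorphism of schemes and not merely a bijection on points.

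There is essentially no obstacle here: the one input that is not purely formal is the recalled identification $\Gr_{G'}(0)\cong\Gr_G(0)$, equivalently the statement that every point of $\Gr_G(0)$ has a representative in $G'((z))$. If one wishes to prove this rather than cite it, the ingredients are that $\pi_1(G')\to\pi_1(G)$ is injective (so $G'((z))\cap(\GK)_0 = (G'((z)))_0$), that $\GO\to (G/G')[[z]]$ is surjective by smoothness over the Henselian ring $\cO$, and that $H^1(\cK,G')$ is trivial since $\cK$ is a $C_1$ field. Granting that, parts (1) and (2) are the direct manipulations above, which is why the lemma is stated without proof.
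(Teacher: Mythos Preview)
Your proposal is correct and is exactly the argument the paper has in mind: the paper states that ``the proof of the following lemma is immediate'' and gives no proof, having just recalled that $\Gr_{G'}(0)$ is isomorphic to every component of $\Gr_G$. Your write-up simply unpacks this, checking that translation by $z^\tau$ composed with the inclusion $\Gr_{G'}(0)\hookrightarrow\Gr_G(0)$ does the job and that the Springer-fibre conditions match under $(z^\tau g)^{-1}c = g^{-1}(z^{-\tau}c)$; the extra paragraph on why $\Gr_{G'}(0)\cong\Gr_G(0)$ is a helpful elaboration beyond what the paper spells out.
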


Now we will combine this Lemma with our analysis of the semisimple case.

\begin{Theorem} \label{th:finitedim}
	Let $ c \in \NK $.  Assume that $ c $ is $\chi$-stable for some homomorphism $ \chi : \GK \rightarrow \cK^\times $.  Assume also that $ \GK$ acts freely on $ c $.  Then for each $ \sigma \in \pi $, $ \Sp_c(\sigma) $ is a finite-dimensional projective variety.
\end{Theorem}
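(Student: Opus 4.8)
The plan is to reduce Theorem \ref{th:finitedim} to the semisimple case already handled in Theorem \ref{th:inGrlambda}, using the translation isomorphism of Lemma \ref{le:GtoG'}. First I would fix $\sigma \in \pi$ and choose a coweight $\tau$ with $[\tau] = \sigma$. By Lemma \ref{le:GtoG'}(2), we have an isomorphism $\Sp_{z^{-\tau}c, G'}(0) \cong \Sp_{c,G}(\sigma)$, so it suffices to show that the left-hand side is a finite-dimensional projective variety. Since $G'$ is semisimple, this is exactly the conclusion of Theorem \ref{th:inGrlambda} applied to the group $G'$ and the point $c' := z^{-\tau}c \in \NK$, \emph{provided} we can verify the hypotheses of that theorem: namely that the $G'_\cK$-orbit of $c'$ is Zariski-closed in $\NK$ (over $\cK$), and that $G'_\cK$ acts freely on $c'$.

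The freeness is the easy half: $G'_\cK \subseteq \GK$ and $\GK$ acts freely on $c$ by hypothesis, hence on $z^{-\tau}c$ (translation by $z^{-\tau}$ is a bijection conjugating the two stabilizers up to nothing, since it's the same ambient $\GK$-action); therefore $G'_\cK$ acts freely on $c'$. The main obstacle is the closedness of the $G'_\cK$-orbit. Here is where $\chi$-stability enters. The point is to work over $\cK$ and apply Lemma \ref{le:StableProper} base-changed to $\cK$: since $c$ is $\chi$-stable for a character $\chi$ of $\GK$ (valued in $\cK^\times$), the morphism $\GK \to \NK \times \cK$ given by $g \mapsto (gc, \chi(g)^{-1})$ is proper (this is Lemma \ref{le:StableProper}, which is stated for a reductive group over a field and applies verbatim with base field $\cK$ — one should note that $G_\cK = G \times_\C \Spec\cK$ is reductive over $\cK$ and $\NK = N \otimes \cK$ is its representation). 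Properness of this map forces the orbit map $\GK \to \NK$ itself to have closed image: the image of a proper morphism is closed, and projecting $Gc \times (\text{graph of }\chi^{-1})$ appropriately, one extracts that $\GK c$ is closed in $\NK$. One then passes from the $\GK$-orbit to the $G'_\cK$-orbit: since $\GK = T_\cK \cdot G'_\cK$ up to isogeny and the translation by $z^{-\tau}$ absorbs the relevant torus part, $\Gr_{G'}(0) \hookrightarrow \Gr_G(\sigma)$ is a connected component, and closedness of $\GK c$ together with the component decomposition gives closedness of $G'_\cK c'$ inside $\NK$. I would spell this out by noting that $\overline{G'_\cK c'}$ is a closed $G'_\cK$-stable subset of the closed set $z^{-\tau}\overline{\GK c} = z^{-\tau}\GK c$, and that $G'_\cK c' = (z^{-\tau}\GK c) \cap (\text{the appropriate }\GO\text{-level set})$ is already closed.

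With the hypotheses of Theorem \ref{th:inGrlambda} verified for $(G', c')$, that theorem yields a dominant coweight $\lambda$ (of $G'$) with $\Sp_{c',G'}(0) \subseteq \overline{\Gr^\lambda_{G'}}$, which is a finite-dimensional projective variety; transporting back along Lemma \ref{le:GtoG'}(1), $\Sp_{c,G}(\sigma)$ lands inside $z^\tau \overline{\Gr^\lambda_{G'}} \subseteq \overline{\Gr_G}$, a finite-dimensional projective subvariety of $\Gr_G(\sigma)$. Since $\Sp_c(\sigma)$ is a closed subscheme of $\Gr_G(\sigma)$ (it is cut out by the condition $c \in g\NO$, which is closed), it is itself a finite-dimensional projective variety, completing the proof. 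The one technical point I would be careful about is the interaction between "stable over $\C$'' in the Definition preceding Lemma \ref{le:StableProper} and "stable for a character $\chi : \GK \to \cK^\times$'' in the theorem statement — I expect the intended reading is that $\chi$-stability is imposed at the level of the $\cK$-group $\GK$ acting on the $\cK$-variety $\NK$, so that Lemma \ref{le:StableProper} applies directly over $\cK$; this is the step that deserves an explicit sentence of justification.
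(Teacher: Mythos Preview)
Your overall strategy matches the paper's: reduce to the semisimple case via Lemma~\ref{le:GtoG'} and verify the hypotheses of Theorem~\ref{th:inGrlambda} for $(G', z^{-\tau}c)$. The freeness check is fine. The gap is in your argument for closedness of the $G'_\cK$-orbit.

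You try to first deduce that the full orbit $\GK c$ is closed in $\NK$, and then descend to $G'_\cK c'$. The first step fails: properness of $g \mapsto (gc,\chi(g)^{-1})$ only gives a closed image in $\NK \times \cK$, and projection to $\NK$ is not proper. Concretely, take $G=\Cx$ acting by scaling on $N=\C$, $c=1$, $\chi=\mathrm{id}$. Then $c$ is $\chi$-stable (use $f=x^n$), the map $g\mapsto(g,g^{-1})$ is a closed embedding, yet $\GK c = \cK^\times$ is open, not closed, in $\NK=\cK$. Your subsequent attempt to extract $G'_\cK c'$ as an intersection with ``the appropriate $\GO$-level set'' is too vague to repair this.

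The paper's fix is short and avoids the issue entirely: restrict the proper map $g\mapsto(gc,\chi(g)^{-1})$ to the closed subset $G'_\cK z^{-\tau}\subset \GK$; its image is closed in $\NK\times\cK$. Now use the key fact you did not invoke: since $G'$ is semisimple, $\chi$ is trivial on $G'_\cK$, so on this coset the second coordinate is the \emph{constant} $\chi(z^{\tau})$. Hence the image equals $(G'_\cK z^{-\tau}c)\times\{\chi(z^{\tau})\}$, and closedness of this product with a point forces $G'_\cK z^{-\tau}c$ to be closed in $\NK$. That is the missing observation.
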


\begin{proof}
	Choose any coweight $ \tau $ such that $[\tau] = \sigma $.  Since $c $ is $ \chi$-stable, the map $ \GK \rightarrow \NK \times \cK $ given by $ g \mapsto (gc, \chi(g)^{-1}) $ is proper.  In particular, the image of $ G'_\cK z^{-\tau} $ is closed in $ \NK \times \cK $.
	
	Since $ G' $ is semisimple, $ \chi $ is the identity on $ G'_\cK $ and thus the image of $ G'_\cK z^{-\tau} $ is the set $$ \{ g z^{-\tau} c : g \in G'_\cK \} \times \{ \chi(z^{\tau}) \} $$  Since this second factor is a point, we see that the $ G'_\cK $ orbit of $ z^{-\tau} c $ is closed in $ \NK$.
	
	Thus, by Theorem \ref{th:inGrlambda}, we conclude that $ \Sp_c(\sigma) $ is a finite-dimensional projective variety.
\end{proof}

\section{Modules from equivariant sheaves}

\subsection{Some sheaf theory} \label{section: some sheaf theory}

We work with equivariant derived categories of constructible sheaves on spaces such as $\sT, \sR$ and $\NO$ and $\Vco$.  These are (placid) ind-schemes of infinite type, and some care is required in defining associated categories. This is done by expressing these spaces as limits of finite-dimensional approximations, as in \cite[Section 2(ii)]{BFN1}.  Standard results about constructible sheaves on finite-dimensional spaces naturally extend to this generality. In particular, we will appeal to several compatibilities between functors from \cite[Appendix B]{AHR}.

\begin{Remark}
Very general theories of sheaf categories on ind-schemes are developed in \cite{Raskin}, \cite{BKV}, which should be flexible enough to define categories such as $D_{\GO}(\Vco)$ in full generality.  Since our proofs appeal to basic properties of sheaf functors, they should extend to these general settings.
\end{Remark}

\subsubsection{Pull-back with supports}
\label{section: pull-back with supports}
We recall the definition of pull-back homomorphisms with supports, following \cite[3(ii)]{BFN1}. Consider a Cartesian square
\begin{equation}
\label{diagram: Cartesian}
\begin{tikzcd}
X  \ar[d, "i"'] & Y \ar[l, "g"'] \ar[d, "j"] \\
M  & N \ar[l, "f"]
\end{tikzcd}
\end{equation}
Let $A, B$  be complexes of sheaves on $M,N$, respectively.  Given $\varphi : f^\ast A \rightarrow B$, we define 
\begin{equation}
\label{eq: pbwithsupports}
 i^! A \longrightarrow i^! f_\ast f^\ast A \cong g_\ast j^! f^\ast A \xrightarrow{g_\ast j^! \varphi} g_\ast j^! B
\end{equation}
where the first arrow is the unit of the adjunction $(f^\ast, f_\ast)$, and the middle isomorphism is base-change.  The morphism (\ref{eq: pbwithsupports}) is called \textbf{pull-back with supports} with respect to $ \varphi $. 

Under the $(f^\ast,f_\ast)$ adjunction, $ \varphi : f^\ast A \rightarrow  B$ corresponds to $ \tilde \varphi : A \rightarrow f_\ast B $.  Using $ \tilde \varphi$ we may rewrite the pull-back with supports in a slightly simpler way.
\begin{Lemma}
\label{lemma: alternative pullback}
The pull-back with supports homomorphism is equivalently given by the composition
$$
i^! A \xrightarrow{i^! \tilde \varphi} i^! f_\ast B \cong g_\ast j^! B,
$$
\end{Lemma}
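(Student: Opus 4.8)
The plan is to simply unwind the two definitions and check they agree, the only real content being the compatibility of the base-change isomorphism with the adjunction unit. First I would recall that by the $(f^\ast, f_\ast)$ adjunction, the morphism $\tilde\varphi : A \to f_\ast B$ is the composite $A \xrightarrow{\eta} f_\ast f^\ast A \xrightarrow{f_\ast \varphi} f_\ast B$, where $\eta$ is the unit. Applying the functor $i^!$ and using functoriality, $i^! \tilde\varphi$ factors as $i^! A \xrightarrow{i^!\eta} i^! f_\ast f^\ast A \xrightarrow{i^! f_\ast \varphi} i^! f_\ast B$. Composing with the base-change isomorphism $i^! f_\ast B \cong g_\ast j^! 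B$, the claim reduces to showing that the diagram
\begin{equation*}
\begin{tikzcd}
i^! f_\ast f^\ast A \ar[r, "i^! f_\ast \varphi"] \ar[d, "\cong"'] & i^! f_\ast B \ar[d, "\cong"] \\
g_\ast j^! f^\ast A \ar[r, "g_\ast j^! \varphi"] & g_\ast j^! B
\end{tikzcd}
\end{equation*}
commutes, where the vertical maps are the base-change isomorphisms for the Cartesian square. This is exactly the naturality of the base-change transformation $i^! f_\ast \Rightarrow g_\ast j^!$ applied to the morphism $\varphi : f^\ast A \to B$ of complexes on $N$; once this square is known to commute, the top-and-then-right path is precisely $i^!\tilde\varphi$ followed by base-change (after precomposing with $i^!\eta$), while the left-and-then-bottom path is the original definition~(\ref{eq: pbwithsupports}) (after precomposing with the same $i^!\eta$). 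So the two compositions coincide.

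The one genuine point to verify is that the base-change isomorphism in~(\ref{eq: pbwithsupports}) is natural in the sheaf on $N$; this is a standard property of the $(-)^!$–$(-)_\ast$ base-change morphism for a Cartesian square, and in the infinite-type setting it is among the compatibilities of sheaf functors we are importing from \cite[Appendix B]{AHR} as explained in Section~\ref{section: some sheaf theory}. Strictly speaking one should also check the elementary compatibility that precomposing~(\ref{eq: pbwithsupports}) with $\eta$ is redundant in the right way — i.e.\ that the unit $\eta$ is what converts $\varphi$ to $\tilde\varphi$ — but this is just the definition of the adjunction bijection. I expect the main (very mild) obstacle to be purely bookkeeping: making sure the base-change square is stated for $(-)_\ast$ and $(-)^!$ in the correct variance and that its naturality is the version recorded in \cite{AHR}, rather than any substantive argument. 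No finite-dimensional approximation subtleties enter beyond those already handled in Section~\ref{section: some sheaf theory}.
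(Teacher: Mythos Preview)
Your argument is correct. The paper states this lemma without proof, treating it as a routine unwinding of the adjunction and base-change definitions; your verification---factoring $\tilde\varphi$ as $f_\ast\varphi \circ \eta$ and then invoking naturality of the base-change isomorphism $i^! f_\ast \Rightarrow g_\ast j^!$ in the sheaf on $N$---is exactly the standard way to fill this in.
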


\begin{Remark}
Suppose that $M$ is a smooth variety, and $f:N \hookrightarrow M$ is an embedding of a smooth subvariety of complex codimension $d$.  Then there is a canonical isomorphism $f^\ast \omega_M \cong \omega_N[2d]$.  The pull-back with supports with respect to this isomorphism gives us a morphism 
$$\omega_X = i^! \omega_M \rightarrow g_* j^! \omega_N[2d] = g_* \omega_Y[2d] $$
Pushing forward to a point gives $ H_\bullet(X) \rightarrow H_{\bullet -2d}(Y) $ which corresponds to the usual restriction with supports in Borel-Moore homology, see \cite[\S 8.3.21]{CG}.
\end{Remark}


\subsubsection{Composition} \label{section: composition of pull-back}
Consider the situation of two vertically stacked Cartesian squares:
\begin{equation}
\label{diagram: vertically stacked Cartesian}
\begin{tikzcd}
Z  \ar[d, "k"'] & W \ar[d, "\ell"] \ar[l, "h"']\\
X \ar[d, "i"'] & Y \ar[d, "j"]  \ar[l, "g"'] \\
M  & N \ar[l, "f"]
\end{tikzcd}
\end{equation}
Given $A, B, \varphi$ as above, we can produce elements of
$$\Hom( k^! i^! A, h_\ast \ell^! j^! B) \cong \Hom\big((i \circ k)^! A, h_\ast (j \circ \ell)^! B\big)$$
by iterating the pull-back with supports construction twice, or all at once, respectively.  These two homomorphisms are identified, as is encoded by the commutative diagram
$$
\begin{tikzcd}
k^! i^! A \ar[r,"k^! i^! \tilde{\varphi}"] \ar[d,"\sim"] & k^! i^! f_\ast B \ar[r, "\sim"] \ar[d,"\sim"] & k^! g_\ast j^! B \ar[r,"\sim"] & h_\ast \ell^! j^! B \ar[dl,"\sim"]\\
(i \circ k)^! A \ar[r,"(i\circ k)^! \tilde{\varphi}"]  & (i\circ k)^! f_\ast B  \ar[r,"\sim"] & h_\ast (j\circ \ell)^! B & 
\end{tikzcd}
$$
The rows correspond to the respective pull-back with supports maps (using Lemma \ref{lemma: alternative pullback}), and the commutativity of the right pentagon corresponds to the composition property for base change as in \cite[Figure B.7(c)]{AHR}.

\begin{Remark}
Given horizontally stacked Cartesian squares, we can also produce morphisms by using pull-back with supports in two different ways.  These again agree, this time by  the composition for base change encoded by \cite[Figure B.7 (d)]{AHR}.
\end{Remark}

\subsubsection{Proper direct image}
\label{section: proper direct image}
For a proper morphism $i: X \rightarrow M$ and a sheaf $A$ on $M$, there is a natural map on hypercohomology:
$$ H^\bullet(X, i^! A) = H^\bullet( M, i_\ast i^! A) = H^\bullet(M, i_! i^! A)  \longrightarrow H^\bullet( M, A), $$
induced by the counit of the adjunction $(i_!, i^!)$.  We call this morphism the \textbf{proper direct image}.

Consider the situation where the vertical arrows $i, j$ in (\ref{diagram: Cartesian}) are proper, and we are given $\varphi: f^\ast A \rightarrow B$ as per usual.  Then we have a diagram in hypercohomology:
\begin{equation}
\label{diagram: proper base change hypercohomology}
\begin{tikzcd}
H^\bullet(X, i^!A) \ar[d] \ar[r] & H^\bullet(Y, j^! B)  \ar[d] \\
H^\bullet(M, A) \ar[r] & H^\bullet(N, B)
\end{tikzcd}
\end{equation}
The bottom map is induced by $\tilde \varphi$, the top by the pull-back with supports with respect to $\varphi$, and the vertical arrows are proper direct image.

\begin{Lemma}
\label{lemma: proper base change hypercohomology}
The diagram (\ref{diagram: proper base change hypercohomology}) commutes.
\end{Lemma}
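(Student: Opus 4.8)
The plan is to reduce the commutativity of (\ref{diagram: proper base change hypercohomology}) to a statement about sheaves on $M$ (or, equivalently, a statement entirely in terms of the counits of the $(i_!, i^!)$ and $(j_!, j^!)$ adjunctions and the base-change isomorphism), rather than trying to chase the diagram in hypercohomology directly. First I would rewrite the top arrow using Lemma \ref{lemma: alternative pullback}, so that the pull-back with supports is the composite $i^! A \xrightarrow{i^!\tilde\varphi} i^! f_\ast B \cong g_\ast j^! B$. Pushing $Y$ forward to a point and using $j$ proper, the term $H^\bullet(Y, j^!B)$ becomes $H^\bullet(N, j_! j^! B)$; likewise $H^\bullet(X, i^!A) = H^\bullet(M, i_! i^! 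A)$. So the whole square lives in the (hyper)cohomology of $M$ and $N$, and the vertical maps are induced by the counits $\epsilon_i: i_! i^! \to \mathrm{id}_M$ and $\epsilon_j: j_! j^! \to \mathrm{id}_N$.

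Next I would set up the square at the level of sheaves on $M$. Applying $f_\ast$ to $\epsilon_j: j_! j^! B \to B$ and then $j_! \cong $ (proper so $=$) we need to compare two maps $i_! i^! A \to f_\ast B$ on $M$: one route is $i_! i^! A \xrightarrow{i_! i^! \tilde\varphi} i_! i^! f_\ast B \xrightarrow{\text{(base change)}} i_! g_\ast j^! B \cong f_\ast j_! j^! B \xrightarrow{f_\ast \epsilon_j} f_\ast B$; the other route is $i_! i^! A \xrightarrow{\epsilon_i} A \xrightarrow{\tilde\varphi} f_\ast B$. The claim is that these two maps of complexes on $M$ agree, and taking $H^\bullet(M,-)$ then yields exactly the commutativity of (\ref{diagram: proper base change hypercohomology}). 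This comparison splits into two independent pieces: (a) the square involving only $\tilde\varphi$ and the two counits $\epsilon_i, \epsilon_j$ (naturality), and (b) the compatibility of the base-change isomorphism $i_! g_\ast \cong f_\ast j_!$ — which is the proper/$!$-pushforward base change, the dual of the base change used in (\ref{eq: pbwithsupports}) — with the counits of the $(\cdot_!,\cdot^!)$ adjunctions. Piece (a) is just naturality of $\epsilon$ applied to the morphism $\tilde\varphi: A \to f_\ast B$ composed with the fact that $i^! f_\ast B \cong g_\ast j^! B$ over the base; piece (b) is the statement that the two base-change squares (for $(f^\ast,f_\ast)$ vs. for $(f_!,f^!)$, here all maps proper so these collapse) are adjoint-compatible, which is exactly one of the coherence diagrams in \cite[Appendix B]{AHR} (the compatibility of base change with units/counits, cf. \cite[Figure B.7]{AHR}).

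The main obstacle I expect is piece (b): carefully identifying which base-change isomorphism appears where (the one in the definition (\ref{eq: pbwithsupports}) of pull-back with supports is $i^! f_\ast \cong g_\ast j^!$, coming from the $(f^\ast, f_\ast)$-style base change, whereas the identification $i_! g_\ast \cong f_\ast j_!$ that I want to use with the counits is the proper-pushforward base change) and then invoking the correct coherence square from \cite[Appendix B]{AHR} relating these two base-change maps to the adjunction (co)units. Since $i$ and $j$ are proper, $i_! = i_\ast$ and $j_! = j_\ast$, which should make the two base-change maps literally the same map and collapse the potential subtlety; the remaining work is then purely formal 2-categorical bookkeeping with units and counits, exactly of the kind already carried out in Section \ref{section: composition of pull-back}. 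I would therefore present the proof as: (1) reduce to a square of complexes on $M$ via $i,j$ proper and Lemma \ref{lemma: alternative pullback}; (2) observe the square factors as the naturality square for $\epsilon$ with respect to $\tilde\varphi$, glued to the base-change/counit coherence square of \cite[Figure B.7]{AHR}; (3) conclude by applying $H^\bullet(M,-)$.
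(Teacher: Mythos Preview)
The paper states this lemma without proof (it is followed immediately by a Remark and then the text moves on), so there is no argument in the paper to compare against. Your proposal is a correct way to supply the omitted details, and it is very much in the spirit of the surrounding section: reduce via Lemma~\ref{lemma: alternative pullback} and the properness of $i,j$ to a diagram of sheaves on $M$, then factor into (a) naturality of the counit $\epsilon_i$ with respect to $\tilde\varphi$ and (b) compatibility of the base-change isomorphism $i^! f_\ast \cong g_\ast j^!$ with the counits $\epsilon_i, \epsilon_j$. Both pieces are standard, and (b) is exactly the kind of coherence recorded in \cite[Appendix B]{AHR}, which the paper already cites for analogous compatibilities.

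One small simplification: you do not need to introduce a separate ``proper-pushforward base change'' $i_! g_\ast \cong f_\ast j_!$. Since $i\circ g = f\circ j$ as maps $Y\to M$, the identification $i_\ast g_\ast = f_\ast j_\ast$ is tautological, and the only genuine base-change isomorphism in play is the one $i^! f_\ast \cong g_\ast j^!$ already appearing in the definition of pull-back with supports. With that in mind, piece~(b) becomes exactly the square
\[
\begin{tikzcd}
i_\ast i^! f_\ast B \ar[r,"\sim"] \ar[d,"\epsilon_i"'] & i_\ast g_\ast j^! B = f_\ast j_\ast j^! B \ar[d,"f_\ast \epsilon_j"] \\
f_\ast B \ar[r,equal] & f_\ast B
\end{tikzcd}
\]
which is the compatibility of base change with the adjunction counits; this collapses the ``main obstacle'' you anticipated.
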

\begin{Remark}
In the case when $A = \omega_X$, the proper direct image corresponds to the proper direct image map in Borel-Moore homology \cite[\S 8.3.19]{CG}.  Moreover the previous lemma generalizes the smooth base change in Borel-Moore homology \cite[Prop 2.7.22]{CG}.
\end{Remark}

\subsection{Definition of the action} \label{se:DefAction} 

Because the definition of the quantized Coulomb branch involves $ \sR_{G,N} $ (and not $ \sZ_{G,N} $), we will define our modules using a diagram involving $ \NO $.   We closely follow \cite[Sections 3(i)-(iii)]{BFN1}.

We begin by considering the groupoid given by restricting the action of $ \GK  $ on $ \NK $ to $\NO$. We define
$$
\groupoid = \{ (g,v) \in \GK \times \NO : gv \in \NO \} 
$$
This is a groupoid over $ \NO $ with source and taget maps $ d_1, d_2 : \groupoid \rightarrow \NO $ defined by $ d_1(g,v) = v, d_2(g,v) = gv $.  We let $ \groupoid^{(2)} $ denote the scheme of composable arrows in this groupoid.  Explicitly, we have
$$
\groupoid^{(2)} = \{ (g_1, g_2, v) \in \GK \times \GK \times \NO : g_2 v, g_1 g_2 v \in \NO \}
$$
This comes with three projections $ d_{12}, d_{13}, d_{23} : \groupoid^{(2)} \rightarrow \groupoid $ given by
\begin{equation} \label{eq:dij}
d_{12}(g_1, g_2, v) = (g_2, v) \quad d_{13}(g_1, g_2, v) = (g_1 g_2, v) \quad d_{23}(g_1, g_2, v) = (g_1, g_2 v) \quad
\end{equation}
We also consider a variation on the unit of this groupoid: the inclusion $u: \GO \times \NO \hookrightarrow \groupoid$ defined by $(g,v) \mapsto (g,v)$.  We denote $u_1,u_2: \GO \times \NO\rightarrow \NO$ the maps $u_1(g,v) = v$, $u_2(g,v) =gv$.
\begin{Remark}
\label{rmk: canonical unit iso}
For any object $\sF$ of the $\GO$--equivariant derived category of $\NO$, there is a canonical isomorphism $u_1^! \sF \cong u_2^! \sF$.
\end{Remark}

\begin{Definition}
Let $ \mathcal F $ be an object of the $ \GO $-equivariant derived category of $ \NO$.  We say that $ \sF $ is $ \groupoid $-\textbf{equivariant}, if we are given an isomorphism $ \beta: d_1^! \sF \rightarrow d_2^! \sF $ (as objects in the $\GO \times \GO$--equivariant derived category of $\groupoid$) such that the following diagram commutes
\begin{equation} \label{eq:hexagon}
\begin{tikzcd}[column sep={1.3cm,between origins}, row sep={1.5cm,between origins}]
& d_{13}^! d_1^! \sF \arrow[rr, "d_{13}^!\beta"] \arrow[ld, equal] && d_{13}^! d_2^! \arrow[rd, equal] &  \\
d_{12}^! d_1^! \sF \arrow[rd, "d_{12}^!\beta"']&  &&  & d_{23}^! d_2^! \sF  \\
& d_{12}^! d_2^! \sF \arrow[rr, equal] && d_{23}^! d_1^! \sF \arrow[ru, "d_{23}^!\beta"'] & 
\end{tikzcd}
\end{equation}
(as objects in the $\GO\times \GO\times\GO$--equivariant derived category of $\groupoid^{(2)}$).  We also require that the diagram
\begin{equation}\label{eq: unit axiom}
\begin{tikzcd}
u^! d_1^! \sF \ar[r,"u^!\beta"] \ar[d,equal] & u^! d_2^! \sF \ar[d,equal] \\
u_1^! \sF \ar[r] & u_2^! \sF
\end{tikzcd}
\end{equation}
commutes, where the bottom arrow is the canonical isomorphism from Remark \ref{rmk: canonical unit iso}.
\end{Definition}

Our primary example of such an $ \sF $ is the sheaf $ \sF =  \omega_{\Vco}[-2\dim \GO + 2 \dim \Lco ] $, which we obtained from the dualizing sheaf from $ F_c $. This sheaf has a natural $\groupoid$--equivariant structure: for $i=1,2$ we have a Cartesian diagram
\begin{equation}
\begin{tikzcd}
\left\{ (g, v) \in \GK \times \Vco : gv \in \Vco \right\} \ar[hook]{r} \ar[d] & \groupoid \ar[d,"d_i"] \\
\Vco \ar[hook]{r} & \NO
\end{tikzcd}
\end{equation}
By base change, $d_i^! \cF$ is given by the $\ast$--pushforward of the (shifted) dualizing sheaf along the top inclusion.  The latter is independent of $i$, and thus gives an isomorphism $d_1^! \cF \cong d_2^!\cF$.

\begin{Theorem} \label{th:action}
If $ \mathcal F $ is a $ \groupoid$-equivariant object of the $ \GO$-equivariant derived category of $ \NO$, then $ H^{-\bullet}_{\GO}(\NO, \sF) $ carries a left module structure for the algebra $ \A_0 $, which is $H_G^\bullet(pt)$--linear in the first variable.

In particular, when $ \Sp_c $ and $ \Lco $ are of finite type, then
$$ M_c := H^{\Lco}_\bullet(\Sp_c) = H_{\GO}^{-\bullet}(\Vco, \omega_{\Vco}[-2 \dim \GO + 2 \dim \Lco]) $$
is an $\A_0$--module.
\end{Theorem}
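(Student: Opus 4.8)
The plan is to run the Braverman--Finkelberg--Nakajima construction of the multiplication on $\A_0$, as reformulated through the pull-back-with-supports formalism of \S\ref{section: some sheaf theory}, but with $\NO$ (carrying the sheaf $\sF$) taking the place of one of the two copies of the convolution variety $\sR$. First I would record the identification $[\groupoid/\GO]\cong\sR$ under the $\GO$-action $(g,v)\cdot h=(gh,h^{-1}v)$; under it $\A_0=H_\bullet^{\GO}(\sR)$ is naturally computed on the groupoid $\groupoid$, and its product is assembled --- as in \cite[\S3]{BFN1} --- from the maps $d_{12},d_{13},d_{23}\colon\groupoid^{(2)}\to\groupoid$ of \eqref{eq:dij}: one builds a class on $\groupoid^{(2)}$ out of $a,b\in\A_0$ by pull-back with supports along $d_{12}$ and $d_{23}$, and pushes it forward along the (ind-)proper map $d_{13}$. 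The action map $\A_0\otimes H^{-\bullet}_{\GO}(\NO,\sF)\to H^{-\bullet}_{\GO}(\NO,\sF)$ follows the same recipe, with a class $m\in H^{-\bullet}_{\GO}(\NO,\sF)$ playing the role of one of the two homology classes: I would pull $m$ back along $d_1\colon\groupoid\to\NO$, combine it with $a$ via the pull-back-with-supports step modeled on BFN's definition of the product (\S\ref{section: pull-back with supports}, Lemma \ref{lemma: alternative pullback}) to get a class on $\groupoid$ with coefficients in $d_1^!\sF$, apply the equivariance isomorphism $\beta\colon d_1^!\sF\xrightarrow{\sim}d_2^!\sF$, and push forward along $d_2\colon\groupoid\to\NO$ (\S\ref{section: proper direct image}), which is proper once restricted to the finite-dimensional piece of $\groupoid$ supporting the class at hand --- just as the properness of $d_{13}$ is used in \cite{BFN1}. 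Since every functor involved preserves $\GO$-equivariance and the $H_G^\bullet(pt)=H_{\GO}^\bullet(pt)$-module structures are all induced by pushforward to a point, the action is $H_G^\bullet(pt)$-linear in the first variable, compatibly with the $H_G^\bullet(pt)$-action on $H^{-\bullet}_{\GO}(\NO,\sF)$ coming from its equivariant structure.

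Next I would verify the module axioms. For associativity $(a\cdot b)\cdot m=a\cdot(b\cdot m)$, both sides are computed by a single diagram over $\groupoid^{(2)}$ built from $d_{12},d_{13},d_{23}$ and the triple $(a,b,m)$, exactly as in BFN's proof of the associativity of $\A_0$; the two bracketings agree because pull-back with supports composes compatibly with itself and with base change (\S\ref{section: composition of pull-back}, i.e.\ \cite[Fig.\ B.7(c),(d)]{AHR}) --- which handles the $a,b$ part word for word --- and because the hexagon axiom \eqref{eq:hexagon} says exactly that the two ways of transporting the $\sF$-coefficients across $\groupoid^{(2)}$ coincide. For unitality, I would use that the unit of $\A_0$ is the class pushed forward along $u\colon\GO\times\NO\hookrightarrow\groupoid$ of \S\ref{se:DefAction}; the unit axiom \eqref{eq: unit axiom}, which identifies $u^!\beta$ with the canonical isomorphism $u_1^!\sF\cong u_2^!\sF$ of Remark \ref{rmk: canonical unit iso}, then collapses the action of $1$ to the identity, and compatibility with left multiplication by $H_G^\bullet(pt)\subseteq\A_0$ follows from the same computation.

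For the last assertion: once $\Sp_c$ and $\Lco$ are of finite type, $\sF=\omega_{\Vco}[-2\dim\GO+2\dim\Lco]$ is a well-defined $\GO$-equivariant object on $\NO$, and it carries a canonical $\groupoid$-equivariant structure --- base change for the dualizing sheaf along the Cartesian squares displayed before the statement supplies $d_1^!\sF\cong d_2^!\sF$, and the hexagon and unit axioms for it follow from the functoriality of base change. So the general statement applies, and transporting it through the identification $H^{-\bullet}_{\GO}(\Vco,\omega_{\Vco}[-2\dim\GO+2\dim\Lco])\cong H^{\Lco}_\bullet(\Sp_c)$ from the discussion following Lemma \ref{le:isoStacks} endows $M_c=H^{\Lco}_\bullet(\Sp_c)$ with its $\A_0$-module structure.

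The step I expect to be the main obstacle is not the formal diagram chase above but the infinite-type geometry, exactly as in \cite{BFN1}: $\NO$, $\GO$, $\groupoid$ and $\groupoid^{(2)}$ are ind-schemes and pro-algebraic groups, so one must present all of them --- together with $\sF$ and the supports of the classes being multiplied --- as a compatible system of finite-dimensional approximations on which pull-back with supports makes sense, on which the maps called proper really are proper, and on which the coherence diagrams of \S\ref{section: some sheaf theory} hold at each stage and pass to the limit, all while tracking the renormalizing shifts (the $[-2\dim\NO]$ in the definition of $\A$ and the $[-2\dim\GO+2\dim\Lco]$ above). Once this bookkeeping is in place, following \cite[\S2(ii),\S3]{BFN1} with $\sF$ carried through, the module axioms reduce to the formal statements indicated above.
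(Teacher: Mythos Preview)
Your proposal is correct and follows essentially the same approach as the paper: define the action via the sequence pull-back-with-supports $\to$ apply $\beta$ $\to$ proper direct image along $d_2=m\circ q$, then verify associativity by building a large diagram over $\groupoid^{(2)}$ whose faces commute by the composition properties of \S\ref{section: composition of pull-back}, by Lemma \ref{lemma: proper base change hypercohomology}, and by the hexagon \eqref{eq:hexagon}, and verify unitality via \eqref{eq: unit axiom}. The paper's proof simply makes explicit the individual faces of this diagram (bottom-left square, bottom-central square, bottom-right pentagon, etc.) and checks each one, but the architecture is exactly what you describe.
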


We will now define the action of $ \A_0 $ on $ H^{-\bullet}_{\GO}(\NO, \sF)  $. To that end, consider the following commutative diagram:

\begin{equation}
\label{eq: diagram for action1}
\begin{tikzcd}
\sR \times \NO \arrow[hook]{d}{i} & \groupoid \arrow{l}[swap]{p}\arrow{r}{q} \arrow[hook]{d}{j}& \sR \arrow{r}{m}  & \NO \\
\sT \times \NO \arrow[leftarrow]{r}{\tilde p}  & \GK \times \NO & &
\end{tikzcd}
\end{equation}
The map $\tilde p$ is given by $(g,v) \mapsto ( [g,v], v)$, and $m$ by $[g, v] \mapsto gv$. The vertical arrows are the inclusion maps, and $p$ is defined by restriction.  Finally $q$ is the quotient map $(g,v) \mapsto [g,v]$, which realizes $\groupoid$ as a principal $\GO$--bundle over $\sR$.  These maps are equivariant for group actions as in \cite[(3.4)]{BFN1}; in particular $\GO \times \GO$ acts on $\GK \times \NO$ by $(h_1, h_2) \cdot (g, v) = (h_1 g h_2^{-1}, h_2 v)$.

\begin{Lemma}
\label{Lemma: morphism to put into restriction with supports}
	There is an isomorphism of $\GO \times \GO $--equivariant complexes
	\begin{equation} \label{eq:defphi}
	\tilde p^\ast \big(\omega_{\sT}[-2\dim \NO ] \boxtimes \sF \big) \xrightarrow{\sim} \omega_{\GK}[- 2\dim \GO] \boxtimes \sF 
	\end{equation}
\end{Lemma}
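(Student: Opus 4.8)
The plan is to factor $\tilde p$ through a torsor quotient followed by a diagonal, and then trace the two resulting pullbacks through the relevant dualizing complexes. First I would observe that $\tilde p$ decomposes as
\[
\GK \times \NO \xrightarrow{\ \id \times \Delta\ } \GK \times \NO \times \NO \xrightarrow{\ q_{\sT} \times \id\ } \sT \times \NO ,
\]
where $\Delta \colon \NO \to \NO \times \NO$ is the diagonal and $q_{\sT} \colon \GK \times \NO \to \sT = \GK \times_{\GO} \NO$ is the quotient map for the $\GO$--action $h \cdot (g,v) = (gh^{-1}, hv)$; indeed $(g,v) \mapsto (g,v,v) \mapsto ([g,v],v)$. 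The map $q_{\sT}$ is a $\GO$--torsor. Equipping $\GK \times \NO \times \NO$ with the $\GO \times \GO$--action $(h_1,h_2)\cdot(g,v_1,v_2) = (h_1 g h_2^{-1}, h_2 v_1, h_2 v_2)$, both maps in this factorization become $\GO \times \GO$--equivariant, as does $\tilde p$ for the actions of \cite[(3.4)]{BFN1}.

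Next I would compute the two pullbacks on dualizing complexes. Since $\GO$ is pro-smooth, $q_{\sT}$ is pro-smooth of relative dimension $\dim\GO$, so $q_{\sT}^{!} \cong q_{\sT}^{\ast}[2\dim\GO]$ and $q_{\sT}^{!}\omega_{\sT} \cong \omega_{\GK\times\NO}$, whence $q_{\sT}^{\ast}\omega_{\sT} \cong \omega_{\GK\times\NO}[-2\dim\GO]$. Combining this with the Künneth identity $\omega_{\GK\times\NO}\cong\omega_{\GK}\boxtimes\omega_{\NO}$ and with the renormalization identification $\omega_{\NO}[-2\dim\NO]\cong\underline{\C}_{\NO}$ (valid because $\NO$ is pro-smooth) gives
\[
(q_{\sT}\times\id)^{\ast}\bigl(\omega_{\sT}[-2\dim\NO]\boxtimes\sF\bigr) \;\cong\; \omega_{\GK}[-2\dim\GO]\boxtimes\underline{\C}_{\NO}\boxtimes\sF
\]
on $\GK\times\NO\times\NO$. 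Applying $(\id\times\Delta)^{\ast}$ and using $\Delta^{\ast}(\underline{\C}_{\NO}\boxtimes\sF)\cong\underline{\C}_{\NO}\otimes\sF\cong\sF$ then yields
\[
\tilde p^{\ast}\bigl(\omega_{\sT}[-2\dim\NO]\boxtimes\sF\bigr) \;\cong\; \omega_{\GK}[-2\dim\GO]\boxtimes\sF .
\]
Every isomorphism invoked (the smoothness identification for the torsor, Künneth, the renormalization identification for $\NO$, and $\Delta^{\ast}$ of a constant sheaf) is canonical, hence $\GO\times\GO$--equivariant, so the composite is an isomorphism of $\GO\times\GO$--equivariant complexes, as claimed.

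The only genuine difficulty is the bookkeeping in the infinite-type setting: $\sT$, $\GK$ and $\NO$ are ind-/pro-schemes, so the symbols $\omega_{\sT}[-2\dim\NO]$, $\omega_{\GK}[-2\dim\GO]$ and $\omega_{\NO}[-2\dim\NO]$ must be interpreted via the finite-dimensional approximations of \cite[\S 2(ii)]{BFN1}, and one must check that the two shift identities $q_{\sT}^{\ast}\omega_{\sT}\cong\omega_{\GK\times\NO}[-2\dim\GO]$ and $\omega_{\NO}[-2\dim\NO]\cong\underline{\C}_{\NO}$ are compatible with the maps of the approximating pro-systems. Once this renormalization framework is set up — exactly as in the construction of the convolution product in \cite[\S 3]{BFN1} — the argument reduces to the purely formal chain of base-change and Künneth isomorphisms above, and I do not expect any further obstacle.
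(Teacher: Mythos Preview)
Your proof is correct and essentially the same as the paper's. The paper writes $\tilde p = (\tilde p_{\sT}, \tilde p_{\NO})$ in components and uses $\tilde p^\ast(A\boxtimes B)\cong \tilde p_{\sT}^\ast A \otimes \tilde p_{\NO}^\ast B$ directly, together with the same two facts you invoke: that $\tilde p_{\sT}$ (your $q_{\sT}$) is a $\GO$--torsor, giving $\tilde p_{\sT}^\ast\omega_{\sT}\cong\omega_{\GK}\boxtimes\omega_{\NO}[-2\dim\GO]$, and that $\omega_{\NO}[-2\dim\NO]\cong\underline{\C}_{\NO}$. Your factorization through $\id\times\Delta$ is just the standard way to unpack the componentwise pullback identity, so the two arguments are the same up to presentation.
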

\begin{proof}
As in the proof of \cite[Lemma 3.5]{BFN1}, we write $\tilde p = (\tilde p_{\sT}, \tilde p_{\NO})$ in terms of its components.  Since $\tilde p_\NO$ is a projection, we have
$$
\tilde p_\NO^\ast \sF \cong \C_\GK \boxtimes \cF
$$
Meanwhile $\tilde p_\sT:\GK \times \NO \longrightarrow \sT$ is a $\GO$--bundle, so 
$$
\tilde p_\sT^\ast (\omega_\sT) \cong \tilde p_\sT^!(\omega_\sT)[- 2 \dim \GO] \cong \omega_\GK \boxtimes \omega_\NO [-2\dim \GO] 
$$
But $\NO$ is smooth, so $\omega_\NO \cong \C_\NO[2 \dim \NO]$.  
\end{proof}

Using the pull-back with supports homomorphism from Section \ref{section: pull-back with supports} with respect to (\ref{eq:defphi}), we get a morphism
\begin{equation} \begin{gathered} \label{eq:restrictionF}
\omega_{\sR}[-2\dim \NO] \boxtimes \sF=  i^! \big(\omega_{\sT}[-2\dim \NO] \boxtimes \sF\big)\\ \longrightarrow p_* j^! \big(\omega_{\GK} [-2 \dim \GO ]\boxtimes \sF\big) = p_\ast d_1^! \sF[-2\dim \GO]
\end{gathered}
\end{equation}
where the last equality comes from the fact that $ d_1 $ equals $ j $ composed with the projection $ \GK \times \NO \rightarrow \NO $.

Next, we use the $\groupoid$-equivariant structure on our sheaf $ \sF $ to obtain
\begin{equation} \label{eq:restrictionF2}
d_1^! \sF[-2\dim \GO] \xrightarrow{\beta} d_2^! \sF[-2\dim \GO] = q^! m^! \sF[-2\dim \GO]
\end{equation}
where we use that $ d_2 = m \circ q $.

Next, we note that the map $ q $ is a principal $ \GO  $-bundle and thus, $  q^! =  q^*[2 \dim \GO] $ and so 
\begin{equation}
H^\bullet_{\GO \times \GO}(\groupoid,  q^!  m^! \sF[-2\dim \GO]) \cong  H^\bullet_{\GO}(\sR, m^! \sF) \label{eq:actionGO}
\end{equation}
Finally, we have the proper direct image for the morphism $ m $ which gives us
\begin{equation} \label{eq:actionproper}
H^\bullet_\GO(\sR, m^! \sF) \rightarrow H^\bullet_\GO(\NO, \sF)
\end{equation}

Combining all these steps together (applying push forward to a point when needed), and remembering that $\A_0 = H^{-\bullet}_\GO(\sR, \omega_\sR[-2\dim \NO] )$, we finally obtain
$$
\A_0 \otimes  H^{-\bullet}_{\GO}(\NO, \sF)  \longrightarrow  H^{-\bullet}_{\GO}(\NO, \sF) 
$$
as desired.

\begin{proof}[Proof of Theorem \ref{th:action}]
To see that this defines a module structure, we closely follow the proof of \cite[Theorem 3.10]{BFN1}, with some minor changes.  

To begin, recall that the action was defined as a composition of four steps given in equations (\ref{eq:restrictionF}) -- (\ref{eq:actionproper}).  We will write these steps in a condensed form as follows
\begin{equation} \label{eq:actionCondensed}
H(\omega_\sR \boxtimes \sF) \xrightarrow{p^*} H(d_1^!\sF) \xrightarrow{\beta} H(d_2^! \sF) \cong H(m^! \sF) \xrightarrow{m_*} H(\sF)
\end{equation}
We have omitted the equivariance groups, we have left out the spaces on which we take these cohomologies, and we have left out the cohomological degrees and shifts.  We have also named the pull-back with supports $ p^*$ and the proper direct image $ m_* $, which will be helpful for keeping track of these later. 

First, we verify that the identity element $r_0 \in \A_0$ acts as the identity, where $r_0$ is the fundamental class of the fibre of $\sR \rightarrow \Gr$ over $[1]$. Consider the commutative diagram
\begin{equation*}
\begin{tikzcd}
\NO \times \NO \ar[d, hookrightarrow] & \GO \times \NO \ar[l,"p_0"'] \ar[d, hookrightarrow] \ar[r,"u_2"] & \NO \ar[d, hookrightarrow] \ar[dr,equal] & \\
\sR \times \NO & \ar[l,"{p}"] \groupoid \ar[r, "{q}"'] & \sR \ar[r,"{m}"'] & \NO
\end{tikzcd}
\end{equation*}
Here $p_0 = (u_2, u_1)$ sends $(g,v) \mapsto(gv, v)$. The vertical arrows are closed embeddings; note that the leftmost vertical arrow uses the map $\NO \hookrightarrow \sR, v \mapsto [1,v]$ which is the inclusion of the fibre over $[1]$. The middle rightward arrows are both quotients by $\GO$ actions.   We use this diagram to add another row to (\ref{eq:actionCondensed}):
\begin{equation}
\label{eq: unit diagram in proof}
\begin{tikzcd}[column sep = scriptsize]
H(\omega_\NO \boxtimes \sF) \ar[r,"p_0^*"] \ar[d] & H(u_1^!\sF) \ar[r,"\sim"] \ar[d] & H(u_2^!\sF)  \ar[d]\ar[rr,"\sim"]& & H(\sF) \ar[d,equal] \\
H(\omega_\sR \boxtimes \sF) \ar[r,"p^*"] & H(d_1^!\sF)\ar[r,"\beta"] & H(d_2^!\sF) \ar[r,"\sim"] &H(m^! \sF) \ar[r,"m_*"] & H(\sF)
\end{tikzcd}
\end{equation}
The vertical arrows are proper pushforwards along the vertical arrows in (\ref{eq: unit diagram in proof}).  The middle square uses the compatibility (\ref{eq: unit axiom}).  In the leftmost square, we define the top edge by pull-back with supports, so that this square commutes by Lemma \ref{lemma: proper base change hypercohomology}.  Using that $\omega_\NO[-2\dim\NO] = \C_\NO$, this pull-back with supports morphism is simply the adjoint of the morphism
$$
p_0^\ast (\C_\NO \boxtimes \sF ) = u_2^\ast( \C_\NO) \otimes u_1^\ast( \sF) = u_1^\ast(\sF) 
$$
Thus the composed top row of (\ref{eq: unit diagram in proof}) is the canonical action $H(\C_\NO \boxtimes \sF) \rightarrow H(\sF)$.  For any $b\in H(\sF)$ the class $r_0 \otimes b$ comes by pushforward along the left vertical arrow in (\ref{eq: unit diagram in proof}), and so we can compute the product $r_0 \ast b$ by tracing the effect through the diagram.  Combined with the above discussion, this proves that $r_0$ acts as the identity.

Second, we will show that $(a_1 \ast a_2) \ast b = a_1 \ast( a_2 \ast b)$ for $a_1, a_2 \in \A_0$ and $b\in H(\sF)$.  Recall the definition of the multiplication from \cite{BFN1}, which makes use of the following diagram \cite[(3.2)]{BFN1} (except that we have swapped the roles of $ \tilde p $ and $ p$, etc).
\begin{equation}
\label{eq: diagram from BFN}
\begin{tikzcd}
\sR \times \sR \arrow[hook]{d}{i} & \tilde p^{-1}(\sR \times \sR) \arrow{l}[swap]{p}\arrow{r}{q} \arrow[hook]{d}{j}& \tilde p^{-1}(\sR \times \sR) /{ \scriptstyle \GO} \arrow{r}{m}  & \sR\\
\sT \times \sR \arrow[leftarrow]{r}{\tilde p}  & \GK \times \sR & &
\end{tikzcd}
\end{equation}

The multiplication in $ \A_0 $ is given by the following sequence
\begin{equation} \label{eq:multCondensed}
H(\omega_\sR \boxtimes \omega_\sR) \xrightarrow{p^*} H(\omega_{\tilde p^{-1}(\sR \times \sR)}) \cong H(\omega_{q(\tilde p^{-1}(\sR \times \sR))}) \xrightarrow{m_*} H(\omega_\sR) 
\end{equation}
where the first arrow is a pull back with supports with respect to an isomorphism $$ \tilde p^*(\omega_\sT [-2\dim \NO] \boxtimes \omega_\sR) \rightarrow \omega_\GK[-2\dim \GO] \boxtimes \omega_\sR) $$
the second arrow is the canonical isomorphism from a $ \GO$-bundle and the last is the proper direct image.

Now we consider the following large  commutative diagram, analogous to \cite[(3.11)]{BFN1}.  It is the ``product'' of the top line of (\ref{eq: diagram for action1}) with the top line of (\ref{eq: diagram from BFN}):
\begin{equation}
\label{eq: large diagram}
\begin{tikzcd}[column sep = scriptsize]
\sR \times \NO & \groupoid  \ar{l}[swap]{p} \ar{r}{q}&\sR \ar{r}{m} & \NO \\
\tilde p^{-1} (\sR \times \sR) /{\scriptstyle \GO} \times \NO  \ar{u}{m\times \id_{\NO}} & \groupoid^{(2)}/{ \scriptstyle \GO \times 1} \ar{l} \ar{u}[swap]{d_{13}} \ar{r} & \groupoid^{(2)}/{ \scriptstyle \GO \times \GO} \ar{u} \ar{r} & \sR \ar{u}[swap]{m} \\
\tilde p^{-1} (\sR \times \sR) \times \NO \ar{u}{q \times \id_{\NO}} \ar{d}[swap]{p\times\id_{\NO}} & \groupoid^{(2)} \ar{d}{p_2} \ar{l}{p_1} \ar{u} \ar{r} & \groupoid^{(2)}/{ \scriptstyle 1\times \GO} \ar{u} \ar{r}{d_{23}} \ar{d} & \groupoid \ar{d}{p} \ar{u}[swap]{q} \\
\sR \times \sR \times \NO & \sR \times \groupoid \arrow{l}{\id_\sR\times p} \arrow{r}[swap]{\id_\sR\times q} &\sR \times \sR \arrow{r}[swap]{\id_\sR\times m} & \sR \times \NO
\end{tikzcd}
\end{equation}
where the maps $p, q, m$ denote either those from (\ref{eq: diagram for action1}) or from (\ref{eq: diagram from BFN}).  The maps $ d_{ij} $ were defined in (\ref{eq:dij}). Finally, the map $ p_1: \groupoid^{(2)} \rightarrow \tilde p^{-1}(\sR \times \sR) \times \NO $ is given by $p_1(g_1, g_2, v) = (g_1, [g_2, v], v)$ and $ p_2 : \groupoid^{(2)} \rightarrow \sR \times \groupoid $ is given by $ p_2(g_1, g_2, v) = ( [g_1, g_2 v], g_2, v) $.

Now, we use (\ref{eq: large diagram}) to construct an even larger diagram of maps of cohomology of sheaves
\begin{equation}
\begin{tikzcd}[column sep =scriptsize, row sep =scriptsize] \label{eq: even larger diagram}
H(\omega_\sR \boxtimes \sF) \arrow[r,"p^*"] & H(d_1^! \sF) \arrow[r,"\beta"] & H(d_2^! \sF) \arrow[r,"m_*"] & H(\sF) \\
& H(d_{13}^! d_1^! \sF) \arrow[u, "(d_{13})_*"'] \arrow[r,"\beta"] & H(d_{13}^! d_2^! \sF) \arrow[d,equal] \arrow[u, "(d_{13})_*"']  &  \\
& & H(d_{23}^! d_2^! \sF)  \arrow[r,"(d_{23})_*"]  & H(d_2^! \sF) \arrow[uu,"m_*"'] \\
 & & H(d_{23}^! d_1^! \sF) \arrow[u,"\beta"] \arrow[r,"(d_{23})_*"] & H(d_1^! \sF) \arrow[u,"\beta"'] \\
H(\omega_{p^{-1}(\sR \times \sR)} \boxtimes \sF) \arrow[uuuu,"m_* \otimes \id"] \arrow[r, "p_1^*"] & H(d_{12}^! d_1^! \sF) \arrow[uuu,equal] \arrow[r,"\beta"] & H(d_{12}^! d_2^! \sF) \arrow[u,equal] &\\
H(\omega_\sR \boxtimes \omega_\sR \boxtimes \sF) \arrow[u,"p^*"] \arrow[r,"\id \otimes p^*"] & H(\omega_\sR \boxtimes d_1^! \sF) \arrow[u,"p_2^*"] \arrow[r,"\id \otimes \beta"] & H(\omega_\sR \boxtimes d_2^! \sF) \arrow[u,"p_2^*"] \arrow[r,"\id \otimes m_*"]  & H(\omega_\sR \boxtimes \sF) \arrow[uu,"p^*"']
\end{tikzcd}
\end{equation}
We are using the same conventions as above regarding ommitting equivariance groups, cohomological degrees and shifts, and spaces.  We have also ommitted the canonical isomorphisms related to the principal $ \GO $-bundles, in order to make the diagram simpler.  On the other hand, the spaces $ \groupoid, \sR \times \groupoid $ each have two cohomology groups attached to them, while $ \groupoid^{(2)} $ has six cohomology groups (the ones in the middle of the diagram).

The morphisms in this diagram are labelled in our usual way, except that we have not defined the pull-back with supports for $ p_1, p_2 $ --- we will do so shortly.

Notice that following the boundary of (\ref{eq: even larger diagram}) up, and then right, from $ H(\omega_\sR \boxtimes \omega_\sR \boxtimes \sF) $ to $ H(\sF) $ is the definition of $ (a_1 * a_2) * b $ (see  (\ref{eq:actionCondensed}) and (\ref{eq:multCondensed})).  On the other hand, following the boundary right, and then up, is the definition of $a_1 * (a_2 *b) $.  Thus, it suffices to prove the commutativity of each face of (\ref{eq: even larger diagram}).

\noindent \textbf{Bottom left square.} First, we consider the bottom left square.  This square involves two pairs of pull-back with supports homomorphisms.  It extends to a commutative cube
\begin{equation}
\begin{tikzcd}[column sep = scriptsize, row sep = scriptsize]
& \GK \times \sR \times \NO \ar[dd, "\tilde p\times \id_{\NO}" near start] & & \GK \times \groupoid \ar[ll, "\id_{\GK}\times p"']  \ar[dd, "\tilde p_2"] \\
p^{-1}(\sR\times \sR) \times \NO \ar[ur] \ar[dd, "p \times \id_\NO"] & & \groupoid^{(2)} \ar[ll, crossing over,"p_1" near start]  \ar[ur] & \\
& \sT \times \sR \times \NO & & \sT \times \groupoid \ar[ll, "\id_{\sT}\times p" near start] \\
\sR \times \sR \times \NO \ar[ur] & & \sR\times \groupoid \ar[ll,"\id_\sR \times p"] \ar[ur] \ar[from = uu, crossing over, "p_2" near start] &
\end{tikzcd}
\end{equation}
Arrows from the front square to the back are closed embeddings.  The map $\tilde p_2$ is analogous to the corresponding map $ p_2 $ in the front square, and sends $(g_1, g_2, v) \mapsto ( [g_1, g_2 v], g_2, v)$.  

We have the following morphisms corresponding to the arrows in the back square
\begin{subequations}
\begin{gather}
(\tilde p \times \id_{\NO})^* A \longrightarrow \omega_\GK[-2\dim \GO] \boxtimes \omega_\sR[-2\dim \NO] \boxtimes \sF \label{eq:sq1} \\
(\id_\GK \times p)^* (\omega_\GK[-2\dim \GO] \boxtimes \omega_\sR[-2\dim \NO] \boxtimes \sF)  \longrightarrow B\label{eq:sq2} \\
(\id_\sT \times p)^* A \longrightarrow \omega_\sT[-2\dim \NO] \boxtimes d_1^! \sF[-2\dim \GO] \label{eq:sq3} \\
\tilde p_2^* (\omega_\sT[-2\dim \NO] \boxtimes d_1^! \sF[-2\dim \GO]) \longrightarrow B \label{eq:sq4}
\end{gather}
\end{subequations}
where \begin{align*} A &= \omega_\sT[-2\dim \NO] \boxtimes \omega_\sR[-2\dim \NO] \boxtimes \sF, \\ B &= \omega_\GK[-2\dim \GO] \boxtimes d_1^! \sF[-2\dim \GO] 
\end{align*}

The morphism (\ref{eq:sq1}) comes from \cite[Lemma 3.5]{BFN1}, the morphisms (\ref{eq:sq2}) and (\ref{eq:sq3}) come from (\ref{eq:restrictionF}), and the morphism (\ref{eq:sq4}) is an isomorphism similar to \cite[Lemma 3.5]{BFN1}.  It is easy to see that the composition of first two morphisms equals the composition of the second two morphisms; both compositions lie in $ \Hom(t^* A, B)$,
where $ t = (\id_\GK \times p) \circ (\tilde p \times \id_\NO) = \tilde p_2 \circ (\id_\sT \times p) $ is the diagonal map across this square.

Since the top, right, left, and bottom faces of the cube are Cartesian, the pull-back with supports with respect to these four morphisms gives the four morphisms in the bottom left square of (\ref{eq: even larger diagram}) --- in particular, we use pull-back with supports with respect to (\ref{eq:sq2}) to define $ p_1^*$ and we use pull-back with supports with respect to (\ref{eq:sq4}) to define $ p_2^*$.  Thus the above equality of compositions in the back square implies the commutativity of the bottom left square of (\ref{eq: even larger diagram}).

\noindent \textbf{Bottom central square.} Now we consider the square
\begin{equation} \label{eq:BottomCentral}
\begin{tikzcd}
H(\groupoid^{(2)}, d_{12}^! d_1^! \sF [-2\dim \NO]) \arrow[r,"\beta"] & H(\groupoid^{(2)}, d_{12}^! d_2^! \sF[-2 \dim \NO]) \\
H(\sR \times \groupoid, \omega_\sR \boxtimes d_1^! \sF) \arrow[r, "\id \otimes \beta"] \arrow[u, "p_2^*"] & H(\sR \times \groupoid, \omega_\sR \boxtimes d_2^! \sF) \arrow[u, "p_2^*"]
\end{tikzcd}
\end{equation}
(We are still simplifying by leaving off equivariance groups and cohomological shifts.)

Recall that the left vertical $ p_2^* $ was defined using pull-back with supports with respect to the map (\ref{eq:sq4})
$$
\tilde p_2^* (\omega_\sT[-2\dim \NO] \boxtimes d_1^! \sF) \longrightarrow \omega_\GK[-2\dim \GO] \boxtimes d_1^! \sF
$$
The right vertical $ p_2^* $ is defined using the pull-back with supports with respect to a similar map
\begin{equation} \label{eq:p2star}
\tilde p_2^* (\omega_\sT[-2\dim \NO] \boxtimes d_2^! \sF) \longrightarrow \omega_\GK[-2\dim \GO] \boxtimes d_2^! \sF
\end{equation}
Moreover these two maps fit into a commutative diagram using $ \beta : d_1^! \sF \rightarrow d_2^! \sF $.  Hence the naturality of pull-back with supports gives us the commutative diagram (\ref{eq:BottomCentral}).

\noindent \textbf{Bottom right pentagon.}  Now we consider the bottom right pentagon, except that we will quotient its left column by the action of $ \GO $. We must prove the commutativity of the following square.
\begin{equation} \label{eq:BottomRight}
\begin{tikzcd}
H(\groupoid^{(2)}/ {\scriptstyle 1 \times \GO}, d_{23}^! d_1^! \sF) \arrow[r, "(d_{23})_*"] & H(\groupoid, d_1^! \sF) \\
H(\sR \times \sR, \omega_\sR \boxtimes m^! \sF) \arrow[r, "m_*"] \arrow[u, "p_2^*"] & H(\sR \times \NO, \omega_\sR \boxtimes \sF) \arrow[u, "p^*"]
\end{tikzcd}
\end{equation}
where the pull-back with supports $p_2^* $ was defined above with respect to (\ref{eq:p2star}).  Here we are using that $ d_{23}^! d_1^! \sF = d_{12}^! d_2^! \sF $.

We would like to apply Lemma \ref{lemma: proper base change hypercohomology}, but we first need to relate the two pull-backs with supports in this diagram.

For this reason, we complete the bottom right square of (\ref{eq: large diagram}) to the following commutative cube.
\begin{equation*}
\begin{tikzcd}[column sep = scriptsize, row sep = scriptsize]
 & \GK \times \sR \ar[dd, "\tilde p_2" near start] \ar[rr, "\id_{\GK} \times m"] & & \GK \times \NO \ar[dd, "\tilde p"] \\
\groupoid^{(2)}/1 \times \GO \ar[ur] \ar[dd,"p_2"] \ar[rr, crossing over, "d_{23}" near end] & & \groupoid \ar[ur]  & \\
 & \sT \times \sR \ar[rr, "\id_\sT \times m" near end] & & \sT \times \NO \\
\sR \times \sR \ar[rr, "\id_\sR \times m"] \ar[ur] & & \sR \times \NO \ar[from = uu, crossing over, "p" near start] \ar[ur] &
\end{tikzcd}
\end{equation*}
where we have slightly abused notation by using the same letters to denote maps after the reduction by $ \GO $.

 The existence of this cube means that we can apply Section \ref{section: composition of pull-back} to conclude that $ p_2^* $ is also the pull-back with supports with respect to the morphism
$$
p^* \omega_\sR[-2\dim \NO] \boxtimes \sF \rightarrow d_1^! \sF[-2\dim \GO]
$$
corresponding to the right vertical edge in (\ref{eq:BottomRight}).

So we are in a position to apply Lemma \ref{lemma: proper base change hypercohomology} to conclude that (\ref{eq:BottomRight}) commutes.

\noindent \textbf{Remaining faces.}
The top left square commutes analogously to the bottom right square.  The middle hexagon (which looks like a rectangle) commutes by the commutativity of (\ref{eq:hexagon}).  The other two squares that involve two parallel $ \beta$s commute analogously to the bottom central square.  Finally, the top right square is just proper direct image for the map $ d_2 \circ d_{23} = d_2 \circ d_{13} : \groupoid^{(2)} \rightarrow \NO $ factored in two different ways and so it commutes.

\end{proof}

\subsection{Auxillary action diagram} \label{se:Auxillary}
Now, we consider the case where our sheaf $ \cF $ comes from another space $ Z$.  More precisely, assume that we have the following data
\begin{enumerate}
	\item a space $ Z $ with an action of $ \GO $ and an equivariant morphism $ r : Z \rightarrow \NO $,
	\item an action of the groupoid $ \groupoid $ on $ Z$, compatible with $ r $ and equivariant for the action of $ \GO $,
	\item a $ \GO$-equivariant sheaf $\cF$ on $ Z$,
	\item a $ \groupoid$-equivariant structure on $ \cF $.
\end{enumerate}
Let us explain these conditions more precisely.  The action of $ \groupoid $ on $ Z $ is a map
$$
c_2 : \groupoid \times_\NO Z = \{(g, v, z) \in \groupoid \times Z : r(z) = v \} \rightarrow Z
$$
such that $r(c_2(g,v,z)) = d_2(g,v) = gv $.  We require that $ c_2 $ is invariant for the ``diagonal'' action of $ \GO $ given by $ h \cdot (g, v, z) = (gh^{-1}, hv, hz) $ and equivariant for the ``left'' action given by $ h \cdot (g, v, z) = (hg, v, z) $.

We also have the projection map $ c_1 : \groupoid \times_\NO Z \rightarrow Z $ given by $ c_1(g,v,z) = z $.  Note that the diagrams 
\begin{equation} \label{eq:ZPsquare}
\begin{tikzcd}
\groupoid \times_\NO Z \arrow[r,"c_i"] \arrow[d,"r^{(1)}"] & Z \arrow[d,"r"] \\
\groupoid \arrow[r,"d_i"] & \NO 
\end{tikzcd}
\end{equation}
commute and are Cartesian for $ i = 1,2$, where $ r^{(1)} $ is projection.

Also, consider 
$$
\groupoid^{(2)} \times_\NO Z = \{(g_1, g_2, v, z) \in \groupoid^{(2)} \times Z : r(z) = v \} 
$$
which comes with three maps $ c_{ij} : \groupoid^{(2)} \times_\NO Z \rightarrow \groupoid \times_\NO Z  $ given by
\begin{gather*}
c_{12}(g_1, g_2, v,z) = (g_2, v, z), \ c_{13}(g_1, g_2, v, z) =  (g_1 g_2, v, z), \\ 
c_{23}(g_1, g_2, v,z) = (g_1, g_2v, c_2(g_2, v, z))
\end{gather*}
There is also a unit morphism $ \GO \times Z \hookrightarrow \groupoid \times_\NO Z$ defined by $(g,z) \mapsto (g,r(z), z)$.

A $\groupoid$-\textbf{equivariant structure} on $ \cF $ is an isomorphism $ \beta : c_1^! \sF \rightarrow c_2^! \sF $ (in the $ \GO \times \GO$-equivariant derived category of $ \groupoid \times_\NO Z $), such that the analog of the hexagon (\ref{eq:hexagon}), with $ d_{ij}, d_k $ replaced by $ c_{ij}, c_k $, commutes. We also require that the obvious analog of the unit axiom (\ref{eq: unit axiom}) holds.

For the remainder of this section, let us fix all this data.

\begin{Lemma}
\label{lem: pushforward sheaf is equivariant}
	$r_* \cF $ is a $\groupoid$-equivariant sheaf on $ \NO $, with equivariant structure given by $ r^{(1)}_* \beta$.
\end{Lemma}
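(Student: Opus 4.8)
The plan is to verify directly that the proposed isomorphism $r^{(1)}_*\beta$ satisfies the two axioms of a $\groupoid$-equivariant structure, namely the hexagon (\ref{eq:hexagon}) and the unit axiom (\ref{eq: unit axiom}), using base change along the Cartesian squares (\ref{eq:ZPsquare}) together with the $\groupoid$-equivariance of $\cF$ on $Z$. First I would record the key base-change identity: since (\ref{eq:ZPsquare}) is Cartesian for $i=1,2$ with $r^{(1)}$ proper (or at least with base change available in our setting), we have canonical isomorphisms $d_i^! r_* \cF \cong r^{(1)}_* c_i^! \cF$. Composing these with $r^{(1)}_*$ applied to $\beta : c_1^!\cF \to c_2^!\cF$ produces the claimed isomorphism $d_1^! r_*\cF \xrightarrow{\sim} d_2^! r_*\cF$.

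Next I would check the hexagon. Applying the various $d_{ij}^!$ and $d_k^!$ to $d_i^! r_*\cF$ and using iterated base change along the relevant Cartesian squares built from (\ref{eq:ZPsquare}) and the projections $\groupoid^{(2)}\times_\NO Z \to \groupoid\times_\NO Z$, every term $d_{ij}^! d_k^! r_*\cF$ is canonically identified with $r^{(2)}_* c_{ij}^! c_k^! \cF$, where $r^{(2)}: \groupoid^{(2)}\times_\NO Z \to \groupoid^{(2)}$ is the projection. Under these identifications the hexagon for $r_*\cF$ becomes $r^{(2)}_*$ applied to the hexagon for $\cF$ on $Z$, which commutes by hypothesis. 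The only real content here is bookkeeping: one must check that the canonical base-change isomorphisms used to rewrite the equalities (the vertical ``identity'' edges of (\ref{eq:hexagon}), e.g. $d_{13}^! d_1^! = d_{12}^! d_1^!$) are compatible with the base-change isomorphisms for $d_i^! r_* \cong r^{(1)}_* c_i^!$. This compatibility is exactly the composition property for base change cited from \cite[Figure B.7]{AHR} (used already in Section \ref{section: composition of pull-back}), so it holds. The unit axiom is checked the same way: pull back along the unit $\GO\times\NO \hookrightarrow \groupoid$, use that the unit square for $Z$ sits over the unit square for $\NO$ via $r$, and reduce the unit axiom for $r_*\cF$ to $r_*$ of the unit axiom for $\cF$, noting that the canonical unit isomorphism of Remark \ref{rmk: canonical unit iso} for $r_*\cF$ is the pushforward of the one for $\cF$.

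I would also remark that $r_*\cF$ is $\GO$-equivariant because $r$ is $\GO$-equivariant and proper pushforward of an equivariant sheaf along an equivariant map is equivariant; and that the isomorphism $r^{(1)}_*\beta$ is an isomorphism in the $\GO\times\GO$-equivariant derived category of $\groupoid$ because $\beta$ is such an isomorphism on $\groupoid\times_\NO Z$ and $r^{(1)}$ is equivariant for the two commuting $\GO$-actions (the diagonal one is not used here; the relevant actions are the ``left'' $\GO$ on $\groupoid$ and the residual $\GO$).

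The main obstacle I expect is purely organizational rather than conceptual: one must set up the three-fold iterated base-change identifications carefully enough that the hexagon for $r_*\cF$ genuinely reduces to $r^{(2)}_*$ of the hexagon for $\cF$, i.e. one must confirm that all six ``identity'' edges of the hexagon and all arrows $d_{ij}^!\beta$ translate correctly. Since every compatibility needed is one of the base-change coherences catalogued in \cite[Appendix B]{AHR} and already invoked in Section \ref{section: composition of pull-back}, no new input is required, and I would present the argument as ``apply $r^{(1)}_*$ (resp.\ $r^{(2)}_*$) to the corresponding diagrams for $\cF$, using base change for the Cartesian squares (\ref{eq:ZPsquare}) and their $\groupoid^{(2)}$-analogues.''
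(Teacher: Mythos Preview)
Your proposal is correct and follows essentially the same approach as the paper: use base change along the Cartesian squares (\ref{eq:ZPsquare}) to identify $d_i^! r_*\cF \cong r^{(1)}_* c_i^!\cF$, then reduce the hexagon to $r^{(2)}_*$ of the hexagon for $\cF$ on $Z$, and handle the unit axiom analogously. Your write-up is in fact more explicit than the paper's about the base-change coherences needed for the bookkeeping, but the underlying argument is the same.
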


\begin{proof}
	By base change for the Cartesian squares (\ref{eq:ZPsquare}), we see that   $ d_i^! r_* \cF \cong r^{(1)}_* c_i^! \cF$.  Thus $ r^{(1)}_* \beta $ gives the desired isomorphism $ d_1^! r_* \sF \rightarrow d_2^! r_* \sF $.  
	
	To deduce the commutativity of the hexagon (\ref{eq:hexagon}), we begin with the corresponding hexagon containing $ c_{ij}, c_k $.  Then we apply $r^{(2)}_* $, where $r^{(2)}$ is the projection $ r^{(2)} : \groupoid^{(2)} \times_\NO Z \rightarrow \groupoid^{(2)} $.  Finally we use base change as above.
	
	The proof of the compatibility for unit axiom (\ref{eq: unit axiom}) is similar.
\end{proof}

Now consider the following modification of (\ref{eq: diagram for action1})
\begin{equation}
\label{eq: diagram for actionZ}
\begin{tikzcd}
\sR \times Z \arrow[hook]{d}{i_Z} & \groupoid \times_\NO Z \arrow{l}[swap]{p_Z}\arrow{r} \arrow[hook]{d}{j_Z}& \groupoid \times_\NO Z / \GO \arrow{r}{m_Z}  & Z\\
\sT \times Z \arrow[leftarrow]{r}{\tilde p_Z}  & \GK \times Z & &
\end{tikzcd}
\end{equation}
Here  $ \tilde p_Z (g,z) = ([g,r(z)], z)$ and $ p_Z $ is defined by restriction. The map $ m_Z $ is the result of descending $ c_2 $ by the diagonal action of $ \GO $.

\begin{Lemma} \label{Lemma: morphism to put into restriction with supports2}
	We have an isomorphism
\begin{equation}
\label{extended morphism 0}
 \tilde p_Z^*(\omega_\sT[-2\dim \NO] \boxtimes \sF) \xrightarrow{\sim} \omega_{\GK}[-2\dim \GO] \boxtimes \sF
\end{equation} which after applying $ (\id_\GK,r)_* $ fits into the bottom row of the following commutative diagram 
\begin{equation*}
\begin{tikzcd}
\tilde p^*(\omega_\sT[-2 \dim \NO] \boxtimes r_*\sF) \arrow{r} \arrow{d} & \omega_{\GK}[-2\dim \GO] \boxtimes r_* \sF \arrow[equal]{d} \\
(\id_\GK, r)_* \tilde p_Z^*(\omega_\sT[-2 \dim \NO] \boxtimes \sF) \arrow{r} & \omega_{\GK}[-2\dim \GO] \boxtimes r_* \sF
\end{tikzcd}
\end{equation*}
where the left vertical arrow is the base change morphism $ \tilde p^* (\id_\sT, r)_* \rightarrow (\id_\GK, r)_* \tilde p_Z^* $ for the Cartesian square
\begin{equation} \label{eq:square}
\begin{tikzcd}
\sT \times Z \arrow{d}{\id_\sT,r} & \GK \times Z \arrow{l}{\tilde p_Z} \arrow{d}{\id_\GK,r}  \\
 \sT \times \NO & \GK \times \NO \arrow{l}{\tilde p}  
\end{tikzcd}
\end{equation}
and the top horizontal arrow is (\ref{eq:defphi}), for $ r_* \sF$.
\end{Lemma}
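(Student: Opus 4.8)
The plan is to obtain the isomorphism (\ref{extended morphism 0}) by running the argument of Lemma \ref{Lemma: morphism to put into restriction with supports} essentially verbatim --- it is the generalization of that lemma from the base $\NO$ to the base $Z$ --- and then to deduce the commutative square as a compatibility of this construction with the base change morphism attached to the Cartesian square (\ref{eq:square}).

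\emph{Constructing (\ref{extended morphism 0}).} I would write $\tilde p_Z = (\tilde p_{Z,\sT}, \tilde p_{Z,Z})$ in components. The map $\tilde p_{Z,Z}\colon \GK \times Z \to Z$ is the projection, so $\tilde p_{Z,Z}^* \sF \cong \C_{\GK}\boxtimes\sF$. The map $\tilde p_{Z,\sT}\colon \GK\times Z \to \sT$ factors as $\GK\times Z \xrightarrow{\id_\GK\times r} \GK\times\NO \xrightarrow{\pi} \sT$, where $\pi$ is the $\GO$--bundle realizing $\sT = \GK\times_{\GO}\NO$. Hence $\pi^*\omega_\sT \cong \pi^!\omega_\sT[-2\dim\GO] \cong \omega_\GK\boxtimes\omega_\NO[-2\dim\GO]$, and since $\NO$ is smooth, $\omega_\NO\cong\C_\NO[2\dim\NO]$; therefore $\tilde p_{Z,\sT}^*(\omega_\sT[-2\dim\NO]) \cong (\id_\GK\times r)^*(\omega_\GK\boxtimes\C_\NO[-2\dim\GO]) \cong \omega_\GK\boxtimes\C_Z[-2\dim\GO]$. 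Tensoring the two components yields the isomorphism (\ref{extended morphism 0}); taking $Z = \NO$ and $r = \id$ recovers (\ref{eq:defphi}).

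\emph{The commutative square.} Since $(\id_\sT,r) = \id_\sT\times r$ and $(\id_\GK,r) = \id_\GK\times r$ are base changes of $r$ in the second factor, the projection (Künneth) isomorphisms give $(\id_\sT\times r)_*(\omega_\sT[-2\dim\NO]\boxtimes\sF) \cong \omega_\sT[-2\dim\NO]\boxtimes r_*\sF$ and $(\id_\GK\times r)_*(\omega_\GK[-2\dim\GO]\boxtimes\sF) \cong \omega_\GK[-2\dim\GO]\boxtimes r_*\sF$. Applying $\tilde p^*$ to the first makes the left vertical arrow of the diagram precisely the base change morphism $\tilde p^*(\id_\sT,r)_* \to (\id_\GK,r)_*\tilde p_Z^*$ for (\ref{eq:square}); the second is the rightmost ``equal'' arrow. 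With the diagram thus well posed, what remains is the single compatibility that (\ref{eq:defphi}) for $r_*\sF$ agrees, after these identifications, with the base change morphism for (\ref{eq:square}) followed by $(\id_\GK,r)_*$ of (\ref{extended morphism 0}). I would prove this by decomposing both (\ref{eq:defphi}) and (\ref{extended morphism 0}), exactly as in the first part, as external tensor products of (i) a factor on $\sT$ built from the $\GO$--bundle descent isomorphism $\pi^*\omega_\sT\cong\omega[-2\dim\GO]$ together with the smoothness of $\NO$, and (ii) the trivial pullback-along-a-projection isomorphism on the $\NO$--, resp.\ $Z$--, factor. Because the horizontal maps $\tilde p$ and $\tilde p_Z$ in (\ref{eq:square}) split compatibly along these two factors (each is a ``diagonal'' map combining the covering $\pi$ with the identity/projection in the second factor), the base change morphism for (\ref{eq:square}) splits accordingly as a tensor product: one factor is base change for the $\GO$--bundle covering square pulled back along $\id\times r$, the other is base change for a square of projections. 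The latter is tautologically compatible with (ii); the former is compatible with (i) because $\GO$--bundle descent and the Künneth isomorphism are themselves natural with respect to base change. Assembling these pieces, together with smooth base change for $r$, gives commutativity. Concretely this is a diagram chase in the six-functor formalism on the relevant ind-schemes, using the pasting law for base change repeatedly and the compatibilities catalogued in \cite[Appendix B]{AHR}, in the same spirit as the proof of Theorem \ref{th:action}.

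The step I expect to be the main obstacle is the bookkeeping in this last compatibility: writing the base change morphism for (\ref{eq:square}) in a form that manifestly respects the decomposition of $\omega_\sT[-2\dim\NO]\boxtimes\sF$ into its two external tensor factors, and keeping every cohomological shift aligned throughout. No new geometric input is needed beyond the smoothness of $\NO$ and the $\GO$--bundle structure of $\GK\times\NO\to\sT$.
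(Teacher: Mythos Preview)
Your proposal is correct and follows essentially the same approach as the paper. The paper's own proof is extremely terse---it simply observes that $\tilde p_Z$ factors as $\GK \times Z \rightarrow \GK \times \NO \times Z \rightarrow \sT \times Z$ and declares that the argument of Lemma \ref{Lemma: morphism to put into restriction with supports} then carries over---whereas you have spelled out the same factorization in components and worked through the K\"unneth/base-change bookkeeping for the commutative square explicitly; but the underlying idea is identical.
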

\begin{proof}
Since $ \tilde p_Z $ factors as $ \GK \times Z \rightarrow \GK \times \NO \times Z \rightarrow \sT \times Z $, the proof of Lemma \ref{Lemma: morphism to put into restriction with supports} gives the desired result.
\end{proof}
We apply pull-back with supports with respect to (\ref{extended morphism 0}) and we get
$$
\omega_\sR [-2\dim \NO] \boxtimes \sF \rightarrow (p_Z)_* j_Z^!( \omega_{\GK}[-2\dim \GO] \boxtimes \sF) =  (p_Z)_* c_1^!( \sF [-2\dim \GO])
$$
Using the equivariant structure $ c_1^! \sF \xrightarrow{\beta} c_2^! \sF $, the isomorphism from $ \GO$-equivariance, and proper direct image for $ m_Z$, we get
\begin{equation} \label{eq:actionmodified}
H^\bullet_{\GO \times \GO}(\sR \times Z, \omega_\sR[-2\dim \NO] \boxtimes \sF) \rightarrow H^\bullet_\GO(Z, \sF)
\end{equation}

\begin{Proposition}
\label{prop: auxiliary action}
	Under the isomorphism $H^\bullet_\GO(Z, \sF) \cong H^\bullet_\GO(\NO, r_* \sF) $, the action of $ \mathcal A_0 $ on $ H^{-\bullet}_\GO(Z, \sF)$ is given by (\ref{eq:actionmodified}).
\end{Proposition}

\begin{proof}

Using condensed notation as in (\ref{eq:actionCondensed}), consider the diagram
$$
\begin{tikzcd}[column sep=scriptsize]
H(\omega_\sR \boxtimes \sF) \ar[r,"p_Z^*"] \ar[d,"\sim"] & H(c_1^!\sF) \ar[r,"\beta"]\ar[d,"\sim"] & H(c_2^! \sF) \ar[r,"\sim"] \ar[d,"\sim"] & H(m_Z^! \sF) \ar[r,"(m_Z)_*"] \ar[d,"\sim"] & H(\sF) \ar[d,"\sim"]\\
H(\omega_\sR \boxtimes r_\ast\sF) \ar[r,"p^*"] & H(d_1^!r_\ast\sF)\ar[r,"r^{(1)}_\ast\beta"] & H(d_2^! r_\ast\sF) \ar[r,"\sim"] &H(m^! r_\ast\sF) \ar[r,"m_*"] & H(r_\ast\sF)
\end{tikzcd}
$$
The top row encodes the auxiliary action via (\ref{eq: diagram for actionZ}), while the bottom row encodes the usual action (\ref{eq:actionCondensed}) applied to $r_\ast \sF$.  The vertical arrows are the obvious isomorphisms. We wish to show that all squares in the diagram commute. This is straightforward for all squares except for the leftmost one; for example, the square involving $\beta$ is commutative by our definition of the $\groupoid$--equivariant structure on $r_\ast \sF$.

Commutativity for the leftmost square will follow if we show that, when we apply $(\id_{\sR}, r)_* $ to the morphism
	\begin{equation}
	\label{extended morphism 1}
	\omega_\sR [-2\dim \NO] \boxtimes \sF \rightarrow  (p_Z)_* c_1^!( \sF [-2\dim \GO]),
	\end{equation}
	then we obtain the morphism (\ref{eq:restrictionF}) for the sheaf  $r_* \sF $
	\begin{equation}
	\label{extended morphism 2}
		\omega_\sR [-2\dim \NO] \boxtimes r_*\sF \rightarrow  p_* d_1^!( r_* \sF [-2\dim \GO])
	\end{equation}
To prove this we consider the following cube, whose back and front squares were used to define the above morphisms:
\begin{equation}
\label{extended morphism 3}
\begin{tikzcd}[column sep = scriptsize, row sep = scriptsize]
\sR \times Z \ar[dd,"i_Z"] \ar[dr] & & \groupoid \times_{\NO} Z \ar[ll,"p_Z"] \ar[dd, "j_Z", near start] \ar[dr] & \\
& \sR \times \NO \ar[dd, crossing over, "i", near start] & & \groupoid \ar[ll, crossing over, "p", near start] \ar[dd, "j"] \\
\sT \times Z \ar[dr] & & \GK \times Z \ar[ll,"\tilde p_Z",near start] \ar[dr] & \\
& \sT \times \NO & & \GK \times \NO \ar[ll, "\tilde p"]
\end{tikzcd}
\end{equation}
The arrows from the back square to the front are induced from $r: Z\rightarrow \NO$ in the obvious ways, and all of the faces are Cartesian. 

Denoting $A = \omega_\sT [-2\dim \NO] \boxtimes \sF$ and $B = \omega_{\GK}[-2\dim \GO] \boxtimes \sF$, recall that by the adjoint to  (\ref{extended morphism 0}) we have a morphism $A \rightarrow (\tilde p_Z)_\ast B$.  Consider a commutative diagram built out of this morphism by applying functors and natural isomorphisms:
\begin{equation*}
\begin{tikzcd}[column sep = scriptsize, row sep = scriptsize]
& (\id_\sR, r)_\ast i_Z^! A \ar[d] \ar[r,"\sim"] & i^! (\id_\sT, r)_\ast A \ar[d] & \\
& (\id_\sR, r)_\ast i_Z^! (\tilde p_Z)_\ast B \ar[r,"\sim"] \ar[dl,"\sim"'] & i^! (\id_\sT, r)_\ast (\tilde p_Z)_\ast B \ar[dr,"\sim"] & \\
(\id_\sR,r)_\ast (p_Z)_\ast j_Z^! B \ar[dr,"\sim"']& & & i^! \tilde p_\ast (\id_\GK, r)_\ast B \ar[dl,"\sim"] \\
& p_\ast r^{(1)}_\ast j_Z^! B \ar[r,"\sim"] & p_\ast j^! (\id_\GK, r)_\ast B &
\end{tikzcd}
\end{equation*}
The vertical arrows at the top come from $A\rightarrow (\tilde p_Z)_\ast B$. The commutativity of the hexagon is encoded by \cite[Figure B.8(b)]{AHR} applied to the cube (\ref{extended morphism 3}).

Now on the one hand, by passing from the top left to the far left we get precisely the functor $(\id_\sR, r)_\ast$ applied to (\ref{extended morphism 1}).  On the other hand, passing all the way down the right side of the diagram gives the pull-back with supports (for the front square) with respect to the morphism
\begin{equation}
\label{extended morphism 5}
 \tilde p^* (\id_\sT, r)_\ast A \longrightarrow  (\id_\GK, r)_\ast B
\end{equation}
which is adjoint to the result of applying $(\id_\sT,r)_\ast $ to the adjoint of the morphism (\ref{extended morphism 0}).
 
 Finally, observe that the map (\ref{extended morphism 2}) is defined by pull-back with supports of an a priori different morphism $ \tilde p^* (\id_\sT, r)_\ast A \longrightarrow  (\id_\GK, r)_\ast B $ given by (\ref{eq:defphi}).  Thus we must show that the morphisms (\ref{extended morphism 5}) and (\ref{eq:defphi}) are equal.

Consider the following diagram of sheaves on $ \sT \times \NO $.
\begin{equation}
\begin{tikzcd}
(\id_\sT,r)_* A \arrow{r}\arrow{d} & \tilde p_* \tilde p^* (\id_\sT, r)_\ast A \arrow{rrr}{\tilde p_*(\ref{eq:defphi})} \arrow{d}& & &   \tilde p_* (\id_\GK,r)_* B  \arrow[equal]{d} \\
(\id_\sT,r)_* (\tilde p_Z)_* \tilde p_Z^* A \arrow{r} & \tilde p_* (\id_\GK,r)_* \tilde p_Z^* A \arrow{rrr}{\tilde p_* (\id,r)_* (\ref{extended morphism 0})} & & & \tilde p_* (\id_\GK,r)_* B
\end{tikzcd}
\end{equation}
The left square commutes by applying Lemma \ref{le:adjunctionBaseChange} below to the Cartesian square (\ref{eq:square}) and the right square commutes by Lemma \ref{Lemma: morphism to put into restriction with supports2}.

Following this diagram along the top, and then down the right, gives the adjoint of (\ref{eq:defphi}). Following the diagram down, and then along the bottom gives the adjoint of (\ref{extended morphism 5}).  Thus the morphisms (\ref{eq:defphi}) and (\ref{extended morphism 5}) are equal and the result follows.

\end{proof}

\begin{Lemma} \label{le:adjunctionBaseChange}
	Consider a Cartesian square
	\begin{equation*}
	\begin{tikzcd}
	X \arrow{d}{i} & Y \arrow{d}{j} \arrow{l}{g} \\
	M & N \arrow{l}{f}     
	\end{tikzcd}
	\end{equation*}
	Then for any sheaf $ A $ on $ X $, the following diagram commutes
	\begin{equation*}
	\begin{tikzcd}
	i_*A \arrow{r} \arrow{d} & f_* f^* i_* A \arrow{d} \\
	i_* g_*g^*A \arrow{r} & f_* j_* g^*A
	\end{tikzcd}
	\end{equation*}
	where the right vertical morphism is the base change morphism.
\end{Lemma}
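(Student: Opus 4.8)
The plan is to recognize the square as one of the triangle identities for the adjunction $(f^\ast, f_\ast)$. First I would pin down the construction of the base-change (exchange) morphism labelling the right vertical arrow, in the normalization of \cite[Appendix B]{AHR}. Since $i \circ g = f \circ j$ there is a canonical identification $i_\ast g_\ast \cong f_\ast j_\ast$; in the diagram, the bottom horizontal arrow is this identification applied to $g^\ast A$, the left vertical arrow is $i_\ast$ applied to the unit $A \to g_\ast g^\ast A$ of $(g^\ast, g_\ast)$, and hence the left-then-bottom composite is a single morphism $\psi : i_\ast A \to f_\ast j_\ast g^\ast A$. The base-change morphism $\mathrm{bc} : f^\ast i_\ast A \to j_\ast g^\ast A$ is precisely the $(f^\ast, f_\ast)$-adjunct of $\psi$, i.e. $\mathrm{bc} = \varepsilon_f \circ f^\ast(\psi)$ where $\varepsilon_f$ is the counit of $(f^\ast, f_\ast)$; if one prefers to construct $\mathrm{bc}$ as a mate through the adjunctions $(i^\ast, i_\ast)$ and $(j^\ast, j_\ast)$ instead, a standard fact of mate calculus identifies the two, so either construction may be used.

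With this description the right vertical arrow of the square is $f_\ast(\mathrm{bc})$ and the top horizontal arrow is the unit $\eta_f$ of $(f^\ast, f_\ast)$ at $i_\ast A$, so the top-then-right composite is $f_\ast(\mathrm{bc}) \circ \eta_f(i_\ast A)$. Substituting $\mathrm{bc} = \varepsilon_f \circ f^\ast(\psi)$ and commuting $\eta_f$ past $f_\ast f^\ast(\psi)$ by naturality of the unit, this equals $f_\ast(\varepsilon_f(j_\ast g^\ast A)) \circ \eta_f(f_\ast j_\ast g^\ast A) \circ \psi$, and the first two factors compose to the identity by the triangle identity $f_\ast(\varepsilon_f(C)) \circ \eta_f(f_\ast C) = \mathrm{id}$. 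Thus the top-then-right composite is $\psi$, which is by definition the left-then-bottom composite, and the square commutes. (Equivalently: $f_\ast(\mathrm{bc}) \circ \eta_f(i_\ast A) = \psi$ is just the assertion that passing to the $f_\ast$-adjunct inverts passing to the $f^\ast$-adjunct.)

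I expect no genuine obstacle here: the claim is the purely formal compatibility between a Beck--Chevalley cell and the unit of an adjunction, and the only point that needs attention is aligning our normalization of $\mathrm{bc}$ with that of \cite[Appendix B]{AHR}. As explained in Section \ref{section: some sheaf theory}, identities of this kind --- statements about functors and natural transformations only --- are verified on the finite-dimensional approximations and pass to the limit, so the lemma applies verbatim to the placid ind-schemes appearing in the Cartesian square (\ref{eq:square}) to which it is applied.
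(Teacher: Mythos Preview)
The paper states this lemma without proof, treating it as a routine categorical fact. Your argument is correct and is the natural one: once the base-change morphism $\mathrm{bc}$ is identified as the $(f^\ast,f_\ast)$-adjunct of the left-then-bottom composite $\psi$, the commutativity of the square is literally the statement that $f_\ast(\mathrm{bc})\circ\eta_f = \psi$, i.e.\ that passing to the adjunct and back is the identity. The remarks about normalization conventions and passage to placid ind-schemes are appropriate caveats but do not affect the argument.
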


\subsection{Examples of the action}
There are some special cases of this auxiliary action which will be of interest.

\subsubsection{Springer fibres}
Let $ c \in \NK $ and assume that the stack $[\Vco/\GO] $ admits a dualizing sheaf.  Then we take $ Z = \Vco, \mathcal F = \omega_{\Vco}[-2\dim \GO + 2\dim \Lco] $.   As discussed before, in this case the module is isomorphic to $ H_\bullet^{\Lco}(\Sp_c) $.

\subsubsection{Losing matter}
\label{sec: losing matter}
Suppose that $N = N' \oplus N''$ splits as a representation of $G$.  Then there is an inclusion of algebras
\begin{equation}
\label{eq: lost matter inclusion}
\A_0(G, N) = \A_0(G, N'\oplus N'') \hookrightarrow \A_0(G, N')
\end{equation}
by \cite[Remark 5.14]{BFN1}. Given a $\groupoid$--equivariant sheaf $\sF$ on $N'_\cO$, we may define an action of $\A_0(G,N)$ on $H_{\GO}^{-\bullet}( N'_\cO, \sF)$ in two a priori different ways:

On the one hand, by Theorem \ref{th:action} the algebra $\A_0(G, N')$ acts on $H_{\GO}^{-\bullet}( N'_\cO, \sF)$.  By restricting this module under the inclusion map (\ref{eq: lost matter inclusion}), we obtain an action of $\A_0(G, N)$.

On the other hand, the space $Z = N'_\cO$ and its sheaf $\cF$ naturally fit into the framework of Section \ref{se:Auxillary}. So Proposition \ref{prop: auxiliary action} provides another action of $\A_0(G,N)$ on $H_{\GO}^{-\bullet}( N'_\cO, \sF)$.

\begin{Proposition}
\label{prop: losing matter}
The action of $\A_0(G,N)$ on $H_{\GO}^{-\bullet}( N'_\cO, \sF)$ via the inclusion (\ref{eq: lost matter inclusion}) agrees with the action defined by the auxiliary action diagram (\ref{eq: diagram for actionZ}).
\end{Proposition}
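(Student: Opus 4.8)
The plan is to reduce the statement, via Proposition~\ref{prop: auxillary action}, to a comparison of two instances of the action diagram~(\ref{eq: diagram for action1}), and then to carry out that comparison by the same face-by-face chase used in the proofs of Theorem~\ref{th:action} and Proposition~\ref{prop: auxillary action}, feeding in the geometric description of the inclusion~(\ref{eq: lost matter inclusion}) from \cite[Remark 5.14]{BFN1}.

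First I would record two elementary facts about the splitting $N = N' \oplus N''$. The inclusion $r : N'_\cO \hookrightarrow N_\cO$ is $\groupoid$--equivariant: if $(g,v) \in \groupoid$ and $v = r(z)$ with $z \in N'_\cO$, then $gz \in N'_\cK$ because $\GK$ preserves $N'_\cK$, while $gz = gv \in N_\cO$, so $gz \in N'_\cK \cap N_\cO = N'_\cO$. More precisely, this identifies $\groupoid \times_{N_\cO} N'_\cO$ with the groupoid $\groupoid'$ attached to the pair $(G, N')$, and likewise $\groupoid^{(2)} \times_{N_\cO} N'_\cO = \groupoid'^{\,(2)}$; one also has closed embeddings $\sT_{G,N'} \hookrightarrow \sT_{G,N}$ and $\sR_{G,N'} \hookrightarrow \sR_{G,N}$ coming from $N' \hookrightarrow N$. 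By Proposition~\ref{prop: auxillary action}, the action of $\A_0(G,N)$ on $H^{-\bullet}_{\GO}(N'_\cO, \sF)$ defined by~(\ref{eq: diagram for actionZ}) agrees, under the isomorphism $H^{-\bullet}_{\GO}(N'_\cO,\sF) \cong H^{-\bullet}_{\GO}(N_\cO, r_\ast\sF)$, with the action of $\A_0(G,N)$ on $H^{-\bullet}_{\GO}(N_\cO, r_\ast\sF)$ given by Theorem~\ref{th:action}. So it remains to show that this latter action coincides with the restriction along~(\ref{eq: lost matter inclusion}) of the Theorem~\ref{th:action} action of $\A_0(G,N')$ on $H^{-\bullet}_{\GO}(N'_\cO,\sF)$.

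For this I would place side by side the two action diagrams~(\ref{eq: diagram for action1}): the one for $(G,N)$ with the sheaf $r_\ast\sF$, and the one for $(G,N')$ with the sheaf $\sF$. By the identifications above, everything to the right of $\groupoid$ (resp.\ $\groupoid'$) in the two diagrams coincides --- the maps $q$, $m$ and the equivariant structure $\beta$ are literally the same --- and on the left the two diagrams differ only through the closed embeddings $\sT_{G,N'} \hookrightarrow \sT_{G,N}$, $\sR_{G,N'} \hookrightarrow \sR_{G,N}$, which are compatible with the $\GO$--bundle maps out of $\GK \times N'_\cO$. Writing the action in the condensed form~(\ref{eq:actionCondensed}), the comparison therefore collapses to its first step: I must check that the pull-back-with-supports step for $(G,N)$ equals the composite of the algebra map~(\ref{eq: lost matter inclusion}) (twisted by $\sF$) with the pull-back-with-supports step for $(G,N')$. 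At this point I would use \cite[Remark 5.14]{BFN1}, where~(\ref{eq: lost matter inclusion}) is realised by a pull-back with supports along a correspondence between $\sR_{G,N}$ and $\sR_{G,N'}$ --- concretely, $\sR_{G,N}$ is a closed subscheme of the vector bundle $\sR_{G,N'} \times_{\Gr} \sT_{G,N''}$ over $\sR_{G,N'}$, and one works with the renormalised dualizing sheaves of \cite[\S 2(ii)]{BFN1}, the cohomological shifts being absorbed by the renormalisation exactly as in the definition of $\A_0$. This correspondence is compatible, in Cartesian fashion, with the groupoid and loop-space data in the action diagram (once again because $\groupoid \times_{N_\cO} N'_\cO = \groupoid'$, and similarly for $\sT_{G,N''}$), so the required equality of morphisms on equivariant cohomology follows by assembling the relevant cubes of Cartesian squares and invoking the composition property of pull-back with supports (Section~\ref{section: composition of pull-back}) together with the base-change compatibilities of \cite[Appendix B]{AHR}.

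The main obstacle is bookkeeping rather than anything conceptual. One must match the renormalised-dualizing-sheaf shifts that appear in the \cite[Remark 5.14]{BFN1} description of~(\ref{eq: lost matter inclusion}) with those built into the action diagram, and check that the auxiliary bundle $\sR_{G,N'} \times_{\Gr} \sT_{G,N''}$ genuinely fits in as an extra layer of Cartesian squares beneath the left-hand column of~(\ref{eq: diagram for action1}); the clean identity $\groupoid \times_{N_\cO} N'_\cO = \groupoid'$ is what makes this work, and no new geometric input beyond \cite[Remark 5.14]{BFN1} is required.
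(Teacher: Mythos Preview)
Your proposal is essentially correct and follows the same strategy as the paper: introduce the intermediate space $\sR_{G,N'} \times_{\Gr} \sT_{G,N''}$ (which the paper denotes $\sR_{N',N''}$), factor the inclusion~(\ref{eq: lost matter inclusion}) as a proper pushforward followed by an inverse Gysin isomorphism along this vector bundle, and then match the two actions by stacking Cartesian squares and applying the composition property of pull-back with supports together with proper base change.

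The one organizational difference is that you take an unnecessary detour through Proposition~\ref{prop: auxillary action}. The paper works directly with the auxiliary action diagram for $Z = N'_\cO$: it builds a three-row diagram~(\ref{eq: big losing matter}) whose first and third rows are exactly~(\ref{eq: diagram for actionZ}) for $Z = N'_\cO$, and whose middle row involves $\sR_{N',N''} \times N'_\cO$; it then proves (as a separate Lemma) that the bottom two rows compute the $\A_0(G,N')$-action under the Gysin isomorphism, and finishes with Section~\ref{section: composition of pull-back} and Lemma~\ref{lemma: proper base change hypercohomology} applied to the top-left square. By contrast, you first pass via Proposition~\ref{prop: auxillary action} to the standard action on $r_\ast\sF$ over $N_\cO$, and then your claim that ``everything to the right of $\groupoid$ coincides'' amounts to redoing part of the base-change argument already inside Proposition~\ref{prop: auxillary action}. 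This is not wrong, but it duplicates work; going directly as the paper does is cleaner and avoids juggling both $N_\cO$ and $N'_\cO$ simultaneously.
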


Before we prove this result,  let us first recall the construction of (\ref{eq: lost matter inclusion}).  For brevity we will omit $G$ from our notation, writing $\sR_{N}$ instead of $\sR_{G,N}$, and so on. Consider the space
$$
\sR_{N', N''} = \left\{ [g, (n_1,n_2)] \in \GK\times_{\GO} (N'_\cO \oplus N''_\cO ) \ : \ gn_1 \in N'_\cO \right\}
$$
We use this space as an intermediary, via the obvious diagram
$$
\begin{tikzcd}
\sR_N \ar[r,hook,"\inclu"] & \sR_{N', N''} \ar[r,twoheadrightarrow,"\ontu"] & \sR_{N'}
\end{tikzcd}
$$
The map $\inclu$ is a closed embedding, and therefore proper.  Applying Section \ref{section: proper direct image} to the sheaf $\omega_{\sR_{N', N''}}[-2\dim \NO]$ we obtain the  proper direct image map $H^\GO_\ast(\sR_N) \rightarrow H^\GO_\ast(\sR_{N', N''})$. 

Meanwhile, $\ontu$ is a vector bundle with fibres isomorphic to $N''_\cO$, so $\ontu^\ast = \ontu^![-2\dim N''_\cO]$. Together  with the unit of the adjunction $(\ontu^\ast, \ontu_\ast)$, we get a morphism $$\omega_{\sR_{N'}}[-2\dim N'_\cO] \longrightarrow \ontu_\ast \omega_{\sR_{N', N''}}[-2\dim \NO]
$$ Taking cohomology gives the Gysin isomorphism $H_\bullet^{\GO}(\sR_{N'}) \xrightarrow{\sim} H_\bullet^\GO(\sR_{N', N''})$.  Composing the inverse of this isomorphism with the above proper direct image defines (\ref{eq: lost matter inclusion}):
$$
\begin{tikzcd}
H^\GO_\ast(\sR_N) \ar[r,hook]& H^\GO_\ast(\sR_{N', N''}) \ar[r,"\sim"] &H_\bullet^{\GO}(\sR_{N'})
\end{tikzcd}
$$

\begin{proof}[Proof of Proposition \ref{prop: losing matter}]
Consider a commutative diagram:
\begin{equation}
\label{eq: big losing matter}
\begin{tikzcd}
\sR_N \times N'_\cO \arrow[swap,hook]{d}{\inclu\times \id} & \groupoid_{N'} \arrow{l} \arrow{r} \arrow[equal]{d}& \sR_{N'} \arrow{r} \arrow[equal]{d} & N'_\cO\arrow[equal]{d}\\
\sR_{N', N''} \times N'_\cO \arrow[hook]{d} & \groupoid_{N'} \arrow[hook]{d} \arrow{r} \arrow{l} & \sR_{N'} \arrow{r} & N'_\cO \\
\sT_N \times N'_\cO \arrow[leftarrow]{r}  & \GK \times N'_\cO & &
\end{tikzcd}
\end{equation}
The arrows in the left squares are just restrictions  of the usual ones from the diagram (\ref{eq: diagram for action1}) for $(G, N)$.  Meanwhile, the rightward pointing arrows are precisely those from the diagram (\ref{eq: diagram for action1}) for $(G, N')$.

The first and third rows form the auxiliary action diagram (\ref{eq: diagram for actionZ}) for $Z = N'_\cO$.  Note that $\groupoid_{N}\times_\NO N'_\cO \cong \groupoid_{N'}$ is the groupoid corresponding to $(G, N')$.

In Lemma \ref{lemma: for losing matter} below, we will show that the bottom two rows of (\ref{eq: big losing matter}) define the action of $\A_0(G,N')$ on $H_{\GO}^{-\bullet}( N'_\cO, \sF)$, under the above Gysin isomorphism for $\ontu$.

Proposition \ref{prop: losing matter} now follows from our results from Section \ref{section: some sheaf theory}, applied to the left-most squares: the pull-back with supports from the bottom row to the top may be computed by first passing to the middle row (section \ref{section: composition of pull-back}), and proper base change in the top left square (Lemma \ref{lemma: proper base change hypercohomology}).
\end{proof}

\begin{Lemma}
\label{lemma: for losing matter}
Under the Gysin isomorphism $\A_0(G,N') = H_\bullet^{\GO}(\sR_{N'}) \xrightarrow{\sim} H_\bullet^\GO(\sR_{N', N''})$ defined by $\ontu$ above, the action of $\A_0(G,N')$ on $H_{\GO}^{-\bullet}( N'_\cO, \sF)$ is defined by the bottom two rows of the diagram (\ref{eq: big losing matter}).
\end{Lemma}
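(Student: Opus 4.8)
The plan is to compare the bottom two rows of (\ref{eq: big losing matter}) with the action diagram (\ref{eq: diagram for action1}) for the pair $(G,N')$, which by Theorem \ref{th:action} computes the action of $\A_0(G,N')$ on $H^{-\bullet}_{\GO}(N'_\cO,\sF)$.  These two diagrams have the same right-hand portion: the $\groupoid_{N'}$--equivariant structure $\beta$, the descent along the $\GO$--bundle $q$, and the proper direct image along $m$ are literally the same maps.  So, writing the action in the condensed form (\ref{eq:actionCondensed}), it suffices to show that the first (pull-back with supports) step of the bottom-two-rows diagram, precomposed with the Gysin isomorphism $\ontu^\ast : \A_0(G,N') = H^{\GO}_\bullet(\sR_{N'}) \xrightarrow{\sim} H^{\GO}_\bullet(\sR_{N',N''})$, coincides with the first step of the $(G,N')$ diagram applied to $a'\boxtimes b$.

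The key geometric input is that the projection $\sT_N\to\sT_{N'}$, $[g,(n_1,n_2)]\mapsto[g,n_1]$, is a vector bundle of rank $\dim N''_\cO$ whose restriction over the closed subspace $\sR_{N'}\subseteq\sT_{N'}$ is exactly $\ontu : \sR_{N',N''}\to\sR_{N'}$.  Hence the leftmost Cartesian square of the bottom two rows of (\ref{eq: big losing matter}) is obtained from the leftmost Cartesian square of (\ref{eq: diagram for action1}) for $(G,N')$ by base change along this vector bundle (on the $\sT$-- and $\sR$--terms), followed by restriction along the induced zero sections on the $\GK\times N'_\cO$-- and $\groupoid_{N'}$--terms.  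Moreover $\omega_{\sT_N}[-2\dim\NO]$ is canonically the pullback of $\omega_{\sT_{N'}}[-2\dim N'_\cO]$ along this bundle, and under this identification the isomorphism (\ref{extended morphism 0}) of Lemma \ref{Lemma: morphism to put into restriction with supports2} (for $(G,N)$) is pulled back from the isomorphism (\ref{eq:defphi}) of Lemma \ref{Lemma: morphism to put into restriction with supports} (for $(G,N')$); this is checked directly from the two constructions, both of which factor $\tilde p$ into its $\GO$--bundle part and its projection part, and is the only place the explicit form of those isomorphisms is used.

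With this in hand I would argue using the compatibility of pull-back with supports with smooth base change --- which, exactly as throughout Section \ref{section: some sheaf theory}, is a consequence of the functorial compatibilities of \cite[Appendix B]{AHR} --- as follows.  The first step of the bottom-two-rows diagram applied to $\ontu^\ast(a')\boxtimes b$ decomposes as: (i) flat pullback of $a'\boxtimes b$ along the vector bundle $\sT_N\times N'_\cO\to\sT_{N'}\times N'_\cO$, which restricts over the supports to $\ontu^\ast(a')\boxtimes b$; (ii) the pull-back with supports for the base-changed square; (iii) restriction along the zero section of the bundle-over-$\groupoid_{N'}$ appearing in that base-changed square.  By smooth base change, (i)--(ii) together equal the flat pullback of the first step of the $(G,N')$ diagram applied to $a'\boxtimes b$; since the bundle pullback on the $\groupoid_{N'}$--term is undone by restriction along its zero section, step (iii) then returns exactly the first step of the $(G,N')$ diagram applied to $a'\boxtimes b$, as required.  (Alternatively, one can run the same computation through the composition property of pull-back with supports in Section \ref{section: composition of pull-back}, after recognizing the Gysin map $\ontu^\ast$ as the pull-back with supports attached to the trivial Cartesian square of the vector bundle with respect to the canonical isomorphism $\ontu^\ast = \ontu^![-2\dim N''_\cO]$.)

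The step I expect to be the main obstacle is the bookkeeping of the degree shifts by $\dim N''_\cO$: one must be careful that the ``extra'' pull-back-with-supports maps appearing above are identified \emph{on the nose} with the Gysin isomorphism $\ontu^\ast$ --- rather than a twist of it, for instance by an Euler class of the normal bundle of the zero section --- and one must verify the stated compatibility of the normalization isomorphisms of Lemmas \ref{Lemma: morphism to put into restriction with supports} and \ref{Lemma: morphism to put into restriction with supports2}.  Granting these, the remainder is a formal diagram chase using Lemma \ref{le:adjunctionBaseChange}, Lemma \ref{lemma: alternative pullback}, and the base-change compatibilities of Section \ref{section: some sheaf theory}, in the style of the proof of Theorem \ref{th:action}.
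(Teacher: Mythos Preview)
Your proposal is correct and follows essentially the same route as the paper. Both arguments rest on the factorization $\tilde p_{N'} = (\tilde b,\id)\circ \tilde p_Z$ coming from the vector bundle $\tilde b:\sT_N\to\sT_{N'}$, the verification that the normalization isomorphisms of Lemmas \ref{Lemma: morphism to put into restriction with supports} and \ref{Lemma: morphism to put into restriction with supports2} are compatible via this factorization, and the functorial compatibilities of pull-back with supports from \cite[Appendix B]{AHR}.

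The organization differs slightly. The paper draws the prism with triangles $\sR_{N'}\times N'_\cO \leftarrow \sR_{N',N''}\times N'_\cO \leftarrow \groupoid_{N'}$ and $\sT_{N'}\times N'_\cO \leftarrow \sT_N\times N'_\cO \leftarrow \GK\times N'_\cO$, then builds one large commutative diagram in sheaves whose left column is the $(G,N')$ pull-back with supports and whose right column (below the top square) is $(\ontu,\id)_*$ of the auxiliary pull-back with supports; the top square becomes the Gysin isomorphism after pushforward to a point. Your ``alternative'' formulation --- recognizing the Gysin map as the pull-back with supports for the vector-bundle square and invoking the horizontal composition property of Section \ref{section: composition of pull-back} --- is exactly this prism argument, and is cleaner than your primary (i)--(iii) decomposition via smooth base change and zero-section restriction. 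The paper's explicit diagram also makes transparent the point you flag as the main obstacle: the top square, built from the unit of $((\ontu,\id)^*,(\ontu,\id)_*)$ together with $(\ontu,\id)^! \cong (\ontu,\id)^*[2\dim N''_\cO]$, is precisely the Gysin map on the nose, with no stray Euler class.
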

\begin{proof}
It suffices to study the bottom left square in (\ref{eq: big losing matter}).  We extend this square, as the front face of the following commutative prism:
$$
\begin{tikzcd}
\sR_{N'} \times N'_\cO \arrow[hook]{dd}{i_{N'}} & & \\
& \sR_{N', N''} \times N'_\cO \arrow{ul}{\ontu, \id}  & \groupoid_{N'} \arrow[swap]{llu}{p_{N'}} \arrow{l}{p_{N',N''}} \arrow[hook]{dd}{j_{N'}} \\
\sT_{N'} \times N'_\cO \arrow[from=drr, "\tilde p_{N'}"',near end] & & \\
& \sT_{N} \times N'_\cO \ar[from = uu,hook,crossing over,"i_{N',N''}"]\arrow{ul}{\ontuu,\id} & \GK \times N'_\cO \arrow{l}{\tilde p_{N',N''}} 
\end{tikzcd}
$$
In the bottom triangle of the prism, the map $\ontuu: \sT_N \rightarrow \sT_{N'}$ is a vector bundle with fibres isomorphic to $N''_\cO$. Denoting $A = \omega_{\sT_{N'}}[-2\dim N'_\cO] \boxtimes \sF$ and $B = \omega_{\sT_N}[-2\dim \NO] \boxtimes \sF$, there is an isomorphism 
$$
(\ontuu,\id)^\ast A \cong (\ontuu,\id)^! A[-2\dim N''_\cO] \cong B
$$
Denoting $C = \omega_{\GK}[-2\dim \GO]\boxtimes \sF$, we have $ B \rightarrow (\tilde p_{N',N''})_\ast C$ from Lemma \ref{extended morphism 0}.
Finally, Lemma \ref{Lemma: morphism to put into restriction with supports} gives $ A \rightarrow (\tilde p_{N'})_\ast C$. It is not hard to see that the latter morphism is related to the first two: it is equal to the composition
\begin{equation}
\label{eq: prism base}
A \longrightarrow (\ontuu,\id)_\ast (\ontuu,\id)^\ast A \cong (\ontuu,\id)_\ast B \longrightarrow (\ontuu,\id)_\ast (\tilde p_{N', N''})_\ast C \cong (\tilde p_{N'})_\ast C
\end{equation}
We now build another large commutative diagram:
$$
\begin{tikzcd}[column sep = scriptsize, row sep = scriptsize]
i_{N'}^! A \ar[d] \ar[r] & (\ontu,\id)_\ast (\ontu,\id)^\ast i_{N'}^! A \ar[d]  \\
i_{N'}^! (\ontuu,\id)_\ast (\ontuu,\id)^\ast A \ar[d,"\sim"] \ar[r,"\sim"] & (\ontu,\id)_\ast  i_{N',N''}^! (\ontuu,\id)^\ast A \ar[d,"\sim"]\\
i_{N'}^! (\ontuu,\id)_\ast B \ar[r,"\sim"] \ar[d]   & (\ontu,\id)_\ast  i_{N',N''}^! B \ar[d] \\
i_{N'}^! (\ontuu,\id)_\ast (\tilde p_{N', N''})_\ast C \ar[r,"\sim"] \ar[d,"\sim"] &(\ontu,\id)_\ast  i_{N',N''}^! (\tilde p_{N', N''})_\ast C \ar[d,"\sim"] \\
i_{N'}^! (\tilde p_{N'})_\ast C \ar[d,"\sim"]& (\ontu,\id)_\ast (p_{N',N''})_\ast j_{N'}^!  C\\
(p_{N'})_\ast j_{N'}^! C \ar[ur,"\sim"] &
\end{tikzcd}
$$
The left column is the pull-back with supports for the composition (\ref{eq: prism base}). The top square is built out of the units of adjunctions plus base change, and commutes by a variation on Lemma \ref{le:adjunctionBaseChange}. The remainder of the right column comes by  base change of the left column. 

Now consider the pushfoward of this diagram to a point. First note that all maps in top square become isomorphisms after pushfoward to a point, since $\ontuu,\ontu$ are vector bundles. Taking into account these isomorphisms, passage down the left column is a rewriting of the restriction with supports for $p_{N'}^\ast A \cong C$, for the back square of the prism.  Its pushforward to a point is part of the definition of the action of $\A_0(G,N')$ on $H^{-\bullet}_\GO( N'_\cO, \sF)$. (Namely, the part corresponding to the left square in (\ref{eq: diagram for action1}). Similarly, the final three entries of the right column encode the corresponding part of the diagram (\ref{eq: big losing matter}). This proves the claim.
\end{proof}

\subsubsection{Equivariant cohomology of a point}
We take $ Z = \{0\} $ and $ r $ to be the inclusion of $ 0 $ into $ \NO $.  In this case we take $ \sF = \C_{\{0\}} $.  In this case, we see that the module is $ H_G^\bullet(pt) $.  This module is known as the GKLO representation, following the paper of Gerasimov-Kharchev-Lebedev-Oblezin \cite{GKLO}.  

The GKLO representation fits into the framework of Proposition \ref{prop: losing matter}, and in particular we see that it comes by restricting along the inclusion $\A_0(G,N) \hookrightarrow \A_0(G, 0)$. Note that this map is denoted $\mathbf{z}^\ast$ in \cite[Lemma 5.11]{BFN1}.  The action of the latter algebra is easy to describe:

\begin{Proposition}
The action of $\A_0(G,0) = H_\bullet^\GO(\Gr_G)$ on $1 \in H_G^\bullet(pt)$ is given by proper pushfoward along $\Gr_G \rightarrow pt$.
\end{Proposition}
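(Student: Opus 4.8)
The plan is to unwind the module construction in the degenerate case $N = 0$, where almost everything collapses. When $N = 0$ we have $\NO = \NK = \{0\}$, hence $\sT_{G,0} = \sR_{G,0} = \Gr_G$, and the groupoid $\groupoid = \{(g,v) \in \GK \times \NO : gv \in \NO\}$ is simply $\GK$ (the condition on $v$ being vacuous). In the framework of Section \ref{se:Auxillary}, the module $H_G^\bullet(pt)$ is the one attached to $Z = \{0\}$, $r : \{0\} \hookrightarrow \NO$ the identity, and $\sF = \C_{\{0\}}$ with its tautological $\groupoid$--equivariant structure; indeed $H^{-\bullet}_\GO(Z, \sF) = H_G^\bullet(pt)$, with $1$ the unit. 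So by Proposition \ref{prop: auxillary action} the action of $\A_0(G,0) = H_\bullet^{\GO}(\Gr_G)$ is computed by running the diagram (\ref{eq: diagram for actionZ}): pull-back with supports with respect to (\ref{extended morphism 0}), then the equivariant structure $\beta$, then the $\GO$--equivariance isomorphism, then proper direct image for $m_Z$.

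First I would specialize (\ref{eq: diagram for actionZ}) to $Z = \{0\}$, $N = 0$. Every space becomes explicit: $\sR \times Z = \Gr_G$, $\groupoid \times_\NO Z = \GK$, $(\groupoid \times_\NO Z)/\GO = \Gr_G$, $Z = \{0\}$; the embeddings $i_Z$ and $j_Z$ are identities; $p_Z = \tilde p_Z : \GK \to \Gr_G$ is the $\GO$--torsor given by the quotient map; and $m_Z : \Gr_G \to \{0\}$ is the structure map, call it $\pi$. The isomorphism (\ref{extended morphism 0}) reduces to the canonical identification $\tilde p_Z^\ast \omega_{\Gr_G} \cong \omega_\GK[-2\dim\GO]$ attached to this torsor. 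Since $i_Z$ and $j_Z$ are identities, pull-back with supports with respect to (\ref{extended morphism 0}) is, by Lemma \ref{lemma: alternative pullback}, induced by the adjoint $\omega_{\Gr_G} \to (\tilde p_Z)_\ast \omega_\GK[-2\dim\GO]$ of that isomorphism; the equivariant structure $\beta : c_1^! \sF \to c_2^! \sF$ is the identity, since $c_1 = c_2$ is the projection $\GK \to \{0\}$ and $\sF$ is constant; and the $\GO$--equivariance isomorphism (\ref{eq:actionGO}) merely descends along the same torsor $p_Z$. Hence, after applying $H^{-\bullet}$ and using the torsor identifications, the composite of the first three steps is the identity endomorphism of $H_\bullet^{\GO}(\Gr_G)$, carrying $a \otimes 1$ to $a$.

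The only surviving step is the proper direct image for $\pi = m_Z : \Gr_G \to \{0\}$, i.e.\ the map $H^{-\bullet}_\GO(\Gr_G, \omega_{\Gr_G}) \to H^{-\bullet}_\GO(\{0\}, \C)$. By the Remark at the end of Section \ref{section: proper direct image}, applied with $A = \omega_{\Gr_G}$, this is precisely the proper pushforward $\pi_\ast : H_\bullet^{\GO}(\Gr_G) \to H_\bullet^{\GO}(pt)$ in equivariant Borel--Moore homology. Identifying $H_\bullet^{\GO}(pt) = H^{-\bullet}_G(pt) = H_G^\bullet(pt)$ (everything in even degrees), we conclude that $a \cdot 1 = \pi_\ast(a)$ for every $a \in \A_0(G,0)$, which is the claim.

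I do not expect a real obstacle; the whole argument is the unwinding of the general construction in its simplest case, with the substantive work already carried out in Theorem \ref{th:action} and Proposition \ref{prop: auxillary action}. The two points that need a word are: the identification of the first three steps with torsor descent (routine bookkeeping of the two copies of $\GO$ and the principal bundle $\GK \to \Gr_G$), and the observation that $\pi : \Gr_G \to pt$ is ind-proper --- because $\Gr_G$ is ind-projective, equivalently because each class in $\A_0$ is, in the sense of \cite{BFN1}, supported on a closed finite-dimensional subvariety of $\Gr_G$ --- so that the proper direct image is defined and genuinely computes the ordinary proper pushforward in Borel--Moore homology.
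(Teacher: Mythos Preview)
Your proposal is correct. The paper states this proposition without proof, treating it as a routine unwinding of the construction in the degenerate case $N=0$; your argument supplies exactly that unwinding, and it matches what the paper leaves implicit.
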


In particular, the action may be computed by using the localization theorem to compute proper pushforward. This tells us that the GKLO representation is compatible with the embedding of the Coulomb branch into a localized ring of difference operators, as in \cite[Appendix A]{BFN2}.  In the quiver case, explicit formulas for this embedding (and thus the GKLO representation) were given in \cite[Appendix B]{BFN2}, cf.~also \cite[Section 4]{klrpaper}.

\subsubsection{Homology of the affine Grassmannian}
Now we take $ Z = \GK $ with its usual action of $ \GO $.  We take $ r $ to be the constant map to the point $ 0 \in \NO $ and we take $ \sF = \omega_{\GK}[-2\dim \GO] $ to be the dualizing sheaf of the affine Grassmannian.  The action of the groupoid $ \groupoid $ is simply given by the left translation action of $ \GK $ on itself.  In this case, the resulting module is given by $H_\bullet(\Gr)$, the homology of the affine Grassmannian.

The diagram (\ref{eq: diagram for actionZ}) simplifies to 
\begin{equation}
\begin{tikzcd}
\sR \times \GK \arrow[hook]{d} & \GK \times \GK \arrow{l} \arrow{r} \arrow[hook]{d} &  (\GK \times \GK) / \GO \arrow{r} & \GK \\
\sT \times \GK \arrow[leftarrow]{r}  & \GK \times \GK & 
\end{tikzcd}
\end{equation}

\subsection{Adding in flavour equivariance}
Now, we would like to extend the action by adding in flavour and loop equivariance.  Recall the notation $ \tG $ and $ \ttGOK $ from before.  And recall that $ \A = H_\bullet^\ttGOK(\sR_{G,N}) $.

Then Theorem \ref{th:action} extends to the following result.

\begin{Theorem}
	If $ \mathcal F $ is a $ \widetilde{\groupoid}$-equivariant object of the $ \ttGO \rtimes \Cx $-equivariant derived category of $ \NO$, then $ H^{-\bullet}_{\ttGO \rtimes \Cx}(\NO, \sF) $ carries an action of the algebra $ \A $.  
\end{Theorem}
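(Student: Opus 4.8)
The plan is to run the proof of Theorem~\ref{th:action} essentially verbatim, systematically replacing the equivariance group $\GO$ by $\ttGO \rtimes \Cx$ and the group $\GK$ by $\ttGOK = \ttGK^\cO \rtimes \Cx$ throughout. First I would introduce the flavoured analogues of the objects of Section~\ref{se:DefAction}. Set
$$
\widetilde{\groupoid} = \{ (\tilde g, v) \in \ttGOK \times \NO : \tilde g v \in \NO \},
$$
the groupoid over $\NO$ obtained by restricting the action of $\ttGOK$ on $\NK$ to $\NO$, with source and target maps $d_1, d_2$, scheme of composable arrows $\widetilde{\groupoid}^{(2)}$, face maps $d_{12}, d_{13}, d_{23}$ and unit $u : (\ttGO\rtimes\Cx)\times\NO \hookrightarrow \widetilde{\groupoid}$ defined exactly as in the unflavoured case. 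A $\widetilde{\groupoid}$-equivariant structure on $\sF$ is an isomorphism $\beta : d_1^!\sF \to d_2^!\sF$ in the $(\ttGO\rtimes\Cx)^2$-equivariant derived category of $\widetilde{\groupoid}$ making the hexagon~(\ref{eq:hexagon}) and the unit axiom~(\ref{eq: unit axiom}) commute. Since $\Gr = \ttGOK/(\ttGO\rtimes\Cx)$ and, following \cite[page 6]{BFN1}, the spaces $\sT_{G,N}$ and $\NK$ carry $\ttGOK$-actions while $\sR_{G,N}$ carries a $\ttGO\rtimes\Cx$-action, the diagram~(\ref{eq: diagram for action1}) admits an evident flavoured analogue: the same shape of maps $\tilde p, p, q, m$, now equivariant for the enlarged groups, with $q$ realising $\widetilde{\groupoid}$ as a principal $\ttGO\rtimes\Cx$-bundle over $\sR_{G,N}$.

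With this in place, the four steps defining the action go through unchanged. The content of Lemma~\ref{Lemma: morphism to put into restriction with supports} is insensitive to the group: $\tilde p$ is still a $(\ttGO\rtimes\Cx)$-bundle onto $\sT_{G,N}$ whose $\NO$-component is a projection, and $\NO$ is still smooth, so $\omega_\NO \cong \C_\NO[2\dim\NO]$ and we obtain the isomorphism~(\ref{eq:defphi}) of $(\ttGO\rtimes\Cx)^2$-equivariant complexes. Applying pull-back with supports with respect to it, then $\beta$, then the principal-bundle identification $q^! \cong q^*[2\dim(\ttGO\rtimes\Cx)]$, then proper direct image for $m$, reproduces~(\ref{eq:restrictionF})--(\ref{eq:actionproper}) and yields a map $\A \otimes H^{-\bullet}_{\ttGO\rtimes\Cx}(\NO,\sF) \to H^{-\bullet}_{\ttGO\rtimes\Cx}(\NO,\sF)$, which (exactly as in the unflavoured case) is linear over $H^\bullet_{\tG\times\Cx}(pt)$ in the first variable.

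That this map is a module structure is verified exactly as in the proof of Theorem~\ref{th:action}: the computation that $r_0$ acts as the identity and the associativity argument organised around the large diagram~(\ref{eq: even larger diagram}) are built entirely out of the diagram~(\ref{eq: large diagram}) of schemes --- whose flavoured analogue is produced by the same substitution of groups --- together with the formal Lemmas~\ref{lemma: alternative pullback} and~\ref{lemma: proper base change hypercohomology} and the composition/base-change compatibilities of Section~\ref{section: composition of pull-back}. None of these inputs depends on the choice of equivariance group, so each face of the flavoured analogue of~(\ref{eq: even larger diagram}) commutes for the same reasons as before.

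The one point that genuinely requires care --- and the main obstacle --- is foundational rather than combinatorial: one must check that $\widetilde{\groupoid}$, $\widetilde{\groupoid}^{(2)}$, $\sR_{G,N}$, $\sT_{G,N}$ and $\NO$ all admit $\ttGOK$-equivariant finite-dimensional approximations in the sense of \cite[Section~2(ii)]{BFN1}, so that the equivariant derived categories, the six functors, and the identification $q^! \cong q^*[\text{shift}]$ are all available with the enlarged group, and that the $\Cx$-equivariant structures on the dualizing complexes behave as expected. This is essentially already contained in Braverman--Finkelberg--Nakajima's treatment of the flavoured quantized Coulomb branch; granting it, the theorem follows by transporting the proof of Theorem~\ref{th:action} line by line.
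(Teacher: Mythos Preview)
Your proposal is correct and matches the paper's approach: the paper does not write out a separate proof for this statement, merely asserting that ``Theorem~\ref{th:action} extends to the following result,'' which is exactly the systematic replacement of $\GO$ by $\ttGO\rtimes\Cx$ and $\GK$ by $\ttGOK$ that you describe. Your account is in fact more detailed than what the paper provides, and your flagging of the foundational point about finite-dimensional approximations is appropriate.
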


Similarly, we can add loop rotation and flavour equivariance into the results from section \ref{se:Auxillary}. We have the following analog of Proposition \ref{prop: auxiliary action}  

\begin{Proposition} \label{pr: auxiliary action equivariant}
	 Suppose that we have a space $ Z $ as in \ref{se:Auxillary}, but carrying actions of $ \ttGO \rtimes \Cx $ and of the groupoid $ \widetilde{\groupoid}$. Let $ \sF $ be an $\ttGO \rtimes \Cx$-equivariant sheaf on $ Z $ equipped with a $ \widetilde{\groupoid}$-equivariant structure.  Then $ H^\bullet_{\ttGO \rtimes \Cx}(Z, \sF) $ carries an $ \A$-module structure described by the analog of (\ref{eq: diagram for actionZ}).
\end{Proposition}

\section{Weight modules and category $ \mathcal O $}

In this section, we will discuss the modules $ M_c $ and how they depend on specific attributes of the point $ c $.  We begin with some general discussion about the algebra $ \A $ and its types of modules.

\subsection{General properties of $ \A $ and its modules}
\subsubsection{Cartier duality}
Let $ \pi $ be a finitely generated abelian group.  Its \textbf{Cartier dual} $ H $ is defined to be the affine algebraic group $ \operatorname{Spec} \C[\pi] $ (where $\C[\pi] $ denotes the group algebra of $ \pi $).  If $ \pi $ is free abelian, then $ H $ is a torus with weight lattice $ \pi $.  In general, $H $ is the product of a torus and a finite group.

The representation category of $ H $ is semisimple with simple objects (all 1-dimensional) indexed by $ \pi $.  Thus, a representation of $ H $ on a complex vector space is equivalent to a $ \pi$-grading on that vector space.

Finally, the Lie algebra of $ H $ is given by $ \fh = (\pi \otimes_\Z \C)^*$.

\subsubsection{Weakly and strongly equivariant}
Let $ \pi, H $ be as above.  Let $\A $ be a $\C[\hbar] $-algebra with an action of $ H $; equivalently we have a grading $ \A = \oplus_{\sigma \in \pi} \A(\sigma)$.  We say that this action is \textbf{Hamiltonian}, if we are given a map $ \mathfrak h \rightarrow \A $, such that for all $ x \in \fh $ and $ a \in \A(\sigma) $, we have
$$
[x, a] =  \langle x, \sigma \rangle \hbar a
$$

Let $ M $ be an $ \A $-module.  We say that $ M $ is \textbf{weakly $H$-equivariant}, if we are given an action of $ H$ on $M $, compatible with its action on $ \A$.  Equivalently, we have a grading $ M = \oplus_{\sigma \in \pi} M(\sigma) $ such that $ A(\sigma_1) M(\sigma_2) \subset M(\sigma_1 + \sigma_2) $.  

Let $ \mathfrak u $ be another abelian Lie algebra and let $ \phi : \mathfrak h \oplus \C\hbar \rightarrow \mathfrak u $.  We say that $ M $ is \textbf{strongly $(H, \phi)$-equivariant}, if we are given a right action of $ \mathfrak u $ on $ M$ such that for $ x \in \fh \oplus \C\hbar $ and $ m \in M(\sigma) $, we have
$$
x \cdot m - m \cdot \phi(x) = \langle x, \sigma \rangle \hbar \cdot m
$$
(In particular, after specializing $ \hbar = 1 $, $ M$ is a Harish-Chandra bimodule for the group $ H$.)

\subsubsection{Equivariance for Coulomb branch algebras}
Recall that we have a short exact sequence of reductive groups
$$
1 \rightarrow G \rightarrow \tG \rightarrow F \rightarrow 1
$$
where $ F $ is a torus. Let $\widetilde T $ be a maximal torus of $ \tG $ containing $ T $.

This leads to a short exact sequence of fundamental groups
$$
0 \rightarrow \pi \rightarrow \widetilde \pi \rightarrow \tau \rightarrow 0
$$
and thus a short exact sequence of their Cartier duals denoted 
$$
1 \leftarrow F^! \leftarrow \tG^! \leftarrow G^! \leftarrow 1
$$
\begin{Remark}
	Note that $ F^! = \Hom(G, \Cx) \otimes_\Z \Cx $ is defined using $ G $ and $ G^! = \Hom(F, \Cx) \otimes_\Z \Cx$ is defined using the group $ F$, so the reader might find our naming convention here a bit bizarre.  The reader should keep in mind that $ F^! $ acts Hamiltonianly on the Coulomb branch, much as $ F $ acts Hamiltonianly on the Higgs branch.  This is one motivation for this choice of names.  Another motivation is that in the toric case, we end up with the Gale dual sequence of tori.
\end{Remark}

We also get a short exact sequence of abelian Lie algebras
\begin{equation}
\begin{tikzcd}  \label{eq:Cartan}
0 \ar{r} & H^2_{F}(pt) \ar{r} \ar[equal]{d} & H^2_{\tG}(pt) \ar{r} \ar[equal]{d} & H^2_{G }(pt) \ar{r} \ar[equal]{d} &  0 \\
0 \ar{r} &\fg^! = \mathfrak f^*  \ar{r} & \widetilde \fg^! = (\widetilde \ft^*)^W \ar{r} & \ff^! \ar{r} = (\ft^*)^W &  0
\end{tikzcd}
\end{equation}

Similar to the decomposition of the affine Grassmannian from section \ref{se:ses}, we get a disjoint decomposition $\sR_{G,N} = \sqcup_{\sigma \in \pi} \sR_{G,N}(\sigma)$ where
$$
\sR_{G,N}(\sigma) = \{ ([g], w) : [g] \in \Gr(-\sigma) \}
$$
and we have $ \A = \oplus_{\sigma \in \pi} \A(\sigma) $ where $ \A(\sigma) = H_\bullet^{\ttGO}(\sR_{G,N}(\sigma))$.  

This $\pi$-grading gives us an action of $\tG^! $ on $ \A $ which factors through $ F^!$.

We have the commutative subspace $ \widetilde \fg^! = H^2_{\tG}(pt) \subset \A $, which we refer to as the \textbf{Cartan subalgebra} of $ \A $.  In this way we differ from \cite{BFN1}, since they refer to the full $ H^\bullet_{\tG \times \Cx}(pt) $ as the Cartan subalgebra; we will instead call $ H^\bullet_{\tG \times \Cx}(pt) $ the \textbf{Gelfand-Tsetlin algebra}.

The following Lemma is equivalent to Lemma 3.19 from \cite{BFN1} and shows that the action of $ \tG^! $ on $ \A $ is Hamiltonian.
\begin{Lemma} \label{le:decompTorus}
	Let $ \sigma \in \pi $ and let $ x \in \widetilde \fg^! $.  Then for all $ a \in \A(\sigma) $, we have $ [x, a] = \hh \langle \sigma,  x \rangle a $.
\end{Lemma}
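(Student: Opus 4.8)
The strategy is to compute the two products $xa$ and $ax$ directly from the convolution diagram and to pinpoint the discrepancy between them in the way the loop-rotation torus $\Cx$ sits inside $\ttGO\rtimes\Cx$ over different components of $\Gr$; as indicated just above the statement, this is essentially \cite[Lemma 3.19]{BFN1}, whose proof I would follow. I would start by recalling the standard embedding $H^\bullet_{\tG\times\Cx}(pt)\hookrightarrow\A$ under which $\widetilde\fg^!=H^2_{\tG}(pt)$ lands in $\A(0)$: writing $\sR_{[1]}\cong\NO$ for the fibre of $\sR\to\Gr$ over the base point $[1]$, the (pro-)affine space $\NO$ has $H^\bullet_{\ttGO\rtimes\Cx}(\NO)=H^\bullet_{\tG\times\Cx}(pt)$, and we push forward along $\sR_{[1]}\hookrightarrow\sR$; the image of $1$ is the unit class $r_0$, and for $x\in H^2_{\tG}(pt)$ I write $\iota(x)\in\A(0)$ for the image of $x$.

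The next step is to observe, just as in the verification inside the proof of Theorem~\ref{th:action} that $r_0$ acts as the identity (and its counterpart in \cite[\S3]{BFN1}), that convolution with a class supported over $[1]$ degenerates to a cap product: for every $a\in\A$ one has $\iota(x)\ast a$ equal to the cap product of $a$ with $x$ pulled back to $\sR$ along $\sR\to pt$ using the equivariant structure coming from the \emph{first} factor of $\sZ=\sT\times_{\NK}\sT$, and likewise $a\ast\iota(x)$ is the cap product of $a$ with $x$ pulled back using the equivariant structure from the \emph{second} factor. (Restricting one convolution factor to lie over $[1]$ makes the map $m$ of the diagram \cite[(3.2)]{BFN1} an isomorphism and the map $p$ a projection, so only the cap product survives, with the surviving equivariant structure determined by which factor was collapsed.) It then remains to compare these two structures on a single component $\sR(\sigma)$. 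Fix a coweight $\tau$ with $[\tau]=\sigma$. A point of $\sZ$ lying over $\sR(\sigma)$ carries two principal $\tG$--bundles on the disc that differ by a modification of relative position $\tau$ at the origin; hence the loop rotation $\Cx$ acts on the fibre at the origin of the second bundle by its action on the fibre of the first bundle twisted by the cocharacter $\tau\colon\Cx\to\tT\hookrightarrow\tG$ (equivalently, conjugation by $z^{\tau}$ carries the loop-rotation subgroup $\Cx\subset\ttGK\rtimes\Cx$ to $\{(\tau(t),t):t\in\Cx\}\subset\tT\times\Cx$). Therefore the two pullbacks of $x$ differ by the pullback of $x$ along this twist, and since $x\in\widetilde\fg^!=(\widetilde\ft^{\,*})^W$ annihilates coroots, their difference is $\langle\tau,x\rangle\,\hh$, with $\langle\tau,x\rangle$ depending only on $\sigma=[\tau]\in\pi$; write it $\langle\sigma,x\rangle$.

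Putting the two steps together yields, on $\A(\sigma)$, the identity $a\ast\iota(x)=\iota(x)\ast a+\hh\langle\sigma,x\rangle\,a$ (the sign being forced by the conventions fixed in the excerpt for the $\pi$--grading on $\A$ and for the embedding $\iota$), that is $[x,a]=\hh\langle\sigma,x\rangle\,a$. The step I expect to be the real obstacle is the bookkeeping of the middle paragraph: making the degeneration of $\iota(x)\ast(-)$ and $(-)\ast\iota(x)$ to honest cap products precise, and keeping straight which of the two $\ttGO\rtimes\Cx$--equivariant structures on $\sR(\sigma)$ governs left versus right multiplication (and, with it, the sign of $\langle\sigma,x\rangle$). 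As a consistency check one can note that modulo $\hh$ the operator $[x,-]$ is a grading-preserving derivation of the commutative algebra $\A_0$ — the Hamiltonian vector field of the function $x$ — which generates the $F^!$--action on $\Spec\A_0$ and so acts on $\A_0(\sigma)$ by the scalar $\langle\sigma,x\rangle$, pinning down the leading term of the formula.
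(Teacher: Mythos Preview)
Your proposal is correct and is precisely the argument of \cite[Lemma~3.19]{BFN1}, which is what the paper invokes for this lemma (no independent proof is given at this point). The paper does later prove an equivalent statement (Lemma~\ref{le:isoLine}) by a more explicit route---writing down a concrete function on $\ttGOKnoloop(\sigma)\rtimes\Cx$ that directly exhibits the isomorphism between the left and right $\ttGO\rtimes\Cx$--equivariant line bundles attached to a character $\theta\in H^2_{\tG}(pt,\Z)$, with weight shift $\langle\sigma,\theta\rangle\hbar$---but this is offered as a convenience rather than as the primary argument, and it amounts to the same comparison of equivariant structures you describe.
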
  

\begin{Remark} \label{re:GvsF}
	Though the action of $ \tG^! $ on $ \A $ factors through $ F^!$, we don't naturally have a Hamiltonian $ F^! $-action on $ \A $, since we don't have a natural way to split (\ref{eq:Cartan}).
\end{Remark}

\subsubsection{Central specialization}
\label{section: central specialization}
In the decomposition $ \A = \oplus \A(\sigma) $,  $ \sigma $ ranges over $ \pi$, and so $ \fg^! \oplus \C\hbar = \mathfrak f^* \oplus \C\hbar $ is central in $ \A $. By \cite[\S 2]{BFN1}, $ \A $ is free over $ H^\bullet_{F\times \Cx}(pt) = \Sym \mathfrak f^* [\hbar]$.  

If $ \xi : \Sym \mathfrak f^*[\hbar] \rightarrow R $ is any algebra map (for example $ R = \C[\hbar] $ or $ \C$ are possible common choices), then we can specialize $ \A $, and we define $\A_\xi := \A \otimes_{\Sym  \mathfrak f^*[\hbar]} R$.

In particular, if $ \zeta \in \ff $, then we will consider the map $  \Sym \mathfrak f^*[\hbar] \rightarrow \C[\hbar] $, dual to the map of varieties $ \C \rightarrow \ff \oplus \C $, defined by $ a \mapsto (a\zeta, a) $.  We denote resulting specialization by 
$$ \A_{\zeta, \hbar} := \A \otimes_{\Sym  \mathfrak f^*[\hbar]} \C[\hbar] $$ 

Another special case will be if $ \xi \in \ff \oplus \C $, then we have $\xi : \Sym \mathfrak f^*[\hbar] \rightarrow \C $ given by evaluation at $ \xi $ and we will denote resulting specialization by 
$$ \A_\xi := \A \otimes_{\Sym  \mathfrak f^*[\hbar]} \C $$ 

Similarly, if $ M$ is any $ \A $-module, then we can specialize $M$ and form $ M_\xi :=  M \otimes_{\Sym  \mathfrak f^*[\hbar]} R$ which will be a module for the specialized algebra.

\subsubsection{Equivariant modules for Coulomb branch algebras}
As we have an action of $ F^!$ on $ \A $ and a Hamiltonian action of $ \tG^! $ on $ \A$, it makes sense to speak about weakly $ F^!$ and strongly $ \tG^! $-equivariant $\A$-modules. 

\begin{Definition} \label{def:Weight}
Let $ M $ be an $ \A$-module.  We say that $M $ is a \textbf{weight module}, if it is strongly $(\tG^!, \phi)$-equivariant for some  $ \phi : \widetilde \fg^! \oplus \C\hbar  \rightarrow \mathfrak u $, such that the action of $ \tG^! $ on $ M $ factors though $ F^! $, and the restriction $ \phi : \mathfrak f^* \oplus \C\hbar = \fg^! \oplus \C\hbar \rightarrow \mathfrak u $ is surjective.
\end{Definition}

\begin{Remark}
To see why this deserves the name ``weight module'', suppose that we choose a linear map $ \gamma: \mathfrak u \rightarrow \C $.  Composed with $ \phi : \mathfrak f^* \oplus \C\hbar \rightarrow \mathfrak u $ this gives us  $ \xi \in \ff \oplus \C $.  Thus we can form the specialized algebra $ \A_\xi $ and the specialized module $M_\xi $.

The action of $\tG^! $ on $ M$ (which factors through $F^!$) gives us a grading $ M = \oplus_{\sigma \in \pi} M(\sigma) $.  Since it is strongly equivariant, any  $ x \in \widetilde \fg^! $ will act on $ M_\xi(\sigma) $ as multiplication by the complex number $ \gamma(\phi(x)) + \xi(\hbar) \langle \sigma, \bar{x} \rangle$.  Thus $ M_\xi(\sigma)$ consists of $ \widetilde \fg^! $-eigenvectors with eigenvalue $ \gamma \circ \phi + \xi(\hbar) \sigma $.  
\end{Remark}

\subsubsection{Category $\OO$}
Fix a character $ \chi : G \rightarrow \C^\times $, equivalently a map of $ \Z$-modules, $ \chi : \pi \rightarrow \Z $.  We use $ \chi $ to collapse our $ \pi $-grading into a $ \Z $-grading, and define
$$ \A(n) := \bigoplus_{\sigma : \langle \chi, \sigma \rangle = n} \A(\sigma)
$$
 and let $\A_+ = \oplus_{n > 0} \A(n)$.  Similarly, if we have a weakly $ F^!$-equivariant (i.e. $\pi$-graded) module $ M $, we define the $ \Z$ grading $ M = \oplus M(n) $.

 \begin{Definition}\label{def:catO}
 We say that a weakly $F^!$-equivariant module $ M $ lies in \textbf{category} $ \OO $ if there exists $ N $ such that $ M(n) = 0 $ if $n > N $.  Note that $ \A_+ $ acts locally nilpotently on any category $ \mathcal O $ module.
 \end{Definition}

\begin{Remark} \label{re:attracting}
Inside the commutative algebra $ \A_0 $, we can consider the ideal $ I_+ $ generated by $ \oplus_{n > 0} \A_0(n) $.  We call $ \Spec \A_0 / I_+ $ the \textbf{attracting locus} of the Coulomb branch.  If $ M $ is a category $\OO$ module for $ \A $, then the specialization $ M_0 $ will give a quasi-coherent sheaf of the Coulomb branch which is set-theoretically supported on the attracting locus. 
\end{Remark}

\subsubsection{Example of $ \mathfrak{gl}_n$} \label{eg:sln}
	To help make sense of these definitions, we consider a familiar example.  Let $ G = \prod_{i = 1}^{n-1} GL_i $ and $ N = \oplus_{i = 1}^{n-1} \Hom(\C^i, \C^{i+1}) $ and $ F = (\Cx)^n $, as in Figure \ref{fig:1}.
	
	The resulting Coulomb branch algebra is closely related to the Lie algebra $ \mathfrak{gl}_n $.  To make a precise statement, we need to fix some notation. Write $ \fh$ for the Cartan subalgebra of diagonal matrices in $ \mathfrak{gl}_n$. Also recall the Harish-Chandra isomorphism $ Z(U \mathfrak{gl}_n) \cong (\Sym \fh)^{S_n}$ and the extended asymptotic enveloping algebra $$ \widetilde{ U}_\hbar \mathfrak{gl}_n :=  U_\hbar \mathfrak{gl}_n\otimes_{Z(U_\hbar \mathfrak{gl}_n)} \Sym \fh[\hbar]$$ which contains two copies of $ \mathfrak h$, one embedded as $ \fh \otimes \C $ and the other as $ \C \otimes \mathfrak h $.

	\begin{Theorem} \label{th:sln}
		With the above $ G, N, F $, the following hold.
\begin{enumerate} \item	There is an isomorphism $ \A \cong \widetilde{ U}_\hbar \mathfrak{gl}_n $.
	\item The Cartan subalgebra $\widetilde \fg^!$ of $ \A $ coincides with $ \C \otimes \mathfrak h  \oplus \mathfrak h \otimes \C  $ and (\ref{eq:Cartan}) becomes
	$$
	0 \rightarrow   \fg^!  = \C \otimes \fh \rightarrow \widetilde \fg^!  \rightarrow \ff^! = \fh \otimes \C \rightarrow  0
	$$
	\item The Gelfand-Tsetlin subalgebra $ H^\bullet_{\tG \times \Cx}(pt) $ coincides with the usual Gelfand-Tsetlin subalgebra of $ \widetilde{U}_\hbar \mathfrak{gl}_n $.
\end{enumerate}
\end{Theorem}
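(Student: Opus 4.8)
The plan is to reduce the statement to the known structure theory of quantized Coulomb branches of quiver gauge theories, and then to identify the Cartan and Gelfand--Tsetlin subalgebras by a direct bookkeeping with explicit generators.

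For (1): the pair $(G,N)$ of Figure \ref{fig:1} is a linearly oriented type $A_{n-1}$ quiver, so $\A = \A(G,N)$ is a quantized generalized affine Grassmannian slice, equivalently a truncated shifted Yangian; since the gauge group is $\prod GL_i$ (and not $\prod SL_i$) and we retain the flavour torus $F=(\Cx)^n$ and the loop rotation $\Cx$, this is the $\mathfrak{gl}_n$ version. The framing is recorded by $\lambda = n\varpi_{n-1}$ ($n$ copies of the last fundamental coweight), and the dimension vector $\mathbf v = (1,2,\dots,n-1)$ gives $\sum_{i=1}^{n-1} i\,\alpha_i = n\varpi_{n-1}$, hence the shift is $\mu = \lambda - \sum_i v_i\,\alpha_i = 0$. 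When $\mu = 0$ the truncation is maximal and collapses the shifted Yangian to a finite object: the associated finite $W$-algebra is $W(\mathfrak{gl}_n,0) = U\mathfrak{gl}_n$, the loop-rotation parameter plays the role of the Rees parameter $\hbar$, and remembering the $F$-equivariance adjoins exactly the extra polynomial generators, producing $\widetilde U_\hbar\mathfrak{gl}_n = U_\hbar\mathfrak{gl}_n \otimes_{Z(U_\hbar\mathfrak{gl}_n)}\Sym\fh[\hbar]$. This is essentially contained in \cite{BFN2, Web, klrpaper}. I would also be prepared to argue it directly: exhibit the minuscule dressed monopole operators $E_i, F_i$ together with the degree-$2$ equivariant classes as a generating set of $\A$, match the relations with the PBW presentation of $\widetilde U_\hbar\mathfrak{gl}_n$, and then conclude that the resulting homomorphism is an isomorphism because both sides are flat over $H^\bullet_{F\times\Cx}(pt)=\Sym\ff^*[\hbar]$ with the same special fibre --- the coordinate ring $\A_0$ of the nilpotent cone $\mathcal M_C(G,N)$ of $GL_n$.

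For (2) and (3): under the isomorphism fixed in the first step I would trace the equivariant parameters. On the Coulomb side, $H^2_G(pt) = \bigoplus_{i=1}^{n-1} H^2_{GL_i}(pt)$ is spanned by the first Chern classes of the $n-1$ tautological bundles and $H^2_F(pt)$ by the $n$ weights of $F$; on the other side the two distinguished copies of $\fh$ are $\fh\otimes\C\subset U_\hbar\mathfrak{gl}_n$ (the diagonal Cartan, which lies in the Gelfand--Tsetlin subalgebra because $E_{jj}$ is a difference of the traces $\sum_{i\le k}E_{ii}\in Z(U_\hbar\mathfrak{gl}_k)$) and $\C\otimes\fh\subset\Sym\fh[\hbar]$, the two copies meeting along the central line $\sum_i E_{ii}$. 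Using the explicit monopole formulas (as in \cite[Appendix B]{BFN2}, cf.\ \cite[Section 4]{klrpaper}) one checks that $H^2_G(pt)$ is carried into $\fh\otimes\C$ and $H^2_F(pt)$ into $\C\otimes\fh$, so that $\widetilde\fg^! = H^2_{\tG}(pt)$ is their span; feeding this into (\ref{eq:Cartan}) gives the stated short exact sequence, and $\dim H^2_{\tG}(pt) = (n-1)+n = 2n-1 = \dim(\fh\otimes\C + \C\otimes\fh)$ is a useful consistency check. For (3), the Gelfand--Tsetlin subalgebra $H^\bullet_{\tG\times\Cx}(pt)$ is, in the shifted-Yangian presentation, the subalgebra generated by the coefficients of the Cartan currents, and the same formulas send these coefficients to the standard Gelfand--Tsetlin generators of $\widetilde U_\hbar\mathfrak{gl}_n$ --- after specialising $\hbar$, the adjoined $\Sym\fh$ together with the centres of $U\mathfrak{gl}_1\subset\cdots\subset U\mathfrak{gl}_n$ --- which identifies the two subalgebras.

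The main obstacle is (1), and inside it the bookkeeping around the ``$\mathfrak{gl}_n$ versus $\mathfrak{sl}_n$'' and ``asymptotic/extended'' decorations: one must check that working with $G = \prod GL_i$ rather than $\prod SL_i$, and keeping the full $\tG\times\Cx$-equivariance, gives precisely $U_\hbar\mathfrak{gl}_n$ glued to $\Sym\fh[\hbar]$ over its centre --- neither a central quotient nor a version carrying spurious additional commuting directions. Once the correct models of $\A$ and of $\widetilde U_\hbar\mathfrak{gl}_n$ are pinned down, (2) and (3) are essentially linear algebra with the explicit generators.
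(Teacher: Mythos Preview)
Your approach is essentially the paper's own: the proof there is simply a citation, deducing all three parts from \cite[Theorem 4.3(a)]{WWY} combined with \cite[Corollary B.28]{BFN2}, and noting that \cite[Corollary 2.79]{FT} gives a direct alternative for parts (1)--(2). Your more detailed sketch of the bookkeeping (identifying $\mu=0$, matching Cartan and Gelfand--Tsetlin generators, the dimension check $2n-1$) is correct and fleshes out what those citations contain.
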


\begin{proof}
	All three statements follow by combining \cite[Theorem 4.3(a)]{WWY} and \cite[Corollary B.28]{BFN2}.
	
	A direct proof of the first two statements can also be found in \cite[Corollary 2.79]{FT}.
\end{proof}
	
	An example of a weight module for $\widetilde{U}_\hbar \mathfrak{gl}_n $ is the universal Verma module $ M = \widetilde{U}_\hbar \mathfrak{gl}_n \otimes_{U_\hbar \mathfrak n} \C[\hbar] $.  This is a weight module with $ \mathfrak u = \fh \oplus \C\hbar $, letting $ \phi : \fh \oplus \fh \oplus \C\hbar \rightarrow \fh \oplus \C\hbar $ be given by $ \phi(x_1, x_2, a\hbar) = (x_1 + x_2,a\hbar)$.  
	
	We choose $ \chi : G \rightarrow \Cx $ to be given by the product of the determinants. Then our definition of category $ \OO $ coincides with that from \cite[Section 3.2]{BLPW}, except that we don't  require that the module be finitely-generated over $ \A $. Our definition of $\OO$ is thus slightly different from the classical BGG category $\OO$, see \cite[Remark 3.11]{BLPW}.

\subsection{Modules from Springer fibres}

Let $ c \in \NK $ and let $ \eqstab \subset \ttGOK $ be the stabilizer of $ c $.  We have a left action of $ L_c $ on $ \Sp_c $.

Let $ K \subset L_c $ be subgroup.  We define $ M_{c,K} = H_\bullet^K(\Sp_c) $.  Our default choice for $ K $ will be $ K = L_c $ and so we will write $ M_c = M_{c, L_c } $.

We will study $ M_{c,K} $ as an $ \A$-module under various hypotheses on $ c, K$.

\begin{Proposition} \label{pr:ModuleExists}
		Assume that the stack $ [K \bs \Sp_c] $ admits a dualizing sheaf.  Then $ M_{c,K} $ carries an $ \A$-module structure. 
\end{Proposition}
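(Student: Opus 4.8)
The plan is to reduce Proposition \ref{pr:ModuleExists} to Theorem \ref{th:action} (equivalently Proposition \ref{prop: auxillary action}) by exhibiting $\Sp_c$, with its $K$-action, as an instance of the ``auxiliary'' framework of Section \ref{se:Auxillary}. Concretely, I would take $Z = \Xco = \{ g \in \GK : gc \in \NO\}$, which carries a left action of $\GO$ (by $h \cdot g = hg$) and the equivariant morphism $r : \Xco \to \NO$ given by $g \mapsto gc$. The groupoid $\groupoid$ acts on $\Xco$ by $(g', v, g) \mapsto g' g$ (defined whenever $r(g) = gc = v$ and $g'v \in \NO$), compatibly with $r$ and with the $\GO$-action, as required in points (1) and (2) of Section \ref{se:Auxillary}. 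Recall from Lemma \ref{le:isoStacks} that $q_2 : \Xco \to \Sp_c$, $g \mapsto [g^{-1}]$, is a $\GO$-principal bundle and that $q_1 : \Xco \to \Vco$, $g \mapsto gc$, is an $\eqstab$-principal bundle; these identifications are what will let me pass between $\Xco$ and $\Sp_c$.

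Next I would produce the sheaf. Since $[K \bs \Sp_c]$ admits a dualizing sheaf by hypothesis, I can pull back $\omega_{[K\bs\Sp_c]}$ along the smooth map $[\GO \bs \Xco] \to [K \bs \Sp_c]$ (the right $K$-quotient of the $\GO$-bundle $q_2$), obtaining a $\GO$-equivariant object $\sF$ of the equivariant derived category of $\Xco$, which up to shift is $\omega_{\Xco}$. This supplies point (3). For point (4), the $\groupoid$-equivariant structure $\beta : c_1^! \sF \to c_2^! \sF$: here both $c_1, c_2 : \groupoid \times_\NO \Xco \to \Xco$ are, locally, the two legs of the groupoid action, and since $\sF$ is (a shift of) the dualizing sheaf and the groupoid acts by automorphisms of $\Xco$ preserving $r$, both pullbacks are canonically $\omega$ of the fiber product; this gives $\beta$, and the hexagon \eqref{eq:hexagon} and unit axiom \eqref{eq: unit axiom} follow from functoriality of dualizing sheaves under the (strictly associative) groupoid action, exactly as in the displayed Cartesian-square argument just before Theorem \ref{th:action}. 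With all four pieces of data in hand, Proposition \ref{prop: auxillary action} (or directly Theorem \ref{th:action} applied to $r_* \sF$) endows $H^{-\bullet}_{\GO}(\Xco, \sF)$ with an $\A_0$-module structure.

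Finally I would identify this cohomology with $M_{c,K} = H^K_\bullet(\Sp_c)$. Since $q_2 : \Xco \to \Sp_c$ is a $\GO$-principal bundle and $\sF$ is pulled back (up to the standard dimension shift) from $[K\bs\Sp_c]$, the projection formula / base change along the principal bundle gives $H^{-\bullet}_{\GO}(\Xco, \sF) \cong H^{-\bullet}_{K}(\Sp_c, \omega_{\Sp_c}) = H^K_\bullet(\Sp_c)$, where the first shift is absorbed exactly as in the passage between $H^*_{\GO}(\Vco, \omega_{\Vco}[\cdots])$ and $H^*_{\Lco}(\Sp_c,\omega_{\Sp_c})$ recorded at the end of Section \ref{se:ses}. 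Transporting the $\A_0$-module structure across this isomorphism gives the assertion.

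The main obstacle is not conceptual but a matter of care with infinite-dimensionality: one must check that $\Xco$, $\groupoid \times_\NO \Xco$ and $\groupoid^{(2)} \times_\NO \Xco$ are placid ind-schemes of the type handled in Section \ref{section: some sheaf theory}, so that dualizing sheaves and the functors $c_i^!$, $(c_{ij})_*$ all make sense, and that the smooth map $[\GO\bs\Xco] \to [K\bs\Sp_c]$ exists at the level of these approximations — i.e. that the hypothesis ``$[K\bs\Sp_c]$ admits a dualizing sheaf'' really does propagate to a $\GO$-equivariant dualizing complex on $\Xco$. Once this is set up, verifying the hexagon and unit axioms for $\beta$ is routine, since everything in sight is (a shift of) a dualizing sheaf and the groupoid acts strictly.
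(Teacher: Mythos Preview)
Your overall strategy --- use the space $X_c$, pull back the dualizing sheaf from $[K\bs\Sp_c]$, and invoke the auxiliary action framework of Section~\ref{se:Auxillary} --- is exactly what the paper does. However, there is a genuine gap: you have proved an $\A_0$-module structure, not an $\A$-module structure, and the proposition asserts the latter. The algebra $\A = H_\bullet^{\ttGO\rtimes\Cx}(\sR_{G,N})$ is the flavour- and loop-equivariant version, and your argument, which works entirely with $\GO$-equivariance and the groupoid $\groupoid$, only yields a module over $\A_0 = H_\bullet^{\GO}(\sR_{G,N})$.

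Relatedly, your choice $Z = \Xco \subset \GK$ is too small. The group $K$ is a subgroup of $L_c \subset \ttGOK$, not of $\GK$ in general, so the ``right $K$-quotient'' of $\Xco$ that you appeal to need not exist. The paper instead uses the flavour version
\[
X_c := \{ g \in \ttGOK : gc \in \NO\},
\]
which carries a left $\ttGO\rtimes\Cx$-action and a right $L_c$-action (hence a right $K$-action), and the stack isomorphism $[\ttGO\rtimes\Cx \bs X_c / K] \cong [K\bs\Sp_c]$ (the flavour analogue of Lemma~\ref{le:isoStacks}). One then transfers the dualizing sheaf to $X_c$ and applies the flavour-equivariant auxiliary action, Proposition~\ref{pr: auxillary action equivariant}, rather than Proposition~\ref{prop: auxillary action}. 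With these replacements your argument goes through verbatim and produces the desired $\A$-module structure on $H^K_\bullet(\Sp_c)$.
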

For the rest of this section we will assume the existence of this dualizing sheaf.

To establish this result we will need flavour versions of $ \Xco, \Vco $ (from Definition \ref{def:Springer}):
$$
X_c := \{ g \in \ttGOK : gc \in \NO \} \quad V_c := (\ttGOK) c \, \cap \NO
$$
Note that we have a right action of $ L_c $ on $ X_c $ with $  X_c / L_c = V_c$.

\begin{proof}

As in Lemma \ref{le:isoStacks}, we have a map $ X_c \rightarrow \Sp_c $ given by $ g \mapsto [g^{-1}] $ leading to an isomorphism of stacks
\begin{equation} \label{eq:isoStacks}
  [ \ttGO \rtimes \Cx \bs X_c / K ] \cong [K \bs \Sp_c ]
\end{equation}
Using this isomorphism, we get a dualizing sheaf $ \mathcal F = \omega_{X_c}[-2\dim \ttGO \rtimes \Cx ] $ on $ X_c $.  Thus by Proposition \ref{pr: auxiliary action equivariant}, we deduce that we have an $ \A $-module structure on
$$
M_{c,K} = H_\bullet^{K}(\Sp_c) \cong H^\bullet_{\ttGO \rtimes \Cx \times K}(X_c, \mathcal F)
$$
\end{proof}

We note here the following simple but useful result, showing that the module $M_{c,K}$ only depends on the orbit of $c, K$ under $\ttGO\rtimes \Cx$:

\begin{Lemma} \label{le:orbitModule}
	For any $c\in \NK$ and $g\in \ttGOK$, we have an isomorphism $X_c / K \cong X_{gc} / gK g^{-1}$ and the $\A$--modules $M_{c,K}$ and $M_{gc, gKg^{-1}}$ are naturally isomorphic.  In particular $ M_c \cong M_{gc} $.
\end{Lemma}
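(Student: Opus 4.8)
The plan is to exhibit an isomorphism between the entire package of data that Proposition~\ref{pr: auxillary action equivariant} uses to build $M_{c,K}$ and the analogous package for $(gc,\,gKg^{-1})$, and then to conclude by transport of structure, using that the $\A$-action in (\ref{eq: diagram for actionZ}) is assembled entirely from functorial sheaf operations.

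First I would write down the underlying geometric isomorphism. Right multiplication by $g^{-1}$ carries $X_c$ into $X_{gc}$: if $h\in\ttGOK$ satisfies $hc\in\NO$, then $(hg^{-1})(gc)=hc\in\NO$. This defines an isomorphism of ind-schemes $\rho\colon X_c\xrightarrow{\sim}X_{gc}$ whose inverse is right multiplication by $g$. Since $g\in\ttGOK$ we have $\ttGOK\,c=\ttGOK\,(gc)$, hence $L_{gc}=gL_cg^{-1}$ and $gKg^{-1}\subseteq L_{gc}$. One checks directly that $\rho$ is equivariant for the left $\ttGO\rtimes\Cx$-actions (which are by left multiplication within the group, so this is just associativity), that it intertwines the right $L_c$-action on $X_c$ with the right $L_{gc}$-action on $X_{gc}$ along the group isomorphism $k\mapsto gkg^{-1}$, and that it commutes with the orbit maps $X_c\to\NO$ and $X_{gc}\to\NO$ (both send $h\mapsto hc$ after the identification $(hg^{-1})(gc)=hc$). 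Therefore $\rho$ descends to an isomorphism $X_c/K\xrightarrow{\sim}X_{gc}/(gKg^{-1})$ over $\NO$, which via the stack isomorphism (\ref{eq:isoStacks}) — equivalently, via left translation by $g$ on $\Gr$, carrying $\Sp_c$ onto $\Sp_{gc}$ and the $K$-action onto the $gKg^{-1}$-action — proves the first assertion and supplies a canonical identification of the vector spaces $M_{c,K}$ and $M_{gc,gKg^{-1}}$.

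Next I would upgrade this to an isomorphism of $\A$-modules. By the proof of Proposition~\ref{pr:ModuleExists}, $M_{c,K}$ is the module produced by the auxiliary diagram (\ref{eq: diagram for actionZ}) with $Z=X_c/K$, equipped with its map $r$ to $\NO$, the $\widetilde{\groupoid}$-action inherited from the $\ttGK$-action, and the shifted dualizing sheaf; the same holds for $M_{gc,gKg^{-1}}$. The descended $\rho$ is compatible with the maps $r$, it pulls the dualizing sheaf back to the dualizing sheaf with the cohomological shifts matching since $\dim\ttGO\rtimes\Cx$ and $\dim K=\dim gKg^{-1}$ agree, and it intertwines the groupoid actions — for $(a,v,[h])$ with $v=hc$ one has $a\cdot[h]=[ah]\mapsto[ahg^{-1}]=a\cdot[hg^{-1}]$ — together with the $\widetilde{\groupoid}$-equivariant structures on the sheaves. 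Hence $\rho$ is an isomorphism of the full collection of data in Section~\ref{se:Auxillary}. Because every arrow in (\ref{eq: diagram for actionZ}), and so the resulting $\A$-action, is built from functorial operations (pull-back with supports, proper push-forward, the canonical $\GO$-bundle isomorphisms, and the equivariance isomorphism $\beta$), the induced map $M_{c,K}\cong M_{gc,gKg^{-1}}$ intertwines the two $\A$-module structures. Taking $K=L_c$, so that $gKg^{-1}=L_{gc}$, gives $M_c\cong M_{gc}$.

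The hard part will be essentially bookkeeping rather than substance: checking that $\rho$ genuinely respects the $\widetilde{\groupoid}$-equivariant structure on the dualizing sheaf (in particular the analogs of the hexagon (\ref{eq:hexagon}) and the unit axiom (\ref{eq: unit axiom})), and that ``transport of structure'' through the diagram (\ref{eq: diagram for actionZ}) is legitimate at the level of the equivariant derived categories of these ind-schemes. Both should follow from the naturality of the constructions in Section~\ref{section: some sheaf theory}, applied to the evident commutative diagram obtained by placing $\rho$ between the action diagram for $(c,K)$ and that for $(gc,\,gKg^{-1})$.
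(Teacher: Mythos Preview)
Your argument is correct and is exactly the natural one: right multiplication by $g^{-1}$ gives a $(\ttGO\rtimes\Cx)$-equivariant isomorphism $X_c\cong X_{gc}$ intertwining the right $K$- and $gKg^{-1}$-actions and the maps to $\NO$, and then transport of structure through the auxiliary action diagram (\ref{eq: diagram for actionZ}) yields the module isomorphism. The paper states this lemma without proof, calling it ``simple but useful,'' so your write-up is if anything more detailed than what the authors deemed necessary.
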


\subsection{Equivariant modules and weight modules}
We will now impose other assumptions on $ c, K $ and see how this affects the resulting modules.

For $ \sigma \in \pi $, define $ X_c(\sigma) := X_c \cap \ttGOKnoloop(-\sigma) \rtimes \Cx $.  Under the map (\ref{eq:isoStacks}), $ X_c(\sigma) $ maps to $ \Sp_c(\sigma) $.

\begin{Proposition} \label{pr:weakEquiv}
	 Assume that $ K \subset \ttGOKnoloop(0) \rtimes \Cx $, the connected component of the identity.  Then $ M_{c,K} $ is weakly $ F^!$-equivariant with $$ M_{c,K}(\sigma) = H_\bullet^K(\Sp_c(\sigma))$$
\end{Proposition}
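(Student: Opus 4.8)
My plan is to prove this in two steps: first produce a $\pi$-grading on $M_{c,K}$, and then check it is compatible with the grading $\A = \bigoplus_{\sigma \in \pi}\A(\sigma)$. (Recall that a weak $F^!$-equivariant structure on an $\A$-module is by definition exactly such a compatible $\pi$-grading, since $F^!$ is the Cartier dual of $\pi$.)

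First I would observe that $\ttGOKnoloop(0)\rtimes\Cx$ is the identity component of $\ttGOK$: the factor $\Cx$ is connected, $\ttGO$ is connected, and $\ttGOKnoloop(0)$ is connected because it is the total space of the $\ttGO$-bundle $\ttGOKnoloop(0)\to\Gr(0)$ over the connected space $\Gr(0)$. A connected group acts trivially on the discrete set $\pi_0(\Gr)=\pi$ of connected components; concretely, left translation by $\ttGOKnoloop(0)$ carries $\Gr(\sigma)$ to $\Gr(0+\sigma)=\Gr(\sigma)$ since $\pi_0$ of the group $\ttGOKnoloop$ is the group $\pi$, and loop rotation, being connected, preserves each $\Gr(\sigma)$ as well. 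Hence $K\subseteq\ttGOKnoloop(0)\rtimes\Cx$ preserves every $\Sp_c(\sigma)=\Sp_c\cap\Gr(\sigma)$; as the $\Gr(\sigma)$ are open and closed in $\Gr$ (\S\ref{se:ses}), these are $K$-stable open--closed pieces of $\Sp_c$, so with the same conventions used to define $\A$ we get
\begin{equation*}
M_{c,K}=H_\bullet^K(\Sp_c)=\bigoplus_{\sigma\in\pi}H_\bullet^K(\Sp_c(\sigma)).
\end{equation*}
I would then set $M_{c,K}(\sigma):=H_\bullet^K(\Sp_c(\sigma))$; under the stack isomorphism (\ref{eq:isoStacks}), which carries $X_c(\sigma)$ to $\Sp_c(\sigma)$, this is the $\sigma$-summand of $H^\bullet_{\ttGO\rtimes\Cx\times K}(X_c,\sF)$.

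For the compatibility I would track $\pi$-components through the flavour auxiliary action diagram --- the analogue of (\ref{eq: diagram for actionZ}) --- with $Z=X_c$ and $\sF=\omega_{X_c}[-2\dim\ttGO\rtimes\Cx]$, which computes the $\A$-action by Proposition \ref{pr: auxillary action equivariant}. Every space in that diagram is a disjoint union of open--closed pieces indexed by the relevant $\pi$-data: $\sR=\bigsqcup_{\sigma_1}\sR(\sigma_1)$ with $\sR(\sigma_1)=\{([g],w):[g]\in\Gr(-\sigma_1)\}$, and $\widetilde\groupoid\times_\NO X_c$ (together with its quotient by $\ttGO$) has a piece $(\sigma_1,\sigma_2)$ where the $\ttGOK$-coordinate lies in $\ttGOKnoloop(-\sigma_1)\rtimes\Cx$ and the $X_c$-coordinate lies in $X_c(\sigma_2)$. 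Unwinding the maps: $(g,v,z)\mapsto([g,v],z)$ carries the $(\sigma_1,\sigma_2)$-piece into $\sR(\sigma_1)\times X_c(\sigma_2)$, while the descent $m_Z$ of the groupoid action $(g,v,z)\mapsto gz$ carries it into $X_c(\sigma_1+\sigma_2)$, because the component of $gz$ in $\pi_0(\ttGOKnoloop)=\pi$ is the sum of the components of $g$ and $z$ and the loop-rotation factor does not affect components. The sheaf $\omega_{X_c}$ restricts compatibly on these pieces, and the equivariant isomorphism $\beta$, being a canonical base-change isomorphism of dualizing sheaves, respects the decomposition. Restricting the four steps of the action to the $(\sigma_1,\sigma_2)$-pieces thus displays the $\A$-action as a sum of maps $\A(\sigma_1)\otimes M_{c,K}(\sigma_2)\to M_{c,K}(\sigma_1+\sigma_2)$, which is exactly the claimed weak $F^!$-equivariance.

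The only delicate point I anticipate is keeping the sign conventions straight: $\sR(\sigma)$ sits over $\Gr(-\sigma)$, and $X_c(\sigma)$ is defined to lie in the component $\ttGOKnoloop(-\sigma)\rtimes\Cx$ precisely so that (\ref{eq:isoStacks}), given by $g\mapsto[g^{-1}]$, matches it with $\Sp_c(\sigma)\subseteq\Gr(\sigma)$. Once these are pinned down, the fact that multiplication sends $\ttGOKnoloop(-\sigma_1)\times\ttGOKnoloop(-\sigma_2)$ into $\ttGOKnoloop(-\sigma_1-\sigma_2)$ forces the grading shift to be $\sigma_1+\sigma_2$ rather than some other combination. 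There is no geometric or analytic difficulty, since the whole argument amounts to decomposing the BFN-type convolution diagram into its open--closed connected components, exactly as the grading on $\A$ itself arises.
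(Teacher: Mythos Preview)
Your proposal is correct and follows essentially the same approach as the paper: both arguments observe that $K\subset\ttGOKnoloop(0)\rtimes\Cx$ preserves each component $X_c(\sigma)$, then decompose the auxiliary action diagram into its $\pi$-indexed open--closed pieces to see that $\A(\sigma_1)$ carries $M_{c,K}(\sigma_2)$ into $M_{c,K}(\sigma_1+\sigma_2)$. The paper's proof is terser and works with $Z=X_c/K$ rather than $Z=X_c$, but this is only a cosmetic difference in where the $K$-quotient is taken.
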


\begin{proof}
	Because $ K \subset \ttGOKnoloop(0) \rtimes \Cx $, the action of $ K $ preserves the components $ X_c(\sigma)$.
	
	Consider the first line of the auxiliary action diagram (\ref{eq: diagram for actionZ}) for the case $ Z =  X_c / K$.  
	\begin{equation*}
	\sR \times  X_c / K \leftarrow \groupoid \times_\NO  X_c / K \rightarrow  X_c / K
	\end{equation*}
	We can decompose this diagram into components as follows
	\begin{equation*}
	\sR(\sigma_1) \times  X_c(\sigma_2) /K  \leftarrow  \{ (g_1, g_2) : g_i \in \ttGOKnoloop(-\sigma_i) \rtimes \Cx,  g_2c, g_1 g_2 c \in \NO \} / K \rightarrow X_c(\sigma_1 + \sigma_2) / K
	\end{equation*}
	Thus $ \A(\sigma_1) $ maps $ M_{c, K}(\sigma_2) $ into $ M_{c, K}(\sigma_1 + \sigma_2) $ as desired.
\end{proof}

\begin{Remark}
	This proof shows that we get a weakly $F^!$-equivariant module for any $ Z, \mathcal F $, as in section \ref{se:Auxillary}, such that $ Z $ admits a disjoint decomposition labelled by $ \pi $.
\end{Remark}


Assume now that $ K \subset \ttGO \rtimes \Cx $.
In this case, we obtain a map 
\begin{equation}
\label{eq: weight map assumpt2}
\phi : \widetilde \fg^! \oplus \C \hbar = H^2_{\ttGO \rtimes \Cx}(pt) \rightarrow \mathfrak u := H^2_K(pt) 
\end{equation}
We have a right action of $ \mathfrak u $ on $ M_{c,K} =  H_\bullet^K(\Sp_c) $.

\begin{Proposition} \label{pr:strongEquiv}
	With the above assumptions, $ M_{c,K} $ is strongly $ (\widetilde G^!, \phi)$-equivariant.
\end{Proposition}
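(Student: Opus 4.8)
The plan is to follow the proof of Lemma \ref{le:decompTorus} (equivalently \cite[Lemma 3.19]{BFN1}), of which this is the module-theoretic analogue. First I would fix the right action of $\mathfrak u$: since $M_{c,K} = H^K_\bullet(\Sp_c)$ is $K$--equivariant Borel--Moore homology, I take the right action of $\mathfrak u = H^2_K(pt)$ to be cap product (the restriction to $\mathfrak u \subset H^\bullet_K(pt)$ of the standard module structure). The $\pi$--grading $M_{c,K} = \oplus_\sigma M_{c,K}(\sigma)$ is provided by Proposition \ref{pr:weakEquiv}, which applies since $K \subset \ttGO \rtimes \Cx \subset \ttGOKnoloop(0) \rtimes \Cx$. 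With these in hand, the content of the Proposition is the single identity
\begin{equation*}
x \cdot m - m \cdot \phi(x) = \hbar \langle x, \sigma \rangle m, \qquad x \in \widetilde\fg^! \oplus \C\hbar = H^2_{\ttGO \rtimes \Cx}(pt),\quad m \in M_{c,K}(\sigma),
\end{equation*}
where $x$ acts on $M_{c,K}$ through its image $x \cdot r_0 \in \A(0)$ under the embedding $\widetilde\fg^! \oplus \C\hbar \hookrightarrow \A$ ($r_0$ the unit of $\A$), and $\phi(x) \in H^2_K(pt)$ is the restriction of $x$ along $K \hookrightarrow \ttGO \rtimes \Cx$. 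As a sanity check, in the degenerate case $N = 0$, $K = \ttGO \rtimes \Cx$ one has $M_{0,K} = \A$ with $\phi = \id$, and this identity is exactly Lemma \ref{le:decompTorus}.

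The main step is to trace $x$ through the auxiliary action diagram. By Proposition \ref{pr: auxillary action equivariant} the $\A$--action on $M_{c,K}$ is computed by the flavoured diagram (\ref{eq: diagram for actionZ}) for $Z = X_c/K$. Because $x \cdot r_0 \in \A(0)$ is supported on the fibre $\sR_{[1]} \cong \NO$ of $\sR \to \Gr$ over $[1]$, and $r_0$ acts as the identity (the unit axiom checked inside the proof of Theorem \ref{th:action}, diagram (\ref{eq: unit diagram in proof})), I would compute the action of $x$ using that ``unit'' subdiagram, now with $Z$ in place of $\NO$ and carrying along the extra equivariant class $x$. Restricting to a single component $X_c(\sigma)/K$ and trivializing the winding by conjugating with a representative $z^\tau$, $[\tau] = \sigma$ (the flavoured analogue of Lemma \ref{le:GtoG'}), one sees that the loop--rotation--plus--flavour copy of $\ttGO \rtimes \Cx$ attached to the $\NO$--factor (the ``$\A$--side'') becomes identified with the one attached to $X_c(\sigma)/K$, but the identification is twisted: conjugation by $z^\tau$ carries a class $x \in H^2_{\tG \times \Cx}(pt)$ to $\phi(x) + \hbar \langle x, \sigma \rangle$, because loop rotation acts on $z^\tau \in T(\cK)$ through the cocharacter $\tau$ and so couples the loop parameter $\hbar$ to the $\tG$--parameters via the pairing with $\sigma = [\tau]$ (which is well defined on $\pi$ since $\widetilde\fg^!$ is Weyl-invariant, hence pairs trivially with coroots). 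Since capping with $\phi(x) + \hbar \langle x, \sigma \rangle$ is the operation $m \mapsto m \cdot \phi(x) + \hbar \langle x, \sigma \rangle m$, this gives the required identity.

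The ingredients making this precise are the $H^\bullet_K(pt)$--linearity of pull--back with supports, of the groupoid equivariant structure $\beta$, of the principal $\GO$--bundle descent isomorphisms, and of proper direct image, all of which are part of the sheaf--theoretic formalism of Section \ref{section: some sheaf theory}; the twist computation itself is identical to the last step of the proof of \cite[Lemma 3.19]{BFN1}. I expect the only real difficulty to be bookkeeping --- matching up the various copies of equivariance groups that occur along the auxiliary action diagram through the chain of base--change and $\GO$--bundle isomorphisms, and pinning down the normalization of the twist, in particular that it is $+\hbar \langle x, \sigma \rangle$ and not its negative, which reduces to getting the loop--rotation weight of $z^\tau$ right.
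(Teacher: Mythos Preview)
Your approach is correct in outline, but the paper takes a more direct route that avoids tracing through the action diagram altogether.

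The paper works with the realization $M_{c,K}(\sigma) = H^\bullet_{\ttGO \rtimes \Cx \times K}(X_c(\sigma), \mathcal F)$ from (\ref{eq:isoStacks}).  In this description the left action of $\theta \in H^2_{\tG}(pt,\Z)$ (through $\A$) and the right action of $\phi(\theta)$ (through $K$) are both cap products with first Chern classes of equivariant line bundles, namely $\OO[\bar\theta,0]$ and $\OO[0,\bar\theta]$ for the left and right $\ttGO\rtimes\Cx$--structures on $X_c(\sigma)$.  Since $X_c(\sigma) \subset \ttGOKnoloop(-\sigma)\rtimes\Cx$, both line bundles are pulled back from this ambient group, and the paper proves a dedicated Lemma~\ref{le:isoLine} giving an explicit isomorphism $\OO_Y[\bar\theta,0]\cong\OO_Y[0,\bar\theta - p\hbar]$ on $Y=\ttGOKnoloop(\sigma)\rtimes\Cx$ with $p=\langle\sigma,\theta\rangle$, by writing down a concrete trivializing function $f(g_1,g_2)=\theta(g_1)_p\,g_2^{-p}$.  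Applying this with $-\sigma$ in place of $\sigma$ gives exactly the required identity.

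Your plan --- restrict to the unit subdiagram, conjugate by $z^\tau$ to trivialize the winding, and read off the shift $\phi(x)+\hbar\langle x,\sigma\rangle$ --- is the same computation in different clothing: the ``conjugation by $z^\tau$'' is precisely what the function $f$ in Lemma~\ref{le:isoLine} encodes, and indeed the paper remarks that Lemma~\ref{le:isoLine} is equivalent to Lemma~\ref{le:decompTorus} (your reference to \cite[Lemma 3.19]{BFN1}).  The advantage of the paper's packaging is that it bypasses all the bookkeeping you anticipate: there is no need to chase equivariance groups through the pull-back-with-supports, $\beta$, descent, and proper-pushforward steps of (\ref{eq:actionCondensed}), because the $H^\bullet_{\tG\times\Cx}(pt)$--linearity of the $\A$--action (stated in Theorem~\ref{th:action}) already identifies the left $\A$--action of $\theta$ with cap product by the left equivariant class.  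Your approach would reprove that linearity along the way, which is fine but unnecessary; the paper's line-bundle argument isolates the one nontrivial ingredient (the $\hbar$--shift) and proves it once, on the universal space $\ttGOKnoloop(\sigma)\rtimes\Cx$, independently of any particular $c$ or $K$.
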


In order to prove this proposition, we will make a brief discussion about certain equivariant line bundles on $ \ttGOK $.

 Let $ \theta \in H^2_{\tG}(pt, \Z) $; we regard $ \theta $ as a group homomorphism $ \theta : \tG \rightarrow \Cx $.  We also get a group homomorphism, $ \bar \theta : \ttGO \rightarrow \Cx $, by composing
$$
\ttGO \xrightarrow{\theta} \cO^\times \rightarrow \Cx
$$
where the second arrow is evaluation at $ t = 0 $.

We also have the group homomorphism $ \hbar : \ttGO \rtimes \Cx \rightarrow \Cx $ given by projection onto the second factor.

We have left and right actions of $ \ttGO \rtimes \Cx $ on $ \ttGOK $.  The above group homomorphisms define equivariant line bundles $ \OO[\bar \theta, 0], \OO[0,\bar \theta - p \hbar] $ on $ \ttGOK $, for any $ p \in \Z $.

\begin{Lemma} \label{le:isoLine}
	Let $ \sigma \in \pi $ and $ \theta \in H^2_{\tG}(pt, \Z) $.  On $ Y:= \ttGOKnoloop(\sigma) \rtimes \Cx $, we have an isomorphism of equivariant line bundles
	$$
	\OO_Y[\bar \theta,0] \cong \OO_Y[0, \bar \theta - p \hbar]
	$$
	where $ p = \langle \sigma, \theta \rangle$.
\end{Lemma}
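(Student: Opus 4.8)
The plan is to exhibit an explicit nowhere-vanishing regular function $\Phi$ on $Y=\ttGOKnoloop(\sigma)\rtimes\Cx$ and check that multiplication by $\Phi$ intertwines the two equivariant structures. First I would recall that the character $\theta\colon\tG\to\Cx$ extends to a homomorphism $\theta_\cK\colon\ttGK\to\cK^\times$ applied coefficient-wise in $z$, and that $\theta$ annihilates all coroots of $G$: being a character of a reductive group it kills the derived subgroup $G'$, which contains the images of the coroots. Hence $\val\circ\theta_\cK\colon\ttGK^\cO\to\Z$ is locally constant, so constant on the connected component $\ttGOKnoloop(\sigma)$, where it equals $\langle\lambda,\theta\rangle=\langle\sigma,\theta\rangle=:p$ evaluated on a representative $z^\lambda$ with $[\lambda]=\sigma$ (well-defined since $\theta$ kills coroots). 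Therefore
\[
\ell(g):=\big(z^{-p}\,\theta_\cK(g)\big)\big|_{z=0}
\]
is a well-defined, multiplicative, nowhere-vanishing regular function on $\ttGOKnoloop(\sigma)$ — the leading coefficient of $\theta_\cK(g)$ — and this is exactly where the restriction to a single component $Y$ is used: on a different component the valuation is different, so the same formula fails.

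Next I would set $\Phi\colon Y\to\Cx$, $\Phi(g,s)=s^{p}/\ell(g)$. Both $\OO_Y[\bar\theta,0]$ and $\OO_Y[0,\bar\theta-p\hbar]$ are the trivial line bundle equipped with an $(\ttGO\rtimes\Cx)\times(\ttGO\rtimes\Cx)$-equivariant structure given by a character of the acting group, so an isomorphism between them is precisely a nowhere-vanishing regular function $\Phi$ on $Y$ satisfying, for all $(h_1,s_1),(h_2,s_2)\in\ttGO\rtimes\Cx$ and $(g,s)\in Y$,
\[
\Phi\big((h_1,s_1)\cdot(g,s)\cdot(h_2,s_2)^{-1}\big)=\bar\theta(h_2)\,s_2^{-p}\,\bar\theta(h_1)^{-1}\,\Phi(g,s).
\]
I would verify this identity by a direct computation, using two inputs: for $h\in\ttGO$ one has $\theta_\cK(h)\in\cO^\times$ with constant term $\bar\theta(h)$, unaffected by loop rescaling; and the loop rotation by $s$ sends $\sum a_iz^i\mapsto\sum a_is^iz^i$, hence multiplies $\ell$ by $s^{p}$ while multiplying the $\Cx$-coordinate by $s$. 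Left multiplication by $(h_1,s_1)$ sends $(g,s)$ to $(h_1\cdot(s_1\cdot g),\,s_1s)$, multiplying $\ell$ by $\bar\theta(h_1)s_1^{p}$ and the numerator $s^{p}$ by $s_1^{p}$; the powers of $s_1$ cancel, leaving only the factor $\bar\theta(h_1)^{-1}$ and no $\hbar$-twist, matching $\OO_Y[\bar\theta,0]$. Right multiplication by $(h_2,s_2)^{-1}$ multiplies $\ell$ by $\bar\theta(h_2)^{-1}$ and the numerator by $s_2^{-p}$, producing the factor $\bar\theta(h_2)s_2^{-p}$, which is exactly the value of the character $\bar\theta-p\hbar$ on $(h_2,s_2)$. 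Assembling these gives the displayed identity, so $\Phi$ is the desired isomorphism.

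The main point requiring care is the bookkeeping with the semidirect-product and loop-rotation conventions, which is what pins down that the numerator exponent is exactly $p=\langle\sigma,\theta\rangle$ (so that the right twist is $\bar\theta-p\hbar$, not $\bar\theta+p\hbar$, and the left carries no $\hbar$-twist); the algebra itself is routine once the conventions are fixed. A secondary, minor point to spell out is that $\ell=\operatorname{lc}\circ\theta_\cK$ is genuinely a regular, invertible function on the ind-scheme $\ttGOKnoloop(\sigma)$, which is immediate from the constancy of $\val\circ\theta_\cK$ there together with the fact that $\theta_\cK$ restricted to that component factors through $z^{p}\cO^\times\subset\cK^\times$.
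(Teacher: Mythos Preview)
Your proof is correct and follows essentially the same approach as the paper's own proof. Both arguments construct an explicit nowhere-vanishing function on $Y$ from the leading coefficient of $\theta_\cK(g)$ (the coefficient of $z^p$ in $\theta(g)$) together with a power of the $\Cx$-coordinate, and then verify the left/right equivariance by direct computation using the semidirect-product conventions. Your function $\Phi(g,s)=s^{p}/\ell(g)$ is exactly the reciprocal of the paper's $f(g_1,g_2)=\theta(g_1)_p\,g_2^{-p}$, the difference reflecting only your choice to phrase the intertwining condition via right multiplication by $(h_2,s_2)^{-1}$ rather than by $r$ directly.
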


This lemma is probably well-known to experts (in fact it is equivalent to Lemma \ref{le:decompTorus}).  We will provide a simple explicit proof for convenience.

\begin{proof}
	To construct this isomorphism, we need a function $ f : Y \rightarrow \Cx $ such that 
	\begin{equation} \label{eq:lhg}
	f(lgr) = \bar \theta(l)  f(g) (\bar \theta - p\hbar)(r) \ \text{ for } l,r \in \ttGO \rtimes \Cx, g \in \ttGOKnoloop(\sigma) \rtimes \Cx
	\end{equation} 
	
	In order to define $ f$, we first fix some notation.  Given $ a \in \cK $, we write $ a_p $ for the coefficient of $ z^p$. In particular $ \bar \theta(h) = \theta(h)_0 $ for $ h \in \ttGO $.
	
	Also, given $ a \in \cK $ and $ s \in \Cx $, we write $ a^s $ for the action of $ s $ on $ a$ by loop rotation.  Note that $ (a^s)_p = s^p a_p $.  More generally, given $ g \in \ttGOK $ we write $ g^s $ for the action by loop rotation.  Note that $ \theta(g)^s = \theta(g^s) $.
	
	The character $ \theta : \ttGK \rightarrow \cK^\times $ restricts to a map $ \tG_\cK^\cO(\sigma) \rightarrow z^p \cO^\times $.  We define $ f $ by $ f(g_1, g_2) = \theta(g_1)_p g_2^{-p} $ where $ (g_1, g_2) \in \tG_\cK^\cO(\sigma) \rtimes \Cx$.
	
	Now let $ l = (l_1, l_2) \in \ttGO \rtimes \Cx $.  Then 
	\begin{gather*}
	f(lg) = f(l_1 g_1^{l_2}, l_2 g_2) = \theta(l_1 g_1^{l_2})_p l_2^{-p} g_2^{-p} \\ = \theta(l_1)_0 (\theta(g_1)^{l_2})_p l_2^{-p} g_2^{-p} = \theta(l_1)_0 \theta(g_1)_p g_2^{-p} = \bar \theta(l) f(g) 
	\end{gather*}  
	Similarly for $ r = (r_1, r_2) \in \ttGO \rtimes \C^\times $, we have
	\begin{gather*}
	f(gr) = f(g_1 r_1^{g_2}, g_2 r_2) = \theta(g_1 r_1^{g_2})_p g_2^{-p} r_2^{-p} \\ = \theta(g_1)_p (\theta(r_1)^{g_2})_0 g_2^{-p} r_2^{-p} = \theta(g_1)_p \theta(r_1)_0 g_2^{-p} r_2^{-p} =  f(g) (\bar \theta - p\hbar)(r) 
	\end{gather*}  
	Thus (\ref{eq:lhg}) holds and $ f $ provides the desired isomorphism of line bundles on $ Y$.
\end{proof}

\begin{proof}[Proof of Proposition \ref{pr:strongEquiv}]

	Note that $ K \subset \ttGO \rtimes \Cx $ and $ X_c(\sigma) \subset \ttGOKnoloop(-\sigma) \rtimes \Cx $.  So we have a map $$ [\ttGO \rtimes \Cx \bs X_c(\sigma) / K] \rightarrow  [\ttGO \rtimes \Cx \bs \ttGOKnoloop(-\sigma) \rtimes \Cx  / \ttGO \rtimes \Cx]$$
	
		Let $ \theta \in H^2_{\tG}(pt, \Z) $.  We need to show that for $ m \in M_{c,K}(\sigma) = H^\bullet_{\ttGO \rtimes \Cx \times K}(X_c(\sigma), \mathcal F) $, we have $ \theta \cdot m - m \cdot \theta = \hbar \langle \sigma, \theta \rangle m$.  The left and right actions of $ \theta $ on $ M_{c,K}$ come from first Chern classes of equivariant line bundles.
	
	Moreover these equivariant line bundles are pulled back from the corresponding left and right $\ttGO \rtimes \Cx$-equivariant line bundles on $ \ttGOKnoloop(-\sigma) \rtimes \Cx$.  Thus the result follows from the isomorphism of equivariant line bundles from Lemma \ref{le:isoLine}.
\end{proof}

\begin{Corollary} \label{co:weight}
	Assume that $ K \subset \ttGO \rtimes \Cx $ and that the natural map $ K \rightarrow \FO \rtimes \Cx $ has finite kernel. Then $ M_{c,K} $ is a weight module.
\end{Corollary}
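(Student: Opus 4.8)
The plan is to obtain Corollary \ref{co:weight} by combining Propositions \ref{pr:weakEquiv} and \ref{pr:strongEquiv}, so that the only genuinely new thing to verify is the surjectivity clause in Definition \ref{def:Weight}. First I would note that the hypothesis $K \subseteq \ttGO \rtimes \Cx$ forces $K \subseteq \ttGOKnoloop(0) \rtimes \Cx$, since $\ttGO$ is connected and hence contained in the identity component. Thus Proposition \ref{pr:weakEquiv} applies and shows that $M_{c,K}$ is weakly $F^!$-equivariant, i.e. the $\tG^!$-action factors through $F^!$; and Proposition \ref{pr:strongEquiv} applies verbatim and shows that $M_{c,K}$ is strongly $(\widetilde G^!, \phi)$-equivariant, where $\mathfrak u = H^2_K(pt)$ and $\phi$ is the map (\ref{eq: weight map assumpt2}) induced by the inclusion $K \hookrightarrow \ttGO \rtimes \Cx$.

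It then remains to check that the restriction of $\phi$ to $\fg^! \oplus \C\hbar \subseteq \widetilde\fg^! \oplus \C\hbar$ is surjective onto $\mathfrak u = H^2_K(pt)$. I would first identify this restricted map concretely. By (\ref{eq:Cartan}), the subspace $\fg^! \oplus \C\hbar = H^2_F(pt) \oplus \C\hbar$ inside $\widetilde\fg^! \oplus \C\hbar = H^2_{\ttGO \rtimes \Cx}(pt)$ is exactly the image of $H^2_{\FO \rtimes \Cx}(pt)$ under pullback along $\ttGO \rtimes \Cx \to \FO \rtimes \Cx$ (using $H^\bullet_{\ttGO}(pt) = H^\bullet_{\tG}(pt)$ and $H^\bullet_{\FO}(pt) = H^\bullet_F(pt)$, together with the fact that loop rotation fixes the subgroup of constant loops). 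Consequently $\phi|_{\fg^! \oplus \C\hbar}$ is the pullback along the composite $K \to \ttGO \rtimes \Cx \to \FO \rtimes \Cx$, which is precisely the natural map appearing in the hypothesis.

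So the real content reduces to the following statement: if $K \to \FO \rtimes \Cx$ has finite kernel, then the pullback $H^2_{\FO \rtimes \Cx}(pt;\C) \to H^2_K(pt;\C)$ is surjective. I would prove this in three elementary moves. A finite kernel induces an isomorphism on rational cohomology of classifying spaces, so one may replace $K$ by its image and assume $K \subseteq \FO \rtimes \Cx$. Next, $\FO \rtimes \Cx$ fits in an exact sequence $1 \to U \to \FO \rtimes \Cx \to F \times \Cx \to 1$ with $U = \ker(\FO \to F)$ pro-unipotent; the composite $K \hookrightarrow \FO \rtimes \Cx \twoheadrightarrow F \times \Cx$ has kernel $K \cap U$, a closed subgroup of $U$, hence pro-unipotent, hence with $\C$-acyclic classifying space, so we reduce to $K \subseteq F \times \Cx$, a subgroup of a torus. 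Finally, for such $K$ the identity component $K^0$ is a subtorus $S$, and since $K$ is abelian there is no $\pi_0$-correction: $H^2_K(pt;\C) = H^2_S(pt;\C) = X^*(S) \otimes \C$, and the restriction $X^*(F \times \Cx) \otimes \C \to X^*(S) \otimes \C$ is surjective because the cokernel $X_*(F \times \Cx)/X_*(S)$ of cocharacter lattices is torsion-free. This gives the desired surjectivity, and hence all the conditions of Definition \ref{def:Weight}.

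I expect the only mildly delicate step to be the middle paragraph: pinning down that $\phi$ restricted to $\fg^! \oplus \C\hbar$ really is the pullback along the given map $K \to \FO \rtimes \Cx$, i.e. that passing to the ``flavour part'' is compatible with all the identifications underlying (\ref{eq:Cartan}) and (\ref{eq: weight map assumpt2}). Once that compatibility is in hand, the surjectivity itself is a routine statement about restriction of characters from a torus, since finite and pro-unipotent subgroups are invisible to rational equivariant cohomology.
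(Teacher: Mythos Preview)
Your proposal is correct and follows the same route as the paper's proof, which is just two sentences: it invokes Proposition \ref{pr:strongEquiv} for strong $(\tG^!,\phi)$-equivariance and then asserts that the finite-kernel hypothesis makes $\ff^* \oplus \C\hbar \rightarrow \mathfrak u = H^2_K(pt)$ surjective. Your version simply fills in the details the paper leaves implicit---the explicit appeal to Proposition \ref{pr:weakEquiv} for the ``factors through $F^!$'' clause, and the reduction-to-a-subtorus argument for surjectivity---so there is no substantive difference in strategy.
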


\begin{proof}
	From Proposition \ref{pr:strongEquiv}, $ M_{c,K} $ is strongly $ \tG^!$-equivariant.  Since $  K \rightarrow \FO \rtimes \Cx $ has finite kernel, the map $ \ff^* \oplus \C\hbar \rightarrow \mathfrak u = H^2_K(pt) $ is surjective.
\end{proof}

\begin{Example}
The most important class of examples will be when $ c \in \NK $ is $\chi$-stable for some character $ \chi : \GK \rightarrow \cK^\times $, with $ \GK $ acting with trivial stabilizer.  Then by Theorem \ref{th:finitedim}, each component of $ \Sp_c $ is of finite type.  Also $ L_c $ maps injectively to $ \FO \rtimes \Cx $.  Hence the dualizing sheaf exists.  By Proposition \ref{pr:ModuleExists}, we have an $ \A$-module structure on $ M_c = H^{L_c}_\bullet(\Sp_c) $.  By Corollary \ref{co:weight}, this is a weight module.
\end{Example}
	
\begin{Example}
A more degenerate example is $ c = 0, K = \Cx $.  Then 
$$ X_c = \ttGOK, \quad X_c/ K = \ttGOKnoloop, \quad M_{c,K} \cong H_\bullet^{\Cx}(\Gr) $$  
In this case, $ \phi : \widetilde \fg^! \oplus \C \hbar  \rightarrow \C \hbar $ is the projection.  The theorem implies that $ x \in \widetilde \fg^! \oplus \C \hbar $ acts on $ H_\bullet^\Cx(\Gr(\sigma)) $ by $ \langle \sigma, x \rangle \hbar $.  This is a weight module by Corollary \ref{co:weight}.
\end{Example}

\begin{Example}
Another example is to take $ c = 0, K = \ttGO \rtimes \Cx $.  
$$X_c = \ttGOK, \quad X_c/ K = \Gr, \quad M_{c,K} \cong H_\bullet^{\ttGO \rtimes \Cx}(\Gr) $$ 
which carries left and right actions by $ H^\bullet_{\ttGO \rtimes \Cx}(pt) $.
In this case, $ \phi $ is the identity and the theorem shows that the left and right actions of $ x \in \widetilde \fg^! \oplus \C\hbar $ are equal up to a shift by $ \langle \sigma, x \rangle \hbar $ on each component $ H_\bullet^{\ttGO \rtimes \Cx}(\Gr(\sigma))$.

Moreover, the module in this case is $ \A(G,0)$ with the action of $ \A(G,N) $ coming from the natural inclusion $ \A(G,N) \hookrightarrow \A(G,0)$, as discussed in section \ref{sec: losing matter}.

This module is strongly $ (\tG^!, \phi) $-equivariant, but not a weight module.
\end{Example}

 \subsection{Quantum Hamiltonian reduction} 
 \label{section: QHR}
 Recall that we have a short exact sequence
 \[
 1 \to G \rightarrow \tG \rightarrow F \to 1
 \]
 where $F$ is the flavour torus.
 
 In this section, we will consider $ \tG $ as a gauge group.  So we consider the affine Grassmannian $ \Gr_\tG $, the BFN space $ \sR_{\tG, N} $ and the Coulomb branch algebra
 \[\widetilde{\A} = H^{\tG_{\cO} \rtimes \Cx }_{\bullet}(\sR_{\tG, N} ) \]
We will study the relation between Springer fibres in $ \Gr_\tG$ and those in $ \Gr_G $, and the corresponding $ \widetilde \A$-modules and $ \A$-modules.

The $ \widetilde \pi $-decompositions of $ \Gr_\tG, \sR_{\tG, N} $ can be reduced to  $ \tau$-decompositions
$$
\Gr_\tG = \bigsqcup_{\zeta \in \tau} \Gr_\tG(\zeta), \quad  \sR_{\tG, N} = \bigsqcup_{\zeta \in \tau} \sR_{\tG, N}(\zeta)
$$
This leads to a $ \tau$-grading of $ \widetilde \A $.  


It is clear from the definitions that $ \sR_{\tG, N}(0) = \sR_{G,N} $ and this leads to an identification $\widetilde \A(0) = H^{\ttGO \rtimes \Cx}(\sR_{G,N}) = \A $ (see \cite[Proposition 3.18]{BFN1}).

Let $ c \in \NK $.  We have the $ \tG$-Springer fibre $ \widetilde{\Sp}_c \subset \Gr_\tG $ and we consider the corresponding $\tau$-decomposition $ \widetilde{\Sp}_c = \sqcup_{\zeta \in \tau} \widetilde{\Sp}_c(\zeta)$.  

Let $ K $ be a subgroup of the stabilizer of $ c $ in $ \ttGK $.  Assume that the dualizing sheaf of $  K \bs \widetilde{\Sp}_c $ exists, so that $ \widetilde M_{c,K} := H^{K}_\bullet(\widetilde{\Sp}_c) $ is an $ \widetilde \A $-module by Proposition \ref{pr:ModuleExists}.

 \begin{Proposition} \label{pr:reduction} Assume that $K\subset \ttGOK $.
 	\begin{enumerate}
 		\item We have a decomposition $$  \widetilde M_{c,K} = \oplus_{\zeta \in \tau} \widetilde M_{c,K}(\zeta), \text{ where } \widetilde M_{c,K}(\zeta) = H^K_\bullet(\widetilde{\Sp}_c(\zeta)) $$
 		\item The module $ \widetilde M_{c,K} $ is strongly $ (G^!, \phi)$-equivariant, where 
 		$$
 		\phi : \fg^! \oplus \C\hbar = H^2_{F \times \Cx}(pt) \rightarrow H_K^2(pt) = \mathfrak u
 		$$
 		comes from $ K \subset \ttGOK \rightarrow \FO \rtimes \Cx $.

 		\item For each $ \zeta \in \tau$, the graded component $\widetilde{M}_{c, K}(\zeta)$ is an $\A$-module.
 		
 		\item Let $ \mu : \Cx \rightarrow \tG $. We have an isomorphism
 		\begin{align*}
 		\widetilde{\Sp}_c([-\mu]) &\cong \Sp_{z^\mu c} \\
 		[g] &\mapsto [z^\mu g]
 		\end{align*}
 		which induces an isomorphism of $\A$-modules
 		\[
 		M_{z^\mu c, K_\mu} \cong \widetilde{M}_{c, K}([-\mu])
 		\]
 		where $K_\mu = z^{\mu} K z^{-\mu}$ and $ [-\mu] $ denotes the image of $ -\mu $ in $ \tau $.
 	\end{enumerate}
 \end{Proposition}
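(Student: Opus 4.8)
The plan is a \emph{transport de structure} argument. First I would identify the two spaces --- and their stacky quotients, so that the relevant dualizing sheaves correspond --- and then check that the two resulting $\A$--module structures on the common underlying vector space agree, by matching the auxiliary action diagrams of the $\tG$-- and $G$--gauge theories.

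\emph{Step 1: the isomorphism of spaces.} Left multiplication by $z^\mu$ is an automorphism of the ind-scheme $\Gr_\tG$ which shifts the $\widetilde{\pi}$--grading, hence the $\tau$--grading, by the class of $\mu$; thus it restricts to an isomorphism $\Gr_\tG([-\mu]) \xrightarrow{\sim} \Gr_\tG(0) = \Gr_G$ (the latter equality holding as for $\sR_{\tG,N}$, \cite[Proposition 3.18]{BFN1}). Since $g^{-1}c \in \NO$ if and only if $(z^\mu g)^{-1}(z^\mu c) \in \NO$, this carries $\widetilde{\Sp}_c([-\mu])$ isomorphically onto $\Sp_{z^\mu c}$ via $[g] \mapsto [z^\mu g]$. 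On the auxiliary spaces $X_c = \{g : gc \in \NO\}$ from the proof of Proposition \ref{pr:ModuleExists} this corresponds to right multiplication by $z^{-\mu}$: if $x \in \widetilde{X}_c$ with $[x^{-1}] \in \widetilde{\Sp}_c([-\mu])$, then $\overline{xz^{-\mu}} \in F_\cO$ (so $xz^{-\mu} \in \ttGOK$) and $(xz^{-\mu})(z^\mu c) = xc \in \NO$, so $x \mapsto xz^{-\mu}$ identifies $\widetilde{X}_c([-\mu])$ with $X_{z^\mu c}$. As left and right multiplications commute, this intertwines the left $K$--action with the left $K_\mu$--action, $K_\mu = z^\mu K z^{-\mu}$; note $K_\mu \subset \ttGOK$, because conjugation by $z^\mu$ preserves $\ttGOKnoloop$ (its image in $F_\cK$ is unchanged, as $F$ is abelian) and twists the loop-rotation $\Cx$ only by the constant loop $\mu\colon \Cx \to \tG$. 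We obtain an isomorphism of stacks $[K \bs \widetilde{\Sp}_c([-\mu])] \cong [K_\mu \bs \Sp_{z^\mu c}]$, so the assumed existence of the dualizing sheaf of $K \bs \widetilde{\Sp}_c$ yields the one for $K_\mu \bs \Sp_{z^\mu c}$; in particular $M_{z^\mu c, K_\mu}$ is defined and canonically isomorphic, as a graded vector space, to $\widetilde{M}_{c,K}([-\mu])$.

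\emph{Step 2: matching the module structures.} By part (3) --- proved as Proposition \ref{pr:weakEquiv}, by decomposing the auxiliary action diagram (\ref{eq: diagram for actionZ}) into $\tau$--graded pieces --- the action of $\A = \widetilde{\A}(0)$ on $\widetilde{M}_{c,K}([-\mu])$ is computed by the $\tau$--degree--$0$ slice of that diagram for the $\tG$--theory with $Z = \widetilde{X}_c/K$. By the $\tG$--versus--$G$ analogue of \cite[Proposition 3.18]{BFN1}, this slice --- involving $\sR_{\tG,N}(0) = \sR_{G,N}$ together with the $\tau$--degree--$0$ parts of $\widetilde{\groupoid}$, $\sT_{\tG,N}$ and $\ttGK \rtimes \Cx$ --- is exactly the auxiliary action diagram of Proposition \ref{pr: auxillary action equivariant} for the $G$--theory, with $Z = \widetilde{X}_c([-\mu])/K$ and $\sF$ the shifted dualizing sheaf. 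Now right multiplication by $z^{-\mu}$ furnishes an isomorphism of the auxiliary-action data of Section \ref{se:Auxillary}: it identifies $\widetilde{X}_c([-\mu])/K$ with $X_{z^\mu c}/K_\mu$ compatibly with the structure maps $r$ to $\NO$ (each sends the class of $x$ to $xc$), with the residual $\ttGO \rtimes \Cx$--actions, and with the groupoid actions (these act by left multiplication on both $X$--spaces, hence commute with right multiplication by $z^{-\mu}$), and it carries $\omega_{\widetilde{X}_c([-\mu])}$ to $\omega_{X_{z^\mu c}}$ with matching cohomological shifts. Since the module of Proposition \ref{pr: auxillary action equivariant} is built functorially out of this data --- pull-back with supports with respect to the canonical isomorphism (\ref{eq:defphi}), the $\groupoid$--equivariant isomorphism $\beta$, the $\GO$--bundle isomorphism, and a proper direct image, all of which are natural --- the isomorphism of data induces an isomorphism of $\A$--modules. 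Composed with Step 1, this gives $M_{z^\mu c, K_\mu} \cong \widetilde{M}_{c,K}([-\mu])$ as $\A$--modules.

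\emph{The main obstacle.} The heart of the matter is the assertion in Step 2 that $z^{-\mu}$--translation is an isomorphism of the \emph{entire} auxiliary action diagram --- matching the dualizing-complex isomorphism (\ref{eq:defphi}) and the $\groupoid$--equivariant structure $\beta$, not just the underlying spaces --- and that the $\tau$--degree--$0$ slice of the $\tG$--theory diagram literally \emph{is} a $G$--theory diagram. Both are bookkeeping, but they demand care: one must keep track of which loop group plays the role of $\ttGOK$ in each theory, use that conjugation (rather than one-sided translation) by $z^\mu$ preserves $\ttGOKnoloop$, control the twisting of the loop-rotation $\Cx$ by the constant loop $\mu$ (which does not affect the relevant equivariant cohomology), and observe that the cohomological shifts carried by the various (shifted) dualizing sheaves are preserved because $z^{-\mu}$ is an isomorphism.
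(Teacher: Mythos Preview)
Your proposal is correct and follows the same strategy as the paper: restrict the $\widetilde\A$--action diagram to its $\tau$--degree--$0$ slice (as in the proof of Proposition~\ref{pr:weakEquiv}) and identify it with the $\A$--action diagram for $z^\mu c$. The paper's proof is a two-line sketch (``the action diagrams for $\widetilde\A$ restrict to the action diagram for $\A$''), whereas you have spelled out the transport step explicitly on the auxiliary spaces $X_c$ via right multiplication by $z^{-\mu}$, checked that $K_\mu\subset\ttGOK$, and verified that the groupoid action, the map $r$, and the dualizing-sheaf data all match --- these are exactly the bookkeeping points the paper leaves to the reader.
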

 
 \begin{proof}
 	Parts (1), (2) follow similar to Propositions \ref{pr:weakEquiv} and \ref{pr:strongEquiv}.  Part (3) follows immediately from the fact that $ \A = \widetilde \A(0) $. 
 	
 	For Part (4), it is easy to see the isomorphism of Springer fibres.  To check that it gives an isomorphism of modules, we just note the action diagrams for $ \widetilde \A $ restricts to the action diagram for $ \A $, as in the proof of Proposition \ref{pr:weakEquiv}.
 \end{proof}

Now, let $ c \in N $. Recall that we assumed the existence of $ \mu_0 : \Cx \rightarrow \tG $ which acts by scaling on $ N $.  Thus
\[
\Cxz = \{ (s^{-\tfrac{1}{2}\mu_0}, s) \, | \, s \in \Cx \} \subseteq \tG \times \Cx \}
\]
fixes $c$. We write $\Cxzf \subset F \times \Cx$ for the image of this subgroup in $ F \times \Cx $. Also, recall that for $ \zeta \in \tau \subset \ff $, we defined the specialization $ \A_{\zeta, \hbar} $ in section \ref{section: central specialization}.

The previous proposition specializes to give the following result.
 \begin{Proposition}   Let $ c \in N $ and assume that $ K $ lies in the preimage of $ \Cxzf$ under the map $ \tG_\cK \rtimes \Cx \rightarrow F_\cK \rtimes \Cx $.  For each $ \zeta \in \tau $, the graded component $\widetilde{M}_{c,K}(\zeta)$  is a module over $\Az$.
 	
 \end{Proposition}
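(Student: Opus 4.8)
We already know, by Proposition \ref{pr:reduction}(3), that $\widetilde{M}_{c,K}(\zeta)$ is a module over $\A = \widetilde{\A}(0)$, so the plan is just to pin down the action of the central subalgebra $H^\bullet_{F\times\Cx}(pt) = \Sym\mathfrak f^*[\hbar]\subset\A$ and check that it factors through the character cutting out $\Az$. Recall from Section \ref{section: central specialization} that $\Az$ is obtained from $\A$ by specializing $\Sym\mathfrak f^*[\hbar]$ along the character $\xi$ which sends $\hbar\mapsto\hbar$ and $x\mapsto\langle x,\,-\zeta-[\tfrac12\mu_0]\rangle\,\hbar$ for $x\in\mathfrak f^*$. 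So it suffices to show that $\Sym\mathfrak f^*[\hbar]$ acts on $\widetilde{M}_{c,K}(\zeta)$ through $\xi$.

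First I would record how the hypothesis on $K$ constrains the equivariance data. By Proposition \ref{pr:reduction}(2), $\widetilde{M}_{c,K}$ is strongly $(G^!,\phi)$-equivariant, with $\phi\colon H^2_{F\times\Cx}(pt)\to H^2_K(pt)$ induced by $K\hookrightarrow\ttGOK\to\FO\rtimes\Cx$. Since $K$ lies in the preimage of $\Cxzf$ under $\tG_\cK\rtimes\Cx\to F_\cK\rtimes\Cx$, and $\Cxzf\subseteq F\times\Cx$ (the constant loops inside $F_\cK\rtimes\Cx$), the composite $K\to\FO\rtimes\Cx$ lands in $\Cxzf$, so $\phi$ factors as $H^2_{F\times\Cx}(pt)\xrightarrow{\ \iota^*\ }H^2_{\Cxzf}(pt)\to H^2_K(pt)$ with $\iota\colon\Cxzf\hookrightarrow F\times\Cx$. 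Because $\Cxzf$ is the image of the cocharacter $s\mapsto(s^{-\frac12\mu_0},s)$, i.e.\ of $(-[\tfrac12\mu_0],\mathrm{id})$, the restriction $\iota^*$ sends the loop-rotation class $\hbar$ to a generator $\epsilon$ of $H^2_{\Cxzf}(pt)$ and sends $x\in\mathfrak f^*$ to $-\langle x,[\tfrac12\mu_0]\rangle\,\epsilon$; in particular $\phi(x) = -\langle x,[\tfrac12\mu_0]\rangle\,\phi(\hbar)$ for all $x\in\mathfrak f^*$.

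Then I would combine this with the strong equivariance. For $x\in\mathfrak f^*$ and homogeneous $m\in\widetilde{M}_{c,K}(\zeta)$, strong $(G^!,\phi)$-equivariance expresses $x\cdot m - m\cdot\phi(x)$ as $\hbar m$ times the pairing of $x$ with the degree of $m$; applying the same relation to $\hbar$, which pairs trivially with $\tau$, gives $m\cdot\phi(\hbar)=\hbar m$, whence $m\cdot\phi(x) = -\langle x,[\tfrac12\mu_0]\rangle\hbar m$. Evaluating the remaining grading twist is the one delicate point: matching the $\tau$-grading on $\widetilde{M}_{c,K}$ against the conventions of Section \ref{section: QHR}---the identifications $\sR_{G,N}(\sigma)\leftrightarrow\Gr(-\sigma)$ and $X_c(\sigma)\subseteq\ttGOKnoloop(-\sigma)\rtimes\Cx$, handled just as in the proof of Proposition \ref{pr:strongEquiv} via Lemma \ref{le:isoLine}---it contributes $-\langle x,\zeta\rangle\hbar m$. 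Altogether $x$ acts on the entire piece $\widetilde{M}_{c,K}(\zeta)$ by multiplication by the single scalar $\langle x,\,-\zeta-[\tfrac12\mu_0]\rangle\hbar$ and $\hbar$ acts by $\hbar$, so the central subalgebra acts through $\xi$ and $\widetilde{M}_{c,K}(\zeta)$ descends to a $\Az$-module. The hard part is exactly this sign bookkeeping; everything else is formal, following from Proposition \ref{pr:reduction} and from $K$ mapping into the rank-one subgroup $\Cxzf$.
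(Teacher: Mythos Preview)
Your approach is the same as the paper's: both proofs observe that the hypothesis forces the composite $K\to\FO\rtimes\Cx$ to land in $\Cxzf$, so the map $\phi$ of Proposition~\ref{pr:reduction}(2) factors through the rank-one $H^2_{\Cxzf}(pt)$; combined with strong $(G^!,\phi)$-equivariance and the fact that $H^\bullet_{F\times\Cx}(pt)$ is generated in degree~2, this forces the central subalgebra to act through a $\C[\hbar]$-character on each $\tau$-graded piece. The paper's proof stops at exactly that point and declares the result, whereas you go further and try to compute the character explicitly.

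That extra computation is where your argument wobbles. The strong-equivariance relation of Proposition~\ref{pr:reduction}(2), read directly from the definition, says $x\cdot m - m\cdot\phi(x) = +\langle x,\zeta\rangle\,\hbar\, m$ for $m\in\widetilde M_{c,K}(\zeta)$. The sign conventions you cite---$\sR_{G,N}(\sigma)\leftrightarrow\Gr(-\sigma)$ and $X_c(\sigma)\subset\ttGOKnoloop(-\sigma)\rtimes\Cx$---are precisely what was used in the \emph{proof} of Proposition~\ref{pr:strongEquiv} to arrive at that relation; they are already absorbed into its statement, so invoking them again to flip the sign is double-counting. Taking the relation at face value and combining with your (correct) identification $\phi(x)=-\langle x,[\tfrac12\mu_0]\rangle\phi(\hbar)$ yields the character $\zeta-[\tfrac12\mu_0]$, not $-\zeta-[\tfrac12\mu_0]$. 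The paper's own terse proof never actually computes this value, so the discrepancy with the stated $\Az$ may well be a sign slip in the paper's statement rather than an error on your part---but as written, your justification for the extra minus sign does not hold.
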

\begin{proof}
Note that because $ K $ lies in the preimage of $\Cxzf$, the map $ K \rightarrow \FO \rtimes \Cx $ factors through $\Cxzf$.  Hence the resulting map $ \phi  : H^2_{F \times \Cx}(pt) \rightarrow H_K^2(pt) $ factors through $  H^2_{F \times \Cx}(pt) \rightarrow H^2_{\Cxzf}(pt) $.  Since $ H^\bullet_{F \times \Cx}(pt) $ is generated in degree 2, the result follows from Proposition \ref{pr:reduction}.(2).
 \end{proof}

 \subsection{Category O condition}
 We will now give a condition for when the Springer fibre module lies in category $ \OO$.  Recall that we fixed $ \chi : G \rightarrow \Cx $ which we can regard as a map $ \chi : \pi \rightarrow \Z $.  We can use $ \chi $ to collapse the grading on $ \Gr $ into a $ \Z $-grading, with 
 $$\Gr(n) :=  \bigsqcup_{\sigma : \langle \chi, \sigma \rangle= n} \Gr(\sigma)$$
 Note that $ [g] \in \Gr(n) $ if and only if $ \val \chi(g) = n $.
 
 Similarly, we use $ \chi $ to collapse the $ \pi$-gradings on $ \A, M_c $ into $ \Z$-gradings and to collapse the decompositions of $ \Sp_c $ into a $\Z$-decomposition, so
 $$
 \Sp_c(n) := \bigsqcup_{\sigma : \langle \chi, \sigma \rangle= n} \Sp_c(\sigma)
 $$
 
 
 
 \begin{Theorem} \label{th:catO} Assume that $ c \in \NK $ is $ \chi$-semistable and that $ L_c \subset \ttGOKnoloop(0) \rtimes \Cx$. Then $ M_c $ is weakly $F^!$-equivariant  and lies in category $ \OO $.  
 	
 \end{Theorem}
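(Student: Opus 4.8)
The plan is to deduce weak $F^!$-equivariance directly from Proposition \ref{pr:weakEquiv}, and then to establish the category $\OO$ condition by producing a single $\chi^{n_0}$-semi-invariant function that forces the components $\Sp_c(\sigma)$ to be empty once $\langle\chi,\sigma\rangle$ is large.

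Since $L_c\subset\ttGOKnoloop(0)\rtimes\Cx$ by hypothesis (and assuming, as in Proposition \ref{pr:ModuleExists}, that the dualizing sheaf of $[L_c\bs\Sp_c]$ exists, so that $M_c$ is defined), Proposition \ref{pr:weakEquiv} applies and shows that $M_c=M_{c,L_c}$ is weakly $F^!$-equivariant with $M_c(\sigma)=H_\bullet^{L_c}(\Sp_c(\sigma))$. Collapsing the $\pi$-grading via $\chi$ gives $M_c(n)=H_\bullet^{L_c}(\Sp_c(n))$, so it remains to find $N_0$ with $\Sp_c(n)=\emptyset$ for $n>N_0$; equivalently, with $\Sp_c(\sigma)=\emptyset$ whenever $\langle\chi,\sigma\rangle>N_0$.

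To obtain this, I would use $\chi$-semistability of $c$: choose $n_0\ge 1$ and $f\in\C[N]^{G,n_0\chi}$ with $f(c)\neq 0$ in $\cK$, where $f$ is extended $\cK$-linearly to a function $\NK\to\cK$ and $\chi$ to a homomorphism $\GK\to\cK^\times$. As $f$ is a polynomial with constant coefficients and $\NO=N\otimes\cO$, we have $\val f(v)\ge 0$ for all $v\in\NO$. Now if $[g]\in\Sp_c$ lies in $\Gr(\sigma)$, then $g^{-1}c\in\NO$ and
\[
\val f(c)-n_0\,\val\chi(g)=\val\bigl(\chi(g)^{-n_0}f(c)\bigr)=\val f(g^{-1}c)\ge 0 ;
\]
since $\val\chi(g)=\langle\chi,\sigma\rangle$, this forces $\langle\chi,\sigma\rangle\le\val f(c)/n_0$. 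Taking $N_0=\lfloor\val f(c)/n_0\rfloor$ then gives $\Sp_c(\sigma)=\emptyset$ for $\langle\chi,\sigma\rangle>N_0$, hence $M_c(n)=0$ for $n>N_0$ and $M_c\in\OO$.

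The argument has no real obstacle — as the paper indicates, it is a fairly direct consequence of the definition of (semi)stability. The only points demanding a little care are the interpretation of $\chi$-semistability for a point $c\in\NK$ rather than $c\in N$, the corresponding choice of a $\chi^{n_0}$-semi-invariant $f$ with constant coefficients (so that $\val f\ge 0$ on $\NO$), and the identification $\val\chi(g)=\langle\chi,\sigma\rangle$ used in the displayed estimate; the rest is the valuation computation above together with the already-established Proposition \ref{pr:weakEquiv}.
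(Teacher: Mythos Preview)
Your proof is correct and follows essentially the same route as the paper's: invoke Proposition~\ref{pr:weakEquiv} for weak $F^!$-equivariance, then use a $\chi^{n_0}$-semi-invariant $f$ together with the valuation estimate $\val f(g^{-1}c)=\val f(c)-n_0\,\val\chi(g)\ge 0$ to bound $\langle\chi,\sigma\rangle$ above. The only cosmetic difference is that the paper takes $f\in\cK[N]^{G,r\chi}$ and decomposes it as $\sum f_i\otimes p_i$ with $f_i\in\C[N]$, obtaining $\val f(g^{-1}c)\ge\min_i\val p_i$, whereas you observe (correctly, since $\cK[N]^{G,n_0\chi}=\C[N]^{G,n_0\chi}\otimes_\C\cK$) that one may pick $f$ with constant coefficients from the start, giving the cleaner bound $\val f(g^{-1}c)\ge 0$.
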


 \begin{proof}
 	From Proposition \ref{pr:weakEquiv}, we know that $ M_c $ is weakly $ F^!$-equivariant.
 	
 	Since $ c $ is $ \chi$-semistable, there exists $r \in \N , f \in \cK[N]^{G, r\chi} $ such that $ f(c) \ne 0 $. 
 	
 	Since $ \cK[N] = \C[N] \otimes \cK $, we can write $ f = \sum f_i \otimes p_i $ where $ f_i \in \C[N] $ and $ p_i \in \cK $.  Let $ m = \min_i \val p_i $.
 	
 	Suppose that $ [g] \in \Sp_c(n) $.  Then $ g^{-1}c \in \NO $ and thus $ f_i(g^{-1}c) \in \cO $ for $ i $.  Hence 
 	\begin{equation}
 	\label{eq:valN}
 	\val f(g^{-1}c) = \val \bigl( \sum_i f_i(g^{-1}c) p_i \bigr) \ge \min_i \val(f_i(g^{-1}c)) +\val(p_i) \ge m
 	\end{equation}
 	On the other hand, $ f(g^{-1}c) = \chi(g^{-1})^r f(c) $.  Since $ [g] \in \Sp_c(n) $, we see that $ \val \chi(g^{-1}) = -n $ and so $ \val f(g^{-1}c) = -nr + \val f(c) $.  Combining with (\ref{eq:valN}), we deduce that $ n \le \frac{ -m - \val f(c)}{r} $.  Thus the module $ M_c $ is bounded above as desired.
 \end{proof}

\subsection{Verma-like modules} \label{se:Verma}

Throughout this section, we will assume that $ c\in N $, is $ \chi$-stable and that the natural map $Stab_{\widetilde G}(c) \subset \widetilde G \rightarrow F $ is an isomorphism.

Given such a $ c $, let $ \psi : F \rightarrow Stab_{\widetilde G}(c) \subset \tG $ be the inverse isomorphism.  Recall the coweight $ \mu_0 : \Cx \rightarrow \tG $ which gives a scaling action on $ N$.

\begin{Lemma} \label{le:stabShift}
	In this situation, there is an isomorphism $ \widehat \psi :\FO \rtimes \Cx \rightarrow \eqstab $ given by 
	\begin{equation*} 
	 (g, s) \mapsto (\psi(g)s^{-\tfrac{1}{2}\mu_0}, s)
	 \end{equation*}
\end{Lemma}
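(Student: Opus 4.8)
The plan is to verify directly that the given formula defines a group homomorphism, that it lands in $\eqstab = \mathrm{Stab}_{\ttGOK}(c)$, and that it is invertible. Let me write $\widehat\psi(g,s) = (\psi(g)s^{-\frac{1}{2}\mu_0}, s)$ for $(g,s) \in \FO \rtimes \Cx$. First I would check that $\widehat\psi$ is well-defined as a map into $\ttGOK$: since $\psi : F \to \mathrm{Stab}_{\tG}(c) \subset \tG$ is a morphism of algebraic groups, it induces $\psi_{\cK} : \FK \to \tG_\cK$, which restricts to $\FO \to \ttGOKnoloop$ because $\psi$ preserves the $\cO$-integrality; and $s^{-\frac{1}{2}\mu_0}$ is a constant (loop-independent) element of $\tG$, hence in $\ttGOKnoloop$. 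So the pair indeed lies in $\ttGOKnoloop \rtimes \Cx$, and in fact in its preimage in $\Gr_\tG$ since there is no loop-rotation obstruction. The key point for well-definedness is that $s^{\mu_0/2}$ makes sense: this requires a square root of $s$, but $\mu_0$ is a cocharacter and $s^{-\frac12\mu_0}$ should be interpreted via a choice, OR — more likely in the paper's conventions — the factor of $\frac12$ is absorbed because the loop-rotation $\Cx$ in $\ttGOK$ is the double cover coming from the weight-$\frac12$ scaling action on $N$ (recall $\Cx$ acts ``by a combination of loop rotation and scaling of $N$ with weight $1/2$''). I would make this precise by recalling that the relevant $\Cx$ already comes with this half-integer normalization built in.

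Next I would verify that $\widehat\psi(g,s)$ fixes $c$. We have $(\psi(g)s^{-\frac12\mu_0}, s) \cdot c$; the element $s \in \Cx$ acts on $\NK = N \otimes \cK$ by loop rotation composed with scaling by $s^{1/2}$, so since $c \in N$ is constant (no $z$-dependence), loop rotation fixes $c$ and $s$ acts by $c \mapsto s^{1/2} c$. Then $s^{-\frac12\mu_0} \in \tG$ acts on this by the scaling coweight: $\mu_0(t) \cdot n = t\, n$, so $s^{-\frac12\mu_0}$ scales by $s^{-1/2}$, cancelling the $s^{1/2}$. Finally $\psi(g) \in \mathrm{Stab}_{\tG}(c)$ fixes $c$ by construction. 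Hence $\widehat\psi(g,s) c = c$, i.e. $\widehat\psi(g,s) \in \eqstab$.

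Then I would check $\widehat\psi$ is a group homomorphism. The semidirect product $\FO \rtimes \Cx$ has multiplication $(g_1,s_1)(g_2,s_2) = (g_1 \cdot {}^{s_1}g_2, s_1 s_2)$, where ${}^{s_1}g_2$ denotes the loop-rotation action; but $\psi(g_2)$ is constant, so ${}^{s_1}\psi(g_2) = \psi(g_2)$ and likewise $\mu_0$ being a constant coweight commutes past the rotation. Since $\mu_0$ is central in $\tG$, the factor $s^{-\frac12\mu_0}$ commutes with everything, and the exponents add: $s_1^{-\frac12\mu_0} s_2^{-\frac12\mu_0} = (s_1 s_2)^{-\frac12\mu_0}$. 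So $\widehat\psi(g_1,s_1)\widehat\psi(g_2,s_2) = (\psi(g_1)s_1^{-\frac12\mu_0}\psi(g_2)s_2^{-\frac12\mu_0}, s_1 s_2) = (\psi(g_1 g_2)(s_1 s_2)^{-\frac12\mu_0}, s_1 s_2) = \widehat\psi(g_1 g_2, s_1 s_2)$, using that $\psi$ is a homomorphism. Finally, invertibility: the inverse is $(h,s) \mapsto (\psi^{-1}(h s^{\frac12\mu_0}), s)$ on the image, and one checks this is well-defined because any element of $\eqstab$ projects to $s \in \Cx$ under the loop-rotation projection, and its ``$\tG$-part'' must, after the $s^{\frac12\mu_0}$ correction, land in $\mathrm{Stab}_{\tG}(c) = \psi(F)$ — here one uses that $\eqstab$ maps isomorphically onto $\FO \rtimes \Cx$ under $\ttGOK \to \FK \rtimes \Cx$, which follows from $\mathrm{Stab}_{\widetilde G}(c) \xrightarrow{\sim} F$ and the triviality of the $G$-stabilizer of $c$ (consequence of $\chi$-stability).

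The main obstacle I anticipate is bookkeeping around the half-integer coweight $\frac12\mu_0$ and the precise normalization of the $\Cx$-action: one must be careful that the ``weight $1/2$'' scaling convention for $\Cx$ acting on $\NK$ is exactly what makes the $s^{-\frac12\mu_0}$ cancellation work on the nose, and that $s^{-\frac12\mu_0}$ is interpreted consistently (either the $\Cx$ is implicitly a double cover, or $\mu_0$ is chosen divisible by $2$, or one works with the substitution $s \mapsto s^2$). Everything else is a routine check once this convention is pinned down. I would state the convention explicitly at the start of the proof and then the three verifications above go through mechanically.
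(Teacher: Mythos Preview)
Your approach is correct and in fact more thorough than the paper's: the paper states this lemma without proof, treating it as immediate from the definitions. Your three-step verification (lands in $L_c$, is a homomorphism, is bijective) is the natural direct argument.

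One small correction in your homomorphism check: you write ``$\psi(g_2)$ is constant, so ${}^{s_1}\psi(g_2) = \psi(g_2)$,'' but this is false when $g_2 \in F_\cO$ depends nontrivially on $z$. What you actually need is that $\psi$, being induced from a homomorphism of constant groups $F \to \tG$, commutes with loop rotation: ${}^{s_1}\psi(g_2) = \psi({}^{s_1}g_2)$. Combined with the centrality of $s_1^{-\frac{1}{2}\mu_0}$ and the constancy of $s_2^{-\frac{1}{2}\mu_0}$ (so it is fixed by loop rotation), the computation then goes through:
\[
\psi(g_1)\,s_1^{-\frac{1}{2}\mu_0}\cdot{}^{s_1}\!\bigl(\psi(g_2)s_2^{-\frac{1}{2}\mu_0}\bigr)
= \psi(g_1)\,\psi({}^{s_1}g_2)\,(s_1 s_2)^{-\frac{1}{2}\mu_0}
= \psi\bigl(g_1\cdot{}^{s_1}g_2\bigr)(s_1 s_2)^{-\frac{1}{2}\mu_0}.
\]
For surjectivity, the key fact you invoke---that $\mathrm{Stab}_{G_\cK}(c)$ is trivial---follows because the scheme-theoretic stabilizer $\mathrm{Stab}_G(c)$ is trivial (it is the kernel of the isomorphism $\mathrm{Stab}_{\tG}(c) \to F$), hence has no nontrivial $\cK$-points. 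Your concern about the half-integer $\frac{1}{2}\mu_0$ is legitimate and is indeed a convention issue the paper leaves implicit.
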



\begin{Theorem} \label{th:Springerpoint1}
	Let $ c $ be as above. Then $\Sp_c(n) = \emptyset $ for $ n > 0 $ and $ \Sp_c(0) = \{ z^0 \} $.  
\end{Theorem}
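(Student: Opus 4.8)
The plan is as follows. First I would unwind the definitions: by construction $[g] \in \Sp_c(n)$ exactly when $g^{-1}c \in \NO$ and $\val\chi(g) = n$. Since $c \in N \subseteq \NO$, we have $g^{-1}c = c \in \NO$ for $g = 1$, and $\val\chi(1) = 0$, so $[z^0] = [1]$ indeed lies in $\Sp_c(0)$; thus the theorem amounts to the two assertions that $\Sp_c$ is disjoint from $\Gr(n)$ for $n > 0$ and that $[1]$ is its only point in $\Gr(0)$. I would also record at the outset that the stabilizer of $c$ in $G$ is \emph{trivial}: since $G = \ker(\tG \to F)$ and $\mathrm{Stab}_{\tG}(c) \to F$ is an isomorphism by hypothesis, $\mathrm{Stab}_G(c) = \ker\!\big(\mathrm{Stab}_{\tG}(c) \to F\big) = 1$. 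In particular $c$ has finite stabilizer and is $\chi$-stable, so Lemma \ref{le:StableProper} is available.

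For the vanishing $\Sp_c(n) = \emptyset$ when $n > 0$, I would use a single semi-invariant. Choose a positive integer $r$ and $f \in \C[N]^{G, r\chi}$ with $f(c) \neq 0$, rescaled so that $f(c) = 1$. If $[g] \in \Sp_c$ then $g^{-1}c \in \NO$, hence $f(g^{-1}c) \in \cO$ (the coefficients of $f$ are constants). But $f(g^{-1}c) = \chi(g^{-1})^{r}f(c) = \chi(g)^{-r}$, so $-r\,\val\chi(g) = \val\chi(g)^{-r} \ge 0$, i.e.\ $\val\chi(g) \le 0$. Thus every point of $\Sp_c$ lies in some $\Gr(n)$ with $n \le 0$, which is the claim.

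For the component $\Sp_c(0)$, suppose $[g] \in \Sp_c(0)$, so that $g^{-1}c \in \NO$ and $\val\chi(g) = 0$; the latter says $\chi(g) \in \cO^\times$. By Lemma \ref{le:StableProper} the morphism $\rho\colon G \to N \times \C$, $h \mapsto (hc, \chi(h)^{-1})$, is proper. Applying $\rho$ to the $\cK$-point $g^{-1} \in \GK$ gives $\rho(g^{-1}) = (g^{-1}c,\, \chi(g))$, which by the above lies in $\NO \times \cO$; in other words $\rho(g^{-1})$ extends to an $\cO$-point of $N \times \C$ whose restriction to $\cK$ is $\rho$ applied to the $\cK$-point $g^{-1}$. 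Since $\cO = \C[[z]]$ is a DVR with fraction field $\cK$, the valuative criterion of properness for $\rho$ produces a (unique) morphism $\Spec\cO \to G$ lifting $g^{-1}$, i.e.\ $g^{-1} \in \GO$, whence $g \in \GO$ and $[g] = [1] = [z^0]$.

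The argument is short and I do not anticipate a genuine obstacle; the one place to be careful is the last step, where one must check that the $\cO$-point of $N \times \C$ and the $\cK$-point $g^{-1}$ of $G$ really do assemble into the commutative square required by the valuative criterion — but this holds tautologically, since the $\cO$-point was defined precisely as an extension of $\rho(g^{-1})$. It is also worth noting that Lemma \ref{le:StableProper}, although stated over $\C$, is exactly what is needed here: the valuative criterion consumes properness over $\C$, with $\cO$ playing the role of the test DVR.
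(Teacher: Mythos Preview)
Your proof is correct and follows essentially the same route as the paper: the key step is the valuative criterion applied to the proper morphism $G \to N \times \C$ from Lemma~\ref{le:StableProper}. The only difference is that you split into the cases $n>0$ and $n=0$, handling the former with a direct semi-invariant argument (the same idea as in Theorem~\ref{th:catO}); the paper instead treats both at once by observing that $n \ge 0$ already forces $\chi(g) \in \cO$, so the single valuative-criterion step yields $g \in \GO$ and hence $n=0$ and $[g]=[z^0]$ simultaneously.
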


\begin{proof}
Assume $ n \ge 0 $.  Let $ g \in \GK $ be such that $ [g] \in \Sp_c(n) $.  Then $ g^{-1} c \in \NO $.  Also since $ \val \chi(g) = n $, we see that $ \chi(g) \in \C[[z]] $.  Thus, we see that $ (g^{-1}c, \chi(g))  \in \NO \times\C[[z]] $.

Consider the map $ G \rightarrow N \times \C $ given by $ h \mapsto (h^{-1}c, \chi(h)) $.  Since $ c $ is $ \chi$-stable, this map is proper by Lemma \ref{le:StableProper}. Consider the diagram
\begin{equation*}
\begin{tikzcd}
\Spec \cK \arrow{r} \arrow{d}{g} & \Spec \cO \arrow{d}{(g^{-1}c, \chi(g))} \\
G \arrow{r} & N \times \C 
\end{tikzcd}
\end{equation*}
By the valuative criterion of properness, we deduce that $ g $ extends to a map $ \Spec \cO \rightarrow G $.  In other words, $ g \in \GO $ and thus $ [g] = z^0 $.  This proves the desired assertions about $ \Sp_c(n)$.
\end{proof}

Let $ v_0 \in M_c(0) =  H_\bullet^{L_c}(\{z^0\})$ denote the fundamental class. In Corollary \ref{co:weight}, we determined the action of the Cartan subalgebra $ H^2_{\tG}(pt) $ on $v_0 $.  However, this component of the Springer fibre is just a point by Theorem \ref{th:Springerpoint1}, so we can determine the action of the whole Gelfand-Tsetlin algebra.

Let $ \widehat \psi^* :  H^\bullet_{\tG \times \Cx}(pt) \rightarrow H^\bullet_{F \times \Cx}(pt) $ be the map dual to $ \widehat \psi $.

\begin{Lemma} \label{le:highestweight}
The vector $ v_0 $ is a highest weight vector (annihilated by $ \A_+ $) and the action of the Gelfand-Tsetlin algebra $ H^\bullet_{\tG \times \Cx}(pt) $ on $ v_0 $ is via the map $ \widehat \psi^* : H^\bullet_{\tG \times \Cx}(pt) \rightarrow H^\bullet_{F \times \Cx}(pt) $.
\end{Lemma}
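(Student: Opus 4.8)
The plan is to verify the two claims of Lemma~\ref{le:highestweight} separately, using the description of the module action via the auxillary action diagram (\ref{eq: diagram for actionZ}).

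\textbf{Highest weight.} First I would show that $v_0$ is annihilated by $\A_+$. By Theorem~\ref{th:catO}, $M_c$ lies in category $\OO$, so $\A_+$ acts locally nilpotently; but more directly, by Proposition~\ref{pr:weakEquiv} we have $M_c(n) = H_\bullet^{L_c}(\Sp_c(n))$, and by Theorem~\ref{th:Springerpoint1} the components $\Sp_c(n)$ are empty for $n > 0$. Hence $M_c(n) = 0$ for $n > 0$, and since $\A_+ = \oplus_{n>0}\A(n)$ raises the $\Z$-grading, $\A_+ v_0 \subseteq \oplus_{n>0} M_c(n) = 0$. So $v_0$ is a highest weight vector.

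\textbf{Action of the Gelfand--Tsetlin algebra.} Here the point is that $\Sp_c(0) = \{z^0\}$ is a single reduced point, so $M_c(0) = H_\bullet^{L_c}(\mathrm{pt}) = H^\bullet_{L_c}(\mathrm{pt})$ is a free module of rank one over $H^\bullet_{L_c}(\mathrm{pt})$ generated by $v_0$, and the Gelfand--Tsetlin algebra $H^\bullet_{\tG\times\Cx}(\mathrm{pt})$ acts through its image in $H^\bullet_{L_c}(\mathrm{pt})$. So I would trace the action diagram (\ref{eq: diagram for actionZ}) for $Z = X_c/L_c$ on the degree-$0$ summand. Over the component of degree $0$, $X_c(0)/L_c \hookrightarrow \Gr_{\tG}$ has image the single point $[z^0]$, so the relevant piece of the diagram becomes a diagram of points (with their $\ttGO\rtimes\Cx$ and $L_c$ equivariance), and the action of a Gelfand--Tsetlin class is pulled back from $\sR(0)$. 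The key computation is that the subalgebra $H^\bullet_{\tG\times\Cx}(\mathrm{pt}) \subset \A$ is, by its construction (as the $\ttGO\rtimes\Cx$-equivariant cohomology of the point-fibre of $\sR(0)\to\Gr$), carried under restriction-with-supports and proper pushforward along the trivial maps to the identity on $H^\bullet_{\tG\times\Cx}(\mathrm{pt})$; and then the identification $M_c(0) = H^\bullet_{L_c}(\mathrm{pt})$ uses the isomorphism $\widehat\psi: \FO\rtimes\Cx \xrightarrow{\sim} L_c$ of Lemma~\ref{le:stabShift}, which on equivariant cohomology of a point is exactly $\widehat\psi^*: H^\bullet_{\tG\times\Cx}(\mathrm{pt}) \to H^\bullet_{F\times\Cx}(\mathrm{pt})$ composed with the restriction $H^\bullet_{\tG\times\Cx}(\mathrm{pt}) \to H^\bullet_{\FO\rtimes\Cx}(\mathrm{pt}) = H^\bullet_{F\times\Cx}(\mathrm{pt})$. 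Putting these together, a class $x \in H^\bullet_{\tG\times\Cx}(\mathrm{pt})$ acts on $v_0$ by multiplication by $\widehat\psi^*(x)$.

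The main obstacle I expect is bookkeeping rather than conceptual: one must check carefully that restricting the whole action diagram to the degree-$0$ strata really does collapse the Gelfand--Tsetlin action to the pullback $H^\bullet_{\tG\times\Cx}(\mathrm{pt})\to H^\bullet_{L_c}(\mathrm{pt})$ induced by the inclusion $L_c\hookrightarrow\ttGO\rtimes\Cx$, with no correction from the various shifts, Gysin maps, or the $\groupoid$-equivariant structure $\beta$ on $\omega_{X_c}$ (which over the point $[z^0]$ is trivial). This amounts to observing that on the degree-$0$ component all the maps $i$, $j$, $p$, $q$, $m$ in (\ref{eq: diagram for actionZ}) restrict to isomorphisms of a point (equivariantly), so the composite is just the change-of-groups map $H^\bullet_{\ttGO\rtimes\Cx}(\mathrm{pt}) \to H^\bullet_{L_c}(\mathrm{pt})$; combined with the identification $H^\bullet_{\ttGO\rtimes\Cx}(\mathrm{pt}) = H^\bullet_{\tG\times\Cx}(\mathrm{pt})$ and $L_c \cong \FO\rtimes\Cx$, this is $\widehat\psi^*$. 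I would also double-check consistency with Corollary~\ref{co:weight}, which already pins down the action of the Cartan subalgebra $H^2_{\tG}(\mathrm{pt})\subset H^\bullet_{\tG\times\Cx}(\mathrm{pt})$ on $v_0$: the formula there must agree with $\widehat\psi^*$ restricted to degree $2$, which follows since $\widehat\psi$ is precisely the isomorphism built from $\psi$ and the $-\tfrac12\mu_0$ shift of Lemma~\ref{le:stabShift}.
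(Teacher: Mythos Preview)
Your proposal is correct and takes essentially the same approach as the paper. The paper's proof is slightly more streamlined: rather than tracing through the full action diagram on the degree-$0$ component, it directly identifies the preimage of $z^0$ in $X_c$ as $\ttGO\rtimes\Cx$ with its left $\ttGO\rtimes\Cx$ action and right $\FO\rtimes\Cx$ action via $\widehat\psi$, computes the stabilizer of the bi-action to be the graph $\{(\widehat\psi(g),g)\}$, and reads off that $H^\bullet_{\tG\times\Cx}(pt)$ acts on $H_\bullet^{L_c}(\{z^0\})\cong H^\bullet_{F\times\Cx}(pt)$ via $\widehat\psi^*$; but this is exactly your ``change-of-groups'' argument rephrased as a stabilizer computation.
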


\begin{proof}
	We consider the preimage of $ z^0 $ inside $ X_c$.  This preimage is simply $ \ttGO \rtimes \Cx $, which carries a left action of $  \ttGO \rtimes \Cx$ and a right action of $ \FO \rtimes \Cx $, which acts through $\widehat \psi$.  Thus the stabilizer for the action of $ \ttGO \rtimes \Cx \times \FO \rtimes \Cx $ is $$\{(\widehat \psi(g), g) : g \in \FO \rtimes \Cx \} $$ 
	Thus we conclude that the action of  $ H^\bullet_{\tG \times \Cx \times F \times \Cx}(pt) $ on $  H_\bullet^{\eqstab}(\{z^0\})$ is given by
	$$
H^\bullet_{\tG \times \Cx} \otimes H^\bullet_{F \times \Cx} \xrightarrow{\widehat \psi^* \otimes id} H^\bullet_{F \times \Cx} \otimes H^\bullet_{F \times \Cx} \rightarrow H^\bullet_{F \times \Cx}
$$
where the second map is multiplication.  In particular, we see that $ x \in H^\bullet_{\tG \times \Cx}(pt) $ acts the same way as $ \widehat \psi^*(x) $ as desired.
\end{proof}

\begin{Remark}
The highest weight space of $ M_c $ is free of rank 1 over the equivariant parameters.  For this reason, we will call the resulting modules, and those of the form $ M_{z^\mu c}$, ``Verma-like''.  However, in general they will not be Verma modules in any sense, and in particular they will often not be generated by the highest weight vector $ v_0 $.  In the abelian case, it is possible to completely analyze these modules and determine which are actually Vermas (see Proposition \ref{pr:AbelianVerma})
\end{Remark}

\subsection{Fixed points and the Hikita conjecture} \label{se:Hikita}
We would like to relate these Verma-like modules to a conjecture of Hikita and Nakajima.

Recall the Higgs branch $ Y = T^* N \sssslash_\chi G = \Phi^{-1}(0) \sslash_\chi G $.  We have an action of $ \tG \times \Cx $ on $\Phi^{-1}(0) $ where $ \Cx $ acts by weight $ 1/2 $ scaling on $ T^* N $.  This action leads to an action of $ F \times \Cx $ on $ Y $ and a Kirwan map 
$$ 
H^\bullet_{\tG \times \Cx}(pt) \rightarrow H^\bullet_{F \times \Cx}(Y) 
$$ 
which is known to be surjective in many examples.

On the other hand, define the $ B$-algebra of the algebra $ \A$, $ B(\A) $ by setting
$$
B(\A) := \A(0) / \langle ab : a \in \A(-n), b \in \A(n), n > 0 \rangle
$$

Since we have a map $ H^\bullet_{\tG \times \Cx}(pt) \rightarrow \A(0) $, we obtain a map 
\begin{equation} \label{eq:HtoB}
H^\bullet_{\tG \times \Cx}(pt) \rightarrow B(\A)
\end{equation}
 which is often surjective.

The following conjecture is an extension of conjectures by Hikita and Nakajima, c.f~\cite{Hikita} and \cite[Section 8]{moncrystals}.

\begin{Conjecture}
\label{hikita conjecture}
There is an algebra isomorphism $ H^\bullet_{F \times \Cx}(Y) \cong B(\A) $ which is compatible with the map from $ H^\bullet_{\tG \times \Cx}(pt) $ to both sides.
\end{Conjecture}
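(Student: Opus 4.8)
The plan is not to prove Conjecture~\ref{hikita conjecture} in full generality — even the original non-equivariant statement is open — but to describe how the Verma-like modules of Section~\ref{se:Verma} produce the conjectural isomorphism, at least at the level of spectra and under the simplifying hypotheses that $Y^{F \times \Cx}$ is finite and all its points lie in $N \sslash_\chi G \subset Y$. The organizing principle is that a character $B(\A) \to R$ is exactly the datum of the action of the Gelfand--Tsetlin algebra $H^\bullet_{\tG \times \Cx}(pt)$ on the highest weight line of a highest weight $\A_R$-module, so the strategy is: (i) attach to each fixed point $[c] \in (N \sslash_\chi G)^F$ such a highest weight module; (ii) read off the resulting character of $B(\A)$; (iii) check that these characters jointly realize $\Spec B(\A)$; and (iv) match the answer with $H^\bullet_{F \times \Cx}(Y)$.

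For steps (i) and (ii): by the hypotheses on $[c]$, the discussion of Section~\ref{se:Verma} supplies an isomorphism $\psi : F \xrightarrow{\sim} \mathrm{Stab}_{\tG}(c)$ and hence the module $M_c = H^{L_c}_\bullet(\Sp_c)$. By Theorem~\ref{th:finitedim} this module is defined with finite-dimensional components, it carries an $\A$-module structure by Proposition~\ref{pr:ModuleExists}, and by Theorem~\ref{th:catO} it lies in category~$\OO$; by Theorem~\ref{th:Springerpoint1} its top graded piece $M_c(0) = H^\bullet_{L_c}(pt)$ is free of rank one. By Lemma~\ref{le:highestweight} the generator $v_0$ is annihilated by $\A_+$, and the Gelfand--Tsetlin algebra acts on it through $\widehat\psi^* : H^\bullet_{\tG \times \Cx}(pt) \to H^\bullet_{F \times \Cx}(pt)$, where $\widehat\psi$ is the isomorphism of Lemma~\ref{le:stabShift}. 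Hence $M_c$ factors through a character $B(\A) \to H^\bullet_{F \times \Cx}(pt)$, and identifying $B(\A)$ with its image in the endomorphisms of $M_c(0)$ exhibits this character as $\widehat\psi^*$. Letting $[c]$ range over the fixed locus assembles these into an algebra map $B(\A) \to \bigoplus_{[c]} H^\bullet_{F \times \Cx}(pt)$ compatible with $(\ref{eq:HtoB})$ by construction.

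For the right-hand side I would invoke the localization theorem: since $Y^{F \times \Cx}$ is finite, restriction to fixed points embeds $H^\bullet_{F \times \Cx}(Y)$ into $\bigoplus_{p \in Y^{F \times \Cx}} H^\bullet_{F \times \Cx}(pt)$, and the composite of the Kirwan map $H^\bullet_{\tG \times \Cx}(pt) \to H^\bullet_{F \times \Cx}(Y)$ with restriction to the point attached to $[c]$ is computed by the $\tG \times \Cx$-weight there, which one checks is again $\widehat\psi^*$. Thus $B(\A)$ and $H^\bullet_{F \times \Cx}(Y)$ both sit inside $\bigoplus_{[c]} H^\bullet_{F \times \Cx}(pt)$ compatibly with the maps from $H^\bullet_{\tG \times \Cx}(pt)$, and it remains to see the two images agree. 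Surjectivity of the Kirwan map (known, e.g., for Nakajima quiver varieties) says $H^\bullet_{F \times \Cx}(Y)$ is generated by the image of $H^\bullet_{\tG \times \Cx}(pt)$; establishing the analogous surjectivity of $(\ref{eq:HtoB})$ for $B(\A)$ would then force the two subalgebras to coincide with the single image of $H^\bullet_{\tG \times \Cx}(pt) \to \bigoplus_{[c]} H^\bullet_{F \times \Cx}(pt)$, completing the identification.

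The main obstacle, and the reason this remains a conjecture, is threefold. First, the construction only sees fixed points inside $N \sslash_\chi G$, whereas $Y^F$ may be strictly larger; recovering the rest requires replacing the Lagrangian $N$ by another $N' \subset T^*N$ and is only partially understood (cf.~Section~\ref{sec: hwtftsy}). Second, one must match \emph{scheme} structures and not merely sets of closed points: $B(\A)$ need not be reduced, $Y$ may be singular at its fixed points, and the embeddings into $\bigoplus_{[c]} H^\bullet_{F \times \Cx}(pt)$ only control the respective rings up to nilpotents without a finer argument. Third, surjectivity of $(\ref{eq:HtoB})$ is not known in general. For these reasons I would present the argument above as a conceptual explanation of the conjecture — upgradable to a theorem in the toric case, and in good quiver cases granting the surjectivity and purity inputs — rather than as a proof of the general statement.
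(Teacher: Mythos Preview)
This statement is a \emph{conjecture}, not a theorem, and the paper does not prove it; rather, Section~\ref{se:Hikita} offers exactly the kind of conceptual evidence you describe. Your proposal correctly identifies this and reproduces the paper's argument: use the Verma-like module $M_c$ attached to a fixed point $[c] \in (N\sslash_\chi G)^{F\times\Cx}$, invoke Lemma~\ref{le:highestweight} to compute the Gelfand--Tsetlin action on the highest weight line as $\widehat\psi^*$, and observe that this same map arises as the composite of the Kirwan map with restriction to $[c]$, yielding the commutative square the paper displays. You also flag the same obstructions the paper notes (fixed points outside $N\sslash_\chi G$, surjectivity of (\ref{eq:HtoB}), scheme-theoretic issues).

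Your write-up goes slightly beyond the paper by packaging the comparison via localization into $\bigoplus_{[c]} H^\bullet_{F\times\Cx}(pt)$ and stating explicitly that Kirwan surjectivity plus surjectivity of (\ref{eq:HtoB}) would force agreement of images; the paper instead specializes at a generic $\xi \in \mathfrak f \oplus \C$ and phrases the conclusion as an inclusion of closed points $\operatorname{Spec} H^\bullet_\xi(Y) \supset (N\sslash_\chi G)^{F\times\Cx} \subset \operatorname{Spec} B(\A_\xi)$ inside $(\ft+\xi)/W$. These are equivalent viewpoints, and your honest accounting of why this falls short of a proof is apt.
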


Note that if the maps from $ H^\bullet_{\tG \times \Cx}(pt)$ to both sides are surjective, then this conjecture is equivalent to requiring both sides are the same quotient (i.e. there is no data in the isomorphism).

We will now examine the relationship between this Conjecture and Lemma \ref{le:highestweight}.

Let $ c $ be as in section \ref{se:Verma}.  Then we have a point  $ [c] \in N\sslash_\chi G \subset Y  $ which is fixed by $ F\times \C^\times $.  The stabilizer of $c $ in $ \Phi^{-1}(0) $ is given by $ \{ (\psi(g) s^{-\tfrac{\mu_0}{2}}, s) : g \in F, s \in \Cx \} $ as in Lemma \ref{le:stabShift}.

  Also $ c $ gives rise to a restriction map $ H^\bullet_{F \times \Cx}(Y) \rightarrow H^\bullet_{F \times \Cx}(\{c\}) $.  The resulting composition $ H^\bullet_{\tG \times \Cx}(pt)\rightarrow H^\bullet_{F \times \Cx}(Y) \rightarrow H^\bullet_{F \times \Cx}(pt) $ agrees with $ \widehat \psi^* $.

On the other hand, the action of $ \A(0) $ on the vector $ v_0 \in M(0) \cong H_{F \times \Cx}^\bullet(pt)  $ gives an algebra map $ \A(0) \rightarrow  H_{F \times \Cx}^\bullet(pt)  $.  This map factors through $ B(\A) $ since $ \A_+ $ annihilates $ v_0 $.  Thus we obtain an algebra morphism $B(\A) \rightarrow H^\bullet_{F\times \Cx}(pt) $.

Thus Lemma \ref{le:highestweight} shows the commutativity of the diagram
\begin{equation*}
\begin{tikzcd}
H^\bullet_{\tG \times \Cx}(pt) \arrow{r} \arrow{d} & H^\bullet_{F \times \Cx}(Y) \arrow{d} \\
B(\A) \arrow{r} &  H^\bullet_{F \times \Cx}(pt)
\end{tikzcd}
\end{equation*}
as both sides give $ \widehat \psi^* $.
Now specialize the equivariant parameters at a generic point $ \xi \in \mathfrak f \oplus \C $, as in Section \ref{section: central specialization}.  Then we obtain 
\begin{equation*}
\begin{tikzcd}
H^\bullet_{\tG \times \Cx}(pt) \otimes_{H^\bullet_{F \times \Cx}(pt)} \C_\xi   \arrow{r} \arrow{d} & H^\bullet_{\xi}(Y) \arrow{d} \\
B(\A_\xi) \arrow{r} &  H^\bullet_{\xi}(pt) = \C
\end{tikzcd}
\end{equation*}
where $ \A_\xi = \A \otimes_{H^\bullet_{F \times \Cx}(pt)} \C_\xi $. All algebra morphisms $ H^\bullet_\xi(Y) \rightarrow \C $ are given by connected components of $ Y^{F\times \C^\times} $.  

Now assume Kirwan surjectivity and surjectivity of (\ref{eq:HtoB}), so that $ \operatorname{Spec} H^\bullet_{\xi}(Y) $ and $ \operatorname{Spec} B(\A) $ are both subschemes of 
$$
\operatorname{Spec} H^\bullet_{\tG \times \Cx}(pt) \otimes_{H^\bullet_{F \times \Cx}(pt)} \C_\xi \cong (\ft + \xi) / W.
$$  
Then the above commutative square shows that all closed points of $ \operatorname{Spec} H^\bullet_{\xi}(Y) $ which come from fixed points $ (N \sslash_\chi G)^{F\times \C^\times} $, actually lie in $ \operatorname{Spec} B(\A) $. 

\subsection{Eigenbases for GT algebras}
Recall the Gelfand-Tsetlin subalgebra $ H^\bullet_{\tG\times\Cx}(pt) $ of $  \A$.  Under some special circumstances, we can find a basis for $ M_c $ consisting of eigenvectors for the action of the Gelfand-Tsetlin algebra.

To begin, recall the flavour exact sequence and restrict this to an exact sequence of maximal tori
\begin{gather*}
1 \rightarrow G \rightarrow \tG \rightarrow F \rightarrow 1\\
1 \rightarrow T \rightarrow \widetilde T \rightarrow F \rightarrow 1
\end{gather*}
Recall that $ H^\bullet_\tG(pt) \cong (\Sym \widetilde \ft)^W $.

Assume that we are in the setting of section \ref{se:Verma}. Recall, that we have  $ \psi : F \rightarrow \tG $ whose image is the stabilizer of $ c$.  Assume that $ \psi $ actually lands in the maximal torus $ \widetilde T $.  Assume that the fixed points of $ \psi(F) $ acting on $ \Gr_G $ coincides with $ \Gr_G^T $.  Finally, assume that $ M_c $ is free over $ H^\bullet_{F \times \Cx}(pt) $.

Let $ \Sp_{c,T} = \Sp_c \cap \Gr_G^T $.  This is the same thing as the Springer fibre for $ c$ in $ \Gr_T $.  

Let $ \bar \psi : \ff \rightarrow \widetilde \ft $ be the Lie algebra map derived from $ \psi $.  For $ \lambda \in \ft $ define 
\begin{align*} \bar \psi_\lambda : \ff \oplus \C &\rightarrow \widetilde \ft \oplus \C  \\
(u, a) &\mapsto (\bar \psi(u) + a(-\lambda - \tfrac{1}{2} \mu_0), a)
\end{align*}
We get a resulting algebra map, denoted $ \bar \psi_\lambda^* : H_{\tG \times \Cx}^\bullet(pt) \rightarrow H^\bullet_{F \times \Cx}(pt) $, by
$$
H_{\tG \times \Cx}^\bullet(pt) = (\Sym \tilde \ft^*)^W[\hbar] \rightarrow \C[\widetilde \ft \oplus \C] \rightarrow \C[\ff \oplus \C] = \Sym \ff^*[\hbar] = H^\bullet_{F \times \Cx}(pt)
$$ 

The following result shows that under the above assumptions, we can describe our modules using the homology of the fixed point set.  This result was also obtained by Garner-Kivinen \cite[Proposition 4.16]{GK}.
\begin{Theorem}
\label{thm: GT bases}
	We have a surjective map 
	$$
	H_\bullet^{L_c}(\Sp_{c,T}) \rightarrow H_\bullet^{L_c}(\Sp_c)
	$$
	which is an isomorphism for generic values of the flavour parameters.
	
	Let $ \lambda $ be a coweight of $ G $ such that $ z^\lambda \in \Sp_c$.
	  Then for $ x \in H_{\tG \times \Cx}^\bullet(pt)$, we have $ x \cdot [\{z^\lambda\}] = \bar \psi_\lambda^*(x) [\{z^\mu\}]$.
	
	In particular at a generic flavour parameter $\gamma \in \mathfrak f \oplus \C$, $ M \otimes_{\Sym \ff^*[\hbar]} \C$  has a basis of Gelfand-Tsetlin eigenvectors.
\end{Theorem}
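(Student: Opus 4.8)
The plan is to deduce the final statement from parts (1) and (2), which I would establish first.

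\textbf{Reduction of the last statement.} Granting (1) and (2), the conclusion is immediate. The fixed locus $\Sp_{c,T}=\Sp_c\cap\Gr_G^T$ is a disjoint union of reduced points $\{z^\lambda\}$, one for each coweight $\lambda$ of $G$ with $z^\lambda\in\Sp_c$, and $\eqstab\cong\FO\rtimes\Cx$ (Lemma \ref{le:stabShift}) acts on each of them; hence $H_\bullet^{\eqstab}(\Sp_{c,T})$ is free over $H^\bullet_{F\times\Cx}(pt)$ with basis the fundamental classes $[\{z^\lambda\}]$. Specializing the flavour parameters at a generic $\gamma\in\ff\oplus\C$, part (1) turns the map into an isomorphism $H_\bullet^{\eqstab}(\Sp_{c,T})\otimes\C_\gamma\xrightarrow{\sim}M_c\otimes_{\Sym\ff^*[\hbar]}\C_\gamma$, so the images $[\{z^\lambda\}]$ form a $\C$--basis of the specialized module. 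By part (2), specialized at $\gamma$, every $x\in H^\bullet_{\tG\times\Cx}(pt)$ acts on $[\{z^\lambda\}]$ by the scalar $(\bar\psi_\lambda^*(x))(\gamma)$, so each $[\{z^\lambda\}]$ is a simultaneous eigenvector for the Gelfand--Tsetlin algebra. This is the asserted eigenbasis.

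\textbf{Part (1): the comparison map.} First I would identify the fixed locus. The group $\eqstab$ acts on $\Sp_c\subset\Gr_G$ through its image in $\ttGOK$, which by Lemma \ref{le:stabShift} is $\widehat\psi(\FO\rtimes\Cx)$; on $\Gr_G$ the central $\mu_0$--scaling is trivial, so the action is really through $\psi(F)$ and loop rotation. Since $\Gr_G^T$ is the discrete set $\{z^\lambda:\lambda\in X_*(T)\}$, all of whose points are loop--rotation fixed, the standing hypothesis $\Gr_G^{\psi(F)}=\Gr_G^T$ gives $\Sp_c^{\,\eqstab}=\Sp_{c,T}$. Next, $Stab_{\tG}(c)\cong F$ meets $\ker(\tG\to F)$ trivially, so $c$ has trivial $G$--stabilizer, and since $c$ is $\chi$--stable Theorem \ref{th:finitedim} says each $\Sp_c(\sigma)$ is a finite--dimensional projective variety whose $\eqstab$--fixed locus is the finite set $\Sp_{c,T}(\sigma)$. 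I would then apply the localization theorem for $(F\times\Cx)$--equivariant Borel--Moore homology componentwise: the proper pushforward $H_\bullet^{\eqstab}(\Sp_{c,T}(\sigma))\to H_\bullet^{\eqstab}(\Sp_c(\sigma))$ is an isomorphism after inverting the nonzero characters, hence has torsion cokernel and becomes an isomorphism once the flavour parameters avoid a finite union of hyperplanes; the surjectivity in (1) is then recovered using the assumed freeness of $M_c$ over $\Sym\ff^*[\hbar]$.

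\textbf{Part (2): the eigenvalue.} This generalizes Lemma \ref{le:highestweight}, the case $\lambda=0$. For $z^\lambda\in\Sp_c$ I would study the preimage of $z^\lambda$ under $X_c\to\Sp_c$, $g\mapsto[g^{-1}]$: it is the coset $(\ttGO\rtimes\Cx)z^{-\lambda}$, a torsor for the left $\ttGO\rtimes\Cx$--action carrying a compatible right $\eqstab$--action. Under $\widehat\psi$ the stabilizer of $z^{-\lambda}$ for the combined $(\ttGO\rtimes\Cx)\times(\FO\rtimes\Cx)$--action is the graph of the homomorphism $\FO\rtimes\Cx\to\ttGO\rtimes\Cx$ obtained from $\widehat\psi$ by conjugation by $z^\lambda$. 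Exactly as in the proof of Proposition \ref{pr:strongEquiv}, Lemma \ref{le:isoLine} evaluates the effect of this conjugation on the relevant equivariant line bundles; reading off the induced map $H^\bullet_{\tG\times\Cx}(pt)\to H^\bullet_{F\times\Cx}(pt)$ one gets $\widehat\psi^*$ composed with the twist $-\lambda-\tfrac12\mu_0$ coming from the loop--rotation part of the conjugation, i.e.\ exactly $\bar\psi_\lambda^*$. One must still check that the Gelfand--Tsetlin action on $[\{z^\lambda\}]\in M_c$ is genuinely computed by this local stabilizer at $z^\lambda$ even when $\Sp_c(\sigma)$ is positive--dimensional; this, together with the torsion--cokernel input of part (1), is where I expect the real work to lie. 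Combining with the grading of Proposition \ref{pr:weakEquiv} then yields the eigenbasis, recovering \cite[Proposition 4.16]{GK}.
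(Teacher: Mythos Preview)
Your approach is essentially the same as the paper's, though you make Part (2) harder than it needs to be. For (1) the paper's proof is a one-liner (``follows from equivariant formality''), which is precisely the localization-plus-freeness argument you spell out. For (2) the paper does exactly what you propose---study the preimage of $z^\lambda$ in $X_c$ and compute the stabilizer for the $(\ttGO\rtimes\Cx)\times(\FO\rtimes\Cx)$-action---but it obtains the stabilizer $\{((\psi(r)s^{-\lambda-\frac{1}{2}\mu_0},s),(r,s)):r\in\FO,\ s\in\Cx\}$ by a direct one-line calculation and then argues exactly as in Lemma~\ref{le:highestweight}. You do not need Lemma~\ref{le:isoLine} or Proposition~\ref{pr:strongEquiv} here: those concern the degree-$2$ Cartan part via equivariant line bundles, whereas the full Gelfand--Tsetlin action is read off directly from the stabilizer, just as in Lemma~\ref{le:highestweight}.

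Your worry that ``the real work'' lies in checking that the local stabilizer at $z^\lambda$ computes the action on the pushed-forward class in $M_c$ is overstated: the Gelfand--Tsetlin action is the cap product by left-$\ttGO\rtimes\Cx$-equivariant cohomology classes, and this commutes with proper pushforward along the closed embedding $\{z^\lambda\}\hookrightarrow\Sp_c$ by the projection formula. The paper treats this as standard and does not comment on it.
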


\begin{proof}
 The first statement follows from equivariant formality.
	
	The proof of the second statement is similar to the proof of Lemma \ref{le:highestweight}.  We consider the diagram
	\begin{equation*}
	\begin{tikzcd}
	\{ h z^\lambda : h \in \ttGO\rtimes \Cx \} \ar[r] \ar[d] & X_c \ar[d] \\
	\{z^\lambda \} \ar[r] & \Sp_c 
\end{tikzcd}
\end{equation*}
where the vertical arrows are $ \ttGO \rtimes \Cx $ principal bundles.  On the space $\{ h z^\lambda  : h \in \ttGO \rtimes \Cx \} $, we have an action of $ \ttGO\rtimes \Cx $ by  left multiplication, and an action of $ \FO \rtimes \Cx $ by right multiplication (composed with $\psi$).

 Now the stabilizer of $ z^\lambda $ under this action is 
 $$ \{( \psi(r)s^{-\lambda - \tfrac{1}{2}\mu_0},s), (r, s) : r \in \FO, s \in \Cx \}$$ 
 The result then follows by the same argument as the proof of Lemma \ref{le:highestweight}.
\end{proof}

Note that the set $ \Sp_{c,T} $ is very easy to analyze.  Write $ c = \sum_{j=1}^m c_j $ where each $ c_j \in N_{\gamma_j} $ is a (non-zero) weight vector.  Then it is easy to see that
$$
\Sp_{c,T} = \{z^\mu : \langle \gamma_j, \mu \rangle \le 0 \text{ for $j = 1, \dots, m$ } \} 
$$

\begin{Example} \label{eg:sln2}
	As in Example \ref{eg:sln}, we consider the case $$ G = \prod_{i = 1}^{n-1} GL_i, \  N = \oplus_{i = 1}^{n-1} \Hom(\C^i, \C^{i+1}), \ F = (\Cx)^n $$  
	We choose $ \chi : G \rightarrow \Cx $ by $g = (g_i) \mapsto \prod_i \det(g_i)^{-1}$. 	In this case the $\chi$-stable points of $N $ are the injective homomorphisms and $ N\sslash_\chi G \cong Fl_n $, the variety of full flags in $ \C^n$.
	
	We let $ c \in N $ correspond to the standard inclusions $ \C^i \hookrightarrow \C^{i+1} $.  Then we see that the resulting map $$ \psi : F = (\Cx)^n \rightarrow \widetilde T = \prod_{i=1}^n (\Cx)^i 
	$$
	is given by $$ (t_1, \dots, t_n) \mapsto (t_1, (t_1, t_2), \dots, (t_1, \dots, t_n)) 
	$$
	In particular, $F$ projects surjectively onto each component torus $ (\Cx)^i \subset GL_i $.  Thus we see that $ \Gr_G^{\psi(F)} = \Gr_G^T $ does hold in this example.
	
	Using the lattice model of the affine Grassmannian for $ G $, we see that the Springer fibre is 
	$$
	\{ L_1 \subset \cdots \subset L_{n-1} \subset \cO^n : L_i \text{ is a $\cO$-lattice in $ \cK^i$ } \}
	$$
	
	Moreover, it is easy to analyze $ \Sp_{c,T}$ in this case and we find
	$$
	\Sp_{c,T} = \{(\mu^1, \dots, \mu^{n-1}) : \mu^{i-1}_j \le \mu^i_j \text{ for $ i = 1, \dots, n-1$ and $ 1 \le j < i $} \}
	$$
	In other words, these points are labelled by triangular arrays of numbers with inequalities down each column.  These are easily seen to be in bijection with Gelfand-Tsetlin patterns.  This matches the description of the fixed points of Laumon spaces in \cite[Section 2.2]{FFFR} (see Section \ref{se:Laumon}).
\end{Example}

\subsection{Case of closed orbit}
For this section, fix $ c_0 \in N $ and assume that the $ G $ orbit of $ c_0 $ is closed in $ N $.  We also assume that $ c_0 $ has trivial stabilizer in $ G $. The corresponding Springer fibre is just a point.

\begin{Lemma} 
The Springer fibre $ \Sp_{c_0} $ is the point $ [z^0]$.
\end{Lemma}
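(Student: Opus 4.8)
The plan is to adapt the valuative-criterion argument from the proof of Theorem~\ref{th:Springerpoint1}, now exploiting that the orbit $Gc_0$ is closed in all of $N$ (and that the stabilizer is trivial): this removes the need for an auxiliary character and lets us control every component of the Springer fibre simultaneously. Unwinding the definition, $\Sp_{c_0} = \{ [g] \in \Gr : g^{-1} c_0 \in \NO \}$, and $[z^0]$ clearly lies in it since $z^0$ acts as the identity and $c_0 \in N \subseteq \NO$; so it remains to show there is nothing else.

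The key point is that the orbit map $\rho \colon G \to N$, $g \mapsto g^{-1} c_0$, is proper. Since $c_0$ has trivial stabilizer, $\rho$ is a bijection onto its image, and over $\C$ the orbit map identifies $G$ with the orbit $Gc_0$; as $Gc_0$ is closed in $N$ by hypothesis, $\rho$ is an isomorphism onto a closed subvariety, hence proper. (Alternatively, apply Lemma~\ref{le:StableProper} to the trivial character $\chi\colon G \to \Cx$, for which $c_0$ is $\chi$-stable exactly because $Gc_0$ is closed in $N$ and the stabilizer is finite, and discard the constant second coordinate of the resulting proper map.)

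Given $[g] \in \Sp_{c_0}$ with representative $g \in \GK = G(\cK)$, the condition $g^{-1} c_0 \in \NO = N(\cO)$ fits into the commutative square
\[
\begin{tikzcd}
\Spec \cK \ar[r, "g"] \ar[d] & G \ar[d, "\rho"] \\
\Spec \cO \ar[r, "g^{-1}c_0"'] & N
\end{tikzcd}
\]
and the valuative criterion of properness for $\rho$, applied to the discrete valuation ring $\cO$, yields a lift $\Spec \cO \to G$. This means $g \in G(\cO) = \GO$, whence $[g] = [z^0]$, so $\Sp_{c_0} = \{[z^0]\}$. The argument is essentially forced; the only step needing care is establishing properness of the orbit map, which is exactly where the closed-orbit and trivial-stabilizer hypotheses enter.
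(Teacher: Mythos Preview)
Your proof is correct and follows essentially the same approach as the paper: the paper also argues that the orbit map $G\to N$, $g\mapsto g^{-1}c_0$, is proper (from the closed-orbit and trivial-stabilizer hypotheses) and then invokes the valuative criterion exactly as in the proof of Theorem~\ref{th:Springerpoint1}. You have simply written out the details of that valuative-criterion step explicitly.
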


\begin{proof}
Consider the map $ G \rightarrow N $ given by $ g \mapsto g^{-1}c $.  Since the orbit has no stabilizer and is closed, this map is proper.  Thus, applying the valuative criterion for properness, as in the proof of Theorem \ref{th:Springerpoint1}, we obtain the desired conclusion.
\end{proof}
So $ M_{c_0} $ is a free rank 1 module over $ H^\bullet_{ L_{c_0}}(pt)$.  In particular, we obtain 1-dimensional modules for any specialization of $ \A $ which factors through $ H^\bullet_{F \times \Cx} \rightarrow H^\bullet_{ L_{c_0}}(pt) $.  In particular, the specialization $ \A_{-[\frac{1}{2}\mu_0], \hbar} $ has a module which is free of rank 1 over $ \C[\hbar]$, since $ \Cxz \subset L_{c_0} $.

Now, let us choose a cocharacter $ \mu : \Cx \rightarrow \tG $.  Then define $ c = z^{\mu} c_0 $. 

\begin{Proposition}
The Springer fibre $ \Sp_c $ has finitely many connected components and thus $H_\bullet^{L_c}(\Sp_c) $ is finitely generated over $ H^\bullet_{L_c}(pt) $. 
\end{Proposition}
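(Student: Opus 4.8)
The plan is to prove the stronger statement that $\Sp_c$ is a finite-dimensional projective variety; the asserted finite generation of $H^{L_c}_\bullet(\Sp_c)$ over $H^\bullet_{L_c}(pt)$ is then formal. Indeed, once $\Sp_c$ is a finite-dimensional projective variety the total Borel--Moore homology $H_\bullet(\Sp_c)$ is finite-dimensional, and the Borel spectral sequence exhibits $H^{L_c}_\bullet(\Sp_c)$ as a subquotient of $H^\bullet_{L_c}(pt)\otimes H_\bullet(\Sp_c)$, which is finitely generated over $H^\bullet_{L_c}(pt)$. (Here we use that $H^\bullet_{L_c}(pt)$ is Noetherian: since $\mathrm{Stab}_{\GK}(c)$ is trivial by Step 1 below, the natural map $L_c\to \FO\rtimes\Cx$ is injective, so $L_c$ is a closed subgroup of $\FO\rtimes\Cx$, whose equivariant cohomology of a point is a finitely generated $\C$-algebra.)

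\emph{Step 1: the $\GK$-orbit of $c$ is closed and $\GK$ acts freely on it.} Since $Gc_0$ is closed in $N$ and $\mathrm{Stab}_G(c_0)$ is trivial, the orbit map $G\to N$, $g\mapsto g^{-1}c_0$, is a closed immersion; base-changing along $\Spec\cK\to\Spec\C$ shows that $\GK c_0 = (Gc_0)(\cK)$ is a closed $\cK$-subvariety of $\NK$, and any $g\in\GK$ fixing the constant $c_0$ lies pointwise in $\mathrm{Stab}_G(c_0)=1$, so $\mathrm{Stab}_{\GK}(c_0)=1$. Now $z^\mu$ acts $\cK$-linearly and invertibly on $\NK$ and normalizes $\GK$ (as $G\triangleleft\tG$), so it carries the closed orbit $\GK c_0$ to the closed orbit $\GK c = z^\mu(\GK c_0)$ and $\mathrm{Stab}_{\GK}(c) = z^\mu\,\mathrm{Stab}_{\GK}(c_0)\,z^{-\mu}$ is trivial.

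\emph{Step 2: $\Sp_c$ lies in a finite union of Schubert varieties.} Since the orbit $\GK c$ is closed, the argument of Lemma~\ref{le:valge} applies verbatim to our reductive $G$: the map $\cK[\NK]\to\cK[\GK c]$ is surjective, so for every $a\in\C[G]$ there is $n_a\in\Z$ with $\val a(g)\ge n_a$ whenever $[g]\in\Sp_c$. Fix a faithful representation $\rho\colon G\hookrightarrow GL(V)$ and apply this to the matrix coefficients of $\rho$ and of $g\mapsto\rho(g)^{-1}$; we obtain an integer $a\ge 0$ such that for every $[g]\in\Sp_c$ the lattice $\rho(g)\cdot\cO^{\dim V}$ is trapped between $z^a\cO^{\dim V}$ and $z^{-a}\cO^{\dim V}$. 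Such lattices form a finite-dimensional projective closed subvariety of $\Gr_{GL(V)}$, a finite union of Schubert varieties. Because $G$ is closed in $GL(V)$ the natural map $\Gr_G\to\Gr_{GL(V)}$ is an immersion, so the image of $\Sp_c$ — hence $\Sp_c$ itself — is finite-dimensional; as $\Sp_c$ is moreover a closed ind-subscheme of the ind-projective $\Gr_G$, it follows that $\Sp_c$ is a finite-dimensional projective variety, in particular it has finitely many connected components.

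Combining Steps 1--2 with the first paragraph completes the proof. The main obstacle is Step 2: one must upgrade the closedness of the orbit $Gc_0$ (together with the twist by the loop $z^\mu$) into a \emph{uniform} bound on the pole order of $g^{-1}c$ as $[g]$ ranges over $\Sp_c$, which is exactly what forces $\Sp_c$ into a bounded region of the affine Grassmannian; Steps 1 and 3 are routine translations and a standard spectral-sequence argument, respectively.
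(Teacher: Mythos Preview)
Your proof is correct, and in fact establishes the stronger statement that $\Sp_c$ is a single finite-dimensional projective variety. The route, however, differs from the paper's. The paper observes that a point $c_0$ with closed free $G$-orbit is $\chi$-stable for \emph{every} character $\chi$ (no nontrivial one-parameter subgroup of $G$ can have a limit on $c_0$), and then invokes the category~$\OO$ bound of Theorem~\ref{th:catO}: for each $\chi$ there is $N_\chi$ with $\Sp_c(\sigma)=\emptyset$ whenever $\langle\chi,\sigma\rangle>N_\chi$. Letting $\chi$ range over $\pm$ a spanning set of characters traps the relevant $\sigma$ in a bounded polytope in $\pi\otimes\mathbb{R}$, so only finitely many $\sigma$ contribute; finite-dimensionality of each piece then comes from Theorem~\ref{th:finitedim}.

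Your argument instead bypasses the category~$\OO$ machinery entirely: you transport the closed-orbit-with-trivial-stabilizer property from $c_0$ to $c=z^\mu c_0$ (using that $z^\mu$ normalizes $G_\cK$), then run the valuation estimate of Lemma~\ref{le:valge} directly for the reductive group $G$ and feed it into a faithful $\rho\colon G\hookrightarrow GL(V)$ to pin the image of $\Sp_c$ inside a bounded region of $\Gr_{GL(V)}$. This is more self-contained and yields the sharper conclusion in one stroke; the paper's argument is shorter given that Theorem~\ref{th:catO} is already in hand, and has the conceptual payoff of tying the finiteness to the category~$\OO$ structure. One small point worth making explicit in your write-up: the map $\Gr_G\to\Gr_{GL(V)}$ is a closed embedding because $GL(V)/G$ is affine ($G$ reductive, Matsushima), which is what lets you pull the boundedness back to $\Gr_G$.
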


\begin{proof}
For any character $ \chi : G \rightarrow \Cx$, we see that $ c_0 $ is $ \chi$-stable.  Thus by Theorem \ref{th:catO}, there exists $N_\sigma $ such that $ \Sp_c(\sigma) = \emptyset $ for $ \langle \sigma, \chi \rangle > N_\sigma $.

Thus those $ \sigma $ such that $ \Sp_c(\sigma) $ is non-empty are contained inside a bounded polytope and thus form a finite set.
\end{proof}

In particular, we obtain finite-dimensional modules for any specialization of $ \A $ which factors through $ H^\bullet_{F \times \Cx} \rightarrow H^\bullet_{L_c}(pt) $, such as $ \Am $.

\begin{Example}
\label{ex: closed orbit quiver}
	Fix three vector spaces $V, W_1, W_2 $ of the same dimension $n$.
	
	We consider $ G = GL(V)$ and $ N = \Hom(V, W_1) \oplus \Hom(W_2, V)$.  Moreover let $ F = T(W_1) \times T(W_2) $ be the product of the maximal tori acting on $ W_1 $ and $ W_2$.  In this case, the resulting Coulomb branch algebra is $ Y^{2n}_0 $, a truncated Yangian for $\mathfrak{sl}_2$, which is isomorphic to a 2-row $W$-algebra (see \cite[Theorem 4.3]{WWY}).  	
	
	Let $ c_0 = (A, B) $ be given by a pair of isomorphisms.  Then the $ G $ orbit of $ c_0 $ is closed and free and equals $ \{(A', B') : A'B' = AB \} $.
	
	Let us choose some $ \mu : \Cx \rightarrow T(W_2) $ and form $ c= z^{-\mu} c_0= (A, B z^{\mu}) $.  Then we find
	$$
	\Sp_c = \{ L \subset V \otimes K : z^\mu (V \otimes \OO) \subseteq L \subseteq V \otimes \OO \}
	$$
	This variety is empty, unless $ \mu_i \ge 0 $ for all $ i $.  When non-empty, it has connected components labelled by $ r = \dim V\otimes \OO / L$.  Each connected component is isomorphic to a ``big Spaltenstein variety''
	$$\Sp_{c}(r) \cong \{ 0 \subseteq U \subseteq \C^m : \dim \C^m / U = r, XU \subset U \}
	$$
	where $ X $ is a type $ \mu $ nilpotent operator on $\C^m $.  (To obtain this we identify $\C^m = V\otimes\OO / z^\mu V\otimes \OO$, and let $ U = L / z^\mu V \otimes \OO $ and let $ X = z $.) 
	
We thus obtain an action of $Y^{2n}_0$ on the homology of this Springer fibre.  We will analyse this module in greater detail in Section \ref{section: closed orbit quiver}.

\end{Example}

\section{Abelian Gauge Theories}

Let $\tG = (\Cx)^n$ acting on $N = \C^n$ and $M = T^*N$ in the obvious way and let 
\[1 \to G \rightarrow \tG \rightarrow  F \to 1\]
be an exact sequence of algebraic tori. For simplicity we assume that the integer matrices representing the induced maps of fundamental groups are all totally unimodular, i.e., that the determinant of every square submatrix is $0$, $1$, or $-1$.

\subsection{The hypertoric enveloping algebra}
When $G$ is a torus, the quantized Coulomb branch algebra $\A = H^{\tG_{\cO} \rtimes \Cx}_\bullet(R_{G,N})$ is known as the hypertoric enveloping algebra. The Gelfand-Tsetlin subalgebra is
\[
H^\bullet_{ \tG_{\cO}\rtimes \Cx}(pt) \cong \Sym \widetilde \fg^*[\hbar] = \C[\hbar, x_1, \ldots, x_n]
\]
where the $x_i$ are the weights of the $\tG$-action on $N$ and $\hbar$ corresponds to loop rotation. As shown in \cite[Section 4(ii)]{BFN1} the algebra $\A$ is free as both a left or right module over the Gelfand-Tsetlin subalgebra with basis given by monopole operators $r^{\lambda} = [R(\lambda)]$ for $\lambda \in \pi$. The multiplication is given by the relations
\[
[x_i, r^\lambda] = \lambda_i \hbar r^\lambda
\]
and
\[
r^{\sigma} r^{\lambda} = 
\prod_{\substack{\lambda_i \cdot \sigma_i < 0 \\ |\lambda_i| \geq |\sigma_i|}} [x_i]^{\sigma_i}
\,\,\, r^{\sigma+\lambda}
\prod_{\substack{\lambda_i \cdot \sigma_i < 0 \\ |\sigma_i| > |\lambda_i|}} [x_i]^{-\lambda_i}.
\]
where
\[ \lambda_i = \langle \bar x_i, \lambda \rangle \]
and
\[
[x_i]^n = \begin{cases} 
\prod_{j=1}^{n} (x_i - (j - \frac{1}{2})\hbar) & n \geq 0   \\
\prod_{j=1}^{|n|} (x_i + (j - \frac{1}{2}) \hbar) & n < 0
\end{cases}.
\]
Here $\bar{x_i} \in \fg $ denotes the restriction of $ x_i $ to $ \fg$.

\subsection{Springer Fibers} By Lemma \ref{le:orbitModule}, the modules associated to a Springer fiber $Sp_c$ depend only on the $\tG_{\cK}^{\cO}  \rtimes \Cx$-orbit of $c \in N_{\cK}$. Thus we will start by finding a list of orbit representatives.

As a warmup, notice that the $\tG$-orbits on $N$ are in bijection with subsets $S \subseteq \{1, \ldots, n\}$. In particular, we define representatives $ c_S \in N $ by
\[ (c_S)_i = \begin{cases} 
1, & \text{when $i \not \in S$} \\ 
0, & \text{when $i \in S$}.
\end{cases} \]
Define the stabilizer groups
\begin{align*}
\tG_S &= \text{Stab}_{\tG}(c_S)  & G_S &= \text{Stab}_{G}(c_S)
\end{align*}
and consider the restriction
\[1 \to G_S \rightarrow \tG_S \rightarrow  F_S \to 1\]
of our short exact sequence of tori. The structure of these groups is given by the following lemma.

\begin{Lemma}
 \begin{align*}
\tG_S &= (\Cx)^S  = \bigcap_{i \not\in S} \ker x_i & G_S &= \bigcap_{i \not\in S} \ker \bar{x}_i
\end{align*}
\end{Lemma}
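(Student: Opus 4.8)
The plan is to unwind the definition of the $\tG$--action and solve the defining equations of the stabilizer directly; this is essentially a one-line computation, so the "proof" is really just a matter of recording conventions carefully. First I would recall that $\tG = (\Cx)^n$ acts on $N = \C^n$ coordinatewise, so that for $t = (t_1, \dots, t_n) \in \tG$ and $v = (v_1, \dots, v_n) \in N$ we have $(t\cdot v)_i = t_i v_i = x_i(t)\, v_i$, where $x_i : \tG \to \Cx$ is the $i$-th coordinate character --- these $x_i$ are exactly the weights of the $\tG$--action on $N$ used above to describe the Gelfand--Tsetlin subalgebra. Applying this to $v = c_S$ and using that $(c_S)_i = 1$ for $i \notin S$ and $(c_S)_i = 0$ for $i \in S$, the equation $t \cdot c_S = c_S$ becomes $x_i(t) = 1$ for every $i \notin S$, with no constraint coming from the indices $i \in S$. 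Hence $\tG_S = \bigcap_{i \notin S} \ker x_i$, and projecting onto the coordinates indexed by $S$ identifies this subgroup with $(\Cx)^S$, giving the first claim.

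For the second claim I would simply intersect with $G$: by definition $G_S = \mathrm{Stab}_G(c_S) = G \cap \mathrm{Stab}_{\tG}(c_S) = G \cap \bigcap_{i \notin S} \ker x_i$. Since $\bar x_i = x_i|_G$, we have $G \cap \ker x_i = \ker \bar x_i$, so $G_S = \bigcap_{i \notin S} \ker \bar x_i$, as desired. (If one wishes, compatibility of the restricted exact sequence $1 \to G_S \to \tG_S \to F_S \to 1$ with the original one is then immediate from these descriptions.)

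There is essentially no genuine obstacle in this lemma; the only point requiring attention is to keep the conventions straight --- namely that $c_S$ is the representative whose \emph{vanishing} locus is $S$, so that the stabilizer is cut out by the characters $x_i$ with $i \notin S$ rather than $i \in S$ --- and to note that a subtorus of $(\Cx)^n$ defined by setting some coordinates equal to $1$ is canonically the product of the remaining coordinate tori, which justifies writing $\tG_S = (\Cx)^S$ on the nose.
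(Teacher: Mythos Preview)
Your proof is correct. The paper in fact gives no proof of this lemma at all, treating it as immediate from the definitions; your careful unwinding of the stabilizer equations is exactly the computation that is being left to the reader, and there is nothing to add.
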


Now we are ready for the main result of this section.

\begin{Proposition}\
The orbits of the $\tG_{\cK}^{\cO}$-action on $N_{\cK}$ are labeled by pairs $(S, [\mu])$ where $S \subseteq \{1, \ldots, n\}$ and $[\mu]: \Cx \to F$. In particular, each orbit has a representative of the form $c_{S, \mu} = z^\mu c_S$ where $\mu: \Cx \to \tG$ is a lift of $[\mu]: \Cx \to F$. The corresponding stabilizer group is
\[
L_{S,\mu} = \text{Stab}_{\tG_{\cK}^{\cO} \rtimes \Cx}(c_{S,\mu}) \cong (\tG_S)^{\cO}_{\cK} \rtimes \Cxm
\]
where
\[
\Cxm = z^\mu \Cxz z^{-\mu} = \{( s^{-(\mu + \frac{1}{2} \mu_0)}, s) \in  \tG^{\cO}_{\cK}  \rtimes \Cx \, | \, s \in \Cx \}.
\]
\end{Proposition}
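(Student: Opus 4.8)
The plan is to reduce everything to elementary lattice combinatorics, exploiting that for the torus $\tG=(\Cx)^n$ acting diagonally the whole situation is coordinatewise. Identify $X_*(\tG)=\Z^n$ with standard basis $e_1,\dots,e_n$, so that $X_*(\tG_S)=\Z^S$ is the span of $\{e_i:i\in S\}$, $X_*(G)\subseteq\Z^n$, and $X_*(F)=\Z^n/X_*(G)$. I will use two structural facts: $\tG_\cK=(\cK^\times)^n$ acts on $\NK=\cK^n$ coordinatewise, and $\tG^\cO_\cK=\{g\in\tG_\cK:\val(g)\in X_*(G)\}$, where $\val:(\cK^\times)^n\to\Z^n$ is the coordinatewise valuation (this follows from $F_\cO=\ker(\val:F_\cK\to X_*(F))$ together with $\tG^\cO_\cK$ being the preimage of $F_\cO$); in particular $\tG_\cO\subseteq\tG^\cO_\cK$ and $z^\nu\in\tG^\cO_\cK$ for every $\nu\in X_*(G)$.

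First I would note that $S(c):=\{i:c_i=0\}$ is an orbit invariant, which is immediate from coordinatewise-ness. Next, given $c$ with $S(c)=S$, write $c_i=z^{a_i}u_i$ with $u_i\in\cO^\times$ for $i\notin S$; acting by the element of $\tG_\cO$ whose $i$-th coordinate is $u_i^{-1}$ for $i\notin S$ and $1$ otherwise carries $c$ to $c_{S,\mu}=z^\mu c_S$ for any $\mu$ with $\mu_i=a_i$ on $S^c$. Hence every orbit contains such a representative. I would then decide when two of them are conjugate: a $g\in\tG^\cO_\cK$ with $g\cdot c_{S,\mu}=c_{S,\mu'}$ must satisfy $\val(g_i)=\mu'_i-\mu_i$ for $i\notin S$ and is otherwise free, subject to $\val(g)\in X_*(G)$, so such a $g$ exists iff $\mu'-\mu\in X_*(G)+\Z^S$. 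Thus the orbit depends only on $S$ and the class $[\mu]\in X_*(F)$, and any lift of $[\mu]$ may be used, which is the asserted parametrization by pairs $(S,[\mu])$. (Strictly, the map from pairs $(S,[\mu])$ to orbits is surjective but identifies $[\mu]$ and $[\mu']$ whenever $\mu-\mu'\in\Z^S$, so "labeled by" is to be read in this surjective sense.)

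For the stabilizer, I would compute the action of $(g,s)\in\tG^\cO_\cK\rtimes\Cx$ on $c_{S,\mu}=z^\mu c_S$ directly, recalling that $\Cx$ acts by loop rotation together with the weight-$\tfrac{1}{2}$ scaling $\tfrac{1}{2}\mu_0$, normalized so that $\Cxz$ fixes constant loops. Coordinatewise one finds that $(g,s)$ fixes $c_{S,\mu}$ iff $g_i=s^{-(\mu+\tfrac{1}{2}\mu_0)_i}$ for all $i\notin S$, with no further constraint on the $i\in S$ coordinates beyond $g\in\tG^\cO_\cK$. Since the $i\notin S$ coordinates of $g$ are then units, $\val(g)$ is supported on $S$, so $\val(g)\in X_*(G)$ becomes $\val(g)\in X_*(G)\cap X_*(\tG_S)=X_*(G_S)$ (using $G_S=G\cap\tG_S$). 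Writing $g=h\cdot s^{-(\mu+\tfrac{1}{2}\mu_0)}$ with $h$ trivial on $S^c$ and $\val(h)=\val(g)\in X_*(G_S)$, i.e.\ $h\in(\tG_S)^\cO_\cK$, identifies the stabilizer with $(\tG_S)^\cO_\cK\rtimes\Cxm$, where $\Cxm=\{(s^{-(\mu+\tfrac{1}{2}\mu_0)},s):s\in\Cx\}=z^\mu\Cxz z^{-\mu}$; finally I would check this is genuinely a semidirect product, $\Cxm$ normalizing $(\tG_S)^\cO_\cK$ because its $\tG_\cK$-part is central ($\tG$ abelian) and loop rotation preserves $S$ and valuations, with $\Cxm\cap(\tG_S)^\cO_\cK=\{1\}$.

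I do not expect a real obstacle: the argument is entirely a computation in abelian groups over $\cK$. The points needing care are the two structural facts about $\tG^\cO_\cK$ above, the bookkeeping with $\tfrac{1}{2}\mu_0$ and the semidirect-product structure in the last step, and — the only genuinely subtle point — the honest form of the parametrization, which is surjective rather than bijective unless $X_*(\tG_S)$ maps to zero in $X_*(F)$.
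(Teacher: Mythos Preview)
Your proof is correct and follows essentially the same coordinatewise computation as the paper's proof, which is quite terse (it says the orbit classification is ``clear'' and computes the stabilizer via the formula $s\cdot c_{S,\mu}=s^{\mu+\frac{1}{2}\mu_0}c_{S,\mu}$). Your additional observation that the parametrization by $(S,[\mu])$ is only surjective in general, with $[\mu]$ and $[\mu']$ yielding the same orbit when $\mu-\mu'\in X_*(G)+\Z^S$, is a genuine subtlety that the paper does not address.
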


\begin{proof}
The statement about orbit representatives is clear. To compute the stabilizer notice that the previous lemma implies that
\[
\text{Stab}_{\tG_{\cK}^{\cO}}(c_{S,\mu}) = (\tG_S)^{\cO}_{\cK}
\]
and that since $\Cx$ acts on $N_{\cK}$ by the combination of the weight $\frac{1}{2}$ action on $N$ and loop rotation we have
\[
s \cdot c_{S,\mu} = s^{\mu + \frac{1}{2}\mu_0} c_{S,\mu}
\]
for $s \in \Cx$. 
\end{proof}

It is easy to give a complete description of the Springer fibre $ \Sp_S[\mu] := \Sp_{c_{S,\mu}} $.  Note that since $ G$ is a torus, we have $ \Gr_G = \pi $.
\begin{Proposition} \label{pr:AbelianSpringer}
	We have
	$$
	\Sp_S[\mu] = \{ \sigma \in \pi : \sigma_i \le \mu_i \text{ for all } i \notin S \}
	$$
In particular, we see that $ \Sp_S[\mu] $ is the set of lattice points inside the polytope in $ \fg_{\mathbb R} $ defined by the linear functionals $ \bar x_i $, for $ i \notin S$, translated by the constants $ \mu_i $.
\end{Proposition}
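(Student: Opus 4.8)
The plan is to unwind the definition of the Springer fibre directly, using the lattice model of the affine Grassmannian of a torus. First I would recall from Section~\ref{se:ses} that since $G$ is a torus we have $Q_*(G) = 0$, hence $\pi = X_*(G)$ and $\Gr_G = \GK/\GO \cong \pi$, with the class $\sigma \in \pi$ represented by the point $z^\sigma \in \GK$. By Definition~\ref{def:Springer}, $\Sp_{c_{S,\mu}} = \{ [g] \in \Gr_G : g^{-1} c_{S,\mu} \in \NO \}$, so under the identification $\Gr_G \cong \pi$ this becomes
$$
\Sp_S[\mu] = \{ \sigma \in \pi : z^{-\sigma} c_{S,\mu} \in \NO \}.
$$
Note that the loop rotation / weight $\tfrac12$ scaling $\Cx$-action plays no role here: it only enters later in the module structure, not in the underlying set $\Sp_c$.

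Next I would compute $z^{-\sigma} c_{S,\mu}$ one coordinate at a time. The group $G \subseteq \tG = (\Cx)^n$ acts coordinatewise on $N = \C^n$, and on the $i$-th coordinate it acts through the restricted character $\bar x_i = x_i|_G$. Thus $z^{-\sigma} \in \GK$ multiplies the $i$-th coordinate of an element of $\NK = \cK^n$ by $z^{-\sigma_i}$, where $\sigma_i := \langle \bar x_i, \sigma \rangle$. Since $c_{S,\mu} = z^\mu c_S$ has $i$-th coordinate equal to $z^{\mu_i}$ when $i \notin S$ and $0$ when $i \in S$ (with $\mu_i := \langle x_i, \mu \rangle$), the element $z^{-\sigma} c_{S,\mu}$ has $i$-th coordinate $z^{\mu_i - \sigma_i}$ for $i \notin S$ and $0$ for $i \in S$. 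This lies in $\NO = \cO^n$ if and only if $\mu_i - \sigma_i \ge 0$ for every $i \notin S$, i.e. $\sigma_i \le \mu_i$ for all $i \notin S$, which is the first displayed formula.

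For the polytope statement, I would simply observe that $\{ \sigma \in \pi : \langle \bar x_i, \sigma \rangle \le \mu_i \text{ for } i \notin S \}$ is by construction the intersection of the lattice $\pi = X_*(G)$ with the closed polyhedron $\{ v \in \fg_{\mathbb R} : \langle \bar x_i, v \rangle \le \mu_i,\ i \notin S \}$, which is exactly the region cut out by the linear functionals $\bar x_i$ ($i \notin S$) translated by the constants $\mu_i$. There is no real obstacle in this argument: it is a one-line coordinate computation once the lattice model of $\Gr_G$ is in place. The only points requiring mild care are (a) identifying the action of $G$ on the $i$-th coordinate as being through $\bar x_i$ rather than $x_i$, so that the relevant exponent is $\sigma_i = \langle \bar x_i, \sigma\rangle$, and (b) recording that $\Sp_{c_{S,\mu}}$ as a set depends only on $c_{S,\mu}$ (and on the chosen lift $\mu$ of $[\mu]$ up to translation by $X_*(G)$), the genuine $[\mu]$-dependence appearing only at the level of the associated module via Lemma~\ref{le:orbitModule}.
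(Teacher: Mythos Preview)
Your proof is correct and follows essentially the same approach as the paper: both compute $(z^{-\sigma} c_{S,\mu})_i$ coordinatewise, obtaining $z^{\mu_i - \sigma_i}$ for $i\notin S$ and $0$ for $i\in S$, and read off the condition $\sigma_i \le \mu_i$ for $i\notin S$. The paper's proof is the one-line version of what you wrote, so nothing further is needed.
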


\begin{proof}
	This follows immediately from the computation \[
	(z^{-\sigma} \cdot c_{S, \mu})_i = \begin{cases}
	z^{\mu_i-\sigma_i} & i \not \in S \\
	0 & i \in S
	\end{cases}
	\]
\end{proof}

Recall the auxiliary space
\[
X_{S}[\mu] := X_{c_{S,\mu}} = \{g \in \tG_{\cK}^{\cO} \rtimes \Cx \, : \, g \cdot c_{S,\mu} \in N_{\cO} \}
\]
Proposition \ref{pr:AbelianSpringer} tells us that
\[
X_{S}[\mu](\sigma) = \begin{cases} 
\tG^{\cO}_{\cK}(-\sigma) \rtimes \Cx & \text{if $\sigma_i \leq \mu_i$ for all $i \not \in S$,} \\
\emptyset & \text{else.}
\end{cases}
\]

For each subgroup $K \subseteq L_{S,\mu}$ one can form the generalized orbital variety
\[
V_{S, K}[\mu] = X_{S}[\mu] / K.
\]

By Proposition \ref{pr:ModuleExists}, one can form a module 
\[
M_{K, S}[\mu] = H^{K}_{\bullet}(\Sp_{S}[\mu]) \cong H^{-\bullet}_{\tG_{\cO} \rtimes \Cx}(V_{S, K}[\mu], \mathcal{F})
\]
and by Proposition \ref{pr:strongEquiv}, $M_{S, K}[\mu]$ is a strongly $\tG^!$-equivariant module whenever
\[
K \subseteq K_{S, \mu} :=  (L_{S,\mu} \cap (\tG_{\cO} \rtimes \Cx)) \cong  (\tG_S)_{\cO} \rtimes \Cxm.
\]
It is not hard to see that in this situation $M_{S,K}[\mu]$ is obtained from $M_{S, K_{S,\mu}}[\mu]$ by restriction of scalars. Thus in the following sections we will examine the cases when $K = L_{S,\mu}$ and $K = K_{S,\mu}$.

\subsection{Full symmetry group} Assume that $K = L_{S,\mu}$ and consider the quotient stack
\[
[\tG_{\cO} \rtimes \Cx \backslash \tG^{\cO}_{\cK} \rtimes \Cx / (\tG_S)_{\cK}^{\cO} \rtimes \Cxm].
\]
Using the exact sequence 
\[
1 \to \tG_S \to \tG \to \tG^S \to 1
\]
and the analogous sequence for $G$ we can see
\begin{align*}
[\tG_{\cO} \rtimes \Cx \backslash \tG^{\cO}_{\cK} \rtimes \Cx / (\tG_S)_{\cK}^{\cO} \rtimes \Cxm]  & \cong [\tG_{\cO} \rtimes \Cx  \backslash (\tG^S)^{\cO}_{\cK} \rtimes \Cx / \Cxm]  \\
&\cong  [(\tG_S)_{\cO} \rtimes \Cx \backslash \text{Gr}_{G^S} \rtimes \Cx /  \Cxm] \\
& \sim  [\tG_S \backslash \text{Gr}_{G^S} /  \Cxm] 
\end{align*}
where the second to last identification uses the splitting $\tG^S \cong \tG_{S^c} \subseteq \tG$ and the last is a Borel-Moore homology equivalence. Thus taking Borel-Moore homology gives us
\[
H^{\tG_S \times \Cxm}_{\bullet}(\text{Gr}_{G^S}) = \bigoplus_{[\sigma] \in \pi^S} R_S [\text{Gr}_{G^S}([\sigma])]
\]
where $\pi^S = \pi_1(G^S)$ and $R_S = H_{\tG_S \times \Cxm}^{\bullet}(pt) = \C[\hbar][x_i \, | \, i \in S]$.

In the last section we saw that $X_{S}[\mu]$ is a union of connected components of $\tG^{\cO}_{\cK}$. We claim that this implies that 
\[
[\tG_{\cO} \rtimes \Cx \backslash V_{S,K}[\mu] ] \cong [\tG_{\cO} \rtimes \Cx \backslash X_{S,K}[\mu] / (\tG_S)_{\cK}^{\cO} \rtimes \Cxm]
\]
is homology equivalent to a union of connected components of $[\tG_S \backslash \text{Gr}_{G^S} /  \Cxm]$. The components of the latter are labeled by $[\sigma] \in \pi^S$ and of the form
\[
[\tG_S \backslash \text{Gr}_{G^S}([-\sigma]) / \Cxm] \sim [\tG_{\cO} \rtimes \Cx \backslash \tG^{\cO}_{\cK}([-\sigma])  \rtimes \Cx / (\tG_S)_{\cK}^{\cO} \rtimes \Cxm]
\]
where on the right hand side we have collapsed the $\pi$-grading on $\tG^{\cO}_{\cK}$ to a $\pi^S$-grading. When $i \not \in S$ the function $[\sigma]_i = \sigma_i$ is well defined and hence our results from the last section tell us that $[\tG_{\cO} \rtimes \Cx \backslash V_{S,K}[\mu] ]$ consists of exactly those connected components where $[\sigma]_i \leq \mu_i$. Thus we have proved:

\begin{Proposition}
The module $M_{S, K}[\mu]$ is a free $R_S$-module generated by the classes
\[
\basissigmas = [V_{S, K}[\mu]([\sigma])] = [\text{Gr}_{G^S}([\sigma])] 
\]
where $[\sigma]_i \leq \mu_i $ for all $i \not \in S$.  For all other $ [\sigma] \in \pi^S $, we define $\basissigmas$ to be $0$.
\end{Proposition}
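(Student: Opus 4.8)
The strategy is to transport the module through the chain of stack equivalences displayed just before the statement, and then to exploit the fact that $\Gr_{G^S}$ is discrete. First, recall from Proposition~\ref{pr:ModuleExists} (and the isomorphism of stacks $[\tG_{\cO} \rtimes \Cx \backslash V_{S,K}[\mu]] \cong [L_{S,\mu} \backslash \Sp_S[\mu]]$) that
\[
M_{S,K}[\mu] \;\cong\; H^{-\bullet}_{\tG_{\cO} \rtimes \Cx}\bigl(V_{S,K}[\mu], \mathcal{F}\bigr),
\]
which is a module over $H^\bullet_{L_{S,\mu}}(pt) = H^\bullet_{\tG_S \times \Cxm}(pt) = R_S$. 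Since $V_{S,K}[\mu] = X_S[\mu]/L_{S,\mu}$ and, by the computation of the previous subsection, $X_S[\mu]$ is precisely the union of those connected components $\tG^\cO_\cK(-\sigma) \rtimes \Cx$ of $\tG^\cO_\cK \rtimes \Cx$ with $\sigma_i \leq \mu_i$ for $i \notin S$, the quotient stack $[\tG_\cO \rtimes \Cx \backslash V_{S,K}[\mu]]$ is the corresponding union of connected components of $[\tG_\cO \rtimes \Cx \backslash \tG^\cO_\cK \rtimes \Cx / (\tG_S)^\cO_\cK \rtimes \Cxm]$.

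Next I would apply, component by component, the identifications displayed before the proposition --- built from the exact sequences $1 \to \tG_S \to \tG \to \tG^S \to 1$ and its analogue for $G$, the splitting $\tG^S \cong \tG_{S^c}$, and the discreteness of $\Gr_{G^S} = \pi^S$ (valid because $G^S$ is a torus). For each $[\sigma] \in \pi^S$ this gives a Borel--Moore homology equivalence between the $[\sigma]$-component and the classifying stack $[\tG_S \backslash \Gr_{G^S}([\sigma]) / \Cxm]$, whose equivariant Borel--Moore homology is the free rank-one $R_S$-module on the fundamental class of the point $\Gr_{G^S}([\sigma])$. Summing over the components that survive in $V_{S,K}[\mu]$, namely those $[\sigma] \in \pi^S$ with $[\sigma]_i \leq \mu_i$ for $i \notin S$ (for such $i$ the functional $[\sigma]_i = \sigma_i$ descends to $\pi^S$), we conclude that $M_{S,K}[\mu]$ is the free $R_S$-module on the classes $\basissigmas = [\Gr_{G^S}([\sigma])] = [V_{S,K}[\mu]([\sigma])]$ for these $[\sigma]$; declaring $\basissigmas := 0$ for the remaining $[\sigma]$ is merely a notational convention.

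The point that requires care is that the equivalences ``$\sim$'' above really are $R_S$-linear isomorphisms on equivariant Borel--Moore homology, not just equivalences of stacks. As in \cite{BFN1}, this rests on standard facts for these pro-(infinite-dimensional) quotient stacks: discarding a pro-unipotent factor of $\tG^\cO_\cK$ or of $(\tG_S)^\cO_\cK$ does not change equivariant cohomology, the residual finite-type groups are $\tG_\cO \rtimes \Cx$ and $\tG_S \times \Cxm$, the affine Grassmannian of a torus is discrete, and the equivariant cohomology of a point is a polynomial ring --- all available within the finite-dimensional approximation framework recalled in Section~\ref{section: some sheaf theory}. Granting these, no further computation is needed; the only real content beyond the formalism is the bookkeeping of which components survive, i.e.\ the inequalities $[\sigma]_i \leq \mu_i$ for $i \notin S$.
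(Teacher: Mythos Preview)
Your proposal is correct and follows essentially the same route as the paper: the argument there is the discussion immediately preceding the proposition (ending with ``Thus we have proved:''), which reduces the quotient stack to $[\tG_S \backslash \Gr_{G^S} / \Cxm]$ via the exact sequence $1 \to \tG_S \to \tG \to \tG^S \to 1$, identifies the surviving components by the inequalities $[\sigma]_i \leq \mu_i$ for $i \notin S$, and reads off the free $R_S$-module structure from the discreteness of $\Gr_{G^S}$. Your additional remarks about $R_S$-linearity and finite-dimensional approximations are correct and slightly more explicit than the paper, but the content is the same.
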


The first step to understanding the $\A$-action on $M_{S,K}[\mu]$ is understanding the action of the Gelfand-Tsetlin subalgebra $\C[\hbar, x_1, \ldots, x_n]$. Unfortunately the subgroup $K = L_{S,\mu}$ does not always satisfy the hypotheses from Propositions \ref{pr:weakEquiv} and \ref{pr:strongEquiv} and hence $M_{S, K}[\mu]$ is not always a weight module.

\begin{Proposition} \text{ }
\begin{enumerate}
\item The action of $R_S$ on $M_{S, K}[\mu]$ factors through the obvious inclusion of $R_S$ into the Gelfand-Tsetlin subalgebra. In particular we cannot simplify 
$ x_i \basissigmas $
when $i \in S$.
\item If $i \not \in S$ we have
$
x_i \basissigmas = ([\sigma]_i - \mu_i - \tfrac{1}{2}) \hbar \basissigmas.
$
\end{enumerate}
\end{Proposition}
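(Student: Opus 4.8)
The plan is to compute everything directly, taking advantage of the fact that for a torus $\Gr_G$ is discrete. By Proposition \ref{pr:AbelianSpringer} the fibre $\Sp_S[\mu]$ is a discrete set of points $z^\sigma$, so in the realization $M_{S,K}[\mu] = H^{-\bullet}_{\tG_{\cO}\rtimes\Cx}(V_{S,K}[\mu],\mathcal F)$ with $\mathcal F$ the (shifted) dualizing sheaf, $V_{S,K}[\mu] = X_S[\mu]/L_{S,\mu}$ breaks up into pieces $V_{S,K}[\mu]([\sigma])$ indexed by $[\sigma]\in\pi^S$ with $[\sigma]_i\le\mu_i$ for $i\notin S$, and $\basissigmas$ is the fundamental class of the corresponding piece. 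First I would identify the $\tG_{\cO}\rtimes\Cx$-equivariant cohomology of each piece. Each $X_S[\mu](\sigma)$ is a single $\tG_{\cO}\rtimes\Cx$-orbit (namely $z^{-\sigma}(\tG_{\cO}\rtimes\Cx)$, up to the twist by $z^\sigma$), and the subgroup of $L_{S,\mu}$ preserving this component is $(\tG_S)_{\cO}\rtimes\Cxm$; quotienting gives $[\tG_{\cO}\rtimes\Cx\backslash V_{S,K}[\mu]([\sigma])]\simeq B\Gamma_\sigma$, where $\Gamma_\sigma = z^{-\sigma}((\tG_S)_{\cO}\rtimes\Cxm)z^\sigma$. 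A short computation, using that $\tG$ is abelian and that loop rotation by $s$ scales $z^\sigma$ by $\sigma(s)$, gives $\Gamma_\sigma = (\tG_S)_{\cO}\rtimes \C^\times_{\sigma - \mu - \frac{1}{2}\mu_0}$, an extension of the loop-rotation $\Cx$ by $(\tG_S)_{\cO}$. Hence $M_{S,K}[\mu] = \bigoplus_{[\sigma]} H^\bullet_{\Gamma_\sigma}(pt)\,\basissigmas$ with $H^\bullet_{\Gamma_\sigma}(pt)\cong\C[\hbar][\,x_i : i\in S\,] = R_S$.

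Next I would pin down the action of the Gelfand--Tsetlin subalgebra $H^\bullet_{\tG\times\Cx}(pt)\subset\A$. By the flavour version of the linearity statement in Theorem \ref{th:action} --- that the action is $H^\bullet_{\tG\times\Cx}(pt)$-linear in the first variable and the unit acts as the identity --- the Gelfand--Tsetlin subalgebra acts on $M_{S,K}[\mu]$ through the tautological $H^\bullet_{\tG_{\cO}\rtimes\Cx}(pt)$-module structure, which on the summand $H^\bullet_{\Gamma_\sigma}(pt)\basissigmas$ is multiplication via the restriction map $H^\bullet_{\tG\times\Cx}(pt) = H^\bullet_{\tG_{\cO}\rtimes\Cx}(pt)\to H^\bullet_{\Gamma_\sigma}(pt)$ induced by $\Gamma_\sigma\hookrightarrow\tG_{\cO}\rtimes\Cx$. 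Both parts of the Proposition then follow by evaluating this restriction on the generators $x_i$. For $i\in S$ the character $x_i$ is nontrivial on $\tG_S$, so its image in $H^\bullet_{\Gamma_\sigma}(pt)$ involves the corresponding genuine $\tG_S$-parameter, which is algebraically independent of $\hbar$; these parameters together with $\hbar$ generate $R_S$, so the $R_S$-action is exactly the one coming from the inclusion $R_S\hookrightarrow H^\bullet_{\tG\times\Cx}(pt)$, and $x_i\basissigmas$ cannot be a scalar multiple of $\basissigmas$ --- this is part (1). For $i\notin S$ we have $x_i|_{\tG_S} = 0$ (by the formula $\tG_S = \bigcap_{j\notin S}\ker x_j$), so $x_i$ kills $(\tG_S)_{\cO}$ and factors through the loop-rotation quotient $\Gamma_\sigma\to\C^\times_{\sigma-\mu-\frac{1}{2}\mu_0}\cong\Cx$ by the character $s\mapsto s^{\langle x_i,\,\sigma - \mu - \frac{1}{2}\mu_0\rangle}$. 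Since $\langle x_i,\sigma\rangle = \sigma_i = [\sigma]_i$, $\langle x_i,\mu\rangle = \mu_i$ and $\langle x_i,\mu_0\rangle = 1$, the exponent is $[\sigma]_i - \mu_i - \tfrac{1}{2}$, so $x_i$ acts on $\basissigmas$ as $([\sigma]_i - \mu_i - \tfrac{1}{2})\hbar$, which is part (2).

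The main obstacle I anticipate is the bookkeeping in identifying $\Gamma_\sigma$: one must determine exactly which part of $L_{S,\mu}$ stabilizes the component $X_S[\mu](\sigma)$, track how conjugation by $z^\sigma$ modifies the embedding $\Cxm\hookrightarrow\tG_{\cO}\rtimes\Cx$, and carry through the half-integral shift coming from the $\tfrac{1}{2}\mu_0$-scaling in the BFN $\Cx$-action --- it is precisely this last point that produces the $-\tfrac{1}{2}$ in part (2). A useful consistency check is the case $S = \emptyset$: there $L_{S,\mu} = \Cxm\subset\tG_{\cO}\rtimes\Cx$ does satisfy the hypotheses of Propositions \ref{pr:weakEquiv} and \ref{pr:strongEquiv}, so $M_{S,K}[\mu]$ is an honest weight module and the formula of part (2) is forced by Lemma \ref{le:decompTorus} together with the identity $\phi(x_i) = -(\mu_i + \tfrac{1}{2})\hbar$; the general computation must specialize to this on the indices $i\notin S$.
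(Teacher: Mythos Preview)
Your approach is correct and is essentially the same as the paper's, just expressed more explicitly. The paper's proof is a terse sketch that, for part (2), invokes the general line-bundle argument of Proposition~\ref{pr:strongEquiv} (via Lemma~\ref{le:isoLine}) applied to the inclusion $\Cxm \hookrightarrow \tG_{\cO}\rtimes\Cx \to (\tG^S)_{\cO}\rtimes\Cx$; your direct computation of the stabilizer $\Gamma_\sigma = (\tG_S)_{\cO}\rtimes\C^\times_{\sigma-\mu-\frac{1}{2}\mu_0}$ and of the restriction $H^\bullet_{\tG\times\Cx}(pt)\to H^\bullet_{\Gamma_\sigma}(pt)$ is precisely the abelian specialization of that argument --- the conjugation by $z^\sigma$ you perform is exactly what the function $f$ in Lemma~\ref{le:isoLine} encodes. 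One small point to tighten: when you first write $\Gamma_\sigma = z^{-\sigma}((\tG_S)_{\cO}\rtimes\Cxm)z^\sigma$, remember that $L_{S,\mu}$ actually contains $(\tG_S)^{\cO}_{\cK}$, not just $(\tG_S)_{\cO}$; the $\cK$-part is what permutes the lifts $\sigma$ of a given $[\sigma]\in\pi^S$, and only the $\cO$-part survives in the stabilizer of a chosen lift, which is why your final formula is right.
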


\begin{proof} \text{ }
\begin{enumerate}
\item
For the $x_i$ with $i \in S$ this is essentially the observation that $\tG_S$ is acting through the inclusion into $ \tG_{\cO}\rtimes \Cx$. In particular, we are not re-writing these equivariant parameters in terms of equivariant parameters for $K=L_{S,m}$ as we do in a weight module. However for $\hbar$ one does have to argue as in the proof of Proposition \ref{pr:strongEquiv} and show that the line bundles associated to $\Cx$ and $\Cxm$ are equivariantly isomorphic.
 \item One can argue as in the proof of Proposition 5.10 to show one can rewrite $x_i$ for $i \not \in S$ in terms of the equivariant parameter for $\Cxm$, which we have identified with $\hbar \in R_S$. The map $\phi$ in Proposition  \ref{pr:strongEquiv} comes from the inclusion
 \[
 \Cxm \to (\tG)_{\cO}\rtimes \Cx \to  (\tG^S)_{\cO} \rtimes \Cx
\]
\end{enumerate}
\end{proof}

\begin{Proposition} \label{pr:abpres1} 
The action of the monopole operators $r^{\lambda} \in \A$ is given by

\begin{align*}
r^{\lambda} \basissigmas &= \prod_{\lambda_i > 0} [x_i]^{\lambda_i} \basislambdasigmas \\
	&= \left( \prod_{\substack{\lambda_i > 0 \\ i \in S}} \prod_{j=1}^{\lambda_i} (x_i - (j - \tfrac{1}{2}) \hbar) \right) \left( \prod_{\substack{\lambda_i > 0 \\ i \not\in S}} \prod_{j=1}^{\lambda_i} (([\lambda + \sigma]_i -\mu_i -j) \hbar) \right) \basislambdasigmas.
\end{align*}
\end{Proposition}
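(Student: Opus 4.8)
The plan is to compute the convolution directly, mirroring the computation of the product $r^\sigma r^\lambda$ in \cite[Section 4(ii)]{BFN1} but with the second monopole class replaced by the module class $\basissigmas$. As a first reduction, note that the second displayed equality follows from the first together with the Gelfand--Tsetlin action on $\basissigmas$ computed above: for $i\in S$ the factors $[x_i]^{\lambda_i}$ are left untouched, while for $i\notin S$ the operator $x_i-(j-\tfrac12)\hbar$ acts on $\basislambdasigmas$ by the scalar $([\lambda+\sigma]_i-\mu_i-j)\hbar$. So it suffices to establish $r^\lambda\basissigmas=\prod_{\lambda_i>0}[x_i]^{\lambda_i}\basislambdasigmas$.

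By Proposition \ref{pr: auxillary action equivariant}, the action of $r^\lambda=[R(\lambda)]\in\A$ on $M_{S,K}[\mu]$ is computed through the span (\ref{eq: diagram for actionZ}) for $Z=V_{S,K}[\mu]$: pull back with supports using the square over $\tilde p_Z$, apply the groupoid-equivariant structure $\beta$, and proper push forward along $m_Z$. Since $G$ is a torus, $\Gr_G=\pi$ is discrete and every space in the span is, on each finite-dimensional approximation, a disjoint union over $\pi$ (resp.\ over $\pi\times\pi$ for the $\groupoid^{(2)}$-type spaces) of $\tG_\cO\rtimes\Cx$-equivariant affine spaces over points of $\Gr$; in particular the cycle $R(\lambda)$ is a fundamental class of such an affine space, and $\basissigmas=[\Gr_{G^S}([\sigma])]$ is the fundamental class of the component of $Z$ labelled $[\sigma]$, which is a point equivariantly and carries the $R_S$-structure. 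Tracing the span component by component shows that $r^\lambda\basissigmas$ is supported on the component of $Z$ labelled $[\lambda+\sigma]$, and that the only non-transversality is an excess bundle: in the $i$-th coordinate direction of $N$, for each $i$ with $\lambda_i>0$, the lattice shift by $z^\lambda$ collapses a rank-$\lambda_i$ subspace, while all other coordinates and all $i$ with $\lambda_i\le 0$ meet transversally and contribute nothing (exactly as in the algebra case). Hence $r^\lambda\basissigmas=e\cdot\basislambdasigmas$, where $e$ is the $\tG\times\Cx$-equivariant Euler class of this excess bundle.

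It remains to identify $e$. The $\lambda_i$ relevant directions (for $\lambda_i>0$) are the coefficients of $z^0,\dots,z^{\lambda_i-1}$ in the $i$-th coordinate of $N$; because $\Cx$ acts on $N$ by loop rotation together with scaling of weight $\tfrac12$, the $\tG\times\Cx$-weight of the $z^{j-1}$-coefficient is $x_i-(j-\tfrac12)\hbar$, so the Euler class of this block is $\prod_{j=1}^{\lambda_i}(x_i-(j-\tfrac12)\hbar)=[x_i]^{\lambda_i}$, giving $e=\prod_{\lambda_i>0}[x_i]^{\lambda_i}$ as claimed. Two consistency checks are worth recording: the formula is compatible with $[x_i,r^\lambda]=\lambda_i\hbar r^\lambda$ via the Gelfand--Tsetlin eigenvalues, and for $i\notin S$ with $[\lambda+\sigma]_i>\mu_i$ the target class $\basislambdasigmas$ vanishes, consistently with one factor $x_i-(j-\tfrac12)\hbar$ being killed by the corresponding eigenvalue; when $S=\{1,\dots,n\}$ the statement also reduces, via Proposition \ref{prop: losing matter}, to the known formula for $\mathbf z^\ast$ in the abelian case \cite[Appendix B]{BFN2}. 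The main obstacle is entirely bookkeeping: choosing compatible finite-dimensional approximations of the spaces in the span so that only finitely many coordinate directions are relevant, checking that the span restricts on them to the asserted maps of vector bundles, and pinning down the half-integer $\hbar$-shifts against the conventions defining $[x_i]^n$ and $\Cxm$; granting the approximations, the Euler-class computation is the same local calculation as in \cite[Section 4(ii)]{BFN1}.
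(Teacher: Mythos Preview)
Your argument is essentially the paper's: run the action diagram, observe the result lands in the $[\lambda+\sigma]$-component, identify the prefactor as the equivariant Euler class of the excess bundle $\sT(\lambda)/\sR(\lambda)\cong z^{-\lambda}N_\cO/(z^{-\lambda}N_\cO\cap N_\cO)$, and then invoke the Gelfand--Tsetlin action for the second equality. One small slip: the relevant directions in coordinate $i$ are the negative powers $z^{-1},\dots,z^{-\lambda_i}$ (not $z^0,\dots,z^{\lambda_i-1}$), though your stated weights $x_i-(j-\tfrac12)\hbar$ and the resulting Euler class are correct.
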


\begin{proof}
The diagram 	
\begin{equation*}
\begin{tikzcd}
	\sR \times V_{S, K}[\mu] \arrow[hook]{d}{i} & p^{-1}(\sR \times V_{S, K}[\mu[) \arrow{l}[swap]{\widetilde{p}} \arrow{r}{\widetilde{q}} \arrow[hook]{d}{j}& q(p^{-1}(\sR \times V_{S, K}[\mu])) \arrow{r}{\widetilde{m}}  & V_{S, K}[\mu] \\
	\sT \times V_{S, K}[\mu] \arrow[leftarrow]{r}{p}  & (\tG^{\cO}_{\cK} \rtimes \Cx) \times V_{S, K}[\mu] & &
\end{tikzcd}
\end{equation*}
defines the action of $\A$ on $M_{S,m}$. 

We would like to compute 
\[
	r^{\lambda} \basissigmas := (\tilde{m} \circ \tilde{q})_*(\tilde{p}^*([\sR(\lambda)] \times [V_{S,K}[\mu]([\sigma])])).
\]
At the level of sets it is clear that
\[
	V_{S,K}[\mu]([\lambda + \sigma])  = (\tilde{m} \circ \tilde{q})(\tilde{p}^{-1}(\sR(\lambda) \times V_{S,K}[\mu]([\sigma]))
\]
so this action must be compatible with the $\pi$-grading on $A$ and the $\pi^S$-grading on $M_{S,m}$. In particular we must have that
\[
	r^{\lambda} \basissigmas   = f(\hbar, x_1, \ldots, x_n) \basislambdasigmas
\]
for some element $f(\hbar, x_1, \ldots, x_n)$ of the Gelfand-Tsetlin subalgebra. Geometrically this prefactor is precisely the excess intersection of $R(\lambda) \times V_S([\sigma])$ and the image of $\tilde{p}$ inside $\sT \times V_S$. But by construction the image of $\tilde{p}$ is contained in $\sR(\lambda) \times V_S([\sigma])$ so the excess is
	\[
	ch_{\tG_{\cO} \rtimes \Cx}(\sT(\lambda) / \sR(\lambda)) = ch_{\Cx \ltimes \tG_{\cO}}(z^{-\lambda} N_{\cO} / (z^{-\lambda} N_{\cO} \cap N_{\cO})) = \prod_{\lambda_i > 0} [x_i]^{\lambda_i}.
	\]
Now using our computation of action of the Gelfand-Tsetlin subalgebra we see that

	\begin{align*}
	\prod_{\lambda_i > 0} [x_i]^{\lambda_i} \basislambdasigmas
	&= \prod_{\lambda_i > 0 } \prod_{j=1}^{\lambda_i} (x_i - (j - \tfrac{1}{2}) \hbar) \basislambdasigmas \\
	&= \left( \prod_{\substack{\lambda_i > 0 \\ i \in S}} \prod_{j=1}^{\lambda_i} (x_i - (j - \tfrac{1}{2}) \hbar) \right) \left( \prod_{\substack{\lambda_i > 0 \\ i \not\in S}} \prod_{j=1}^{\lambda_i} (([\lambda + \sigma]_i -\mu_i -j) \hbar) \right) \basislambdasigmas.
	\end{align*}

\end{proof}

\subsection{Restricted symmetry group}  Assume that $K = K_{S,\mu}$. 
Using similar reasoning to the last section we see that
\[
[\tG^{\cO} \rtimes \Cx \backslash V_{S,K}[\mu]]
\]
is a union of connected components of
\[
[\tG_{\cO} \rtimes \Cx  \backslash \tG^{\cO}_{\cK} \rtimes \C) / (\tG_S)_{\cO} \rtimes \Cxm] \cong  [\text{Gr}_{G}/ (\tG_S)_{\cO} \rtimes \Cxm].
\]
In particular, the module $M_{S,K}[\mu]$ is a free $U_S$-submodule of
\[
H^\bullet_{\tG_S \times \Cxm}(\text{Gr}_G)) \cong  \bigoplus_{[\sigma] \in \pi^S} U_S \cdot [\text{Gr}_{G}(\sigma)]
\]
where $U_S = H^{\bullet}_{\tG_S \times \Cxm}(pt) = \C[\hbar][y_i \, | \, i \in S]$.

\begin{Proposition} \label{pr:abpres2} \text{ }
\begin{enumerate}
\item
As a   $U_S$-module $M_{S,K}[\mu]$ is free and generated by the classes 
\[
\basissigma = [V_{S,K}[\mu](\sigma)] = [\text{Gr}_{G^S}(\sigma)]
\]
where $\sigma_i \leq \mu_i$ for all $i \not \in S$. For all other $ \sigma \in \pi $, we define $\basissigma$ to be $0 $.

\item The action of the Gelfand-Tsetlin subalgebra is given by 
\[
		x_i \basissigma  = (y_i  \delta_{i, S} + (\sigma_i - \mu_i - \tfrac{1}{2})\hbar) \basissigma.
\]
where $\delta_{i,S} = 1$ if $i \in S$ and $\delta_{i,S} = 0$ otherwise.

\item The action on the monopole operators is given by
 \begin{align*} 
		r^{\lambda} \basissigma &= \prod_{\lambda_i > 0} [x_i]^{\lambda_i} \basislambdasigma \\
		&= \prod_{\lambda_i > 0 } \prod_{j=1}^{\lambda_i} (y_i  \delta_{i, S} + (\lambda_i + \sigma_i - j - \mu_i)\hbar) \basislambdasigma
		\end{align*}

\end{enumerate}
\end{Proposition}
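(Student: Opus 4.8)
The plan is to prove the three parts by running the argument of the preceding $K = L_{S,\mu}$ case in exact parallel, the only differences coming from the facts that $\tG_S = (\Cx)^S$ (so the equivariant coefficient ring is $U_S = \C[\hbar][y_i \mid i \in S]$) and that now $K = K_{S,\mu} \subset \tG_\cO \rtimes \Cx$. Part (1) is immediate from the discussion preceding the proposition: that discussion identifies $[\tG_\cO \rtimes \Cx \backslash V_{S,K}[\mu]]$, up to Borel--Moore homology equivalence, with a union of connected components of $[\Gr_G / (\tG_S)_\cO \rtimes \Cxm]$, and since $\Gr_G = \pi$ is discrete and $(\tG_S)_\cO$ is connected this gives $M_{S,K}[\mu] \cong \bigoplus_\sigma U_S \cdot \basissigma$, the sum over those $\sigma \in \pi$ for which $V_{S,K}[\mu](\sigma) \neq \emptyset$. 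By the formula for $X_S[\mu](\sigma)$ recorded above (equivalently Proposition \ref{pr:AbelianSpringer}), these are exactly the $\sigma$ with $\sigma_i \le \mu_i$ for all $i \notin S$.

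For part (2), the action of $x_i \in \C[\hbar, x_1, \dots, x_n] = H^\bullet_{\tG_\cO \rtimes \Cx}(pt)$ on $\basissigma$ is the first Chern class of the left-equivariant line bundle on $\Gr_G$ attached to $x_i$, pulled back to $X_S[\mu](\sigma)$ and re-expressed in terms of the right $K$-equivariant parameters. On the component $\ttGOKnoloop(-\sigma) \rtimes \Cx$ of $X_S[\mu]$, Lemma \ref{le:isoLine} identifies the left parameter $x_i$ with the right parameter $\bar x_i$ shifted by $\sigma_i \hbar$; restricting the right action to $K = (\tG_S)_\cO \rtimes \Cxm$, the $(\tG_S)_\cO$-component of $\bar x_i$ equals $y_i$ when $i \in S$ and vanishes when $i \notin S$ (because $\tG_S = \bigcap_{i \notin S} \ker x_i$), while on $\Cxm = \{(s^{-(\mu + \frac{1}{2}\mu_0)}, s)\}$ one computes $\bar x_i = -(\mu_i + \tfrac{1}{2})\hbar$ using $\langle x_i, \mu_0 \rangle = 1$. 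Summing these contributions gives $x_i \cdot \basissigma = (y_i \delta_{i,S} + (\sigma_i - \mu_i - \tfrac{1}{2})\hbar)\basissigma$; this is the same bookkeeping as in the proofs of Proposition \ref{pr:strongEquiv} and of the Gelfand--Tsetlin statement in the $L_{S,\mu}$ case.

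For part (3), I would feed $Z = V_{S,K}[\mu]$ into the action diagram (\ref{eq: diagram for actionZ}) and argue exactly as in the proof of Proposition \ref{pr:abpres1}. Because the $\pi$-grading of $\A$ adds onto the $\pi$-grading of the module, $r^\lambda \cdot \basissigma = f \cdot \basislambdasigma$ for some $f$ in the Gelfand--Tsetlin subalgebra, and, as there, $f$ is the equivariant Euler class $\mathrm{ch}_{\tG_\cO \rtimes \Cx}(\sT(\lambda)/\sR(\lambda)) = \prod_{\lambda_i > 0}[x_i]^{\lambda_i}$ with $[x_i]^{\lambda_i} = \prod_{j=1}^{\lambda_i}(x_i - (j - \tfrac{1}{2})\hbar)$; this step involves only $\sT$ and $\sR$ and is insensitive to $S$. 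Substituting the formula from part (2) for the action of each factor $x_i - (j-\tfrac{1}{2})\hbar$ on $\basislambdasigma$ and multiplying out yields $r^\lambda \cdot \basissigma = \prod_{\lambda_i > 0}\prod_{j=1}^{\lambda_i}(y_i \delta_{i,S} + (\lambda_i + \sigma_i - j - \mu_i)\hbar)\basislambdasigma$, as claimed.

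The only genuine work, just as in the $L_{S,\mu}$ case, is the parameter bookkeeping of part (2): correctly separating the $x_i$ that survive as free generators $y_i$ of $U_S$ (those with $i \in S$) from those that collapse to the fixed multiple $(\sigma_i - \mu_i - \tfrac{1}{2})\hbar$ of the loop-rotation parameter (those with $i \notin S$), and extracting the $\sigma_i\hbar$ shift correctly from the choice of connected component via Lemma \ref{le:isoLine}. Everything else is formal and reduces to results already established.
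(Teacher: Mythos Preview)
Your proposal is correct and follows essentially the same approach as the paper: part (1) is read off from the discussion preceding the proposition, part (2) is obtained from the strong $(\widetilde G^!,\phi)$-equivariance of Proposition \ref{pr:strongEquiv} (which you effectively unpack via Lemma \ref{le:isoLine}) with $\phi(x_i) = y_i\delta_{i,S} - (\mu_i+\tfrac{1}{2})\hbar$, and part (3) is the excess-intersection computation of Proposition \ref{pr:abpres1} followed by substitution of (2). The paper's proof is terser---it simply invokes Proposition \ref{pr:strongEquiv} and writes down $\phi$---but the content is identical.
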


\begin{proof}
By Proposition \ref{pr:strongEquiv} the $\A$-module $M_{S,K}[\mu]$ is a generalized weight module with coefficients in $U_S$ and
	 $\phi_{S,m}: \C[\hbar, x_1, \ldots, x_n] \to \C[\hbar][y_i \, | \, i \in S]$ given by
	\begin{align*}
	\phi_{S,m}(\hbar) &= \hbar \\
	\phi_{S,m}(x_i) &= y_i \delta_{i, S} - (\mu_i + \tfrac{1}{2}) \hbar
	\end{align*}
The action of the monopole operators is computed exactly as in the last section.
\end{proof}

\subsection{Changing Lagrangians}
As mentioned in the introduction, one can construct additional modules by considering other $\tG$-equivariant Lagrangian splittings of $M$ than $M \cong N \oplus N^*$. In the abelian case these modules are especially easy to understand.

First note that $\tG$-equivariant Lagrangian splittings of $M$ are in bijection with sign vectors $\alpha \in \{+,- \}^n$. To see this let $v_1, \ldots v_n$ be the standard coordinates on $N$ and let $w_1, \ldots w_n$ be the dual coordinates on $N^*$.  We define $N_{\alpha}$ to be the vanishing set of all $v_j$ with $\alpha_j = +$ and all $w_j$ with $\alpha_j = -$. In particular, we can and will identify $\alpha$ with the cocharacter of $\tG$ that acts as inverse scaling on $N_{\alpha}$.

For each $t^\mu c_S \in (N_{\alpha})_{\cK}$ and choice of subgroup $K \subset L_{S,\mu}$ one gets a $\A_{G, N_{\alpha}}$-module $M^{\alpha}_{S, K}[\mu]$. Since the torus $\tG$ does not act on $N_{\alpha}$ in the standard way, this does not fit into the framework we have been using. However, we can fix this by inverting the $j$th $\Cx$-factor of $\tG$ whenever $\alpha_j = +$. Thus the presentations for $\A_{G, N_{\alpha}}$ and $M^{\alpha}_{S, K}[m]$ are the same as the ones we computed earlier with $x_i$ replaced by $-\alpha_i x_i$. 

Using the Fourier transform
\begin{align*}
\A_{G, N} &\cong \A_{G, N_{\alpha}} \\
r^{\lambda}&\mapsto  \prod_{\alpha_i \lambda_i < 0} (-\alpha_i)^{|\lambda_i|}  r^{\lambda} \\
\end{align*}
constructed in \cite[\S 4(v)]{BFN1} one can make $M^K_{\alpha, S}[\mu]$ into an $\A_{G,N}$-module. Using this procedure can find more general versions of Propositions \ref{pr:abpres1} and \ref{pr:abpres2}.  Recall that we have identified $\alpha$ with the cocharacter of $\tG$ that acts as inverse scaling on $N_{\alpha}$.

\begin{Proposition} \label{pr:abpresn} Suppose $K = (\tG_S)_{\cK}^{\cO} \rtimes \Cxma$. Then $M^{\alpha}_{S, K}[\mu]$ is a free $R_S$-module generated by the classes
\[
\basissigmas = [V^K_{\alpha, S}[\mu]([\sigma])] = 
[\text{Gr}_{G^S}(\sigma)] \text{ if $\alpha_i [\sigma]_i \geq \alpha_i [\mu]_i $ for all $i \not \in S$,}
\]
   For all other $ [\sigma] \in \pi^S $, we define $\basissigmas$ to be $0 $. 
   
   The $\A$-module structure is determined by the formulas
\begin{align*}
[x_i]^{\alpha_i} \basissigmas &=  \begin{cases} 
[x_i]^{\alpha_i} \basissigmas, & i \in S  \\
([\sigma]_i - [\mu]_i) \hbar \basissigmas, & i \not\in S
\end{cases}
\\
r^{\lambda} \basissigmas &= \prod_{\alpha_i \lambda_i < 0 } [x_i]^{\lambda_i} \basislambdasigmas \\
\end{align*}
\end{Proposition}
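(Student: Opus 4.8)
The plan is to deduce this from Proposition \ref{pr:abpres1}, applied to the gauge theory $(G, N_\alpha)$, combined with the Fourier isomorphism of \cite[\S4(v)]{BFN1}. As a representation of $\tG = (\Cx)^n$, the Lagrangian $N_\alpha \subset T^*N$ has its $i$-th coordinate line of weight $-\alpha_i x_i$, and the cocharacter of $\tG$ acting by inverse scaling on $N_\alpha$ is exactly $\alpha$, so that $-\alpha$ plays the role that $\mu_0$ played for $N$. Thus $(G, N_\alpha)$ becomes a standard abelian gauge theory once we relabel the Gelfand--Tsetlin weight generators $x_i \mapsto -\alpha_i x_i$ and use $-\alpha$ as scaling cocharacter; under this relabelling $\Cxma$ is precisely the analogue of $\Cxm$. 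First I would record the combinatorics: since $z^{-\sigma}$ scales the $i$-th coordinate of $N_\alpha$ by $z^{\alpha_i\sigma_i}$ while that coordinate of $c_{S,\mu} = z^\mu c_S$ is $z^{-\alpha_i\mu_i}$, Proposition \ref{pr:AbelianSpringer} gives that the relevant Springer fibre is $\{\sigma\in\pi : \alpha_i(\sigma_i - \mu_i)\ge 0 \text{ for all } i\notin S\}$, and Proposition \ref{pr:abpres1} then realizes $M^\alpha_{S,K}[\mu]$ as the free $R_S$-module on the classes $\basissigmas = [\Gr_{G^S}(\sigma)]$ over this set.

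Next I would translate the action formulas back into the original variables $x_i$. Unwinding the $\Cxma$-parameter (using that $\alpha$ pairs to $-1$ with the weight $-\alpha_ix_i$), one gets that for $i\notin S$ the generator $x_i$ acts on $\basissigmas$ by $\bigl([\sigma]_i - [\mu]_i + \tfrac12\alpha_i\bigr)\hbar$; since $[x_i]^{\alpha_i} = x_i - \tfrac12\alpha_i\hbar$, this gives $[x_i]^{\alpha_i}\basissigmas = ([\sigma]_i - [\mu]_i)\hbar\,\basissigmas$, while for $i\in S$ the parameter $x_i$ is not rewritten --- exactly as stated. For the monopole operators, Proposition \ref{pr:abpres1} in the relabelled variables reads $r^\lambda\basissigmas = \prod_{\alpha_i\lambda_i<0}[-\alpha_ix_i]^{-\alpha_i\lambda_i}\basislambdasigmas$, and the elementary identity $[-\alpha_ix_i]^{-\alpha_i\lambda_i} = (-\alpha_i)^{|\lambda_i|}[x_i]^{\lambda_i}$ rewrites this as $\prod_{\alpha_i\lambda_i<0}(-\alpha_i)^{|\lambda_i|}[x_i]^{\lambda_i}\basislambdasigmas$.

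Finally I would pull this $\A_{G,N_\alpha}$-module structure back along the Fourier isomorphism $\A_{G,N}\xrightarrow{\sim}\A_{G,N_\alpha}$ of \cite[\S4(v)]{BFN1}, which fixes the Gelfand--Tsetlin subalgebra and sends $r^\lambda \mapsto \prod_{\alpha_i\lambda_i<0}(-\alpha_i)^{|\lambda_i|}r^\lambda$. This leaves the Gelfand--Tsetlin formulas untouched and multiplies the monopole action by a second factor $\prod_{\alpha_i\lambda_i<0}(-\alpha_i)^{|\lambda_i|}$; as $(-\alpha_i)^{|\lambda_i|}\in\{\pm1\}$, the two sign factors cancel and we obtain $r^\lambda\basissigmas = \prod_{\alpha_i\lambda_i<0}[x_i]^{\lambda_i}\basislambdasigmas$ as claimed.

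The only real difficulty is the sign and half-integer bookkeeping. There are three nested sources of signs: the $-\alpha_i$ twist relating the weights of $N_\alpha$ to those of $N$, the $\mp\tfrac12\hbar$ shifts hidden inside the symbols $[x_i]^n$, and the scalar prefactor of the Fourier transform; one must verify that they conspire exactly --- in particular that the Fourier prefactor cancels against the $(-\alpha_i)^{|\lambda_i|}$ coming from $[-\alpha_ix_i]^{-\alpha_i\lambda_i}$. No geometric input beyond Propositions \ref{pr:AbelianSpringer} and \ref{pr:abpres1} and the cited Fourier isomorphism is needed.
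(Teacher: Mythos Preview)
Your proposal is correct and follows exactly the route the paper itself indicates: the paper does not give a separate proof of this proposition but simply states, just before it, that one obtains it by applying the earlier presentations to $(G,N_\alpha)$ with the substitution $x_i\mapsto -\alpha_i x_i$ and then pulling back along the Fourier transform of \cite[\S4(v)]{BFN1}. You have carefully filled in the sign and half-integer bookkeeping that the paper leaves implicit, and your verification that the two factors of $(-\alpha_i)^{|\lambda_i|}$ cancel is exactly the point.
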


\begin{Proposition} \label{pr:abpresd}  Suppose $K = (\tG_S)_{\cO} \rtimes \Cxma$. Then $M^{\alpha}_{S, K}[\mu]$ is a free $U_S$-module generated by the classes
\[
\basissigma = [V^K_{\alpha, S}[\mu](\sigma)] = 
[\text{Gr}_{G^S}(\sigma)] \text{ if $\alpha_i \sigma_i \geq \alpha_i \mu_i $ for all $i \not \in S$,} 
\]  For all other $ \sigma \in \pi $, we define $\basissigma$ to be $0 $.

 The $\A$-module structure is determined by the formulas
\begin{align*}
[x_i]^{\alpha_i} \basissigma &= (y_i \delta_{i,S} + \sigma_i - \mu_i) \hbar \basissigma \\
r^{\lambda} \basissigma &= \prod_{\alpha_i \lambda_i < 0 } [x_i]^{\lambda_i} \basislambdasigma \\
&= \prod_{\alpha_i \lambda_i < 0 } \prod_{j=1}^{|\lambda_i|} (y_i \delta_{i,S} + \lambda_i + \sigma_i + \alpha_i j - \mu_i) \hbar \basislambdasigma
\end{align*}
\end{Proposition}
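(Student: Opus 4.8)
The plan is to derive Proposition \ref{pr:abpresd} from Proposition \ref{pr:abpres2} by the changing-Lagrangians procedure, in exact parallel with how Proposition \ref{pr:abpresn} is obtained from Proposition \ref{pr:abpres1}. First I would invoke the identification of $N_\alpha \subset T^*N$ with the standard representation $\C^n$ of the torus obtained from $\tG$ by inverting the $j$-th $\Cx$-factor whenever $\alpha_j = +$; under this identification the subgroup $K = (\tG_S)_\cO \rtimes \Cxma$ corresponds to the Dirichlet-type subgroup $K_{S,\mu}$ of the previous subsection, and $z^\mu c_S \in (N_\alpha)_\cK$ to the standard Springer datum. Applying Proposition \ref{pr:abpres2} to this twisted data therefore describes $M^\alpha_{S,K}[\mu]$ as a free $U_S$-module together with its $\A_{G,N_\alpha}$-action; translating back through the substitution $x_i \mapsto -\alpha_i x_i$ (so that, in particular, the lattice-containment condition cutting out the support of the basis is flipped in the $\alpha_i = +$ directions, giving $\alpha_i\sigma_i \ge \alpha_i\mu_i$ for $i \notin S$) yields the stated basis and the Gelfand--Tsetlin action $[x_i]^{\alpha_i}\basissigma = (y_i\delta_{i,S} + (\sigma_i - \mu_i)\hbar)\basissigma$.

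Next I would transport this $\A_{G,N_\alpha}$-module structure along the Fourier-transform isomorphism $\A_{G,N} \cong \A_{G,N_\alpha}$ of \cite[\S 4(v)]{BFN1}, which is the identity on the Gelfand--Tsetlin subalgebra and sends $r^\lambda \mapsto \prod_{\alpha_i\lambda_i<0}(-\alpha_i)^{|\lambda_i|}r^\lambda$. Since the condition ``$\lambda_i > 0$'' governing the excess-intersection factor in Proposition \ref{pr:abpres2}(3) becomes ``$\alpha_i\lambda_i < 0$'' after the sign twist, combining the two ingredients gives $r^\lambda\basissigma = \prod_{\alpha_i\lambda_i<0}[x_i]^{\lambda_i}\basislambdasigma$. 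Finally, expanding $[x_i]^{\lambda_i}$ via the definition of $[x_i]^n$ for $n$ of sign $-\alpha_i$ and applying the Gelfand--Tsetlin action just obtained produces the closed form $\prod_{j=1}^{|\lambda_i|}(y_i\delta_{i,S} + \lambda_i + \sigma_i + \alpha_i j - \mu_i)\hbar$.

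The only genuinely delicate point is the sign and half-integer bookkeeping. One has to check that the $\mp(j-\tfrac12)\hbar$ shifts in the definition of $[x_i]^{\lambda_i}$ combine with the substitution $x_i \mapsto -\alpha_i x_i$, with the Gelfand--Tsetlin eigenvalue, and with the overall Fourier sign $\prod(-\alpha_i)^{|\lambda_i|}$ so that all half-integers cancel and the product index $j$ runs cleanly from $1$ to $|\lambda_i|$. This is precisely the cancellation already responsible for the difference between Proposition \ref{pr:abpres1}(3) and Proposition \ref{pr:abpres2}(3) --- in the Dirichlet case, realizing the non-$S$ directions through $\Cxma$ (rather than the ambient $\Cx$) removes the half-integer shifts --- so the remaining work is only to confirm compatibility with the Lagrangian twist. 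I do not expect any obstacle beyond this careful accounting.
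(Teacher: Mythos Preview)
Your proposal is correct and follows essentially the same approach as the paper: the paper does not give a detailed proof of this proposition, but explains that it is obtained from Proposition \ref{pr:abpres2} by twisting to $N_\alpha$ (replacing $x_i$ by $-\alpha_i x_i$) and then transporting along the Fourier transform $\A_{G,N}\cong\A_{G,N_\alpha}$ from \cite[\S 4(v)]{BFN1}, which is exactly the plan you describe. Your identification of the remaining work as sign and half-integer bookkeeping is accurate.
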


\subsection{Comparison with Physics}
Recall from the introduction that \cite{BDGH} introduced a boundary condition $\mathcal{B}$ for each triple $(L, c, H)$ or equivalently in the abelian case each triple $(\alpha, S, H)$. The boundary condition is Dirichlet when $H=1$, denoted $\mathcal{D}_{\alpha, S}$,  and Neumann when $H = G$, denoted $\mathcal{N}_{\alpha, S}$.

In \cite{BDGH} only modules with $H = G_S$ were considered. Furthermore, particular interested was paid to generic Dirichlet boundary conditions with $F_S = 1$ and exceptional Dirichlet boundary conditions with $F_S = F$. The following proposition gives a combinatorial classification of such boundary conditions which matches the one in \cite{BDGH}.

\begin{Proposition} \label{pr:basis} \text{ }
\begin{enumerate} 
\item $G_S = 1$ and $F_S= 1$ if and only if $S = \emptyset$.
\item $G_S = G$ if and only if $S = \{1, \ldots, n \}$.
\item $G_S=1$ and $F_S = F$ if and only if $\{ \bar{x}_i \, | \, i \not\in S \}$ is a basis for $\fg^*$
\end{enumerate}
\end{Proposition}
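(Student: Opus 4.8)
The proposition is a statement about subtori of the torus $\tG = (\Cx)^n$, so the plan is to translate everything into (co)character lattices and do dimension counts. Recall from the preceding lemma that $\tG_S = (\Cx)^S = \bigcap_{i \notin S}\ker x_i$ is a subtorus of $\tG$ of dimension $|S|$, and that $G_S = \bigcap_{i \notin S}\ker \bar x_i = \tG_S \cap G$. The exact sequence $1 \to G \to \tG \to F \to 1$ restricts to an exact sequence $1 \to G_S \to \tG_S \to F_S \to 1$, so that $|S| = \dim \tG_S = \dim G_S + \dim F_S$ and $F_S$ is a subtorus of $F$ (being the image of a torus). Two elementary facts about a torus $G$ will be used repeatedly: a subtorus of $G$ (resp. of $F$) equals the whole group as soon as it has the same dimension; and for characters $\chi_1,\dots,\chi_k$ of $G$ one has $\bigcap_j \ker\chi_j = \{1\}$ if and only if $\chi_1,\dots,\chi_k$ generate the full character lattice $X^*(G)$, since generating only a finite-index sublattice would leave a nontrivial finite kernel.

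With these in hand, (1) and (2) are quick. For (1): if $S = \emptyset$ then $\tG_\emptyset = \bigcap_{i=1}^n \ker x_i = \{1\}$ because the $x_i$ form a basis of $X^*(\tG)$, so $G_\emptyset$ and $F_\emptyset$ are trivial; conversely, if $G_S$ and $F_S$ are both trivial the exact sequence forces $\tG_S = \{1\}$, i.e. $|S| = \dim \tG_S = 0$. For (2): if $S = \{1,\dots,n\}$ then $c_S = 0$ and $G_S = G$ trivially; conversely, $G_S = G$ means $G \subseteq \tG_S$, i.e. $\bar x_i = 0$ for every $i \notin S$, and since the $G$-action on $N$ has no trivial weight (no coordinate line of $N$ is $G$-fixed) this is possible only when $S^c = \emptyset$.

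Part (3) carries the content. For the forward implication, assume $G_S = 1$ and $F_S = F$. Then $\dim G_S = 0$, so $\dim F_S = |S|$; since $F_S = F$ and $\dim F = n - \dim G$ this gives $|S^c| = \dim G$. On the other hand $G_S = 1$ means $\{\bar x_i : i \notin S\}$ generates $X^*(G)$, and a generating set of the rank-$\dim G$ lattice $X^*(G)$ of cardinality $\dim G$ is necessarily a $\Z$-basis of $X^*(G)$, hence a $\C$-basis of $\fg^* = X^*(G)\otimes_{\Z}\C$. For the converse, assume $\{\bar x_i : i \notin S\}$ is a basis of $\fg^*$; then $|S^c| = \dim G$, so $|S| = n - \dim G = \dim F$, and it remains to prove $G_S = 1$. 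View the $\bar x_i$ as images of the standard basis vectors $e_i$ under the quotient $\Z^n = X^*(\tG) \to X^*(G) = \Z^n/L$ with $L = X^*(F)$. Linear independence of $\{\bar e_i : i \notin S\}$ over $\C$ says $L \cap \langle e_i : i \notin S\rangle = 0$, so the projection of $L$ onto the coordinates indexed by $S^c$ is injective; the unimodularity hypothesis then forces the corresponding maximal minor of the matrix cutting out $L$ inside $\Z^n$ to be $\pm 1$, so this projection is an isomorphism and $\langle e_i : i \notin S\rangle + L = \Z^n$. Hence $\{\bar x_i : i \notin S\}$ is in fact a $\Z$-basis of $X^*(G)$, so $G_S = \bigcap_{i\notin S}\ker\bar x_i = 1$; then $\dim F_S = |S| = \dim F$, whence $F_S = F$.

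The only real obstacle is this integrality step in the converse of (3): being a $\C$-basis of $\fg^*$ is strictly weaker than being a $\Z$-basis of $X^*(G)$, and when the $\bar x_i$ generate only a proper finite-index sublattice the intersection $\bigcap_{i\notin S}\ker\bar x_i$ is a nontrivial finite group rather than $\{1\}$. This is precisely where the assumption that the integer matrices of the induced maps on fundamental groups are unimodular is indispensable; everything else is bookkeeping with the short exact sequence of tori and its restriction to the stabilizers $\tG_S$.
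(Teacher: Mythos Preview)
Your argument is correct in substance and parallels the paper's, though the presentations differ. The paper organizes part (3) around the equivalence ``$G_S=1$ and $F_S=F$'' $\Leftrightarrow$ ``$\tG_S \to F$ is an isomorphism'' $\Leftrightarrow$ ``the exact sequence splits with $\tG_S$ as the image of $F$,'' and then observes that a right splitting is the same as a left splitting $\tG \to G$ restricting to an isomorphism $(\Cx)^{S^c}\cong G$; unimodularity is invoked at the last step exactly as you use it. Your version unpacks this splitting argument into explicit lattice computations (dimension counts plus the minor calculation), which makes the role of unimodularity more transparent but is otherwise the same idea.

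One slip to fix: in the converse of (3) you write ``the projection of $L$ onto the coordinates indexed by $S^c$ is injective,'' but the condition $L \cap \langle e_i : i \notin S\rangle = 0$ says $L$ meets $\Z^{S^c}$ trivially, i.e.\ the projection onto the coordinates indexed by \emph{$S$} (whose kernel is $\Z^{S^c}$) is injective on $L$. This is also what you need: $L$ has rank $|S|$, so an injection $L \hookrightarrow \Z^S$ together with the unimodularity of the corresponding $|S|\times|S|$ minor gives $L \xrightarrow{\sim} \Z^S$, and then $\Z^{S^c} + L = \Z^n$ follows as you claim. With $S^c$ in place of $S$ the ranks do not even match, so the sentence as written cannot be what you intended.
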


\begin{proof}
The first statement is obvious and the second follows immediately from our assumption that all the $G$ weights of $N$ are non-zero. To prove the third statement is enough to show that $\{\bar{x}_i | i \not\in S\}$ is a basis of $\fg^*$ if and only if $F \cong \tG_S$. To see this note that such an $S$ gives a right splitting $F \cong \tG_S \hookrightarrow \tG$ of the short exact sequence $1 \to G \rightarrow \tG \rightarrow  F \to 1$. Because the groups are tori, this is equivalent to the data of a left splitting $\tG \to G$ that induces an isomorphism between $(\Cx)^{S^c}$ and $G$. Because of our total unimodularity assumptions this occurs precisely when $\{\bar{x}_i \, | \, i \not \in S\}$ is a basis for $\ff^! = (\fg)^*$.
\end{proof}

Given a vortex line $[\mu]: \Cx \to F$ one can produce a module over the quantized Coulomb branch algebra $\A_{[\mu]}$ which will be also be denoted by $\mathcal{B}[\mu]$. In \cite{BDGH} presentations for these modules were given in the case that  $\mathcal{B}$ is Neumann, Generic Dirichlet, or Exceptional Dirichlet. These are identical to specialization of the presentations in Propositions \ref{pr:abpresn} and \ref{pr:abpresd}.

\begin{Proposition} \label{pr:bdyCond}
When $\mathcal{B} = (\alpha, S, H)$ is Neumann, Generic Dirichlet, or Exceptional Dirichlet we have
\[
\mathcal{B}[\mu] \cong (M^{\alpha}_{S, K}[\mu])_{[-\mu+\tfrac{1}{2}\alpha], \hbar}
\]
where $K = H_{\cK} \rtimes \Cxma$. 
\end{Proposition}

\begin{Remark}
Recall that a quantization parameter $\zeta \in \ff$ is integral if it is in the same $\tau = \pi_1(F)$-orbit as $[\frac{1}{2} \mu_0]$. In particular all of the quantization parameters $\zeta = [-\mu+\tfrac{1}{2}\alpha]$ are integral. One often wants to consider a fixed quantization so it is common to label modules as $\mathcal{B}(\zeta)$ instead of $\mathcal{B}[\mu]$.
\end{Remark}

\begin{Remark}
Recall that notion of weight module we use in this paper is stricter than the one from \cite{BLPW}: we require the Gelfand-Tsetlin subalgebra to act semi-simply instead of just locally finitely. There are extensions of weight modules contained in category $\mathcal{O}$ that we cannot produce directly using Springer fibres.
\end{Remark}

One puzzling fact noted in \cite{BDGH} is that the Higgs branch image of the Neumann boundary condition is a simple module, but the Coulomb branch image is free over the Gelfand-Tsetlin subalgebra, and hence cannot be in hypertoric Category $\mathcal{O}$. In particular, it is not a projective module as is predicted by symplectic duality \cite{BLPW1}. One can apply the geometric Jacquet functor to remedy this situation \cite{H}, but physical considerations imply that the module $\mathcal{N}_{\alpha}[\mu]$ is more natural: the Neumann boundary condition is engineered by coupling our $3$d theory to the $2$d $\mathcal N=(2,2)$ linear sigma model with target $N_{\alpha}$ using the flavour symmetry $G$.  The Coulomb branch module $\mathcal{N}_{\alpha}[\mu]$ encodes the Hori-Vafa superpotential, and hence also the quantum D-module for the toric variety $N_{\alpha} / G$. A similar observation is made by Teleman in \cite{T}.

\subsection{Hyperplane arrangements}
After specialization at $([-\mu + \tfrac{1}{2} \alpha],1)$ weight modules over $\A$\footnote{Here we mean ordinary weight modules over $\C$.}  have a nice description in terms of the combinatorics of a hyperplane arrangement in $(\ff^!)^* = \fg$. The starting point is the following proposition.
\begin{Proposition} \text{ }
\begin{enumerate}
\item The Gelfand-Tsetlin subalgebra of $\A_{([-\mu + \tfrac{1}{2}\alpha],1)}$ can be identified with functions on the fiber of the map
\[ (\ftg^!)^* = \ftg \xrightarrow{q} \ff = (\fg^!)^*\] above the point $[-\mu + \tfrac{1}{2} \alpha]$.
\item For each representative $\mu: \Cx \to \tG$ one has an isomorphism
\begin{align*}
(\ff^!)^* = \fg & \cong q^{-1}([-\mu + \tfrac{1}{2} \alpha]) \\
\xi &\mapsto \xi - \mu + \tfrac{1}{2} \alpha
\end{align*}
and under this isomorphism $x_i$ restricts to $\bar{x}_i - \mu_i +   \tfrac{1}{2} \alpha_i$.
\end{enumerate}
\end{Proposition}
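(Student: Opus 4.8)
The plan is to unwind the definitions on both sides and check that the two descriptions of the Gelfand--Tsetlin algebra agree. For part (1), recall that the Gelfand--Tsetlin subalgebra of $\A$ is $H^\bullet_{\tG \times \Cx}(pt) = \Sym \widetilde\fg^*[\hbar]$, and that by the central specialization construction of Section \ref{section: central specialization}, specializing at $([-\mu+\tfrac12\alpha],1)$ amounts to imposing that the central subalgebra $H^\bullet_{F\times\Cx}(pt) = \Sym\ff^*[\hbar]$ act through the character associated to the point $(-\mu+\tfrac12\alpha,1) \in \ff\oplus\C$. On the dual side, the sequence $1 \to G \to \tG \to F \to 1$ of tori dualizes (Cartier dually, as in \eqref{eq:Cartan}) to the surjection $q : \ftg \twoheadrightarrow \ff$ with kernel $\fg$; here I am using the unimodularity hypothesis so that everything is a genuine torus quotient with no finite-group ambiguity. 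The Gelfand--Tsetlin algebra after specialization at $\hbar=1$ is then $\Sym\widetilde\fg^* \otimes_{\Sym\ff^*} \C_{-\mu+\tfrac12\alpha}$, which is precisely the ring of functions on the scheme-theoretic fibre $q^{-1}(-\mu+\tfrac12\alpha) \subset \ftg$. Keeping $\hbar$ in the picture (taking $\hbar=1$ as in the statement, or more generally along the line $a\mapsto(a\zeta,a)$) does not change the content. So part (1) is really just a matter of stating this identification carefully.

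For part (2), once $\mu : \Cx \to \tG$ is a fixed lift of $[\mu]$, the point $\mu \in \ftg$ (its derivative) satisfies $q(\mu) = [\mu] \in \ff$, hence $q(-\mu+\tfrac12\alpha)$ is the chosen specialization point, so $-\mu + \tfrac12\alpha$ is one point of the fibre $q^{-1}([-\mu+\tfrac12\alpha])$. The fibre is then a torsor under the kernel $\ker q = \fg = (\ff^!)^*$, and translation by $-\mu+\tfrac12\alpha$ gives the asserted affine isomorphism $\fg \xrightarrow{\sim} q^{-1}([-\mu+\tfrac12\alpha])$, $\xi \mapsto \xi - \mu + \tfrac12\alpha$. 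For the final assertion I would track the weight generator $x_i \in H^2_{\tG}(pt)$, which is by definition the $i$-th weight of the $\tG = (\Cx)^n$-action on $N = \C^n$, i.e. the $i$-th coordinate function on $\ftg$. Restricting this linear function along the affine embedding $\xi \mapsto \xi - \mu + \tfrac12\alpha$ yields $\bar x_i - \mu_i + \tfrac12\alpha_i$, where $\bar x_i$ is the restriction of $x_i$ to $\fg = (\ff^!)^*$ (consistent with the notation $\bar x_i$ used in the hypertoric presentation) and $\mu_i = \langle \bar x_i, \mu\rangle$, $\alpha_i$ the $i$-th sign. This is a direct computation with linear functionals.

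The main obstacle is essentially bookkeeping rather than mathematics: making sure the various half-integer shifts (the $\tfrac12\mu_0$ hidden inside $\Cxma$ and the $\tfrac12\alpha$ coming from the Fourier transform / choice of Lagrangian $N_\alpha$) are accounted for consistently, and that the specialization point is really $-\mu+\tfrac12\alpha$ and not, say, $-\mu+\tfrac12\mu_0$ or $-\mu-\tfrac12\mu_0$ — these differ by the conventions established in the previous subsections (compare the subgroups $\Cxm$, $\Cxma$, $\Cxz$). I would therefore be careful to cite Proposition \ref{pr:bdyCond} and the definition of $\Cxma$ to pin down exactly which shifted point appears, and then the identification of $x_i$ with a coordinate on $\ftg$ makes the restriction formula immediate. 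The only other thing to be slightly careful about is the scheme structure on the fibre $q^{-1}([-\mu+\tfrac12\alpha])$: since $q$ is a smooth surjection of affine spaces (tori), the fibre is reduced and smooth, so ``functions on the fibre'' is unambiguous and matches the tensor-product description of the specialized Gelfand--Tsetlin algebra.
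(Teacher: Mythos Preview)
Your proposal is correct and is exactly the natural unpacking of definitions that the paper intends: the paper states this proposition without proof, treating it as an immediate consequence of the central specialization construction (Section~\ref{section: central specialization}) together with the identification of the Gelfand--Tsetlin subalgebra with polynomial functions on $\ftg\oplus\C$. Your bookkeeping of the shifts and the torsor description of the fibre is accurate, and your caution about the half-integer conventions is well placed but not an actual obstacle.
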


Thus the vanishing set $V(x_i)$ of each affine function $x_i$ gives a hyperplane in $(\ff^!_{\mathbb R})^*$ which we will call $H_i$. By restricting a weight module over $\A_{[\mu],1}$ to the Gelfand-Tsetlin subalgebra one gets a collection of skyscraper sheaves on $(\ff^!)^*$. The structure of this module is often easy to describe using the hyperplanes $H_1, \ldots, H_n$ and Proposition \ref{pr:abpresd}.

\begin{Proposition} \label{pr:abpreshyp}
The specialization of 
\[
D_{\alpha, S}[\mu] = M^{\alpha}_{S, K}[\mu]
\]
where $K = \Cxma$ at $\hbar = 1$ is a module over $\A_{([-\mu + \tfrac{1}{2}\alpha],1)}$
\begin{enumerate}

\item As a module over the Gelfand-Tsetlin the specialization of $D_{\alpha, S}[\mu]$ is a sum of skyscraper sheaves supported on the intersection of the integer lattice $\pi \subseteq  s{(\ff^!)^* = \fg}$ and the polytope
\begin{align*}
P_{\alpha, S}[\mu] &= \{ \xi \in (\ff^!_{\mathbb R})^* \, | \,  \text{$\alpha_i x_i(\xi) > 0$ for  $i \not \in S$} \} \\
&= \{\xi \in (\ff^!_{\mathbb R})^* \, | \,  \text{$\alpha_i \xi_i - \alpha_i \mu_i + \tfrac{1}{2} > 0$ for  $i \not \in S$} \}
\end{align*}
Moreover, the skyscraper sheaf at $\sigma$ is generated by $\basissigma$.

\item Suppose that $\basissigma \neq 0$. Then $r^{\lambda} \basissigma = 0$ if and only if there exists an $i$ such that $\alpha_i x_i(\sigma) > 0$ and $\alpha_i x_i(\sigma + \lambda) < 0$.
\end{enumerate}
\end{Proposition}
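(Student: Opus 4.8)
The plan is to deduce both statements directly from the explicit presentation of $M^{\alpha}_{S,K}[\mu]$ in Proposition \ref{pr:abpresd}, by setting $K = \Cxma$ (so $S$ is such that $i \notin S$ accounts for all constraints, and $U_S = \C[\hbar]$ since the only surviving equivariant parameter is $\hbar$) and then specializing $\hbar = 1$. First I would record that after this specialization the Gelfand-Tsetlin subalgebra $\C[\hbar, x_1,\dots,x_n]$ acts on the basis vector $\basissigma$ by the scalars $x_i \basissigma = (\alpha_i \sigma_i - \alpha_i \mu_i + \tfrac{1}{2})\basissigma$ when $i \notin S$ (reading off the formula in Proposition \ref{pr:abpresd}(2) with $\delta_{i,S}=0$ and $\hbar=1$); this is exactly the statement that $\basissigma$ is an eigenvector for the Gelfand-Tsetlin algebra with eigenvalue the point $\sigma - \mu + \tfrac12\alpha \in q^{-1}([-\mu+\tfrac12\alpha])$, which under the identification of the preceding proposition is the lattice point $\sigma \in \pi \subset (\ff^!)^*$. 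Since $M^{\alpha}_{S,K}[\mu]$ is free over $U_S = \C[\hbar]$ on the classes $\basissigma$ with $\alpha_i\sigma_i \ge \alpha_i\mu_i$ for $i \notin S$, the module is, as a Gelfand-Tsetlin module, the direct sum of the corresponding skyscrapers; and the condition $\alpha_i\sigma_i \ge \alpha_i\mu_i$ on the integer lattice is equivalent to the strict inequality $\alpha_i\xi_i - \alpha_i\mu_i + \tfrac12 > 0$ defining $P_{\alpha,S}[\mu]$ because adding $\tfrac12$ converts a $\ge 0$ integer condition into a $> 0$ condition. This proves part (1).

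For part (2), I would again use the monopole formula from Proposition \ref{pr:abpresd}(3): at $\hbar = 1$ and with $\delta_{i,S} = 0$,
\[
r^{\lambda}\basissigma = \prod_{\alpha_i\lambda_i < 0} \prod_{j=1}^{|\lambda_i|} \bigl(\lambda_i + \sigma_i + \alpha_i j - \mu_i\bigr)\,\basislambdasigma .
\]
Assuming $\basissigma \ne 0$, the vector $r^\lambda\basissigma$ vanishes iff $\basislambdasigma = 0$ or one of the linear factors vanishes. I would handle these two cases and show they combine into the single stated criterion: there exists $i$ with $\alpha_i x_i(\sigma) > 0$ but $\alpha_i x_i(\sigma + \lambda) < 0$. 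The key computation is to rewrite each factor $\lambda_i + \sigma_i + \alpha_i j - \mu_i$ in terms of $\alpha_i x_i(\sigma)$ and $\alpha_i x_i(\sigma+\lambda)$. Using $\alpha_i x_i(\tau) = \alpha_i\tau_i - \alpha_i\mu_i + \tfrac12$, multiply the factor by $\alpha_i$ (harmless since $\alpha_i = \pm 1$): $\alpha_i\lambda_i + \alpha_i\sigma_i + j - \alpha_i\mu_i$. For $\alpha_i\lambda_i < 0$ one has $j$ ranging over $1,\dots,|\lambda_i| = -\alpha_i\lambda_i$, so $\alpha_i\lambda_i + j$ ranges over $\alpha_i\lambda_i + 1, \dots, 0$, and hence $\alpha_i\lambda_i + \alpha_i\sigma_i + j - \alpha_i\mu_i$ ranges over the integers from $\alpha_i x_i(\sigma+\lambda) - \tfrac12$ up to $\alpha_i x_i(\sigma) - \tfrac12$; one of these is $0$ precisely when $\alpha_i x_i(\sigma+\lambda) \le 0 \le \alpha_i x_i(\sigma)$ with the appropriate integrality. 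Since $\basissigma \ne 0$ forces $\alpha_i x_i(\sigma) > 0$ for $i \notin S$ already, the vanishing of a factor is equivalent to $\alpha_i x_i(\sigma+\lambda) < 0$; and one checks that $\basislambdasigma = 0$ (i.e.\ $\lambda + \sigma \notin P_{\alpha,S}[\mu]$) is the same condition for possibly a different index $i$ for which $\alpha_i\lambda_i < 0$ must hold. Assembling these observations gives the stated iff.

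The main obstacle is the bookkeeping in part (2): one must be careful that the index $i$ witnessing $\basislambdasigma = 0$ is automatically among those with $\alpha_i\lambda_i < 0$ (otherwise $r^\lambda$ would not ``see'' it as a factor), and conversely that a vanishing linear factor indeed lands on the value $0$ rather than being skipped — this is where the shift by $\tfrac12$ in the identification $x_i(\xi) = \xi_i - \mu_i + \tfrac12$ does the work, guaranteeing the product runs over exactly the integer interval $[\alpha_i x_i(\sigma+\lambda), \alpha_i x_i(\sigma)] - \tfrac12$ with no off-by-one error. Once the interval endpoints are matched correctly, both directions of the equivalence are immediate.
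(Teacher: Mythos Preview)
Your approach is exactly the paper's: both derive everything from the explicit formulas of Proposition~\ref{pr:abpresd}, with the paper's proof being a single sentence for part~(2) and nothing written for part~(1). Two small slips worth fixing: write ``$y_i = 0$'' rather than ``$\delta_{i,S} = 0$'' (passing from $K = (\tG_S)_\cO \rtimes \Cxma$ to $K = \Cxma$ specializes the equivariant parameters $y_i$ to zero; $S$ and $\delta_{i,S}$ remain arbitrary and fixed throughout), and the lower endpoint of your integer range should be $\alpha_i x_i(\sigma+\lambda) + \tfrac12$, not $-\tfrac12$ --- a harmless off-by-one since the half-integrality of $\alpha_i x_i$ still delivers the correct strict inequalities.
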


\begin{proof}
To prove the second statement notice that
\begin{align*}
r^{\lambda} \basissigma &= \prod_{\alpha_i \lambda_i < 0 } \prod_{j=1}^{- \alpha_i \lambda_i} (\lambda_i + \sigma_i + \alpha_i j - m_i) \basislambdasigma
\end{align*}
can only vanish if $j = - \alpha_i (\lambda_i + \sigma_i - \mu_i)$.
\end{proof}

Choose a character $\chi$ of $G$. We will assume that that $\mu$ and $\chi$ are generic so that our arrangement is simple and $\chi$ is not constant on any $1$-dimensional flat. Under these assumptions category $\mathcal{O}$ is especially well behaved.  Using the results in this section one can match the modules $D_{\alpha, S}[\mu]$ to specific indecomposable weight modules in category $\cO$. 

\begin{Lemma} \label{lem:poly}
A module $D_{\alpha, S}[\mu]$ is contained in category $\cO$ for $\chi$ if and only if $\chi$ is bounded above on $P_{\alpha, S}[\mu]$. 
\end{Lemma}

\begin{proof}
By Proposition \ref{pr:strongEquiv}  $D_{\alpha, S}[\mu]$ is strongly $F^!$-equivariant. Up to an overall constant the element $\basissigma$ has degree $\chi(\sigma)$ for the $\mathbb{Z}$-grading from Definition \ref{def:catO}. Thus the degree is bounded iff and only if $\chi$ is bounded $P_{\alpha, S}[\mu]$.
\end{proof}

For each $\alpha$ we have a unique generic Dirichlet $D_{\alpha}[\mu] = D_{\alpha, \{1, \ldots, n\}}[\mu]$. 
\begin{Proposition}
Each $D_{\alpha}[\mu]$ is a simple $\A_{([-\mu + \tfrac{1}{2}\alpha], 1)}$-module.
\end{Proposition}
\begin{proof}
We claim that $D_{\alpha}[\mu]$ is generated by $\basissigma$ for any $\sigma \in (P_{\alpha}[\mu] \cap \pi)$. To see this, note that by Proposition \ref{pr:abpreshyp} we have that $r^{\lambda} \basissigma = 0$ only when $\sigma$ and $\lambda + \sigma$ are on opposite sides of some hyperplane. However, the intersection of any hyperplane $H_i$ with $P_{\alpha}[\mu]$ must be a face of $P_{\alpha}[\mu]$ or empty. Therefore if $\lambda + \sigma \in P_{\alpha}[\mu]$ we have $r^{\lambda} \basissigma = c \basislambdasigma$ for some non-zero scalar.
\end{proof}

\begin{Proposition} \label{pr:AbelianVerma}
Let $D_{\alpha, S}[\mu]$ be an exceptional Dirichlet module that is contained in category $\mathcal{O}$. If $\alpha_i x_i(\sigma^{\max}) < 0$ for all $i \in S$, then $D_{\alpha, S}[\mu]$ is a Verma module. 
\end{Proposition}

\begin{proof}
Since $F_S = F$ this is a Verma-like module in the sense of section \ref{se:Verma}. Since the set $\{\bar{x}_i \,|, i \not\in S \}$ is a basis for $\ff^! = \fg^*$ one can consider the dual basis $\{ \lambda^i \, | \, i \not\in S \}$ of $(\ff^!)^* = \fg$. By our total unimodularity assumptions these vectors actually give an integer basis for $\pi$. It is not hard to check that we have
\[
P_{\alpha, S}[\mu] \cap \pi = \sigma^{max} + \N \cdot \{ \alpha_i \lambda^i \, | \, i \not\in S\}
\]
where $(\sigma^{\max})_i = \mu_i$ for all $i \not \in S$ (i.e. $ \sigma^{\max} $ is the image of $ \mu $ under the right splitting $F \to \tilde{G}$ determined by $S$ in the proof of Proposition \ref{pr:basis}). 

We claim that $D_{\alpha, S}[\mu]$ is generated by $\basissigmamax$. To see this it suffices to show that $r^{\lambda} \basissigmamax \neq 0$ for all $\lambda \in \N \cdot \{ \alpha_i \lambda^i \, | \, i \not\in S\}$. However, this is immediate from Proposition \ref{pr:abpreshyp} since the only hyperplanes that can pass through the interior of $P_{\alpha, S}[\mu]$ are $H_i$ for $i \in S$.
\end{proof}

\begin{Remark}
The Hamiltonian reduction approach from Section \ref{section: QHR} works very well for abelian theories. An interested reader can implement the construction of modules in \cite{BLPW1} and \cite{H} using Springer fibers. In particular, one can use \cite[Proposition 4.3.1]{H} to show that the exceptional Dirichlet modules $D_{\alpha, S}[\mu]$ are exactly the twisted Verma modules.
\end{Remark}

\section{Quiver gauge theories}
Fix a quiver with vertex set $ I $ and edge set $ Q $.  We assume that both $ I $ and $ Q$ are finite, but we allow loops and multiple edges.

\subsection{Stable points}

Choose $ I $-graded vector spaces $ V, W $ with $ \dim V_i = v_i, \dim W_i = w_i $.  We define
$$ G = \prod_{i \in I} GL(V_i), \   N = \bigoplus_{(i,j) \in Q} \Hom(V_i, V_j) \oplus  \bigoplus_{i \in I} \Hom(W_i, V_i), \  F = \prod_{i\in I} (\Cx)^{w_i} $$
Then $ G $ acts on $ N $ by acting on the vector spaces $ V_i $ and $ F $ acts on the vector spaces $ W_i $.  For the remainder of this section, fix $ \chi : G \rightarrow \Cx $ to be given by $ \chi(g) = \prod \det(g_i) $.

The following result is well-known, see \cite[Section 3.ii]{NakKM}, \cite[Corollary 5.1.9]{Ginzburg}:
\begin{Lemma}\label{le:stabquiver}  Let $ c = ((c_h), (c_i)) \in N $.  The following are equivalent.
	\begin{enumerate}
		\item $ c $ is $\chi$-stable.
		\item $ c $ is $\chi$-semistable.
		\item There are no $I$-graded subspaces $ V' \subsetneq V $ containing the image of $ (c_i) $ and invariant under $ (c_h) $.
	\end{enumerate}

Moreover any such $c$ has trivial stabilizer in $ G $.
\end{Lemma}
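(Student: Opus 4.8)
The plan is to prove the cyclic chain $(1)\Rightarrow(2)\Rightarrow(3)\Rightarrow(1)$ together with the stabilizer statement; this is essentially King's criterion for $\theta$-stability of quiver representations adapted to the framed setting, and the argument is short enough to give directly rather than only cite. The implication $(1)\Rightarrow(2)$ is immediate from the definitions ($\chi$-stable $\Rightarrow$ $\chi$-polystable $\Rightarrow$ $\chi$-semistable). I would deduce the stabilizer claim straight from (3): if $g\in G$ fixes $c$, the $I$-graded subspace $V^g$ with $(V^g)_i=\ker(g_i-1)$ is invariant under $(c_h)$ (because $g_jc_h=c_hg_i$ for an arrow $h\colon i\to j$) and contains $\operatorname{im}(c_i)$ (because $g_ic_i=c_i$), so (3) forces $V^g=V$, i.e. $g=1$.

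The core tool for $(2)\Leftrightarrow(3)$ and for $(3)\Rightarrow(1)$ will be a Hilbert--Mumford criterion for semistability. Using the reformulation recorded just before Lemma \ref{le:StableProper} --- $c$ is $\chi$-semistable iff $N\times\{0\}$ misses $\overline{G\cdot(c,1)}$ in $N\times\C$ --- together with the Hilbert--Mumford--Kempf theorem applied to the closed $G$-invariant subset $N\times\{0\}$, I would first establish: $c$ is \emph{not} $\chi$-semistable iff there is a one-parameter subgroup $\lambda\colon\Cx\to G$ such that $\lim_{t\to0}\lambda(t)\cdot c$ exists and $\langle\chi,\lambda\rangle<0$. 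Writing $\lambda$ as a $\Z$-grading $V_i=\bigoplus_n V_i^{(n)}$ of each $V_i$ (trivial on the $W_i$), one checks routinely that $\lim_{t\to0}\lambda(t)\cdot c$ exists precisely when each $V^{\geq k}:=\bigoplus_{i}\bigoplus_{n\geq k}V_i^{(n)}$ is $(c_h)$-invariant and $\operatorname{im}(c_i)\subseteq V^{\geq 0}$, while $\langle\chi,\lambda\rangle=\sum_{i,n}n\dim V_i^{(n)}$ since $\chi(g)=\prod_i\det(g_i)$.

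With this in hand $(2)\Rightarrow(3)$ and $(3)\Rightarrow(2)$ become bookkeeping. If (3) fails, taking $V_i^{(0)}$ to be the component of a bad subspace $V'\subsetneq V$ and $V_i^{(-1)}$ a complement produces a destabilizing $\lambda$ with $\langle\chi,\lambda\rangle=-\dim(V/V')<0$. Conversely, if $c$ is not $\chi$-semistable, a destabilizing $\lambda$ forces $V^{\geq 0}$ to be a proper $(c_h)$-invariant subspace containing $\operatorname{im}(c_i)$ --- it cannot be all of $V$, since then every negative graded piece vanishes and $\langle\chi,\lambda\rangle\geq 0$ --- so (3) fails. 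This gives $(2)\Leftrightarrow(3)$.

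The step I expect to be the real obstacle is $(3)\Rightarrow(1)$, specifically establishing $\chi$-polystability, since being $\chi$-semistable with trivial stabilizer does not in general force the orbit to be closed. Knowing $c$ is $\chi$-semistable, pick $f\in\C[N]^{G,n\chi}$ with $f(c)\neq0$, set $U=\{f\neq0\}$ (an affine $G$-stable open containing $Gc$, with good quotient $U\to U\sslash G$), and let $Gc_1$ be the unique closed orbit contained in $\overline{Gc}\cap U$. By Hilbert--Mumford--Kempf there is $\lambda$ with $c_1:=\lim_{t\to0}\lambda(t)\cdot c\in Gc_1$; this $c_1$ is $\lambda$-fixed and, lying in $U$, is $\chi$-semistable. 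Applying the semistability criterion to $c$ with $\lambda$ gives $\langle\chi,\lambda\rangle\geq0$, and to $c_1$ with $\lambda^{-1}$ gives $\langle\chi,\lambda\rangle\leq0$, hence $\langle\chi,\lambda\rangle=0$; meanwhile $V^{\geq0}$ is $(c_h)$-invariant and contains $\operatorname{im}(c_i)$, so (3) gives $V^{\geq0}=V$, which kills the negative graded pieces, and then $\langle\chi,\lambda\rangle=0$ kills the positive ones. Thus $\lambda$ is trivial, $c_1=c$, so $Gc=Gc_1$ is closed in $U=\{f\neq 0\}$; hence $c$ is $\chi$-polystable, and with the trivial stabilizer it is $\chi$-stable. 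This closes the cycle and, together with the direct argument above, proves the stabilizer claim.
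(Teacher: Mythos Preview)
Your proof is correct. The paper itself does not prove this lemma at all: it simply records the statement as ``well-known'' and points to \cite[Section 3.ii]{NakKM} and \cite[Corollary 5.1.9]{Ginzburg}. What you have written is essentially the standard King/Hilbert--Mumford argument that underlies those references, carried out in full in the framed setting.

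A few comments on the comparison. Your $(2)\Leftrightarrow(3)$ via destabilizing one-parameter subgroups is exactly the mechanism in King's paper and in Ginzburg's notes; the translation between a $\Z$-grading on $V$ and the filtration $V^{\ge k}$ is the usual bookkeeping. Your trivial-stabilizer argument using $V^g=\ker(g-1)$ is the standard Schur-type trick for framed representations and matches what one finds in Nakajima's treatment. The one place where you go slightly beyond what is often written out is the polystability step in $(3)\Rightarrow(1)$: your use of Kempf on the affine open $U=\{f\neq0\}$, together with the double inequality $\langle\chi,\lambda\rangle\ge0$ (from semistability of $c$) and $\langle\chi,\lambda^{-1}\rangle\ge0$ (from semistability of the $\lambda$-fixed limit $c_1\in U$), is clean and correct. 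Many sources instead bypass this by observing directly that semistable points with trivial (or finite) stabilizer are automatically stable in the GIT sense, but your argument makes this explicit.

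In short: the paper gives no proof, and yours is a correct, self-contained realization of the cited argument.
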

Let  $ c = ((c_h), (c_i)) \in N^s $ be a $\chi$-stable point.   Then $ (c_h) $ defines a $ \C Q$-module structure  structure on the vector space $ V $.  By the above Lemma \ref{le:stabquiver}, we have a linear map $ (c_i) : W\rightarrow V  $ whose image generates $ V $ as a $\C Q$-module (so $ W $ is called a \textbf{framing} of $ V$). 

\subsection{Description of the Springer fibre} \label{se:SpringerQuiver}
  We have the standard interpretation of the affine Grassmannian of $ GL_G $ using lattices.

\begin{Proposition} \label{prop:LatticeModel}
	There is an isomorphism
	$$
	\Gr_G \cong \{  L = \oplus_{i \in I} L_i  \subset V \otimes \cK :  L_i  \text{ is an $\mathcal O$-lattice in $ V_i \otimes \cK $} \}
	$$
	given by $ g \mapsto g V \otimes \cO $.
\end{Proposition}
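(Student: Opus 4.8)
The plan is to reduce to the case of a single general linear group, where the statement is the classical lattice description of the affine Grassmannian.

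Since $ G = \prod_{i \in I} GL(V_i) $, we have $ \GK = \prod_i GL(V_i)_\cK $, $ \GO = \prod_i GL(V_i)_\cO $, and hence $ \Gr_G = \prod_i \Gr_{GL(V_i)} $; likewise a graded $ \cO $-lattice in $ V \otimes \cK = \bigoplus_i (V_i \otimes \cK) $ is the same thing as a tuple of $ \cO $-lattices $ L_i \subset V_i \otimes \cK $. So I would first reduce to the case $ G = GL(V) $ with $ \dim V = n $, and then prove that $ g \mapsto g(V \otimes \cO) $ induces a bijection $ \GK/\GO \xrightarrow{\sim} \{ \cO\text{-lattices in } V \otimes \cK \} $.

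For the bijection: the map is constant on $ \GO $-cosets since $ \GO $ stabilizes $ V \otimes \cO $. Surjectivity uses that $ \cO = \C[[z]] $ is a discrete valuation ring, so every $ \cO $-lattice $ L $ is free of rank $ n $; any $ \cO $-basis of $ L $ is simultaneously a $ \cK $-basis of $ V \otimes \cK $, and the matrix $ g \in \GK $ carrying a fixed basis of $ V $ to it satisfies $ g(V \otimes \cO) = L $. Injectivity: if $ g(V \otimes \cO) = g'(V \otimes \cO) $ then $ h = g^{-1} g' $ and $ h^{-1} $ both preserve $ V \otimes \cO $, so both have entries in $ \cO $, whence $ h \in \GO $.

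Finally I would upgrade this to an isomorphism of ind-schemes using the modular (Beauville--Laszlo) description of $ \Gr_G $: an $ R $-point is a rank-$ n $ projective $ R[[z]] $-module together with a trivialization of its generic fibre, i.e. precisely the $ R[[z]] $-lattice $ L \subset V \otimes_\C R((z)) $ that it spans. Both sides are exhausted by the closed subfunctors defined by the bound $ z^N(V \otimes \cO) \subseteq L \subseteq z^{-N}(V \otimes \cO) $; at finite level this is a scheme of $ \C[z]/(z^{2N}) $-submodules of a fixed finite-dimensional vector space, a closed subscheme of an ordinary Grassmannian, and $ \Gr_G $ is the colimit over $ N $. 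The bijection above is compatible with these exhaustions and is an isomorphism at each finite stage. I expect this last compatibility to be the only genuinely delicate point, though it is completely standard; everything else is formal.
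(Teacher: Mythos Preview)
Your argument is correct and is exactly the standard proof of the lattice model. The paper itself does not give a proof of this proposition: it is stated as ``the standard interpretation of the affine Grassmannian of $GL_G$ using lattices'' and immediately used. So there is nothing to compare against; you have simply supplied the omitted (well-known) argument, and the reduction to a single $GL(V_i)$ together with the DVR/free-module reasoning and the ind-scheme upgrade via Beauville--Laszlo is precisely what one would write if asked to fill in the details.
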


Let $ c \in N $ be $ \chi$-stable.  The main result of this section describes the Springer fibre $ \Sp_c $ in these terms.  As before, $ c $ gives a $\C Q$-module structure on $ V $, which gives a $ \C Q \otimes \cK $-module structure on $V\otimes \cK$.
\begin{Theorem} \label{th:SpQuiver}
	 Let $ c \in N $ be $ \chi$-stable.  Under the isomorphism from Proposition \ref{prop:LatticeModel}, we have an identification
	 $$
	\Sp_c = \{ L \in \Gr : V \otimes \cO \subseteq L, \text{ and $L $ is a $ \C Q $-submodule of $V \otimes \cK$ }\}
	$$
\end{Theorem}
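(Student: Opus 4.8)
The plan is to translate the defining condition of $\Sp_c$ into the lattice model of Proposition~\ref{prop:LatticeModel} and then read off both inclusions of the claimed identity. Recall that $\Sp_c = \{[g] \in \Gr : g^{-1} c \in \NO\}$, and that under Proposition~\ref{prop:LatticeModel} the class $[g]$ corresponds to the lattice $L = g(V \otimes \cO) = \bigoplus_{i} g_i(V_i \otimes \cO)$. Writing $c = ((c_h),(c_i))$ and using the standard identification of $\Hom_\C(V_i,V_j) \otimes \cO$ with the space of $\cO$-linear maps $V_i\otimes\cO \to V_j\otimes\cO$, the condition $g^{-1} c \in \NO$ breaks up as follows: for each arrow $h : i \to j$ one needs $g_j^{-1} c_h g_i (V_i\otimes\cO) \subseteq V_j \otimes\cO$, i.e.\ $c_h(L_i) \subseteq L_j$; and for each vertex $i$ one needs $g_i^{-1} c_i(W_i) \subseteq V_i\otimes\cO$, i.e.\ $c_i(W_i) \subseteq L_i$. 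So the first step is to record the equivalence: $[g] \in \Sp_c$ if and only if $L$ is a $\C Q$-submodule of $V\otimes\cK$ (for the module structure coming from $(c_h)$) \emph{and} $L$ contains the image of $(c_i)$. The only care needed here is in matching the conventions for the $G$-action on $N$ and on $\Gr$ so that the inverses land where they should; there is no real difficulty.

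Granting this dictionary, the inclusion $\supseteq$ is immediate: if $V \otimes \cO \subseteq L$ and $L$ is a $\C Q$-submodule, then the image of $(c_i)$ lies in $V \subseteq V\otimes\cO \subseteq L$, so both conditions above hold and $[g]\in\Sp_c$.

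The content is in the inclusion $\subseteq$, and this is where $\chi$-stability is used. Suppose $[g] \in \Sp_c$, so $L$ is a $\C Q$-submodule of $V\otimes\cK$ containing the image of $(c_i)$. Since $L$ is an $\cO$-lattice it is $\cO$-stable, hence is in fact a $\C Q \otimes \cO$-submodule of $V\otimes\cK$. By Lemma~\ref{le:stabquiver}, $\chi$-stability of $c$ says there is no proper $I$-graded $(c_h)$-invariant subspace of $V$ containing the image of $(c_i)$; equivalently, the image of $(c_i)$ generates $V$ as a $\C Q$-module. Propagating a finite generating set, the image of $(c_i)$ then generates $V\otimes\cO$ as a $\C Q\otimes\cO$-module, so $V\otimes\cO \subseteq L$; together with the fact that $L$ is a $\C Q$-submodule, this shows $L$ lies in the right-hand set. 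I expect the only ``hard'' part of writing this up to be purely bookkeeping --- pinning down the dictionary of the first paragraph precisely --- after which the module-theoretic argument of this last paragraph finishes the proof and also makes transparent the asserted independence of $\Sp_c$ from the framing $W$.
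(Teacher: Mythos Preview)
Your proof is correct and follows the same overall strategy as the paper: translate the Springer fibre condition into lattice language, then use $\chi$-stability for the nontrivial inclusion $\subseteq$. Your execution of that inclusion is actually more direct than the paper's: the paper passes to the quotient $V' := (L \cap V\otimes\cO)/(L \cap zV\otimes\cO) \subseteq V$, shows $V' = V$ via stability, and then (implicitly) invokes Nakayama over $\cO$ to conclude $V\otimes\cO \subseteq L$; you instead observe that since the image of $(c_i)$ generates $V$ over $\C Q$, it generates $V\otimes\cO$ over $\C Q\otimes\cO$, and this submodule already sits inside $L$.
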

Recall that $ c$ consists of two pieces of data; the $\C Q$-module structure $ (c_h) $ and the framing $ (c_i) $.  This Theorem shows that the Springer fibre only depends on the underlying $ \C Q$-module.  

\begin{proof}
	Write $ gV \otimes \cO = \oplus_i L_i $. The condition $  (c_h) \in g\Hom_Q(V, V)_\OO $ is equivalent to $ \oplus L_i $ being a $\C Q$-submodule of $ V((z)) $.
	
	On the other hand, the condition that $ (c_i) \in g\Hom_I(W, V)_\OO $ implies that $ L_i $ contains the image of $ W_i \otimes \cO $ for each $ i $.  Thus, $ L_i \cap V_i \otimes \cO $ contains the image of $ W_i \otimes \cO $.  Consider the subspace
	$$
	V' := L \cap V \otimes \cO / L \cap z V \otimes \cO \subset V \otimes \cO / z V\otimes \cO = V
	$$
	We see that $ V' $ contains the image of $ W $ and is invariant under the $\C Q $-module structure.  So by Lemma \ref{le:stabquiver}, we have $ V' = V$.  From this, we conclude that $ V\otimes \cO \subseteq L $ as desired.
\end{proof}

\begin{Remark} \label{re:filt}
Let $ \Gr^+ := \{ L \in \Gr : V \otimes \cO \subseteq L \} $; we refer to this locus as the positive part of the affine Grassmannian.  It is naturally filtered by $$ \Gr^{(k)} :=  \{ L \in \Gr : V \otimes \cO \subseteq L \subseteq z^{-k} V\otimes \cO \} $$
for $ k \in \N $.

The theorem implies that $ \Sp_c $ is contained in $ \Gr^+ $ and thus it is reasonable to consider the truncated Springer fibres $ \Sp_c^{(k)} := \Sp_c \cap \Gr^{(k)} $.
These form an exaustive filtration of $ \Sp_c $ (since for any fixed $\sigma$, for large enough $ k $, $ \Sp_c(\sigma) \subset \Sp_c^{(k)} $ by Theorem \ref{th:finitedim}) and we get a map $ H_\bullet(\Sp_c^{(k)}) \rightarrow H_\bullet(\Sp_c) $ for each $ k $.  In what follows, we will assume that this map is injective and thus we have a filtration on the vector space $ H_\bullet(\Sp_c) $.

By forming the quotient $ L/ V \otimes \cO $, we see that $\Sp_c^{(k)} $ is the variety of all $ \C Q \otimes \C[z] $ submodules of $ V \otimes \C[z] / z^k$.  (In particular, when $ k =1 $, we just get the variety of all $\C Q$-submodules of $ V $.)

\end{Remark}

\subsection{Generalized affine Grassmannian slices} \label{se:GeomSlices}
Assume now that $ Q $ is an orientation of a finite-type simply-laced Dynkin diagram.  There is an associated simple Lie algebra $ \fg_Q $ and we let $ G_Q $ be the corresponding group of adjoint type.

Let $ \lambda, \mu $ be the coweights of $ G_Q $, defined by  $$ \lambda = \sum w_i \omega_i^\vee, \ \lambda - \mu = \sum v_i \alpha_i^\vee, $$ where $\{\varpi_i^\vee\}$ are the fundamental coweights of $G_Q$ and $\{\alpha_i^\vee\}$ the simple coroots.

The Coulomb branch and its quantization admit descriptions using the affine Grassmannian of $ G_Q $.  Let $ U_Q $ denote the unipotent radical of a Borel in $ G_Q $ and let $ U^-_Q $ be the unipotent radical of the opposite Borel.

We will need to recall certain subvarieties of the affine Grassmannian $\Gr_{G_Q}$ of $ G_Q $ and related objects.  In \cite{BFN2}, a certain subvariety $ \cW_\mu \subset G_Q((z^{-1})) $ was defined (we will not recall its definition here) and also the generalized affine Grassmannian slice $\cW^\lambda_\mu := \overline{G_Q[z] z^\lambda G_Q[z]} \cap \cW_\mu $.  Moreover, in \cite[Section B(viii)]{BFN2}, we defined the truncated shifted Yangian $Y^\lambda_\mu $ (this version of $ Y^\lambda_\mu $ contains a central subalgebra isomorphic to $ H^\bullet_F(pt) $).

\begin{Theorem}
We have isomorphisms
\begin{enumerate}
	\item $\A_0(G,N) \cong \C[\cW^\lambda_\mu] $
	\item $\A(G,N) \cong Y^\lambda_\mu $
\end{enumerate}
These isomorphisms are compatible with the actions of the torus $ F^! \cong T_Q $.
\end{Theorem}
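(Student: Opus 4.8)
The plan is to deduce both isomorphisms from the work of Braverman--Finkelberg--Nakajima in \cite{BFN2}. Recall that in \emph{loc.~cit.}\ they prove precisely that, for $ Q $ an orientation of a simply-laced finite-type Dynkin diagram and dimension vectors $ v, w $, the Coulomb branch $ \mathcal{M}_C(G,N) $ is isomorphic as a Poisson variety to the generalized affine Grassmannian slice $ \cW^\lambda_\mu $, and that the universal quantization $ \A(G,N) $ is isomorphic to the truncated shifted Yangian $ Y^\lambda_\mu $, with $ \lambda = \sum_i w_i \varpi_i^\vee $ and $ \lambda - \mu = \sum_i v_i \alpha_i^\vee $. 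Since $ \A_0(G,N) = \C[\mathcal{M}_C(G,N)] $ by definition, the substance of the theorem beyond quoting \cite{BFN2} is to match the dimension-vector dictionary, the choice of $ \chi $, and the shift normalisation with the conventions there, and then to track the flavour torus.

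For orientation I would recall the mechanism of \cite{BFN2}. The Coulomb branch algebra $ \A $ embeds into a localised ring of difference operators by abelianisation (\cite[Appendix A]{BFN2}; compare the GKLO representation of Section~\ref{sec: losing matter}), and \cite[Appendix B]{BFN2} writes the image in terms of explicit generators --- a distinguished finite collection of monopole operators together with the Gelfand--Tsetlin subalgebra --- and identifies them, with their relations, with Drinfeld-type generators $ H_i^{(r)}, E_i^{(r)}, F_i^{(r)} $ of $ Y^\lambda_\mu $, the truncation relations being forced by $ w $. This proves (2). The classical limit $ \hbar = 0 $ (and the flavourless specialisation) turns this presentation into the generators-and-relations presentation of $ \C[\cW^\lambda_\mu] $, which proves (1). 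Because all of these isomorphisms are built from the monopole generators, which are homogeneous for the $ \pi = \pi_1(G) $-decomposition $ \sR_{G,N} = \sqcup_\sigma \sR_{G,N}(\sigma) $ of Section~\ref{se:ses}, they automatically respect the resulting gradings.

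It then remains to identify the flavour symmetry. For $ G = \prod_i GL(V_i) $ one has $ \pi = \pi_1(G) = \Z^I $, which is the root lattice of $ \fg_Q $, so its Cartier dual $ F^! = \Spec \C[\Z^I] $ is the maximal torus $ T_Q $ of $ G_Q $. Under the isomorphisms above, the $ \pi $-grading on $ \A $ corresponds to the weight grading of $ Y^\lambda_\mu $ for the adjoint action of its Cartan, and hence to the $ T_Q $-action on $ \cW^\lambda_\mu $; thus both maps are $ F^! \cong T_Q $-equivariant. One should also observe that the central copy of $ H^\bullet_F(pt) $ inside $ Y^\lambda_\mu $, recording the parameters of the $ W $-framing, matches the central $ H^\bullet_F(pt) \subset \A $, which is part of what is checked in \cite[Appendix B]{BFN2}.

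The only real obstacle is bookkeeping: pinning down the precise correspondence between $ (v,w) $ and $ (\lambda,\mu) $, the direction of the $ \pi_1(G) $-grading (note that $ \sR_{G,N}(\sigma) $ lies over $ \Gr(-\sigma) $), and the half-integer shifts coming from the $ \mu_0 $-scaling, so that everything agrees with the conventions of \cite{BFN2}. Once the conventions are aligned there is no further mathematical content.
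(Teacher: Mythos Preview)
Your approach is essentially the same as the paper's: both isomorphisms are deduced from the results of \cite{BFN2}, and the paper's proof is little more than a citation. Your explanation of the abelianisation mechanism and the matching of gradings is correct and more detailed than what the paper provides.

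There is, however, one genuine gap in your argument for part (2). Theorem~B.18 of \cite{BFN2} constructs a homomorphism $\overline{\Phi}_\mu^\lambda : Y_\mu^\lambda \rightarrow \A(G,N)$ and verifies the relations, but it does \emph{not} establish that this map is an isomorphism. What is missing is surjectivity: one needs to know that the images of the Drinfeld generators, together with the Gelfand--Tsetlin subalgebra, actually generate the entire Coulomb branch algebra. This was left open in \cite{BFN2} and was proved subsequently in \cite{Alex}. The paper's own proof accordingly cites \cite[Theorem B.18]{BFN2} \emph{together with} \cite{Alex} for part (2). Without this additional input, your sentence ``This proves (2)'' is not justified.
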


\begin{proof}
	The first part is \cite[Theorem 3.10]{BFN2} and the second part is \cite[Theorem B.18]{BFN2} along with \cite{Alex}.
\end{proof}

Recall also the semi-infinite orbits $ S_{\pm}^\mu := U^\pm_Q((z)) z^\mu \subset \Gr_{G_Q}$.  Recall also from Remark \ref{re:attracting}, the definition of the attracting locus $ \Spec \A_0 / I_+ \subset \Spec \A_0(G,N) $ inside the Coulomb branch.  The following result is due to Krylov \cite[Theorem 3.1(1)]{Krylov}.

\begin{Theorem}
	Under the above isomorphism, the attracting locus is equal to $ \overline{\Gr_{G_Q}^\lambda} \cap S_-^\mu $.
\end{Theorem}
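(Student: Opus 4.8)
Since this is \cite[Theorem 3.1(1)]{Krylov}, I will only outline the strategy and how our conventions match. The plan is to reinterpret both sides as subschemes of $\cW^\lambda_\mu$ and identify them via the $\Cx$-action on the slice induced by $\chi$. First I would use that, under the isomorphism $\A_0(G,N)\cong\C[\cW^\lambda_\mu]$, the $\pi$-grading $\A_0=\bigoplus_\sigma\A_0(\sigma)$ corresponds to the weight decomposition of $\C[\cW^\lambda_\mu]$ for the action of $F^!\cong T_Q$ (this compatibility is part of the statement recalled above). Collapsing by $\chi$ therefore corresponds to restricting the $T_Q$-action along the cocharacter $\chi^\vee\colon\Cx\to T_Q$ dual to $\chi\in\Hom(\pi,\Z)=\Hom(X^*(T_Q),\Z)=X_*(T_Q)$, and by definition $\Spec\A_0/I_+$ is the attracting subscheme of $\cW^\lambda_\mu$ for this $\Cx$-action.

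Next I would pin down $\chi^\vee$ explicitly. Because $\chi(g)=\prod_{i\in I}\det(g_i)$, on $\pi_1(G)=\Z^I$ the character $\chi$ is $\sum_i e_i^*$, and the identification $\pi_1(G)=X^*(T_Q)$ sends $e_i$ to the simple root $\alpha_i$ of $G_Q$ (the same identification under which a monopole of charge $(v_i)$ has $T_Q$-weight $\sum v_i\alpha_i$, consistent with $\lambda-\mu=\sum v_i\alpha_i^\vee$). Hence $\chi^\vee=\sum_i\varpi_i^\vee=\rho^\vee$, the sum of the fundamental coweights of $G_Q$, up to the overall sign dictated by our conventions. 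So the relevant $\Cx$-action on $\cW^\lambda_\mu$ is that of $\rho^\vee(\Cx)\subset T_Q$.

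Then comes the geometric identification. The $T_Q$-fixed points of $\cW^\lambda_\mu$ are certain points $z^\nu$, and the only one on which every negative-degree function vanishes --- hence the only one contributing to $\Spec\A_0/I_+$ --- is the base point $z^\mu$; so $\Spec\A_0/I_+$ is the attracting set of $z^\mu$ for $\rho^\vee$. On the affine Grassmannian this attracting set is computed by the classical description of the semi-infinite orbits $S^\mu_\pm=U^\pm_Q((z))z^\mu$ as attracting cells: for the appropriate sign, $U^-_Q((z))$ contracts to $1$ under the $\rho^\vee$-action, so $S^\mu_-$ contracts to $z^\mu$ while $S^\mu_+$ is repelled. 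The reason it is $S^\mu_-$ and not $S^\mu_+$ that appears is the minus sign in the definition $\sR_{G,N}(\sigma)=\{([g],w):[g]\in\Gr(-\sigma)\}$, which reverses the identification of the $\chi$-grading with the $\rho^\vee$-direction. Intersecting with $\overline{\Gr^\lambda_{G_Q}}$, which is $\rho^\vee$-stable and contains $z^\mu$, yields $\overline{\Gr^\lambda_{G_Q}}\cap S^\mu_-$ as the attracting locus.

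The step I expect to be the genuine obstacle --- and where one really needs Krylov's argument rather than the generalities above --- is the scheme-theoretic refinement: a priori $I_+$ is cut out by \emph{all} functions of positive $\chi$-degree, the $\chi$-grading of $\A_0$ need not be bounded below, and one must rule out contributions from the other fixed points $z^\nu$ and check that the defining ideal of $\overline{\Gr^\lambda_{G_Q}}\cap S^\mu_-$ inside $\C[\cW^\lambda_\mu]$ coincides with $I_+$ on the nose, not merely up to radical. For this I would appeal to the explicit presentations of $\C[\cW^\lambda_\mu]$ and of the semi-infinite intersection from \cite{BFN2}, exactly as carried out in \cite[Theorem 3.1(1)]{Krylov}.
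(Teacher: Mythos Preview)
The paper does not give a proof of this theorem at all; it simply attributes the result to Krylov \cite[Theorem 3.1(1)]{Krylov} and states it without argument. Your proposal goes further than the paper by sketching the geometric heuristic (identifying the $\chi$-collapsed grading with the action of $\rho^\vee\in X_*(T_Q)$, and the attracting locus with the appropriate semi-infinite intersection) before, quite correctly, deferring to Krylov for the genuine scheme-theoretic work. So there is nothing to compare: your sketch already contains more than the paper does, and both ultimately rest on the same citation.

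One small caution: the sign bookkeeping you do (tracing the minus in $\sR_{G,N}(\sigma)=\{([g],w):[g]\in\Gr(-\sigma)\}$ to decide between $S^\mu_+$ and $S^\mu_-$) is the right instinct, but this chain of conventions --- the sign in the grading, the choice of Borel for $G_Q$, and whether ``attracting'' means $t\to 0$ or $t\to\infty$ --- is notoriously delicate and would need to be checked line by line against the conventions in \cite{BFN2} and \cite{Krylov} before being trusted.
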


Using Theorem \ref{th:catO} and Remark \ref{re:attracting}, we deduce the following results.
\begin{Corollary} \label{co:catOQuiver}
	Let $ ((c_h), (c_i)) $ be a framed $\C Q$-module structure on $ V $.  Then, 
	\begin{enumerate}
		\item $ H_\bullet(\Sp_c) $ is a $\C[\cW^\lambda_\mu]$-module set theoretically supported on $ \overline{\Gr_{G_Q}^\lambda} \cap S_-^\mu $.
		\item $ H_\bullet^{\eqstab}(\Sp_c) $ is a $ Y^\lambda_\mu $-module and lies in category $ \mathcal O$.
	\end{enumerate}
\end{Corollary}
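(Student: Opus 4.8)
The plan is to reduce the statement entirely to Theorem~\ref{th:catO}, Remark~\ref{re:attracting}, the two identifications of Coulomb branch algebras from Section~\ref{se:GeomSlices}, and Krylov's description of the attracting locus. First I would unwind the hypotheses. By Lemma~\ref{le:stabquiver}, a framed $\C Q$-module structure on $V$ is exactly a $\chi$-stable point $c = ((c_h),(c_i)) \in N$ (with $\chi = \prod\det$), and any such $c$ has trivial stabilizer in $G$; since the stabilizer equation $g\cdot c = c$ is defined over $\C$, the stabilizer of $c$ in $\GK$, and in $\ttGOK$, is trivial as well. Hence Theorem~\ref{th:finitedim} applies, so each $\Sp_c(\sigma)$ is a finite-dimensional projective variety. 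As in the Example following Corollary~\ref{co:weight}, $\eqstab$ embeds into $\FO \rtimes \Cx$, so the stack $[\eqstab \backslash \Sp_c]$ admits a dualizing sheaf and, moreover, $\eqstab \subset \ttGOKnoloop(0) \rtimes \Cx$. Thus all hypotheses of Proposition~\ref{pr:ModuleExists} and of Theorem~\ref{th:catO} are in force; in particular $c$ is $\chi$-semistable since it is $\chi$-stable.

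For part (2), Proposition~\ref{pr:ModuleExists} equips $M_c = H_\bullet^{\eqstab}(\Sp_c)$ with an $\A(G,N)$-module structure, and the isomorphism $\A(G,N) \cong Y^\lambda_\mu$ turns it into a $Y^\lambda_\mu$-module, compatibly with the $F^! \cong T_Q$-actions. Membership in category $\OO$ is intrinsic to the module together with its $\pi$-grading (and the $\Z$-grading obtained by collapsing along $\chi$), so it is preserved under transport of structure, and it holds by Theorem~\ref{th:catO}.

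For part (1), the $K = \{1\}$ instance of Theorem~\ref{th:action} makes $H_\bullet(\Sp_c)$ an $\A_0(G,N)$-module, hence a $\C[\cW^\lambda_\mu]$-module via $\A_0(G,N) \cong \C[\cW^\lambda_\mu]$. To identify the support, I would observe that the valuation estimate in the proof of Theorem~\ref{th:catO} — the inequality bounding those $n$ for which $\Sp_c(n) \neq \emptyset$ — uses only $\chi$-semistability of $c$ and no equivariance, so $H_\bullet(\Sp_c) = \bigoplus_n H_\bullet(\Sp_c(n))$ is bounded above for the $\Z$-grading coming from $\Sp_c = \sqcup_\sigma \Sp_c(\sigma)$ collapsed along $\chi$. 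Since the $\A_0$-action respects these gradings (by the argument of Proposition~\ref{pr:weakEquiv}), the ideal $I_+ \subset \A_0(G,N)$ acts locally nilpotently on $H_\bullet(\Sp_c)$, so by the reasoning of Remark~\ref{re:attracting} the associated quasi-coherent sheaf on $\Spec \A_0(G,N)$ is set-theoretically supported on the attracting locus $\Spec \A_0(G,N)/I_+$. Krylov's theorem then identifies this locus with $\overline{\Gr_{G_Q}^\lambda} \cap S_-^\mu$, completing part (1).

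The step I expect to be the main obstacle is not a computation but the bookkeeping in part (1): one must make sure the mechanism of Remark~\ref{re:attracting} really applies to $H_\bullet(\Sp_c)$ itself — that its $\Z$-grading is the one induced by $\Sp_c = \sqcup_\sigma \Sp_c(\sigma)$, that the $\A_0$-module structure is graded for it, and that boundedness above genuinely forces $I_+$ to act locally nilpotently in this non-equivariant setting. An alternative would be to invoke equivariant formality of $\Sp_c$ to identify $H_\bullet(\Sp_c)$ with the $\hbar = 0$, flavour $=0$ specialization of $M_c$ and quote Remark~\ref{re:attracting} verbatim, but establishing equivariant formality at this level of generality appears messier than the direct grading estimate.
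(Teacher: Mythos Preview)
Your approach is correct and matches the paper's, whose entire proof is the sentence ``Using Theorem~\ref{th:catO} and Remark~\ref{re:attracting}, we deduce the following results''; you have simply unpacked this with more care, correctly bringing in the algebra identifications of Section~\ref{se:GeomSlices} and Krylov's theorem, and your extra bookkeeping for part~(1) (that the $\pi$-grading on $H_\bullet(\Sp_c)$ is respected by the $\A_0$-action and that boundedness above forces $I_+$ to act locally nilpotently) is exactly what Remark~\ref{re:attracting} is implicitly asserting. One slip: the stabilizer of $c$ in $\ttGOK$ is not trivial---it is $L_c$, which you correctly note a moment later embeds into $\FO\rtimes\Cx$---but only triviality of the $\GK$-stabilizer is needed for Theorem~\ref{th:finitedim}, so this does not affect your argument.
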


Let $ d \in \N^I \subset \Z^I = \pi $. Then by Theorem \ref{th:SpQuiver},
$$
\Sp_c(d) = \{ V \otimes \cO \subseteq L \subset V \otimes \cK : \dim L_i / V_i \otimes \cO = d_i \text{ for all $ i \in I $} \}
$$
Thus we can rephrase Corollary \ref{co:catOQuiver}.(1) to say that there exists a quasi-coherent sheaf $ \mathcal F $ on $ \Gr_{G_Q} $ such that $$
\Gamma(\Gr_{G_Q}, \mathcal F)_d = H_\bullet(\Sp_c(d))$$
where the left hand side denotes the weight space for the action of $ T_Q $.

\begin{Remark}
In \cite[Conjecture 12.5]{BKK}, the second author and collaborators, motivated by the theory of MV polytopes and biperfect bases, conjectured that for each preprojective algebra module $V $ there is a quasi-coherent sheaf on the affine Grassmannian whose sections are related to cohomology of spaces of submodules of $ V \otimes \cO $.  Corollary \ref{co:catOQuiver}.(1) proves a weak form of this conjecture for those $ V$ which come from a $ \C Q$-module.
\end{Remark}

\begin{Remark}
Let $ \mathcal R^+_{G,N},  \mathcal R^{(k)}_{G,N} $ denote the preimages of $ \Gr^+_G, \Gr^{(k)}_G $ in $ \mathcal R_{G,N}$ (see Remark \ref{re:filt}).  Then $ H_\bullet^{\GO}(\mathcal R^+_{G,N}) $ is a subalgebra of $ \A_0 $ and acts on $ H_\bullet(\Sp_c) $.  Moreover $ H_\bullet^{\GO}(\mathcal R^{(k)}_{G,N}) $ provides a filtration of $  H_\bullet^{\GO}(\mathcal R^+_{G,N}) $ compatible with the filtration $ H_\bullet(\Sp_c^{(k)}) $ of $ H_\bullet(\Sp_c) $.  Thus we see that $ \Rees H_\bullet(\Sp_c) $ defines a coherent sheaf on $\Proj \Rees H_\bullet^{\GO}(\mathcal R^+_{G,N}) $.  We believe that $\Proj \Rees H_\bullet^{\GO}(\mathcal R^+_{G,N}) $ should be isomorphic to the compactified Zastava space (see \cite[Remark 3.7]{BFN2} for a related result) and thus  $ \Rees H_\bullet(\Sp_c) $ defines a coherent sheaf supported on the closure of the intersection of opposite semi-infinite orbits $ \overline{ S^{\lambda - \mu} \cap S_-^0}$.  This would be bring us to the full conjecture from \cite{BKK} (again only for those preprojective algebra modules which come from a $ \C Q$-module).

\end{Remark}

\subsection{Highest weights for truncated shifted Yangians}
\label{sec: hwtftsy}

The papers \cite{moncrystals}, \cite{klrpaper} study category $\OO$ for the truncated shifted Yangian $Y_\mu^\lambda$, and combinatorial relationships with quiver varieties.  We will relate this combinatorics to our BFN Springer fibres, based on the discussion in Section \ref{se:Hikita}.

For each node $i \in I$, choose integers $r_{i,1},\ldots, r_{i,w_i}$.  We organize them into multisets $\mathbf{R}_i = \left\{ r_{i,1},\ldots,r_{i,w_i}\right\}$, and write $\mathbf{R} = (\mathbf{R}_i)_{i\in I}$.  We also consider the cocharacters $\rho_i:\C^\times \rightarrow (\C^\times)^{w_i}$, defined by $s \mapsto (s^{r_{i,1}},\ldots, s^{r_{i,w_i}})$.  Now, consider the homomorphism
\begin{equation}
\label{eq: moncrystals 1}
\C^\times  \longrightarrow F \times \C^\times, \qquad s \longmapsto \big( (s^{-1} \rho_i(s))_{i\in I}, s^2 \big)
\end{equation}
There is an induced homomorphism $H_{F\times \C^\times}^\bullet(pt) \rightarrow H_{\C^\times}^\bullet(pt) = \C[x]$.  Setting $x = 1$, we obtain a homomorphism $\xi: H_{F\times \C^\times}^\bullet(pt) \rightarrow \C$.  The specialized algebra $\A_\xi = \A \otimes_{H_{F\times \C^\times}^\bullet(pt)} \C$ is precisely the truncated shifted Yangian $Y_\mu^\lambda(\mathbf{R})$ studied in \cite[Section 4]{klrpaper}.

Via the embedding (\ref{eq: moncrystals 1}), we obtain an action of $\C^\times$ on $N \sslash_\chi G$, and on the Nakajima quiver variety $Y = T^\ast N \sssslash_\chi G$ more generally.  In \cite[Section 8.3]{moncrystals} we conjectured the following weak form of the Hikita-Nakajima Conjecture \ref{hikita conjecture}, which we later proved:

\begin{Theorem}[\mbox{\cite[Theorem 1.5]{klrpaper}}]
The surjective Kirwan maps $ B \big( Y_\mu^\lambda(\mathbf{R})\big) \leftarrow H_G^\bullet(pt) \rightarrow H^\bullet( Y^{\C^\times} )$ induce bijections on maximal ideals:
$$
\operatorname{MaxSpec} B\big( Y_\mu^\lambda(\mathbf{R}) \big)  \ \longleftrightarrow \ \pi_0( Y^{\C^\times} )
$$
\end{Theorem}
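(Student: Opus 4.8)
The plan is to derive the bijection from the general discussion of Section~\ref{se:Hikita}, feeding in two facts about the quiver data: that the Kirwan map $H_G^\bullet(pt) \to H^\bullet(Y^{\Cx})$ is surjective (known for Nakajima quiver varieties) and that the map $H^\bullet_{\tG \times \Cx}(pt) \to B(\A)$ of~(\ref{eq:HtoB}) is surjective for $\A = Y_\mu^\lambda$ (which can be read off the explicit generators of the truncated shifted Yangian). Granting these, after specializing the flavour parameters at the homomorphism $\xi$ attached to~(\ref{eq: moncrystals 1}), both $\operatorname{MaxSpec} B\big(Y_\mu^\lambda(\mathbf{R})\big)$ and $\pi_0(Y^{\Cx})$ become subsets of the finite set $\operatorname{MaxSpec}\big(H^\bullet_{\tG \times \Cx}(pt) \otimes_{H^\bullet_{F \times \Cx}(pt)} \C_\xi\big)$, identified with a subset of $(\ft + \xi)/W$; so it is enough to show that these two subsets coincide.

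Next I would build the comparison map out of BFN Springer fibres. Given a connected component of $Y^{\Cx}$, the first step is to check that under the $\Cx$-action coming from~(\ref{eq: moncrystals 1}) it meets the zero section $N\sslash_\chi G \subset Y$; concretely this amounts to analysing the $\Cx$-weights on the cotangent directions at a fixed point together with $\chi$-stability, and it places us in the situation of Section~\ref{se:Verma} with a representative $c \in N$ such that $\mathrm{Stab}_{\tG}(c) \cong F$. Then $M_c = H_\bullet^{L_c}(\Sp_c)$ is a Verma-like module by Theorem~\ref{th:catO}, and Lemma~\ref{le:highestweight} together with the commuting square at the end of Section~\ref{se:Hikita} identifies its highest weight --- equivalently, the maximal ideal of $B(\A_\xi)$ through which it is generated --- with the image of the chosen fixed component under the composite $H^\bullet_{\tG \times \Cx}(pt) \to H^\bullet_\xi(Y) \to \C$. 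This exhibits $\pi_0(Y^{\Cx})$ as a subset of $\operatorname{MaxSpec} B\big(Y_\mu^\lambda(\mathbf{R})\big)$; injectivity is automatic because the Kirwan map is surjective and each component already contributes a distinct closed point of $(\ft + \xi)/W$.

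The main obstacle is the reverse inclusion: every maximal ideal of $B\big(Y_\mu^\lambda(\mathbf{R})\big)$ must arise from a fixed component. I would attack this by a cardinality count. For $\mathbf{R}$ in general position $B\big(Y_\mu^\lambda(\mathbf{R})\big)$ should be a product of copies of $\C$ whose number is the size of the product monomial crystal of weight data $(\lambda,\mu)$ as in~\cite{moncrystals}, and by Nakajima's crystal/tensor-product description of quiver-variety fixed loci $|\pi_0(Y^{\Cx})|$ is the same number; combined with the injection of the previous paragraph this forces a bijection for generic $\mathbf{R}$, and the general case either follows by a limiting argument in the family over the space of parameters $\mathbf{R}$ or must be proved uniformly by the combinatorial methods of~\cite{klrpaper}. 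This cardinality matching, rather than the Springer-fibre input, is the genuinely hard part and is precisely the combinatorial content of~\cite{klrpaper}; a secondary technical point is the verification, already flagged in the introduction, that under this particular $\Cx$ every component of $Y^{\Cx}$ really does meet $N\sslash_\chi G$, which may require restricting to suitably generic $\mathbf{R}$ or a separate argument.
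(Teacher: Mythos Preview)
The paper does not give its own proof of this theorem: it is stated with attribution to \cite[Theorem~1.5]{klrpaper} and used as a black box. So there is no paper proof to compare against; the actual argument lives in \cite{klrpaper} and relies on the combinatorics of product monomial crystals and KLR-type methods, not on BFN Springer fibres.

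That said, your proposal has a genuine gap independent of this. Your plan for the inclusion $\pi_0(Y^{\Cx}) \subset \operatorname{MaxSpec} B\big(Y_\mu^\lambda(\mathbf{R})\big)$ hinges on the claim that every connected component of $Y^{\Cx}$ meets the zero section $N\sslash_\chi G$, so that you can pick a representative $c \in N$ and run Section~\ref{se:Verma}. This is false in general, and the paper says so explicitly: immediately after Proposition~\ref{prop: W algebras} it gives the $A_3$ example with $w=(0,1,0)$, $v=(1,2,1)$, where $Y^{\Cx}$ is a single point but $L\sslash_\chi G$ is empty for \emph{every} $G$-invariant Lagrangian $L \subset T^*N$. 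So no Springer fibre construction produces a module with that highest weight, and the set~(\ref{eq: achievable highest weights}) is a proper subset of $\pi_0(Y^{\Cx})$. You flag this as a ``secondary technical point'', but it is fatal to the Springer-fibre route for the inclusion, not a detail to be patched by genericity of $\mathbf{R}$ (the obstruction is about the underlying quiver data, not the parameters).

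What survives of your outline is essentially what the paper already records in Section~\ref{se:Hikita}: under Kirwan surjectivity and surjectivity of~(\ref{eq:HtoB}), both sides embed into $(\ft+\xi)/W$, and Springer fibres witness that the points of $(N\sslash_\chi G)^{\Cx}$ lie in $\operatorname{Spec} B(\A_\xi)$. Turning this partial compatibility into the full bijection requires the independent combinatorial argument of \cite{klrpaper}, which you correctly identify as the hard input but which cannot be replaced by the geometric step you propose.
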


Given $[c] \in (N\sslash G)^{C^\times}$,  we can produce a ``Verma-like" module $M_c$ for $Y_\mu^\lambda(\mathbf{R})$, by the construction of section \ref{se:Verma}.  By the explanation in section \ref{se:Hikita}, this gives us a geometric realization of part of the bijection in Theorem 8.10.  By considering other $G$--invariant Lagrangians $L \subset T^\ast N$ and repeating the same construction, we can obtain the following subset of $\pi_0( Y^{\C^\times})$:
\begin{equation}
\label{eq: achievable highest weights}
\bigcup_{L} \operatorname{Im} \left( \pi_0\big( (L\sslash_\chi G)^{\C^\times} \big) \longrightarrow \pi_0 \big( Y^{\C^\times} \big) \right),
\end{equation}
There are interesting cases where (\ref{eq: achievable highest weights}) is equal to all of $\pi_0( Y^{\C^\times})$:
\begin{Proposition}
\label{prop: W algebras}
Fix integers $1 \leq m_1 < \ldots < m_{n-1} < M $, and consider the following $A_{n-1}$--quiver gauge theory:
\begin{equation*}
\begin{tikzpicture}[scale=0.5]
\draw  (0,0) circle [radius=1];
\draw (4,0) circle[radius=1];
\draw (12,0) circle [radius=1];
\draw [->]  (2.8,0) -- (1.2,0);
\draw [->] (6.8,0) -- (5.2,0);
\draw [->] (10.8,0) -- (9.2,0);
\draw [->] (14.8,0) -- (13.2,0);
\node at (0,0) {$m_1$};
\node at (4,0) {$m_2$};
\node at (12,0) {$m_{n-1}$};
\node at (8,0) {$\cdots$};
\draw (15,-1) rectangle (17,1);
\node at (16,0) {$M$};
\end{tikzpicture}
\end{equation*}
Then $\pi_0\big( (N \sslash_\chi G)^{\C^\times}\big) \rightarrow \pi_0( Y^{\C^\times})$ is surjective, where $N$ is the Lagrangian given by the above orientation.  
\end{Proposition}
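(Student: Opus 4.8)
The plan is to make the geometry completely explicit: identify $Y = T^\ast N \sssslash_\chi G$ with the cotangent bundle of a partial flag variety, recognize $N\sslash_\chi G$ as its zero section, and then check that the $\C^\times$--action of (\ref{eq: moncrystals 1}) already has all of its fixed points on the zero section, so that the map on $\pi_0$ is in fact a bijection.

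First I would unwind the GIT quotients for this linearly oriented quiver. By Lemma \ref{le:stabquiver}, a point of $N$ is $\chi$--stable precisely when the framing map $\C^M \to V_{n-1}$ and all of the arrow maps $V_{k+1}\to V_k$ are surjective; quotienting by $G = \prod_k GL(V_k)$ then identifies $N\sslash_\chi G$ with the variety $\mathrm{Fl}$ of flags $0\subsetneq K_{n-1}\subsetneq\cdots\subsetneq K_1\subsetneq \C^M$ with $\dim K_j = M-m_j$ (the kernels of the successive quotients $\C^M\twoheadrightarrow V_{n-1}\twoheadrightarrow\cdots\twoheadrightarrow V_1$). Since $m_1<\cdots<m_{n-1}<M$ this is a nonempty smooth connected partial flag variety. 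The standard type $A$ quiver variety computation (cf.\ \cite{NakKM, Ginzburg}) then identifies $Y$ with $T^\ast\mathrm{Fl}$, in such a way that the natural inclusion $N\sslash_\chi G\hookrightarrow Y$ becomes the zero section $\mathrm{Fl}\hookrightarrow T^\ast\mathrm{Fl}$; in particular $(N\sslash_\chi G)^{\C^\times} = \mathrm{Fl}^{\C^\times}$.

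Next I would transport the homomorphism (\ref{eq: moncrystals 1}) through this identification. Its $F$--component $(s^{-1}\rho_i(s))_i$ acts on $\mathrm{Fl}$ through a cocharacter of the torus of $GL(\C^M)$ given, up to an overall shift, by the integers $r_{n-1,1},\dots,r_{n-1,M}$, while the remaining factor $s^2$ (the weight-$\tfrac12$ dilation of $T^\ast N$) scales all the linear maps uniformly, hence acts trivially on $\mathrm{Fl}$ but by a single nonzero weight, say $h$, on each cotangent fibre. Now take a fixed point $(x,\xi)\in T^\ast\mathrm{Fl}$: then $x$ is one of the finitely many coordinate flags and the weights of $\C^\times$ on $T^\ast_x\mathrm{Fl}$ all have the form $(r_a - r_b) + h$ with $a\neq b$. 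For a generic choice of $\mathbf{R}$ --- which is already assumed in \cite{klrpaper} in order that the fixed-point locus be isolated --- none of these integers is zero, so $\xi = 0$. Hence $Y^{\C^\times} = \mathrm{Fl}^{\C^\times} = (N\sslash_\chi G)^{\C^\times}$ and the map on connected components is the identity, in particular surjective.

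The main obstacle is the content of the third paragraph: pinning down the identification $Y\cong T^\ast\mathrm{Fl}$ carrying $N\sslash_\chi G$ to the zero section, and then computing exactly how (\ref{eq: moncrystals 1}) acts in the cotangent directions --- in particular verifying that its ``$s^2$'' part contributes a genuinely nonzero weight there and that this weight is not cancelled by the torus part for generic $\mathbf{R}$. Once those are in place the fixed-point comparison is immediate from the regularity of the cocharacter. (Alternatively, one could argue purely combinatorially, comparing the tableau parametrizations of $\pi_0(Y^{\C^\times})$ and of $\pi_0((N\sslash_\chi G)^{\C^\times})$ coming from \cite{moncrystals, klrpaper}, but the geometric argument above seems cleaner.)
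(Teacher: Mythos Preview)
Your approach is the same as the paper's: identify $N\sslash_\chi G$ with a partial flag variety in $\C^M$ and $Y$ with its cotangent bundle, so that the inclusion $N\sslash_\chi G \hookrightarrow Y$ is the zero section. After that identification the paper's proof is a single clause: ``every component of $Y^{\C^\times}$ meets the base $N\sslash_\chi G$.''

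Where you diverge is in justifying that last step, and there you make life harder than necessary. Your weight computation on $T^\ast_x\mathrm{Fl}$ only forces $\xi=0$ when none of the integers $(r_a-r_b)+h$ vanish, so you are obliged to impose a genericity hypothesis on $\mathbf R$. But the proposition is stated (and used) for \emph{arbitrary} integers $r_{i,j}$, so this does not quite prove what is claimed. The cleaner argument, implicit in the paper, avoids weights entirely: the ambient group $F\times\C^\times$ is abelian, so the pure fibre-scaling $\C^\times$ (the second factor) commutes with the one-parameter subgroup (\ref{eq: moncrystals 1}) and hence preserves the closed subset $Y^{\C^\times}\subset T^\ast\mathrm{Fl}$. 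Being connected, it preserves each component $C$; and since fibre-scaling contracts every point $(x,\xi)$ to $(x,0)$, the limit lies in $C\cap\mathrm{Fl}$. Thus every component of $Y^{\C^\times}$ meets the zero section, with no assumption on $\mathbf R$ and no need to compute the cotangent weights.
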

For the above quiver data, there are isomorphisms $\A_\gamma \cong Y_\mu^{M \varpi_1^\vee}(\mathbf{R}) \cong  W(\pi)_{\mathbf{R}}$, where $W(\pi)_{\mathbf{R}}$ is a  finite W-algebra for $\mathfrak{gl}_M$ (or rather, its central quotient corresponding to $\mathbf{R}$) by \cite[Theorem 4.3(a)]{WWY}.

\begin{Remark}
Note that in this section we continue to work with stability condition $\chi(g) = \prod_i \det(g_i)$.  With our conventions, this corresponds to the ``usual'' highest weight theory for the Yangian, where elements $E_i^{(p)}$ act locally nilpotently.  Meanwhile, taking the (more ubiquitous) opposite stability condition and the opposite orientation of the quiver, we obtain the lowest weight theory for the Yangian.
\end{Remark}

\begin{proof}[Proof of Proposition \ref{prop: W algebras}]
$N\sslash_\chi G$ is a partial flag variety (in $\C^M$), and $Y$ is its cotangent bundle.  Every component of $Y^{\C^\times}$ meets the base $N \sslash_\chi G$, which proves the claim. 
\end{proof}

\begin{Example}
In general (\ref{eq: achievable highest weights}) is a proper subset of $\pi_0(Y^{\C^\times})$, and it can even be empty. Consider the Nakajima quiver variety of type $A_3$, for the dimension vectors $w = (0,1,0)$ and $v= (1,2,1)$.  This quiver variety $Y$ consists of a single point, corresponding to the following $\chi$-stable framed representation:
$$
\begin{tikzcd}[sep=small]
& |[draw=black, rectangle]| \C \ar[d] & \\
& \C \ar[dl] \ar[dr] & \\
\C \ar[dr] & & \C \ar[dl] \\
& \C & 
\end{tikzcd}
$$
The quotient $L \sslash_\chi G$ is empty for all Lagrangians $L$, so the set (\ref{eq: achievable highest weights}) is empty.  Correspondingly, there is a unique simple module in category $\OO$ for the algebra $Y_{-\varpi_2^\vee}^{\varpi_2^\vee}(\mathbf{R})$, but we cannot realize it (or anything else in $\OO$) using BFN Springer fibres. 
\end{Example}

\subsection{A closed orbit example}
\label{section: closed orbit quiver}

In this section we revisit Example \ref{ex: closed orbit quiver}, where the Springer fibre had components 
$$
\Sp_c(r)  \cong \{ 0 \subseteq U \subseteq \C^m : \dim \C^m / U = r, \ XU \subset U \},
$$ 
with $X$ is a nilpotent operator on $\C^m$ with Jordan type $\mu$.  We have representations of the truncated Yangian $ Y_0^{2n}(\ssl_2)$ on the equivariant homology of these big Spaltenstein varieties, which we can try to relate the well-known theory of finite-dimensional modules for $Y(\mathfrak{sl}_2)$.

We keep our notation from Example \ref{ex: closed orbit quiver}, but for simplicity we identify $V = W_1 =W_2 = \C^n$ and $ T(W_1) = T(W_2)$ with the diagonal maximal torus $T \subset G = GL(n)$.  Following \cite[Appendix B]{BFN2}, we denote $H_{T(W_1)}^\bullet(pt) = \C[z_1,\ldots, z_n]$, $H_{T(W_2)}^\bullet(pt) = \C[z_{n+1},\ldots,z_{2n}]$, and $H_G^\bullet(pt) = \C[w_1,\ldots, w_n]^{S_n}$.

Take $c_0 = (I, I)$ to consist of identity matrices.  	 Then the stabilizer $\eqstab$ is the image of the embedding
		\begin{align*}
		T_\OO \rtimes \C^\times & \longrightarrow   (GL(n)_\OO \times T_\OO\times T_\OO) \rtimes \C^\times = \ttGO \rtimes  C^\times, \\
		(t, s) & \longmapsto  (s^{1/2} t, t, s^{\mu+1} t, s)
	\end{align*}
Recall the surjective homomorphism $\overline{\Phi}_0^{2n}:Y_\hbar(\mathfrak{sl}_2)[z_1,\ldots,z_{2n}] \twoheadrightarrow \A$ from \cite[Theorem B.18]{BFN2}. The generators $E^{(p)}$ map to homology classes over the Schubert variety for the coweight $\varpi_1^\ast = (0,...,0,-1)$, and thus land in degree $\A(1) \subset \A$. The maximal degree component $\Sp_c(m)$ therefore gives a highest weight space of $M_c$. Note that $\Sp_c(m)$ consists of a single point, corresponding to the subspace $U = 0$ in $\C^m$. 	
  
Similarly to the proof of Theorem \ref{thm: GT bases}, we find that $H_{\ttGO\rtimes \C^\times}^\bullet(pt)$ acts on $H^{\eqstab}_\bullet( \Sp_c(m)) \cong H_{\eqstab}^\bullet(pt)$ via the homomorphism
\begin{align*}
H_{\ttGO\rtimes \C^\times}^\bullet(pt) & \longrightarrow H_{\eqstab}^\bullet(pt), \\
z_1,\ldots,z_n, \hbar & \longmapsto z_1,\ldots,z_n, \hbar\\
e_i(w_1,\ldots,w_n) & \longmapsto e_i\big(z_1+(\mu_1+\tfrac{1}{2})\hbar,\ldots,z_n+(\mu_n+\tfrac{1}{2})\hbar), \\
z_{n+1},\ldots, z_{2n} & \longmapsto z_1+(\mu_1+1)\hbar, \ldots, z_n+(\mu_n+1)\hbar
\end{align*} 
Combining the above with \cite[(B.14)]{BFN2}, we find that the Yangian's generating series $H(u)$ acts on the highest weight space by the rational function
\begin{equation}
\label{eq: action of H}
\overline{\Phi}_0^{2n}\big(H(u)\big) = \frac{\prod_{i=1}^{2n}(u-z_i - \tfrac{1}{2}\hbar)}{\prod_{i=1}^n(u-w_i) (u-w_i - \hbar)} \longmapsto \prod_{i=1}^n \frac{u-z_i - \tfrac{1}{2}\hbar}{u-z_i - (\mu_i+\tfrac{1}{2})\hbar}
\end{equation}

Suppose that we choose complex numbers $r_1,\ldots,r_n$, and we specialize our module $M_c$ at $\hbar \mapsto 1$ and $z_i \mapsto r_i - \tfrac{1}{2}$ for $i=1,\ldots,n$. This induces a map $\xi: H_{F\times \Cx}^\bullet(pt) \rightarrow \C$, and we obtain a module $M_{c, r_1,\ldots,r_n}$ over the specialized algebra $\A_\xi$.

\begin{Remark}
$\A_\xi \cong Y_0^{2n}(\mathbf{R})$ with multiset $\mathbf{R} = \left\{r_1,\ldots,r_n, r_1+\mu_1+1,\ldots,r_n+\mu_n+1\right\}$.
\end{Remark}
By (\ref{eq: action of H}), after specialization  the series $\overline{\Phi}_0^{2n}\big(H(u)\big)$ acts on the highest weight space by 
$$
\prod_{i=1}^n \frac{u-r_i}{u-r_i - \mu_i} = \prod_{i=1}^n \frac{(u-r_i) (u-r_i-1)\cdots (u-r_i -\mu_i+1)}{(u-r_i-1)\cdots(u-r_i-\mu_i)}  \stackrel{def}{=}\frac{P(u+1)}{P(u)}
$$
The polynomial $P(u)$ is called the {\bf Drinfeld polynomial} of $M_{c, r_1,\ldots,r_n}$.  Following \cite[Definition 3.3.5]{Molev} we say that its root strings 
\begin{equation}
\label{eq: root strings}
S_i = \left\{r_i+1,r_i+2,\ldots,r_i+\mu_i\right\}
\end{equation} 
are in {\em general position} if $S_i \cap S_j \neq \emptyset$ implies $S_i \subset S_j$ or $S_j \subset S_i$.  

\begin{Proposition}
If the $S_i$ are in general position then $M_{c, r_1,\ldots,r_n}$ is an irreducible module for $Y(\mathfrak{sl}_2)$, and thus for its quotient $Y_0^{2n}(\mathbf{R})$. It is isomorphic to a tensor product of evaluation modules for $Y(\mathfrak{sl}_2)$, with underyling $\mathfrak{sl}_2$ module
$$
V(\mu_1)\otimes \cdots \otimes V(\mu_n)
$$

Moreover, any finite-dimenisonal irreducible module for $Y(\mathfrak{sl}_2)$ can be realized as $M_{c, r_1,\ldots,r_n}$ for appropriate $n, \mu$ and $r_1,\ldots,r_n$.
\end{Proposition}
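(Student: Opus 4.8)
The plan is to match $M_{c,r_1,\ldots,r_n}$ with the classification of finite-dimensional irreducible $Y(\ssl_2)$-modules by their Drinfeld polynomials. Recall (Drinfeld; Chari--Pressley; see \cite[\S 3.3]{Molev}) that these modules are in bijection with monic polynomials $P(u)$; that the module $L(P)$ attached to $P$ is finite-dimensional for every $P$; that $L(P)$ is a tensor product of evaluation modules $V(\mu_i)_{a_i}$ whose root strings $S_i = \{a_i+1,\ldots,a_i+\mu_i\}$ are in general position (uniquely up to reordering); that for strings in general position this tensor product is already irreducible; and that then $\dim L(P) = \prod_i(\mu_i+1)$. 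Finally, a highest weight $Y(\ssl_2)$-module whose highest weight has associated Drinfeld polynomial $P$ has $L(P)$ as its unique simple quotient, so if it is finite-dimensional its dimension is at least $\dim L(P)$.

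First I would compute the Drinfeld polynomial of $M_{c,r_1,\ldots,r_n}$. Since each $E^{(p)}$ lies in $\A(1)\subseteq \A_+$ and $\Sp_c(m)$ is a single reduced point (the subspace $U=0$), the fundamental class $v_0 \in H_\bullet^{\eqstab}(\Sp_c(m))$ is a highest weight vector spanning a one-dimensional weight space, and by the computation (\ref{eq: action of H}), after the stated specialization $\hbar\mapsto 1$, $z_i\mapsto r_i-\tfrac12$, the generating series $H(u)$ acts on it by $P(u+1)/P(u)$ with $P(u)=\prod_{i=1}^n\prod_{k=1}^{\mu_i}(u-r_i-k)$, whose root strings are precisely the $S_i=\{r_i+1,\ldots,r_i+\mu_i\}$ of (\ref{eq: root strings}). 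Hence the cyclic submodule $Y(\ssl_2)\cdot v_0 \subseteq M_{c,r}$, being finite-dimensional and highest weight with Drinfeld polynomial $P$, satisfies $\dim(Y(\ssl_2)\cdot v_0)\geq \dim L(P)$.

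Next I would compute $\dim_\C M_{c,r_1,\ldots,r_n}=\sum_r \dim_\C H_\bullet(\Sp_c(r))$. The space $\bigsqcup_r\Sp_c(r)$ is the variety of $\C[z]$-submodules of the nilpotent module $\bigoplus_{i=1}^n\C[z]/z^{\mu_i}$, on which the torus $T\subseteq L_c$ acts with isolated fixed points, namely the monomial submodules $\bigoplus_i z^{k_i}\C[z]/z^{\mu_i}$ with $0\le k_i\le\mu_i$; there are exactly $\prod_i(\mu_i+1)$ of them. Using an affine paving of these (big) Spaltenstein varieties — equivalently, equivariant formality, which also produces the basis of Gelfand--Tsetlin eigenvectors indexed by $\Sp_{c,T}$ as in Theorem \ref{thm: GT bases} — one gets $\dim_\C M_{c,r}=\prod_i(\mu_i+1)$. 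When the $S_i$ are in general position this equals $\dim L(P)$, so the inequalities $\dim L(P)\le \dim(Y(\ssl_2)\cdot v_0)\le \dim M_{c,r}=\dim L(P)$ are all equalities; hence $M_{c,r}=Y(\ssl_2)\cdot v_0=L(P)\cong\bigotimes_i V(\mu_i)_{a_i}$ (with $a_i$ read off from $S_i$), which is irreducible over $Y(\ssl_2)$ and therefore over its quotient $Y_0^{2n}(\mathbf R)$, with underlying $\ssl_2$-module $V(\mu_1)\otimes\cdots\otimes V(\mu_n)$. For the converse, given any finite-dimensional irreducible $Y(\ssl_2)$-module $L$, the classification writes $L\cong\bigotimes_{i=1}^n V(\mu_i)_{a_i}$ with the $S_i$ in general position; taking $n$ to be the number of factors, $\mu=(\mu_1,\ldots,\mu_n)$, and $r_i:=a_i$ in the construction of this section produces a module $M_{c,r_1,\ldots,r_n}$ with the same Drinfeld polynomial, which by the first part is irreducible and hence isomorphic to $L$.

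The main obstacle is the dimension count $\dim_\C M_{c,r}=\prod_i(\mu_i+1)$: this rests on knowing that the homology of the singular big Spaltenstein varieties $\Sp_c(r)$ is concentrated in even degrees, so that the total Betti number equals the Euler characteristic equals the fixed-point count, and for this one needs an explicit affine paving — this is the genuinely geometric input, beyond the formal Yangian classification. A secondary point is to check that the Gelfand--Tsetlin-eigenbasis argument of Theorem \ref{thm: GT bases}, stated for points of $N$, applies to $c=z^{-\mu}c_0\in\NK$; this should follow from identifying $\Sp_c$ with a graded piece of the $\tG$-Springer fibre of $c_0$ as in Proposition \ref{pr:reduction}.
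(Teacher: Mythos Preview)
Your proposal is correct and follows essentially the same strategy as the paper: identify the highest weight vector and its Drinfeld polynomial, use that $L_P$ must occur as a subquotient of $M_{c,r}$, compute $\dim_\C M_{c,r}=\prod_i(\mu_i+1)$ by counting torus fixed points, and conclude by matching dimensions with $\dim L_P$ when the strings are in general position; the converse is handled identically.

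The one genuine difference is in how the key geometric input --- odd cohomology vanishing for the big Spaltenstein varieties $\Sp_c(r)$, which you correctly isolate as the main obstacle --- is established. You propose to find an affine paving. The paper instead proves it (in the lemma immediately following the proposition) by Springer's purity method: working over a finite field, one compares $\Sp_c(r)$ with the preimage of the Slodowy slice under a parabolic Grothendieck--Springer alteration to show that $H^i(\Sp_c(r),\mathbb{Q}_\ell)$ is pure of weight $i$, and combines this with the polynomial point count for such varieties from Macdonald. This bypasses the need to construct a paving explicitly; an affine paving would of course also work, but the purity argument is cleaner here since the required point count is already in the literature.
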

\begin{proof}
$M_{c,r_1,\ldots,r_n}$ contains a highest weight vector $[\Sp_c(m)]$, with Drinfeld polynomial $P(u)$ as above.  There is a corresponding simple $Y(\mathfrak{sl}_2)$ module $L_P$, with the same highest weight, which must appear in the composition series of $M_{c,r_1,\ldots,r_n}$.  Now, by the odd cohomology vanishing proven below in Lemma \ref{lemma: odd coh van}, we can compute dimensions by counting torus fixed points: 
$$
\dim_\C M_{c,r_1,\ldots,r_n} = \chi ( \Sp_c) = \chi( \Sp_c^T) = \# \Sp_c^T = \prod_i (\mu_i+1)
$$ 
By \cite[Corollary 3.3.6]{Molev}, if the strings $S_i$ are in general position then $\dim_\C L_P$ is also $\prod_i (\mu_i+1)$, and $L_P$ is a tensor product of evaluation modules of the type claimed. This equality of dimensions implies that $L_P \cong M_{c,r_1,\ldots,r_n}$, proving the first claim.

For the second claim, recall that finite-dimensional irreducible $Y(\mathfrak{sl}_2)$ modules $L_P$ are in bijection with (monic) Drinfeld polynomials $P(u)$.  The roots of $P(u)$ can be factored uniquely into root strings $S_1,\ldots, S_n$ in general position, for some $n$ \cite[Prop 3.3.7]{Molev}.  We can choose $\mu_1,\ldots,\mu_n\geq 0$ and $r_1,\ldots,r_n$ to write these strings in the form (\ref{eq: root strings}).  This proves the claim.
\end{proof}

	\begin{Lemma}
	\label{lemma: odd coh van}
	$H^{i}( \Sp_c(r)) = 0$ for $i$ odd.  In particular, $\Sp_c(r)$ is equivariantly formal for $\eqstab$.
	\end{Lemma}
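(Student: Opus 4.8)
The plan is to prove odd cohomology vanishing for each component $\Sp_c(r)$ of the big Spaltenstein variety, using a Bialynicki-Birula--type decomposition together with a known result on Spaltenstein varieties. First I would recall that $\Sp_c(r) \cong \{ 0 \subseteq U \subseteq \C^m : \dim \C^m/U = r, \ XU \subseteq U\}$, where $X$ is a nilpotent operator on $\C^m$ of Jordan type $\mu$. This is precisely a (partial, one-step) Spaltenstein variety, i.e.\ a component of the fibre of the Springer resolution $T^*(\Gr(m-r,\C^m)) \to \mathcal N_{\mathfrak{gl}_m}$ over the nilpotent $X$. It is a classical fact (going back to Spaltenstein, and recoverable from the results of De Concini--Lusztig--Procesi, or from Shimomura's explicit cell decompositions of Springer fibres in type $A$) that such varieties admit a paving by affine spaces; I would cite one of these sources. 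A paving by affines immediately gives that $H^i(\Sp_c(r)) = 0$ for $i$ odd, since the Borel--Moore homology / cohomology is concentrated in even degrees and free.

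An alternative self-contained route, which I would sketch as a fallback, is to pick a generic one-parameter subgroup $\rho : \Cx \to \operatorname{Stab}(X) \cap T$ acting on $\Sp_c(r)$ with isolated fixed points (the fixed points being the coordinate-subspace $X$-invariant flags, indexed by the combinatorial data already described as Gelfand--Tsetlin-type patterns, i.e.\ $\Sp_c^T$), and then apply the Bialynicki-Birula theorem: since $\Sp_c(r)$ is projective and smooth near\ldots --- actually it need not be smooth, so more care is needed --- instead one uses the fact that $\Sp_c(r)$ is the central fibre of an affine bundle-type situation, or invokes the general result that attracting sets of a $\Cx$-action on a projective variety with isolated fixed points give a filtration whose associated graded is a disjoint union of affine spaces when the variety is paved. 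The cleanest statement to invoke here is Shimomura's or Spaltenstein's affine paving.

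Given the affine paving, the odd vanishing $H^{\mathrm{odd}}(\Sp_c(r)) = 0$ follows, and then equivariant formality for the torus $\eqstab$ is a formal consequence: a space with vanishing odd cohomology on which a torus acts is equivariantly formal (see e.g.\ Goresky--Kottwitz--MacPherson), so $H^\bullet_{\eqstab}(\Sp_c(r)) \cong H^\bullet(\Sp_c(r)) \otimes H^\bullet_{\eqstab}(pt)$ as a module, and in particular $\dim_\C H^\bullet(\Sp_c(r))$ equals the number of $\eqstab$-fixed points, which is $\#\Sp_c^T = \prod_i (\mu_i+1)$ after passing to the whole $\Sp_c$ --- this is exactly what is used in the proposition above.

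The main obstacle I anticipate is not the argument itself but pinning down the precise reference for the affine paving of one-step Spaltenstein varieties in type $A$ in the generality needed (arbitrary Jordan type $\mu$ and arbitrary $r$): the literature states this in several overlapping forms (Spaltenstein, De Concini--Procesi, Shimomura, Fresse), and I would want to cite the one whose hypotheses match ours with no translation. If a clean citation proves elusive, the fallback is to give the Bialynicki-Birula argument directly, constructing the generic cocharacter explicitly from the torus $T$ acting on $\C^m$ and checking that the attracting cells are affine spaces by an elementary row-reduction argument on the invariant subspaces $U$ --- this is routine but slightly lengthy, so I would prefer the citation if available.
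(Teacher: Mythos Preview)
Your approach is correct but genuinely different from the paper's. You argue via an affine paving of the one-step Spaltenstein variety, citing Spaltenstein/Shimomura/Fresse, and then deduce odd vanishing and equivariant formality from that. The paper instead runs a purity argument in $\ell$-adic cohomology \`a la Springer: it views $\Sp_c(r)$ as a fibre of the parabolic Grothendieck--Springer map, compares with the preimage of a Slodowy slice (which is smooth), and deduces that $H^i(\Sp_c(r),\mathbb{Q}_\ell)$ is pure of weight $i$; combined with the polynomial point count from Macdonald's formula for the number of $X$-invariant subspaces over $\mathbb{F}_q$, this forces odd vanishing.

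Your route is more elementary and geometric, and avoids any passage to finite fields; the only cost is exactly the one you flag, namely locating a reference whose hypotheses literally match the one-step parabolic case (Shimomura or Fresse will do). The paper's route is more uniform---it does not need to know an explicit cell structure, only smoothness of the Slodowy-slice preimage and a point count---and so generalizes more readily to situations where an affine paving is not obviously available. Your caution about the Bialynicki--Birula fallback is well placed: $\Sp_c(r)$ is typically singular, so the naive BB theorem does not apply, and one really does need the paving result rather than just isolated fixed points.
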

	The following proof is based on suggestions of Dinakar Muthiah.
	\begin{proof}
This is analogous to the proof of \cite[Cor.~1 to Thm.~2]{Springer}: considering $\Sp_c(r)$ as a variety over a finite field, we show that $H^i(\Sp_c(r), \mathbb{Q}_\ell)$ is pure, and that $\Sp_c(r)$ has polynomial point count.  
	
To show purity, note that $\Sp_c(r)$ is a fibre of a Grassmannian (i.e.~parabolic) version of the Grothendieck-Springer alteration. Consider instead the preimage $\widetilde{\Sp}_c(r)$ of the corresponding Slodowy slice.  Then $\widetilde{\Sp}_c(r)$ is a smooth variety, and there is an isomorphism
	$$ H^\ast(\widetilde{\Sp}_c(r), \mathbb{Q}_\ell)  \longrightarrow H^\ast({\Sp}_c(r), \mathbb{Q}_\ell) $$
	compatible with the action of Frobenius, as in \cite[Lemma 2]{Springer}. $\Sp_c(r)$ is projective so $H^i(\Sp_c(r),\mathbb{Q}_\ell)$ has weight $\leq i$, while $\widetilde{\Sp}_c(r)$ is smooth so $H^i(\widetilde{\Sp}_c(r),\mathbb{Q}_\ell)$ has weight $\geq i$.  This proves purity. 
	
	Finally, $\Sp_c(r)$ has polynomial point count by \cite[Example 8, \S III.6]{Macdonald}.  
	\end{proof}
	
\begin{Remark}
There is a geometrically defined action of the Yangian on the direct sum (over $r$) of the homology of fibres $T^\ast \Gr(m-r, m) \rightarrow \mathcal N$, by \cite[Remark 8.7]{GV}.  Our varieties $\Sp_c$ are big Spaltenstein versions of these fibres, and so one expects an analogous geometrically defined Yangian action on their homology, cf.~\cite[Section 5.2.4]{Muthiah}.  The precise relationship between these two module structures on the homology of $\Sp_c$ is unclear at the moment.
\end{Remark}

\section{Relation to quasimap spaces}
We will now relate Springer fibres to various spaces of quasimaps into stacks.

\subsection{Various spaces of quasimaps} \label{se:various}
Let $ Y $ be a scheme and let $ G $ be a reductive group.  The space of quasimaps from $ \PP^1 $ to the stack $[Y/G] $ parametrizes pairs $(P, s) $ where $P $ is principal $ G $-bundle on $ \PP^1 $ and $ s : P \rightarrow Y $ is a $G$-equivariant morphism.  More precisely, we define $\QM(\PP^1, [Y/G]) $ to be the mapping stack $ \underline{\operatorname{Map}}(\PP^1, [Y/G]) $.  This works out to the following definition.

\begin{Definition}
	$\QM(\PP^1, [Y/G]) $ is the stack defined by
	\begin{align*}
	\QM&(\PP^1, [Y/G])(S) = \\
	&\{ (P, s): \text{ $P $ is a principal $G$-bundle on $ \PP^1 \times S $, $ s : P \rightarrow Y$ is $G$-equivariant}\}
	\end{align*}
	where $ S $ is an affine scheme.

	Let $ c \in Y $.  Assume that $ c $ has trivial stabilizer in $ G $.
	
	We define the stack of \textbf{based quasimaps} $\QM_c(\PP^1, [Y/G]) $ to be
	\begin{align*}
	\QM_c&(\PP^1, [Y/G])(S) = \{ (P, s, \psi): \text{ $P $ is a principal $G$-bundle on $ \PP^1 \times S $,} \\ 
	&\text{$ s : P \rightarrow Y$ is $G$-equivariant,} \\
	&\text{ $\psi : S \times G \rightarrow P|_{\{\infty\} \times S} $ is a trivialization such that $ s(\psi(S,1)) = c $.}\}
	\end{align*}
	
	Finally, we say that a quasimap $ (P, s) $ has degree $ \sigma \in \pi $ if $ P $ has degree $ \sigma $, and we write $  \QM^\sigma_c(\PP^1, [X/G])$ for the stack of based quasimaps of degree $ \sigma $. 
\end{Definition}
Note that since $ c $ has a trivial stabilizer in $ G $, the trivialization $ \psi $ is unique.

Now, let $N, G, c $ be as in previous sections.  Let $ G' $ be the commutator subgroup of $ G $ and let $ H = G/G' $.

For the remainder of this section, we will make the following assumptions.
\begin{enumerate}
	\item $ c $ has trivial stabilizer in $ G $.
	\item The scheme theoretic fibre of $ N \rightarrow N \sslash G' $ over the point $ [c] $ is the orbit $ G' c $.
\end{enumerate}

In this section, we will study three spaces of quasimaps $$ \QM_c(\PP^1, [\Phi^{-1}(0)/G]),\  \QM_c(\PP^1, [N/G]), \ \QM_c(\PP^1, [N\sslash G' / H ])$$  
We will show that there are maps
$$
\QM_c(\PP^1, [N\sslash G' / H ]) \leftarrow \QM_c(\PP^1, [N/G]) \rightarrow  \QM_c(\PP^1, [\Phi^{-1}(0)/G])
$$
and we will prove the following theorem.

\begin{Theorem} \label{th:QMcompare}
	Let $ c $ be as above.
	\begin{enumerate}
		\item $\QM_c(\PP^1, [N/G]) \rightarrow  \QM_c(\PP^1, [\Phi^{-1}(0)/G])$ is a homotopy equivalence.
		\item For each $ \sigma$,  $ \QM^\sigma_c(\PP^1, [N\sslash G' / H ]) $ has a contracting $ \C^\times $ action.
		\item $ \Sp_c(\sigma) $ is the central fibre of $$ \QM^\sigma_c(\PP^1, [N/G]) \rightarrow \QM^\sigma_c(\PP^1, [N\sslash G' / H ]) $$ and $ H_\bullet(\Sp_c(\sigma)) \cong H_\bullet\big( \QM^\sigma_c(\PP^1, [N/G])\big) $
	\end{enumerate}
\end{Theorem}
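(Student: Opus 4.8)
The plan is to treat the three parts in the order (2), (1), (3), since the analysis of quasimaps into the torus quotient $[N\sslash G'/H]$ is the technical backbone. For part (2), I would first describe a based quasimap of degree $\sigma$ into $[N\sslash G'/H]$ concretely: an $H$-bundle on $\PP^1$ of degree $\sigma$ (which is rigid, since $H$ is a torus), together with an $H$-equivariant section into $N\sslash G'$ that equals $c$ (more precisely $[c]$) at $\infty$. Trivializing the $H$-bundle away from $0$ and using the $\Cx$-action rescaling the coordinate near $0$, one obtains a contracting $\Cx$-action whose fixed locus is the constant quasimap $c$. Here I would lean on the detailed study of $\QM$ into torus quotients promised in the introduction (``the main step in the proof is a careful study of the space of based quasimaps into a torus quotient''); concretely this amounts to presenting $\QM_c^\sigma(\PP^1,[N\sslash G'/H])$ as an explicit affine scheme cut out inside a product of jet spaces at $0$, with the $\Cx$-weights all positive, so that the action is contracting to $\{c\}$. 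This also yields that it is an affine scheme, as claimed in the introduction's Theorem \ref{th:introQM}(2).

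For part (1), the idea is that $\Phi^{-1}(0) \subset N\oplus N^*$ is a vector subbundle-like locus over $N$: the projection $\Phi^{-1}(0)\to N$ exhibits $\Phi^{-1}(0)$ as the total space of the (possibly singular, but $G$-equivariant) family of subspaces $\{\alpha : \alpha(\fg\cdot v)=0\}$ over $v\in N$. I would argue that the induced map on based quasimap spaces $\QM_c(\PP^1,[N/G])\to \QM_c(\PP^1,[\Phi^{-1}(0)/G])$ realizes the target as (homotopy equivalent to) the total space of a cone over the source — since adding the $N^*$-component of a quasimap is a choice of section of a coherent sheaf on $\PP^1$ vanishing to appropriate order at $\infty$, and the space of such sections is a (possibly infinite-dimensional, but pro-affine) linear space, hence contractible onto its zero section, which is exactly the image of $\QM_c(\PP^1,[N/G])$. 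So the retraction $N\oplus N^*\supset\Phi^{-1}(0)\to N$, applied fibrewise to quasimaps, gives a deformation retraction; this is the homotopy equivalence. I should be careful that the retraction is algebraic and $G$-equivariant, so that it descends to the quotient stacks and the based condition at $\infty$ is preserved.

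For part (3), I would combine (2) with the lattice/loop-space description of $\Sp_c(\sigma)$. A based quasimap into $[N/G]$ of degree $\sigma$ is the same datum as a $G$-bundle on $\PP^1$ with section of the associated $N$-bundle, trivialized at $\infty$ with value $c$; restricting to a formal neighbourhood of $0$ and using that the bundle is trivialized on $\PP^1\setminus\{0\}$ (because its image in $[N\sslash G'/H]$ is the constant quasimap $c$ exactly when we are in the central fibre, using assumption (2) that the scheme-theoretic fibre of $N\to N\sslash G'$ over $[c]$ is $G'c\cong G'$), one recovers precisely the data $[g]\in\Gr_G(\sigma)$ with $g^{-1}c\in\NO$, i.e.\ a point of $\Sp_c(\sigma)$. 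The scheme-theoretic identification of the central fibre with $\Sp_c(\sigma)$ follows from comparing the functors of points; assumption (2) is exactly what guarantees that ``the image in the torus quotient is the constant map $c$'' forces the $G'$-bundle part to be trivial on $\PP^1\setminus\{0\}$ with the section genuinely equal to $c$ there, rather than merely equal modulo $G'$. Finally, since by (2) each $\QM_c^\sigma(\PP^1,[N\sslash G'/H])$ is contractible (equivariantly, via the $\Cx$-action) onto $\{c\}$, the central fibre $\Sp_c(\sigma)$ carries all the Borel–Moore homology: $H_\bullet(\Sp_c(\sigma))\cong H_\bullet(\QM_c^\sigma(\PP^1,[N/G]))$, by the contraction lemma (pushforward along the contracting $\Cx$-action, as in the hyperbolic localization / Bialynicki-Birula picture) applied to the map in (3) with contractible base.

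The main obstacle I expect is part (2): making precise the scheme structure on $\QM_c^\sigma(\PP^1,[N\sslash G'/H])$ and proving the $\Cx$-action is genuinely contracting (not merely that the fixed locus is a point) requires an honest local computation at $0$, controlling the jet-scheme equations of $N\sslash G'$ and checking positivity of all torus weights. Closely related is the delicate point in (3) of upgrading the set-theoretic bijection on central fibres to a scheme isomorphism, which is where hypothesis (2) on the fibre $N\to N\sslash G'$ is essential; without it one only gets the $G'$-bundle trivialized up to a $G'$-torsor ambiguity along $\PP^1\setminus\{0\}$. The homotopy equivalence in (1) and the homology statement in (3) should then be comparatively formal consequences of contractibility of the relevant total spaces and bases.
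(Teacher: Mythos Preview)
Your proposal is correct and follows essentially the same approach as the paper: part (1) is exactly the $\Cx$-scaling on the $N^*$ factor of $T^*N$; part (2) is carried out in the paper via an explicit coordinate ring $A^\sigma_c$ for $\QM^\sigma_c(\PP^1,[Y/T])$ (your ``jet space'' description) whose $\Cx$-grading is checked directly; and part (3) is proved just as you outline, introducing an auxiliary moduli $\widehat{\Sp}_c$ with trivialization on $\PP^1\setminus\{0\}$, identifying it with $\Sp_c$ via Beauville--Laszlo, and then with the central fibre using the hypothesis on the scheme-theoretic fibre of $N\to N\sslash G'$ over $[c]$.
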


\begin{Remark}
	This theorem admits an obvious equivariant upgrade.  First, the loop rotation action of $ \Cx $ on $ \Sp_c $ extends to an action on the quasimap spaces, coming from the usual $ \Cx $ action on $ \PP^1 $.  All the maps involved are equivariant and the isomorphisms of homology extend to $ \Cx$-equivariant homology. Second, suppose as usual that we have chosen a flavour group $ F $ and that the stabilizer of $ c $ in $ \tG $ maps isomorphically onto $ F $.  Then we get an action of the flavour group on the quasimap spaces.  Similarly, we see that we obtain isomorphisms of $F$-equivariant homology.
\end{Remark}

\begin{proof}[Proof of Theorem \ref{th:QMcompare} (1)]
Since $ N \subset \Phi^{-1}(0)$, we have an obvious embedding $$\QM_c(\PP^1, [N/G]) \rightarrow \QM_c(\PP^1, [\Phi^{-1}(0)/G]) $$ given by taking a point $ (P, s, \psi) $ and extending $s$ by 0.

Define a $\C^\times$-action on $ T^* N $ by scaling on the $ N^* $ factor.  This gives us an action of $ \C^\times $ on $\QM_c(\PP^1, [\Phi^{-1}(0)/G]) $ which contracts $\QM_c(\PP^1, [\Phi^{-1}(0)/G]) $  to $\QM_c(\PP^1, [N/G])$.  
\end{proof}

\subsection{Torus quotients} \label{se:QMtorus}

In this section, we will study quasimaps into stacks of the form $ [Y / T] $ where $ Y = \Spec A $ is a finite-type affine scheme and $ T $ is a torus.  As before choose $ c \in Y(\C)$ with trivial stabilizer.

Because $ T $ is a torus, $ \pi $ is just the coweight lattice of $ T $, and so $ \sigma $ is a coweight.  For the purposes of this section, let $ X $ denote the weight lattice of $ T $.  For any $ \lambda \in X $, $ \sigma(\lambda) $ is an integer.   For any integer $ n $, we write $ \C[x,y]_n $ for the space of homogeneous polynomials in two variables of degree $ n $.  Of course if $ n < 0 $, then this space is $ 0$.  We have the usual multiplication map $ \C[x,y]_{n_1} \otimes \C[x,y]_{n_2} \rightarrow \C[x,y]_{n_1 + n_2} $ and dually a coproduct
$$
\Delta : \C[x,y]_{n_1 + n_2}^* \rightarrow \C[x,y]_{n_1}^* \otimes \C[x,y]_{n_2}^*
$$
which we will write as $ \Delta(\beta) = \sum \beta_1 \otimes \beta_2 $ (as is customary, we are suppressing the index of summation).  Also, we define $ \beta_\infty \in \C[x,y]_n^* $ by $ \beta_\infty(p) = p(0,1) $ for any $ p \in \C[x,y]_n $.

Let $ P $ be a principal $T$-bundle on $ \PP^1 $.  We will be interested in the algebra $ \C[P] $ of global functions on $ P $.  The variety $ P $ (and thus the algebra $ \C[P] $) has actions of $ \C^\times $ (by its usual action on $ \PP^1$) and $ T $ (since it is a principal $T$-bundle).

\begin{Lemma} \label{le:functionsP}
	Let $ P $ be a principal $T$-bundle on $ \PP^1 $ of degree $ \sigma $.  Then the algebra $ \C[P] $ can be described as
	$$
	\C[P] = \bigoplus_{\lambda \in X} \C[x,y]_{\sigma(\lambda)}
	$$
	where $ T $ acts on $ \C[x,y]_{\sigma(\lambda)} $ with weight $ \lambda $ and where the algebra structure comes from the usual multiplication of polynomials.
	
	Moreover, this principal $T$-bundle has a natural trivialization at $ \infty $ which is given by $ \beta_\infty: \C[P] \rightarrow \C $.
\end{Lemma}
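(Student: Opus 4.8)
The plan is to compute $\C[P]$ using the affineness of the structure morphism $\varpi\colon P \to \PP^1$ together with the $T$-action, and then to identify the resulting graded pieces with spaces of homogeneous polynomials.

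First, since $P$ is a torsor under the affine group $T$, the morphism $\varpi$ is affine, so $\C[P] = \Gamma(\PP^1, \varpi_\ast \OO_P)$, and the commuting $T$-action makes $\varpi_\ast \OO_P$ a $T$-equivariant quasi-coherent $\OO_{\PP^1}$-algebra. Decomposing into weight subsheaves gives $\varpi_\ast \OO_P = \bigoplus_{\lambda \in X} \mathcal L_\lambda$, where $\mathcal L_\lambda$ is the line bundle whose local sections are the weight-$\lambda$ functions on $P$ (equivalently, the line bundle associated to $P$ via the character $\lambda$, up to the usual sign). By the definition of the degree of a $T$-bundle on $\PP^1$, $\deg \mathcal L_\lambda = \sigma(\lambda)$, so $\mathcal L_\lambda \cong \OO_{\PP^1}(\sigma(\lambda))$ and $\Gamma(\PP^1, \mathcal L_\lambda) \cong \C[x,y]_{\sigma(\lambda)}$ (which is $0$ when $\sigma(\lambda) < 0$, as recalled above). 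Taking global sections of the decomposition yields $\C[P] = \bigoplus_\lambda \C[x,y]_{\sigma(\lambda)}$ with $T$ acting on the $\lambda$-summand through $\lambda$. Finally, the algebra structure on $\C[P]$ comes from the multiplication $\varpi_\ast \OO_P \otimes \varpi_\ast \OO_P \to \varpi_\ast \OO_P$, whose $(\lambda_1, \lambda_2)$-component is the natural pairing $\mathcal L_{\lambda_1} \otimes \mathcal L_{\lambda_2} \to \mathcal L_{\lambda_1 + \lambda_2}$; under the identifications with $\OO(n)$'s and then with homogeneous polynomials, this is exactly polynomial multiplication $\C[x,y]_{n_1} \otimes \C[x,y]_{n_2} \to \C[x,y]_{n_1 + n_2}$.

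For the trivialization at $\infty$, I would restrict to the affine chart $\PP^1 \setminus \{0\} = \Spec \C[u]$ with $u = x/y$ and $\infty = \{u = 0\}$. Since this chart is affine with trivial Picard group, $P$ is trivial over it; fixing the trivialization compatible with the chosen homogeneous coordinates, a weight-$\lambda$ function $p \in \C[x,y]_{\sigma(\lambda)}$ becomes $y^{\sigma(\lambda)}\,\tilde p(u)\,\chi_\lambda$ with $\tilde p(u) = p(u,1)$, and evaluation at the point $(u,t) = (0,1)$ of this chart sends $p \mapsto \tilde p(0) = p(0,1) = \beta_\infty(p)$. Thus $\beta_\infty\colon \C[P] \to \C$ is an algebra homomorphism, i.e.\ a $\C$-point of $P$; it lies over $\infty$ by construction, hence is a point of the $T$-torsor $P_\infty$, equivalently a trivialization of $P$ at $\infty$. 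Independence of the auxiliary chart trivialization is automatic because $\beta_\infty$ was defined intrinsically as $p \mapsto p(0,1)$.

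The only genuine subtlety is the bookkeeping of sign and orientation conventions: one must pin down consistently the identification of $X_\ast(T)$-valued degrees with line-bundle degrees, the choice of which point of $\PP^1$ is $\infty$, and the identification $\Gamma(\PP^1, \OO(n)) \cong \C[x,y]_n$, so that the exponent appearing is exactly $\sigma(\lambda)$ (rather than $-\sigma(\lambda)$) and the natural trivialization is evaluation at $(0,1)$ (rather than at $(1,0)$). Once these are fixed compatibly, every step above is routine, and I expect no further difficulty.
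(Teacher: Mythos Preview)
Your proof is correct, but it takes a different route from the paper's. The paper works with the explicit model
\[
P \;=\; \bigl(\C^2 \smallsetminus \{0\}\bigr) \times T \,/\, \C^\times,
\]
where $\C^\times$ acts by inverse scaling on $\C^2 \smallsetminus \{0\}$ and through $\sigma$ on $T$, and then computes $\C[P] = \bigl(\C[x,y]\otimes\C[T]\bigr)^{\C^\times}$ directly as a weight decomposition. You instead push $\OO_P$ down to $\PP^1$, decompose into the associated line bundles $\mathcal L_\lambda \cong \OO_{\PP^1}(\sigma(\lambda))$, and take global sections. The paper's argument is shorter and makes the algebra structure and the point $\beta_\infty$ completely transparent in coordinates, at the cost of implicitly invoking that every $T$-bundle of degree $\sigma$ on $\PP^1$ is isomorphic to this model. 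Your argument is coordinate-free until the very end and works uniformly for any $P$, but it forces you to track the sign conventions you flag (degree of the associated bundle, which chart contains $\infty$, the identification $\Gamma(\OO(n))\cong\C[x,y]_n$); in the paper's approach these are hard-wired into the model and never arise as separate checks.
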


\begin{proof}
	We have
	$$
	P = (\C^2 \smallsetminus \{0\} ) \times T / \C^\times
	$$
	where $ \C^\times $ acts on $ \C^2 \smallsetminus \{0\}  $ by inverse scaling and on $ T$ by the map $ \sigma : \C^\times \rightarrow T$.  Thus $ \C[P] = (\C[x,y] \times \C[T])^{\C^\times} $ which is easily reduced to the above formula.
\end{proof}

Now, let $ A $ be a finitely-generated algebra with an action of $ T $ and let $ c \in \Spec A $.  Thus we get an algebra morphism $ A \rightarrow \C $, written as $ a \mapsto a(c) $.  We use $ \sigma $ to collapse the $ X $-grading on $ A $ to a $ \Z $-grading, by setting $ A_n = \displaystyle{\oplus_{\sigma(\lambda) = n}} \A_\lambda $.  

We assume that $ c, \sigma $ are compatible in the following sense:
\begin{equation}
\label{eq:ConditionNonEmpty}
\text{ if $ \sigma(\lambda) < 0 $ and $ a \in \A_\lambda $, then $ a(c) = 0 $ }
\end{equation}
This condition is equivalent to $ c $ lying the attracting set for the $ \Cx $ action defined by $\sigma$.  It is a necessary condition for the non-emptyness of the quasimap space $ \QM^\sigma_c(\PP^1,[Y/T]) $.

Assuming the above condition, we define
$$
A^\sigma := \Sym( \bigoplus_{n\in \Z} A_n \otimes \C[x,y]^*_n ) / J
$$
where $ J $ is the ideal generated by
$$ (a_1 a_2) \otimes \beta - \sum (a_1 \otimes \beta_1) (a_2 \otimes \beta_2) 
$$
for $ a_1 \in \A_{n_1}, a_2 \in \A_{n_2}, \beta \in \C[x,y]^*_{n_1 + n_2} $, and $ n_1, n_2 \in \Z $.

Note that $ A^\sigma $ is a finitely generated algebra.  Indeed, if we have homogeneous generators $ a_1, \dots, a_m $ for $A $ of degrees $ \lambda_1, \dots, \lambda_m$, then $ \{ a_k \otimes (x^p y^q)^* \}$ generates $ A^\sigma $, for $ 1 \le k \le m $ and $ p+q = \sigma(\lambda_k) $.

Then we define a further quotient
$$
A^\sigma_c := A^\sigma / ( a \otimes \beta_\infty - a(c) : a \in \A_\lambda)
$$

\begin{Example} \label{eg:Zastava}
	The following example was our motivation for the above definition of $ A^\sigma_c$.
	
	Let $ G $ be a semisimple group and let $ Y = G \sslash U $ be the base affine space and let $ T $ act on $ Y $ by right multiplication.  Then $ Y = \Spec A $, where  $ A = \oplus V(\lambda)^* $ is the direct sum of all duals of the irreducible representation of $ G $ with multiplication defined by
	$$ \iota^* : V(\lambda_1)^* \otimes V(\lambda_2)^* \rightarrow V(\lambda_1 + \lambda_2)^* \text{ dual to } \iota : V(\lambda_1 + \lambda_2) \rightarrow V(\lambda_1) \otimes V(\lambda_2) $$
	and where $ T $ acts on $ A $ by acting by weight $ \lambda $ on $ V(\lambda)^* $.
	
	We let $ c : A \rightarrow \C $ be given by the highest weight vector in each $ V(\lambda) $.
	
	In this case $ A^\sigma = \Sym( \oplus_\lambda V(\lambda)^* \otimes \C[x,y]^*_{\sigma(\lambda)} ) / J $.  Thus an algebra homomorphism $ \phi : A^\sigma \rightarrow \C $ gives us an element $ \phi_\lambda \in V(\lambda) \otimes \C[x,y]_{\sigma(\lambda)}  $.  These elements $ \phi_\lambda $ must satisfy the relations in $ J $, which is equivalent to the equations
	$$ \phi_{\lambda_1}  \phi_{\lambda_2} = \iota(\phi_{\lambda_1 + \lambda_2}) \text{ in } V(\lambda_1) \otimes V(\lambda_2) \otimes\C[x,y]_{\sigma(\lambda_1 + \lambda_2)} .$$
	
	Moreover, if $ \phi : A^\sigma_c \rightarrow \C $, then we see that we have the additional condition that $ \phi_\lambda(0,1) = v_\lambda $, the highest weight vector.
	
	Comparing with Definition 5.2 in \cite{FMSemiinfinite}, we conclude that $ \Spec A^\sigma_c $ equals the Zastava space $ Z^\sigma $, if  $\sigma $ lies in the cone of positive coroots, and is empty otherwise (this is equivalent to the condition (\ref{eq:ConditionNonEmpty})).
\end{Example}

Returning to the general case, we will now prove that the based quasimap space is an affine scheme.

\begin{Theorem} \label{th:QMtorus}
	Let $ A $ be as above and let $ Y = \Spec A $.  The stack $ \QM^\sigma_c(\PP^1,[Y/T]) $ is isomorphic to $ \Spec A^\sigma_c$.
\end{Theorem}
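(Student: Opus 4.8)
The plan is to unwind the definition of the mapping stack $\QM^\sigma_c(\PP^1,[Y/T])$ directly in terms of principal $T$--bundles and equivariant maps, and to match it term by term against $\Spec A^\sigma_c$. First I would observe that a based quasimap of degree $\sigma$ over an affine test scheme $S$ consists of a principal $T$--bundle $P$ on $\PP^1\times S$ of degree $\sigma$, a $T$--equivariant morphism $s:P\to Y$, and the (unique, since $c$ has trivial stabilizer) trivialization at $\infty$ compatible with $c$. Since $\PP^1$ is proper and $T$ is a torus, a degree--$\sigma$ bundle on $\PP^1\times S$ is, Zariski-locally on $S$, pulled back from $\PP^1$; more usefully, I would argue that the pushforward of $\OO_P$ to $S$ is a sheaf of algebras whose formation commutes with base change (using that $\C[x,y]_n$ has no higher cohomology on $\PP^1$), and by Lemma \ref{le:functionsP} this sheaf of algebras is exactly $\bigoplus_{\lambda\in X}\OO_S\otimes H^0(\PP^1,\OO(\sigma(\lambda)))=\bigoplus_\lambda \OO_S\otimes \C[x,y]_{\sigma(\lambda)}$, with its natural grading and multiplication.

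Next I would reinterpret the equivariant map $s:P\to Y$. A $T$--equivariant morphism $P\to Y=\Spec A$ is the same as a $T$--equivariant algebra map $A\to \C[P]$ (relative to $S$), i.e. a collection of $\OO_S$--linear maps $A_\lambda \to \OO_S\otimes\C[x,y]_{\sigma(\lambda)}$ compatible with multiplication. Dualizing the polynomial factor, such a datum for each $n=\sigma(\lambda)$ is an element of $\Hom_{\OO_S}(\OO_S, A_n\otimes\C[x,y]_n^*\otimes \OO_S)$, i.e. an $S$--point of $\Spec \Sym(\bigoplus_n A_n\otimes\C[x,y]_n^*)$; the compatibility with multiplication in $A$ together with the comultiplication on $\bigoplus \C[x,y]_n^*$ is precisely the relation set cutting out the ideal $J$. (Here I would note that condition (\ref{eq:ConditionNonEmpty}) is automatic once $s$ exists, since $A_n\to\OO_S\otimes\C[x,y]_n^*{}^\vee$ with $\C[x,y]_n=0$ for $n<0$ forces $A_n$ to map to zero, matching the fact that $J$ involves only $n\ge 0$ pieces — more precisely $A^\sigma$ is defined using $\bigoplus_{n\in\Z}$ but the negative summands contribute nothing.) Thus giving $(P,s)$ of degree $\sigma$ over $S$ is the same as giving an $S$--point of $\Spec A^\sigma$. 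Finally, the basing condition $s(\psi(S,1))=c$ says that the composite $A\to\C[P]\xrightarrow{\beta_\infty}\OO_S$ equals $a\mapsto a(c)$, which is exactly the further quotient defining $A^\sigma_c$. Assembling these identifications gives the isomorphism of functors of points, hence of stacks, and shows in passing that the stack is represented by the affine scheme $\Spec A^\sigma_c$.

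The main obstacle I anticipate is the base-change/descent step: justifying that over an arbitrary affine $S$ the bundle $P$ and the algebra $\C[P]$ behave as in Lemma \ref{le:functionsP} uniformly in families, and that no stacky automorphisms survive. The triviality of the stabilizer of $c$ in $T$ is what kills the $T$--automorphisms of a based quasimap, so the mapping stack is in fact a scheme; I would make this precise by noting that an automorphism of $(P,s,\psi)$ is a section of $T\times \PP^1\times S$ acting trivially on $s$ and fixing $\psi$, forcing it to be the identity at $\infty$ and hence everywhere on the connected fibres. One should also be slightly careful that $\underline{Map}(\PP^1,Y)$ is taken in the stacky sense so that $[Y/T]$-valued maps really do unwind to (bundle, equivariant map) pairs; this is standard, but worth a sentence. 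Modulo these points, the proof is the bookkeeping identification of graded algebra data sketched above.
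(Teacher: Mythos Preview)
Your proposal is correct and follows essentially the same approach as the paper: both identify $S$-points of the based quasimap space with $X$-graded algebra maps $A\to\C[P]$ via Lemma~\ref{le:functionsP}, then match the multiplicativity and basing conditions to the ideal $J$ and the further relation defining $A^\sigma_c$. The paper's argument is terser and does not dwell on the base-change or rigidity issues you flag, but the substance is the same.
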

\begin{proof}
	We will prove that for $ S = \Spec R $, we have
	$$
	\QM^\sigma_c(\PP^1,[Y/T])(S) = \Hom_{alg}(A^\sigma_c, R)
	$$
	
	Now suppose that we are given $ \phi \in \Hom_{alg}(A^\sigma_c, R) $. Then since $ A^\sigma_c $ is a quotient of a symmetric algebra, for each $ \lambda$, we obtain a linear map $ \A_\lambda \otimes \C[x,y]_{\sigma(\lambda)}^* \rightarrow R $ and thus dualizing we obtain a linear map $ \phi_\lambda : \A_\lambda \rightarrow \C[x,y]_{\sigma(\lambda)} \otimes R $.  These linear maps $ \phi_\lambda $ must satisfy the conditions of the ideal $ J$ defining $ A^\sigma $ and also the further relation defining $ A^\sigma_c $ and so we see that
	$$
	\phi_\lambda(a_1 a_2) = \phi_{\lambda_1}(a_1) \phi_{\lambda_2}(a_2) \quad \beta_\infty(\phi_\lambda(a)) = a(c)
	$$
	
	On the other hand, consider a principal bundle $ P $ on $ \PP^1 \times S $ of degree $ \sigma $.  Since $ Y = \Spec A $, a $T$-equivariant map $ s : P \rightarrow Y $ is equivalent to an $X$-graded algebra map $ A \rightarrow \C[P] $.  Now, by Lemma \ref{le:functionsP}, we have
	$$
	\C[P] = \oplus_{\lambda} \C[x,y]_{\sigma(\lambda)} \otimes R
	$$
	
	We conclude that the collection $ (\phi_\lambda)_{\lambda \in X}$ defines am $X$-graded algebra map $ A \rightarrow \C[P] $ compatible with the map $ \C[P] \rightarrow \C $ and hence a $T$-equivariant map $ s : P \rightarrow Y $ which restricts to $ c $ on the trivialization.  Thus, from $ \phi \in \Hom_{alg}(A^\sigma_c, R) $ we obtain an element of $ \QM^\sigma_c(\PP^1,[Y/T])(S) $.  It is easy to see that this defines a bijection.
	
\end{proof}

The space $  \Spec A^\sigma_c$ has a distinguished point, also denoted $ c $, given by the map $ A^\sigma_c \rightarrow \C $ defined by $ a \otimes \beta \mapsto a(c) \beta(y^n) $ for $ a \otimes \beta \in   A_n \otimes \C[x,y]^*_n $.  Under the above isomorphism, this distinguished point can be viewed as the following quasimap.  Recall that 
$$ 
P = (\C^2 \smallsetminus \{0\}) \times T / \Cx
$$
where $ \Cx $ acts by inverse scaling on  $\C^2 \smallsetminus \{0\}$ and on $ T $ by the coweight $ \sigma$.  We define a map 
$$ (\C^2 \smallsetminus \C \times \{0\}) \times T \rightarrow Y \quad (x,y,t) \mapsto \sigma(y)t c $$
This is invariant for the action of $ \Cx$ and extends to a map $ P \rightarrow Y $ if and only if the condition (\ref{eq:ConditionNonEmpty}) holds.  Comparing with the proof of Theorem \ref{th:QMtorus}, we see that this quasimap corresponds to the point $ c \in \Spec A^\sigma_c $.

We define an action of $ \Cx $ on $ A^\sigma_c $ by setting the variable $ x $ to have degree $ 1 $ and the variable $ y $ degree $0$.

\begin{Proposition}
	This $ \Cx $ action on $ \QM^\sigma_c(\PP^1, [Y/T]) $ contracts this space to the point $ c $. 
\end{Proposition}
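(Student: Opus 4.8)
The plan is to translate the geometric statement into a statement about a non-negative $\Z$-grading, using the explicit presentation of $A^\sigma_c$ from Theorem \ref{th:QMtorus}. Recall the general fact that for an affine scheme $\Spec B$ a $\Cx$-action is the same as a $\Z$-grading on $B$, that this action extends to an action of the multiplicative monoid $\mathbb A^1$ precisely when $B$ is non-negatively graded, and that in that case the $t=0$ fibre of the monoid action is the retraction of $\Spec B$ onto $\Spec B_0$; in particular, if $B_0 = \C$, then the action contracts $\Spec B$ to the single point $V(B_+)$, where $B_+ = \bigoplus_{d>0} B_d$. So it suffices to show that, with the stated grading ($x$ in degree $1$, $y$ in degree $0$), the algebra $A^\sigma_c$ is non-negatively graded with degree-zero part $\C$, and that the point $V\big((A^\sigma_c)_+\big)$ is exactly the distinguished point $c$.

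First I would check the grading is non-negative. The grading puts the monomial $x^p y^q \in \C[x,y]_n$ in degree $p$, hence $(x^p y^q)^* \in \C[x,y]_n^*$ in degree $p$ (up to the overall sign of the convention, which only fixes the direction of the contraction; we take the sign for which $c$ is the sink). Thus each algebra generator $a \otimes (x^p y^q)^*$ of $A^\sigma$ (with $a \in A_\lambda$, $p+q = \sigma(\lambda)$, $p,q\ge 0$) is homogeneous of degree $p \ge 0$, so $\Sym\big(\bigoplus_n A_n \otimes \C[x,y]_n^*\big)$ is non-negatively graded. The relator $(a_1 a_2)\otimes \beta - \sum (a_1\otimes\beta_1)(a_2\otimes\beta_2)$ generating $J$ is homogeneous: the coproduct $\Delta$ dual to multiplication of monomials sends $(x^p y^q)^*$ to a sum of terms $(x^{p_1}y^{q_1})^*\otimes(x^{p_2}y^{q_2})^*$ with $p_1+p_2 = p$, so every term has degree $p$. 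Finally the relations $a\otimes \beta_\infty - a(c)$ cutting out $A^\sigma_c$ are homogeneous of degree $0$, since $\beta_\infty = (y^n)^*$ lies in degree $0$. Hence $A^\sigma_c$ is a non-negatively $\Z$-graded algebra.

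Next I would identify $(A^\sigma_c)_0$. In the symmetric algebra the degree-zero subalgebra is the polynomial algebra on the degree-zero generators, namely $\{a\otimes \beta_\infty : a \in A_\lambda,\ \sigma(\lambda)\ge 0\}$; passing to the quotient $A^\sigma$ it remains generated by the images of these, and in $A^\sigma_c$ each $a\otimes \beta_\infty$ is identified with the scalar $a(c)$. Therefore $(A^\sigma_c)_0 = \C$, so $(A^\sigma_c)_+$ is a maximal ideal and the contraction is onto the single point $V\big((A^\sigma_c)_+\big)$. To see this point is $c$, recall $c$ is the homomorphism $a\otimes\beta \mapsto a(c)\,\beta(y^n)$; it kills every generator $a\otimes(x^p y^q)^*$ with $p>0$, i.e. it kills $(A^\sigma_c)_+$ and restricts to the augmentation on $(A^\sigma_c)_0 = \C$, so $c = V\big((A^\sigma_c)_+\big)$. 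Combining with the general fact above gives $\lim_{t\to 0} t\cdot q = c$ for every $q \in \QM^\sigma_c(\PP^1,[Y/T]) = \Spec A^\sigma_c$, which is the claim; under the isomorphism of Theorem \ref{th:QMtorus} this is just the statement that the $\Cx$-action induced from the action on $\PP^1$ fixing $0$ and $\infty$ pushes every quasimap to the constant map with value $c$.

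The only real care points are bookkeeping ones: fixing the sign convention in the grading of $\C[x,y]_n^*$ so that the contraction runs towards $c$ rather than away from it, and verifying cleanly that $(A^\sigma_c)_0$ is exactly $\C$ and not larger — this uses crucially that every degree-zero algebra generator has the form $a\otimes\beta_\infty$ and is hence made scalar by the defining relations of $A^\sigma_c$. There is no analytic content; everything reduces to the monomial basis of $\C[x,y]_n$ and its dual, together with the standard dictionary between $\Cx$-actions and gradings on affine schemes.
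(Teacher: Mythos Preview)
Your proposal is correct and follows exactly the same approach as the paper's proof: the paper simply asserts that $A^\sigma_c$ is non-negatively graded with one-dimensional degree-zero part given by the augmentation $c$, while you supply the verification of these claims by checking generators and relations. There is no substantive difference in method.
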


\begin{proof}
	It is easily see that with respect to the above action $ A^\sigma_c $ is non-negatively graded and has $  (A^\sigma_c)^\Cx $ one-dimensional with augmentation morphism given by $ c$.
\end{proof}

\subsection{Comparing BFN Springer fibres and quasimaps}
We return to our general setup of a reductive group $ G$, a representation $ N $, and a $\chi$-stable point $ c \in N $ satisfying our assumptions from section \ref{se:various}.

We will now apply the setup of the previous section to the affine scheme $ Y = N \sslash G' $ with the action of the torus $ H := G/G'$.  Note that we have a morphism $ N \rightarrow N\sslash G' $ which is invariant for the action of $ G'$ and we write $ [c] $ for the image of $ c $ under this morphism.

Applying Theorem \ref{th:QMtorus}, we deduce the following result which implies Theorem \ref{th:QMcompare} (2).

\begin{Corollary} \label{pr:ContractQM}
	$$\QM^\sigma_c(\PP^1, [N\sslash G' / H ]) = \Spec A^\sigma_{[c]}$$
	where $ A = \C[N]^{G'} $.  In particular, the action of $ \Cx $  on $ \QM^\sigma_{[c]}(\PP^1, [N\sslash G' / H ]) $ contracts this space to the point $ [c] \in N \sslash G'$.
\end{Corollary}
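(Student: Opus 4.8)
The plan is to obtain this corollary as a direct application of Theorem~\ref{th:QMtorus} together with the last Proposition of Section~\ref{se:QMtorus}, so the only real work is to check that the hypotheses of that theorem are satisfied for the affine scheme $ Y = N \sslash G' $ equipped with the residual action of the torus $ H = G/G' $. First I would note that $ Y = N \sslash G' = \Spec \C[N]^{G'} $ is of finite type: since $ G' $ is reductive and $ N $ is a finite-dimensional representation, the invariant ring $ A := \C[N]^{G'} $ is finitely generated. The torus $ H $ acts on $ Y $, and the $ G' $-invariant quotient morphism $ N \rightarrow N \sslash G' $ carries $ c $ to a point $ [c] \in Y(\C) $, so the pair $ (Y, [c]) $ with torus $ H $ is a candidate for the setup of Section~\ref{se:QMtorus}.

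Next I would verify that $ [c] $ has trivial stabilizer in $ H $, which is the input needed to form the based quasimap space $ \QM^\sigma_c(\PP^1,[Y/H]) $. Suppose $ h \in H $ fixes $ [c] $, and choose a lift $ g \in G $. Then $ g c $ lies in the scheme-theoretic fibre of $ N \rightarrow N \sslash G' $ over $ [c] $, which by assumption (2) of Section~\ref{se:various} is exactly the orbit $ G' c $; hence $ g c = g' c $ for some $ g' \in G' $, so $ (g')^{-1} g $ stabilizes $ c $. By assumption (1), $ c $ has trivial stabilizer in $ G $, forcing $ g = g' \in G' $, i.e.\ $ h = 1 $. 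With this in hand, Theorem~\ref{th:QMtorus} applied to $ (Y, [c]) $ and $ H $ immediately gives $ \QM^\sigma_c(\PP^1, [N \sslash G' / H]) \cong \Spec A^\sigma_{[c]} $, with the usual understanding that if the compatibility condition (\ref{eq:ConditionNonEmpty}) fails for $ [c] $ and $ \sigma $ then both sides are empty (and $ A^\sigma_{[c]} $ is only defined in the presence of that condition).

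Finally, the contraction statement is nothing but the conclusion of the final Proposition of Section~\ref{se:QMtorus}: equipping $ A^\sigma_{[c]} $ with the $ \Cx $-action that gives $ x $ degree $ 1 $ and $ y $ degree $ 0 $ makes it non-negatively graded with one-dimensional degree-zero piece whose augmentation is $ [c] $, so $ \Spec A^\sigma_{[c]} = \QM^\sigma_c(\PP^1,[N\sslash G'/H]) $ contracts onto $ [c] \in N \sslash G' $. I do not anticipate a genuine obstacle here, since all the substantive content already lives in Theorem~\ref{th:QMtorus} and Lemma~\ref{le:functionsP}; the one place where care is needed is the triviality of the $ H $-stabilizer of $ [c] $, which is precisely where assumptions (1) and (2) of Section~\ref{se:various} get used.
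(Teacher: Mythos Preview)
Your proposal is correct and follows exactly the paper's approach: the corollary is stated as an immediate consequence of Theorem~\ref{th:QMtorus} and the contraction Proposition at the end of Section~\ref{se:QMtorus}, with no separate proof given. Your verification that $[c]$ has trivial stabilizer in $H$ (using assumptions (1) and (2) of Section~\ref{se:various}) is a useful detail that the paper leaves implicit, and your argument for it is sound.
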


There is a morphism of stacks $ \pi : [N/G] \rightarrow [N\sslash G' / H] $ which leads to a morphism 
$$
\pi_\sigma : \QM^\sigma_c(\PP^1, [N/G])\rightarrow \QM^\sigma_{[c]}(\PP^1, [N\sslash G' / H ])
$$
which can be described as follows. Given a point $ (P, s, \psi) \in  \QM^\sigma_c(\PP^1, [N/G])(S) $, we consider the quotient $ P/G' $ which is a principal $H = G/G'$ bundle.  The map $ s : P \rightarrow N $ descends to a map $ s/G' : P/G' \rightarrow N \sslash G' $ and similarly the trivialization descends as well.  Thus we obtain the point 
$$\pi_{\sigma}(P,s,\psi) = (P/G', s/G', \psi/G') \in \QM^\sigma_{[c]}(\PP^1, [N\sslash G' / H ])(S)
$$

Now, we relate the central fibre of $ \pi $ to the Springer fibre.  We write $ \pi^{-1}([c]):= \sqcup_{\sigma} \pi_\sigma^{-1}{[c]} $.

Define a new moduli space $ \widehat \Sp_c $ by
\begin{align*}
\widehat \Sp_c(S) 
= &\{ (P, s, \psi): \text{ $P $ is a principal $G$-bundle on $ \PP^1 \times S $,} \\ 
&\text{$ s : P \rightarrow N$ is $G$-equivariant,} \\
&\text{ $\psi : \PP^1 \smallsetminus \{0\} \times S \times G \rightarrow P|_{ \PP^1 \smallsetminus \{0\} \times S} $ is a trivialization, such that } \\
&\text{ $ s\circ \psi : \PP^1 \smallsetminus \{0\} \times S \times \{1\} \rightarrow N $ is the constant morphism (with image $c$).}\} 
\end{align*}
As usual, we write $ \widehat \Sp_c(\sigma) $ for the locus where $ P $ has degree $ \sigma $.

The following result completes the proof of Theorem \ref{th:QMcompare} (3).

\begin{Proposition} \label{pr:CompareFcQM}
	\begin{enumerate}
		\item	The map $ (P,s,\psi) \mapsto (P|_{D \times S}, \psi|_{D\times S}) $ defines an isomorphism $ \widehat \Sp_c \rightarrow \Sp_c $.
		\item The map $ (P, s, \psi) \mapsto (P, s, \psi|_{\{\infty\} \times S}) $ gives an isomorphism $ \widehat \Sp_c(\sigma) \cong \pi_\sigma^{-1}([c])$.
	\end{enumerate}
\end{Proposition}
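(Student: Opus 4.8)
The plan is to unwind the moduli-theoretic definitions of the three spaces and verify that the claimed maps are inverse to explicit maps going the other way. For part (1), I would start with a point $(P, s, \psi) \in \widehat{\Sp}_c(S)$, and restrict $P$ and $\psi$ to the formal disk $D \times S$ (thinking of $D$ as the completion of $\PP^1$ at $0$, or more carefully passing through a Zariski or formal neighbourhood of $0$ and then to the disk). The section $s$ is determined: over $D^\times \times S$ the trivialization $\psi$ identifies $s$ with the constant section $c$, and over $D \times S$ it must extend this. So the datum $(P|_{D\times S}, \psi|_{D^\times \times S})$ together with the requirement that the associated section lies in $\Gamma(D, N_P)$ is exactly a point of $\Sp_c$ in its modular description $\{(P,\sigma) : \sigma^{-1}(c) \in \Gamma(D, N_P)\}$. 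Conversely, given a point of $\Sp_c(S)$, I would glue: use the trivial bundle on $\PP^1 \smallsetminus \{0\} \times S$ with its constant section $c$, and glue it to $P$ over $D^\times$ via $\psi$; Beauville--Laszlo gives a bundle on $\PP^1 \times S$ with a $G$-equivariant map to $N$ and the required trivialization away from $0$. These two constructions are visibly mutually inverse, and they are compatible with the degree decomposition since the degree of $P$ on $\PP^1$ equals the class in $\pi_1(G)$ of the gluing, i.e.\ the component $\sigma$ of $\Gr$ containing $[P|_D, \psi]$.

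For part (2), the key observation is that since $c$ has trivial stabilizer in $G$ (our standing assumption), a trivialization $\psi$ of $P$ over $\PP^1 \smallsetminus \{0\} \times S$ subject to $s\circ\psi \equiv c$ is \emph{unique} if it exists: any two differ by a map $\PP^1 \smallsetminus\{0\}\times S \to G$ fixing $c$, hence the constant map $1$. Therefore the forgetful map $\widehat{\Sp}_c(\sigma) \to \pi_\sigma^{-1}([c])$, remembering only $(P, s, \psi|_{\{\infty\}\times S})$, has an obvious candidate inverse: given $(P, s, \psi_\infty)$ mapping to $[c]$ under $\pi_\sigma$, the condition $\pi_\sigma(P,s,\psi_\infty) = [c]$ says that $P/G'$ is the constant $H$-bundle with its constant section $[c]$, so $P/G' \to \PP^1\times S$ is trivialized on all of $\PP^1 \smallsetminus \{0\}\times S$ compatibly with $s/G'$; lifting this to a reduction of $P$ over $\PP^1\smallsetminus\{0\}\times S$ and using assumption (2) — that the scheme-theoretic fibre of $N \to N\sslash G'$ over $[c]$ is the orbit $G'c$, so that $s$ lands in a $G'$-torsor over that locus and hence can be rigidified to the constant section $c$ — produces the desired $\psi$ over $\PP^1 \smallsetminus\{0\}\times S$. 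Uniqueness of $\psi$ makes this inverse well-defined and the two composites the identity.

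The main obstacle I anticipate is the second direction of part (2): carefully checking that the map $s : P \to N$ genuinely reduces to the constant section over $\PP^1 \smallsetminus\{0\}\times S$. This is where assumption (2) does the real work — without it, the fibre of $N \to N\sslash G'$ over $[c]$ could be larger than $G'c$ (e.g.\ contain nilpotent directions or extra components), and then knowing $s/G' \equiv [c]$ away from $0$ would not pin down $s$ up to the $G'$-gauge freedom, so no trivialization $\psi$ with $s\circ\psi\equiv c$ need exist. Once that reduction is in hand, everything else is formal bookkeeping with torsors and the uniqueness of $\psi$. I would also take a moment to confirm the degree matches — $P$ has degree $\sigma$ on both sides by definition — so that the decomposition into $\sigma$-pieces is respected, and to note that part (1) together with part (2) identifies $\pi_\sigma^{-1}([c])$ with $\Sp_c(\sigma)$, which combined with the contractibility of $\QM^\sigma_{[c]}(\PP^1,[N\sslash G'/H])$ from Corollary~\ref{pr:ContractQM} yields the homology statement $H_\bullet(\Sp_c(\sigma)) \cong H_\bullet(\QM^\sigma_c(\PP^1,[N/G]))$ asserted in Theorem~\ref{th:QMcompare}(3).
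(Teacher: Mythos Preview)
Your proposal is correct and follows essentially the same approach as the paper: Beauville--Laszlo gluing for part (1), and for part (2) lifting the trivialization of $P/G'$ to $P$ over $\PP^1\smallsetminus\{0\}\times S$ and then using assumption (2) on the fibre of $N\to N\sslash G'$ to correct the section to the constant $c$. Your emphasis on the uniqueness of $\psi$ (via the trivial stabilizer of $c$) is a clean way to see the inverse is well-defined; the paper phrases this more constructively, choosing an arbitrary lift $\psi$ and then modifying it by the resulting $G'$-valued function $g$, but the content is identical.
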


\begin{proof}
	\begin{enumerate}
		\item First, we note that the equation $ s(\psi(x,S,1)) = c $ determines $ s $ on the restriction of $ P $ over $  \PP^1 \smallsetminus \{0\} \times S$, a dense subset of $ P $.  Thus, given $ (P, \psi) $ if $ s $ exists, it is unique.  The result thus follows from a theorem of Beauville-Laszlo \cite{BL}.
		\item 
		First, we note that for any $ (P,s,\phi) \in \widehat \Sp_c(S) $, the composition $ s : P \rightarrow N \rightarrow N \sslash G' $ descends to a map $ s /G' :  P / G' \rightarrow N \sslash G' $.  The trivialization of $ P $ over $ \PP^1 \smallsetminus \{0\} \times S $ descends to a trivialization of $ P/G' $ over this locus.  With respect to this trivialization, the map $ s / G' $ is constant with image $ [c]$ and thus gives the central point $ [c] \in QM_{[c]}(\PP^1, [N \sslash G' / H]) $.  This shows that $ (P, s, \psi|_{\{\infty\} \times S}) \in \pi^{-1}([c])(S) $.
		
		Conversely, let $ (P,s,\psi_\infty) \in \pi^{-1}([c])$. Thus there exists a trivialization $ \psi_H $ of $ P/G' $ on $ \PP^1 \smallsetminus \{0\} \times S $ and such that $s/G' $ is given by 
		$$ P/G'|_{\PP^1 \smallsetminus \{0\} \times S}\cong H \times \PP^1 \smallsetminus \{0\} \times S \rightarrow N \sslash G' \quad (1,x) \mapsto [c] $$
		Choose a trivialization $ \psi$ of $ P $ on $ \PP^1 \smallsetminus \{0\} \times S $ extending $ \psi_\infty $ and $ \psi_H $, so we obtain the commutative diagram
		\begin{equation*}
		\begin{tikzcd}
		\PP^1 \smallsetminus \{0\} \times S \times G \arrow{r}{\psi} \arrow{d} & P|_{\PP^1 \smallsetminus \{0\} \times S} \arrow{r}{s} \arrow{d} & N \arrow{d} \\
		\PP^1 \smallsetminus \{0\} \times S \times H \arrow{r}{\psi_H} & P/G' |_{\PP^1 \smallsetminus \{0\} \times S} \arrow{r}{s/G'} & N \sslash G'
		\end{tikzcd}
		\end{equation*}
		Take a point $ (x,1) \in \PP^1 \smallsetminus \{0\} \times S \times G $. As the path down and right gives the point $ [c] $, we see that the path right must land in the fibre over $ [c] $.  By the assumption that this fibre is scheme-theoretically isomorphic to $ G' $, we see that there is a morphism $ g : \PP^1 \smallsetminus \{0\} \times S  \rightarrow G $ such that $ s \circ \psi(1,x) = g(x)c $.  We change the trivialization $ \psi $ by this morphism $ g $ and then we see that under the new trivialization, the section $s $ is constant (and equal to $c $).  Thus it defines a point in $ \widehat \Sp_c(S) $.   
		
		This gives the inverse morphism $ \pi^{-1}([c]) \rightarrow \widehat \Sp_c $ and thus we have the desired isomorphism $ \Sp_c(\sigma) \cong \pi_\sigma^{-1}([c])$.
	\end{enumerate}
\end{proof}

\subsection{Laumon's quasiflags spaces} \label{se:Laumon}

As in Examples \ref{eg:sln} and \ref{eg:sln2}, we consider the case in which $ G = \prod_{i = 1}^{n-1} GL_i $ and $ N = \oplus_{i = 1}^{n-1} \Hom(\C^i, \C^{i+1}) $, choose $ \chi : G \rightarrow \Cx $ to be given by $\chi(g) = \prod_i \det(g_i)^{-1}$ and take $ c \in N $ to be the point corresponding to the standard embeddings of $ \C^i $ into $ \C^{i+1} $.

In this case the $\chi$-stable points of $N $ are the injective homomorphisms and $ N\sslash_\chi G \cong FL_n $, the variety of full flags in $ \C^n$.  We also have $ G' = \prod_{i=1}^{n-1} SL_i$ and $ G/G' = (\C^\times)^{n-1} $.

We can identify $ \QM_c(\PP^1, [N/G])$ with Laumon's based quasiflag space $ \La_n$.
\begin{align*}
\La_n := \{ &0 = \sF_0 \subset \sF_1 \subset \cdots \subset \sF_{n-1} \subset \sF_n = \OO_{\PP^1} \otimes \C^n : \\
&\text{ $ \sF_i $ is a locally free sheaf of rank $ i $,\ $ \sF_i|_\infty = \C^i $} \}
\end{align*}
In this language the Springer fibre $ \Sp_c $ is identified with the following locus.

$$
\Sp_c = \{ \sF_\bullet : \sF_i|_{\PP^1 \smallsetminus \{0 \}} = \OO_{\PP^1 \smallsetminus \{0 \}} \otimes \C^i \} 
$$

In \cite{Kuz}, Kuznetsov studied a map $ \pi : \La_n^\sigma \rightarrow \Za_n^\sigma $, where $ \Za^\sigma $ denotes  Drinfeld's Zastava space.  As discussed in Example \ref{eg:Zastava}, $ \Za^\sigma_n = \QM_c^\sigma(\PP^1,[SL_n \sslash U / T])$ where $ T $ denotes the torus of $ SL_n $.  

In fact, we have the following.
\begin{Lemma}
	There is an isomorphism $ SL_n \sslash U = N \sslash G' $ compatible with the action of $ T = (\C^\times)^{n-1} $ on both sides.
\end{Lemma}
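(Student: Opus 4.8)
The plan is to identify the two affine varieties through their coordinate rings, using the description of the base affine space recalled in Example~\ref{eg:Zastava}: $SL_n\sslash U=\Spec A$ with $A=\bigoplus_{\lambda}V(\lambda)^*$, the sum over dominant weights of $SL_n$, graded so that $V(\lambda)^*$ sits in degree $\lambda$ for the right $T$--action, with product dual to the embeddings $V(\lambda+\mu)\hookrightarrow V(\lambda)\otimes V(\mu)$. Writing a point of $N$ as $\phi=(\phi_1,\dots,\phi_{n-1})$, $\phi_j\colon\C^j\to\C^{j+1}$, set $\psi_i:=\phi_{n-1}\circ\dots\circ\phi_i\colon\C^i\to\C^n$ and define
\[
\Psi\colon N\longrightarrow\prod_{i=1}^{n-1}\Lambda^i\C^n=\prod_{i=1}^{n-1}V(\varpi_i),\qquad
\Psi(\phi)_i=(\Lambda^i\psi_i)(e_1\wedge\dots\wedge e_i).
\]
A telescoping computation gives $\psi_i(g\cdot\phi)=\psi_i(\phi)\circ g_i^{-1}$ for $g=(g_j)\in G$, hence $\Psi(g\cdot\phi)_i=\det(g_i)^{-1}\Psi(\phi)_i$; thus $\Psi$ is $G'$--invariant and descends to $\bar\Psi\colon N\sslash G'\to\prod_i V(\varpi_i)$, equivariant for $H=G/G'$ acting on the $i$-th factor through $g\mapsto\det(g_i)$, and for the left action of $SL_n=SL(\C^n)$ by post-composition on $\phi_{n-1}$. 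Dualizing, $\Psi$ induces an algebra map $\Psi^\sharp\colon\Sym\bigl(\bigoplus_i V(\varpi_i)^*\bigr)\to\C[N]^{G'}$, and the task is to see that it factors through an isomorphism $A\xrightarrow{\sim}\C[N]^{G'}$ carrying the $T$--grading to the $H$--grading.

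The crux is a computation of $\C[N]^{G'}$, which I would carry out by induction on $n$. Peeling off the last map, $N=N'\oplus\Hom(\C^{n-1},\C^n)$ with $N'=\bigoplus_{i=1}^{n-2}\Hom(\C^i,\C^{i+1})$ and $G'=G''\times SL(\C^{n-1})$; take $G''$--invariants first, insert the inductive identification $\C[N']^{G''}\cong\bigoplus_\mu V_{n-1}(\mu)^*$ as a module over the residual $SL(\C^{n-1})$, apply the Cauchy decomposition $\C[\Hom(\C^{n-1},\C^n)]=\bigoplus_\nu S_\nu(\C^{n-1})\otimes S_\nu((\C^n)^*)$, and then take $SL(\C^{n-1})$--invariants: by Schur's lemma a pair $(\mu,\nu)$ contributes a one-dimensional multiplicity space exactly when $S_\nu(\C^{n-1})\cong V_{n-1}(\mu)$, and zero otherwise. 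Summing the surviving terms gives $\C[N]^{G'}\cong\bigoplus_{\nu}S_\nu((\C^n)^*)$ over partitions $\nu$ with at most $n-1$ parts; since $S_\nu((\C^n)^*)$ is the irreducible $SL_n$--module of highest weight $-w_0\nu$, reindexing identifies this, $SL_n$--equivariantly, with $\bigoplus_\lambda V_n(\lambda)^*$. Two features of this decomposition are what we will need: the $H$--grading is homogeneous on each summand, and the $H$--degree of the summand $\cong V(\lambda)^*$ sets up a bijection between dominant weights and the support of the grading, under which the summand $\cong V(\varpi_i)^*$ carries the character $g\mapsto\det(g_i)$.

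With this in hand the proof concludes quickly. First, $\Psi^\sharp$ factors through $A$: the algebra $A$ is generated in degrees $\varpi_1,\dots,\varpi_{n-1}$, and its defining relations assert that a product of generators projects onto the Cartan summand; in $\C[N]^{G'}$ the image of $V(\varpi_{i_1})^*\cdots V(\varpi_{i_k})^*$ lies in a single $H$--graded piece, which by the computation is the irreducible module $\cong V(\varpi_{i_1}+\dots+\varpi_{i_k})^*$, so that image map is forced to be the Cartan projection — hence the relations die. Second, the resulting graded map $A\to\C[N]^{G'}$ is surjective: its image is an $SL_n$--stable subalgebra meeting every graded piece (a product of nonzero matrix coefficients of the $\Psi_i$ is nonzero, $\C[N]^{G'}$ being a domain), hence equals each irreducible graded piece. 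Third, it is injective, being a graded map which in each finite-dimensional graded piece is a surjection $V(\lambda)^*\twoheadrightarrow V(\lambda)^*$, hence an isomorphism. Thus $\bar\Psi$ realizes $SL_n\sslash U\cong N\sslash G'$. Finally, for the torus statement, the $H$--grading on $\C[N]^{G'}$ and the right-$T$ grading on $A$ are both indexed by the monoid of dominant weights, with generators matching under $g\mapsto\det(g_i)\leftrightarrow\varpi_i$; this is the required isomorphism $H\xrightarrow{\sim}T$, and under it $\bar\Psi$ descends to $[N\sslash G'/H]\cong[SL_n\sslash U/T]$. The step needing genuine care is the inductive invariant-theory computation itself — in particular, propagating the residual $SL(\C^{n-1})$ and $SL(\C^n)$ actions correctly through the Cauchy/Schur bookkeeping and keeping the $GL$--versus--$SL$ distinction straight as the chain is peeled off.
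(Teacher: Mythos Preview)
Your proof is correct and reaches the same conclusion as the paper, but the organization is genuinely different. The paper decomposes $\C[N]$ all at once by applying Howe duality (equivalently, the Cauchy identity) to each tensor factor $\Sym\bigl(\C^i\otimes(\C^{i+1})^*\bigr)$ simultaneously, and then reads off the $(G,\theta)$--semi-invariants for each character $\theta$ using Schur's lemma at every node in one stroke. You instead peel off the last node and proceed by induction on $n$, using the Cauchy decomposition only for $\Hom(\C^{n-1},\C^n)$ at each step; this is the same invariant theory unpacked one layer at a time. What your approach buys is an explicit realization of the isomorphism via the Pl\"ucker-type map $\Psi(\phi)_i=\Lambda^i(\phi_{n-1}\circ\cdots\circ\phi_i)(e_1\wedge\cdots\wedge e_i)$, which the paper does not write down; conversely, the paper's all-at-once argument is shorter and avoids the bookkeeping of propagating the residual $SL_{n-1}$--action through the induction. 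One small point worth making explicit in your write-up: when you take $SL_{n-1}$--invariants, a given $SL_{n-1}$--dominant weight $\mu$ matches infinitely many partitions $\nu$ (differing by columns of length $n-1$), but this is exactly what makes the final sum over $\nu$ with at most $n-1$ parts hit every $SL_n$--dominant weight once --- you use this implicitly in ``summing the surviving terms,'' and it is the only place the $GL$-versus-$SL$ distinction you flag actually bites.
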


\begin{proof}
	It suffices to show that $\C[SL_n]^U = \C[N]^{G'} $.  We already know the left hand side is given by the direct of all irreducible representations of $ SL_n$.
	
	Given  any dominant weight $ \lambda = \sum \lambda_i \omega_i $, we can define $ \theta : G \rightarrow \Cx $ by $\theta(g_1, \dots, g_{n-1}) = \prod \det(g_i)^{\lambda_i}$.  So we must show that for this $ \theta$, we have an isomorphism of $ SL_n $ representations $ \C[N]^{G, \theta} = V(\lambda) $. 
	
	Using Howe duality, we have
	\begin{gather*}
	\C[N] = \Sym(N^*) = \Sym \bigl(\bigoplus_{i=1}^{n-1} \C^i \otimes (\C^{i+1})^* \bigr) \\
	= \bigoplus_{\mu^1, \dots, \mu^{n-1}} \bigotimes_{i=1}^{n-1} V_{GL_i}(\mu^i) \otimes V_{GL_{i+1}}(\mu^i)^* \\
	=\bigoplus_{\mu^1, \dots, \mu^{n-1}} \bigotimes_{i=1}^n V_{GL_i}(\mu^{i-1})^* \otimes V_{GL_i}(\mu^i)
	\end{gather*}
	where $ \mu^i \in \mathbb N^i $.  Also if $ j \le i $ and $ \mu \in \mathbb \N^j$, then $V_{GL_i}(\mu) $ denotes the irreducible representation of $ GL_i $ of highest weight $ (\mu,0, \dots,0) $.  (Also we make the convention that $ \mu^n = 0 $, so $V_{GL_n}(\mu^n) = \C$.)
	
	Now we are looking inside this space for those vectors where $ G = \prod_{i=1}^{n=1} GL_i$ acts by $ \theta $.  Now
	$$ (V_{GL_i}(\mu^{i-1})^* \otimes V_{GL_i}(\mu^i))^{GL_i, \det^{\lambda_i}} = \begin{cases} \C \text{ if $ -\mu^{i-1} + \mu^i = \lambda_i \omega_i$ } \\
	0 \text{ otherwise}
	\end{cases}
	$$
	Combining these equations together gives the desired result.
	
\end{proof}

\begin{Corollary} \label{th:LaumonQM}
	We have an isomorphism $$\QM_c^\sigma(\PP^1,[SL_n \sslash U / T]) \cong   \QM_c^\sigma(\PP^1,[N\sslash G' /  (\C^\times)^{n-1}])$$  
	Under this isomorphism, the map $  \La_n^\sigma \rightarrow \Za_n^\sigma $ studied by Kuznetsov coincides with our map $ \pi $ above.
\end{Corollary}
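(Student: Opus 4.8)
The plan is to deduce the statement from the preceding Lemma together with the functoriality of the based-quasimap construction for torus quotients. By Theorem \ref{th:QMtorus}, for a finitely generated $T$-algebra $A$ and a point $c \in \Spec A$ with trivial stabilizer one has $\QM^\sigma_c(\PP^1, [\Spec A/T]) \cong \Spec A^\sigma_c$, and inspecting the construction of $A^\sigma_c$ shows that it is functorial: a $T$-equivariant algebra isomorphism $A \xrightarrow{\sim} A'$ carrying the character $c$ to a character $c'$ induces $A^\sigma_c \xrightarrow{\sim} (A')^\sigma_{c'}$. First I would apply this to the isomorphism $\C[SL_n]^U \cong \C[N]^{G'}$ of the Lemma above, which is $T$-equivariant for $T = (\C^\times)^{n-1}$. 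The one point that needs checking here is that the base points agree: the point $c$ on the Zastava side (the highest-weight covector in each $V(\lambda)$, as in Example \ref{eg:Zastava}) must correspond under the Lemma's isomorphism to the image $[c] \in N\sslash G'$ of the standard-embeddings point $c \in N$. This can be read off the Howe-duality decomposition used in the proof of the Lemma, or more simply from the fact that both points map to the standard flag under $SL_n\sslash U \to SL_n/B = Fl_n = N\sslash_\chi G$ and are normalized to transform by the appropriate characters of $T$.

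Granting this, the first statement follows by assembling the identifications already in hand: Example \ref{eg:Zastava} gives $\Za^\sigma_n = \QM^\sigma_c(\PP^1, [SL_n\sslash U/T])$, Corollary \ref{pr:ContractQM} gives $\QM^\sigma_{[c]}(\PP^1, [N\sslash G'/H]) = \Spec A^\sigma_{[c]}$ with $A = \C[N]^{G'}$ and $H = G/G' = (\C^\times)^{n-1}$, and the previous paragraph identifies the two. Implicitly one also checks that Drinfeld's $T$-action on $\Za^\sigma_n$, by right multiplication on $SL_n\sslash U$, is carried to the $H$-action on $N\sslash G'$ --- but this is exactly the compatibility asserted in the Lemma.

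For the second statement I would work on $S$-points. A point of $\QM^\sigma_c(\PP^1,[N/G])(S)$ is a triple $(P, s, \psi)$; under the identification of Section \ref{se:Laumon} this is the flag $\sF_\bullet$ of locally free sheaves on $\PP^1\times S$ standard at $\infty$. Our map $\pi_\sigma$ sends $(P,s,\psi)$ to $(P/G', s/G', \psi/G')$, and under $SL_n\sslash U \cong N\sslash G'$ the principal $H$-bundle $P/G'$ corresponds to the tuple of line bundles $(\det \sF_i)_{i=1}^{n-1}$, while $s/G'$ records precisely the $G$-semi-invariant sections $\det\sF_i \hookrightarrow \Lambda^i(\OO_{\PP^1}\otimes\C^n)$ --- i.e.\ the matrix-coefficient data out of which Kuznetsov builds his map $\pi : \La^\sigma_n \to \Za^\sigma_n$ in \cite{Kuz}. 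So the claim is that $\pi_\sigma$ and Kuznetsov's $\pi$ give the same $S$-point of $\Za^\sigma_n = \QM^\sigma_{[c]}(\PP^1, [N\sslash G'/H])$, which I would verify by unwinding both constructions against the common functor-of-points description, paying attention to the degree bookkeeping (that $P$ of degree $\sigma$ gives $P/G'$ of degree $\sigma$ under $\pi_1(G) = \pi_1(H)$) and to the trivializations at $\infty$. The main obstacle is exactly this last comparison: reconciling Kuznetsov's sheaf-theoretic recipe with the stacky quotient $[N/G] \to [N\sslash G'/H]$ is bookkeeping-heavy, and some care is needed to see that the ``excess'' data that Kuznetsov's map forgets is precisely $G'$ --- which is where our standing assumption that the scheme-theoretic fibre of $N \to N\sslash G'$ over $[c]$ is $G'$ enters, cf.\ Proposition \ref{pr:CompareFcQM}.
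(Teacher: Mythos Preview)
Your proposal is correct and follows the same approach as the paper, whose proof is extremely terse: the first claim ``follows from the Lemma'' and the second ``follows by tracing through the definitions.'' You have simply unpacked what those two phrases mean, including the functoriality of $A\mapsto A^\sigma_c$ and the matching of base points, which the paper leaves implicit.

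One small over-attribution: your closing remark that the scheme-theoretic fibre hypothesis on $N\to N\sslash G'$ ``enters'' in comparing $\pi_\sigma$ with Kuznetsov's map is not needed here. That hypothesis is used in Proposition~\ref{pr:CompareFcQM}(2) to identify the \emph{central fibre} of $\pi_\sigma$ with the Springer fibre, but the maps $\pi_\sigma$ and Kuznetsov's $\pi$ are both defined unconditionally, and their agreement is a pure matter of matching the passage $(P,s,\psi)\mapsto (P/G', s/G', \psi/G')$ to Kuznetsov's determinant-line data --- exactly as you describe in the body of that paragraph.
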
 

\begin{proof}
	The first claim follows from the Lemma.  The second follows by tracing through the definitions.
\end{proof}

Thus we see that the central fibre of $ \pi_\sigma $ studied in section 2 by Kuznetsov \cite{Kuz} coincides with our BFN Springer fibre $ \Sp_c(\sigma) $.  In particular, Kuznetsov computes the dimension of $ \Sp_c(\sigma) $ and its Poincar\'e polynomial. 

\subsection*{Declaration of Funding and Conflicts of Interest}
	
J.K. was supported by an NSERC Discovery Grant and a Simons Fellowship. J.H. is part of the Simons Collaboration on Homological Mirror Symmetry supported by Simons Grant 390287. 

This research was supported in part by Perimeter Institute for Theoretical Physics. Research at Perimeter Institute is supported by the Government of Canada through the Department of Innovation, Science and Economic Development Canada and by the Province of Ontario through the Ministry of Research, Innovation and Science. 

The authors have no relevant financial or non-financial interests to disclose.

\end{document}